\documentclass[12pt]{amsart}
\usepackage{amsmath,amsthm,amscd,amssymb}
\setlength{\parskip}{3pt}
\usepackage{geometry}
\geometry{letterpaper,left=1in,right=1in,top=1in,bottom=1in}
\usepackage{subcaption}
\captionsetup{compatibility=false}

\usepackage{pb-diagram}
\usepackage{graphicx}
\usepackage{setspace}

\usepackage{amssymb}

\usepackage{pstricks}
\usepackage{pst-node}
\usepackage{url}
 \usepackage{hyperref}


\usepackage{mathrsfs}
\usepackage{MnSymbol}
\usepackage{gensymb} 
\usepackage{enumerate}
\usepackage{mathtools} 

\newtheorem{thm}{Theorem}[section]

\newtheorem{lem}[thm]{Lemma}
\newtheorem{prop}[thm]{Proposition}
\newtheorem{cor}[thm]{Corollary}

\theoremstyle{definition}

\newtheorem{rem}[thm]{Remark}
\newtheorem{ques}[thm]{Question}
\newtheorem{conj}[thm]{Conjecture}
\numberwithin{equation}{section}
\numberwithin{figure}{section}

\newcommand{\Z}{\mathbb{Z}}
\newcommand{\Q}{\mathbb{Q}}
\newcommand{\DK}{D_{K}(S^{3})}
\newcommand{\DF}{D_{F}(B^{4})}

\usepackage{titlesec}
\titleformat{\section} {\normalfont\scshape\bfseries\filcenter}{\thesection }{1em}{}
\titleformat{\subsection} {\normalfont\scshape\bfseries\filcenter}{\thesubsection}{1em}{}
\renewcommand \thesubsection{}

\begin{document}

\title[{THE NON-ORIENTABLE 4-GENUS OF 11 CROSSING NON-ALTERNATING KNOTS}]{THE NON-ORIENTABLE 4-GENUS OF 11 CROSSING NON-ALTERNATING KNOTS}

\author[MEGAN FAIRCHILD]{MEGAN FAIRCHILD}
\address{Department of Mathematics \\ Louisiana State University}
\email{mfarr17@lsu.edu}

\maketitle


ABSTRACT. The non-orientable 4-genus of a knot $K$ in $S^{3}$ is defined to be the minimum first Betti number of a non-orientable surface $F$ in $B^{4}$ so that $K$ bounds $F$. We will survey the tools used to compute the non-orientable 4-genus, and use various techniques to calculate this invariant for non-alternating 11 crossing knots. We also will view obstructions to a knot bounding a M\"{o}bius band given by the double branched cover of $S^{3}$ branched over $K$. 

\section{INTRODUCTION}

Knots bounding orientable surfaces, both in $S^{3}$ and $B^{4}$, has been extensively studied, however much is still to be learned about the non-orientable surfaces in $B^{4}$ bounded by knots. Recently, the non-orientable 4-genus of torus knots has been computed for all knots $T(2, q)$ and $T(3, q)$ by Allen \cite{Allen}, and most knots $T(4, q)$ by Binns, Kang, Simone, Tru\"ol, and Sabloff \cite{torus1, torus2}. The non-orientable 4-genus of knots with 10 or fewer crossings has also been computed in detail by Ghanbarian, Jabuka, and Kelly \cite{N10, JK}, with much focus on alternating knots. This paper aims to shed light on the non-alternating case and strategies to calculate the non-orientable 4-genus. We will explore various techniques in finding this invariant, as well as examining obstructions to knots bounding a M\"{o}bius band.

For this paper, a knot $K$ is in $S^{3}$. The orientable 4-genus of a knot is the minimum genus of an orientable surface in the 4-ball that is bounded by $K$ and is denoted $g_{4}(K)$, and knots with $g_{4}(K) = 0$ are called slice knots. Following Murakami and Yasuhara in \cite{MY}, the non-orientable 4-genus of a knot $K$, denoted $\gamma_{4}(K)$, is defined to be the minimum first Betti number of non-orientable surfaces $F$ smoothly embedded in $B^{4}$ bounded by $K$, that is min$\{b_{1}(F) | \partial F = K\}$. Note that the first Betti number is defined to be $b_{1}(F) = \text{dim} H_{1}(F; \mathbb{Z})$. We have, by definition, for any knot $K$, $\gamma_{4}(K) \geq 1$ where equivalence applies when $K$ bounds a M\"{o}bius band. Slice knots that bound a smooth disk embedded in $B^{4}$ have non-orientable 4-genus one, as we may attach a non-oriented band to such an embedded disk.   

\begin{thm}
    For the 185 non-alternating 11 crossing knots,
        \begin{enumerate}[(a)]
        \item 121 knots have $\gamma_{4}(K) = 1$
        \item 58 knots have $\gamma_{4} (K) = 2$
    \end{enumerate}
    The remaining 6 knots have $\gamma_{4}(K) = 1$ or $2$.
\end{thm}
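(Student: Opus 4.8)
The plan is to compile, for each of the $185$ knots, matching upper and lower bounds for $\gamma_{4}$, the inequality $1\le\gamma_{4}(K)\le 2$ being the best obtainable only for the six exceptional knots. For the upper bounds the cheapest tool is band surgery: if a single non-orientable (incoherent) band move carries $K$ to the unknot, then capping off the unknot with a disk produces a connected surface with one boundary component and Euler characteristic $0$, which is necessarily a M\"{o}bius band, and pushed into $B^{4}$ it certifies $\gamma_{4}(K)=1$; two such moves give $\gamma_{4}(K)\le 2$, and $|\gamma_{4}(K)-\gamma_{4}(K')|\le 1$ whenever $K$ and $K'$ differ by one incoherent band move. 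So the first step is a breadth-first search over diagrams of each $K$ and their band surgeries (seeded from standard diagram data, e.g.\ \texttt{KnotInfo}), supplemented by the facts that slice knots have $\gamma_{4}=1$ and that $\gamma_{4}(K)\le 2g_{4}(K)+1$ always. This should yield M\"{o}bius bands for the $121$ knots and surfaces with $b_{1}=2$ for the other $64$, hence $\gamma_{4}\le 2$ throughout -- itself nontrivial, since the ordinary $4$-genus need not be small for $11$-crossing knots.

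The substance of the theorem is the bound $\gamma_{4}(K)\ge 2$ for the $58$ knots, i.e.\ the exclusion of a M\"{o}bius band, and the main obstruction comes from the double branched cover $\DK$. If $K$ bounds a M\"{o}bius band $F\subset B^{4}$ of normal Euler number $e$ (necessarily $e\equiv 2\bmod 4$), then $\DK$ bounds the $4$-manifold $\DF$, which after surgery on its interior is definite with $b_{2}=1$ and intersection form $\langle -e/2\rangle$; moreover $|e|$ is constrained by $\det(K)$. Ozsv\'{a}th--Szab\'{o}'s inequality for the correction terms of a definite filling then pins the $d$-invariants of $\DK$, across all spin$^{c}$ structures, to explicit bounds, and a knot whose $d$-invariants -- computed from a Goeritz/surgery presentation of $\DK$ via the Ozsv\'{a}th--Szab\'{o} plumbing algorithm, or directly when $\DK$ is surgery on a knot with known knot Floer homology -- are incompatible with these bounds has $\gamma_{4}(K)\ge 2$. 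As secondary obstructions I would bring in the Gordon--Litherland signature together with the Arf- and signature-type congruence that plays the role of ``$\sigma(K)=0$ for slice knots'' in the non-orientable setting, and the unoriented knot-Floer concordance invariant; each forces $\gamma_{4}\ge 2$ for some of the $58$ and cross-checks the rest. Tabulating the outcomes then gives $121+58+6=185$.

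The principal difficulty is genuinely the six undetermined knots: for them every obstruction above is inconclusive while no band move to the unknot turns up, so the residual $\gamma_{4}\in\{1,2\}$ reflects the limits of current technology rather than an incomplete search. A real secondary obstacle is computational -- for non-alternating $K$ the double branched cover need not be plumbed or quasi-alternating, so extracting $d(\DK)$ is more delicate than in the $10$-crossing alternating setting of \cite{N10, JK} -- and the band-move search must be made demonstrably exhaustive enough that the ``no M\"{o}bius band'' verdict for the six knots is not an artifact of truncation.
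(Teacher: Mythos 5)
Your overall architecture is sound and your upper-bound side is essentially the paper's: the paper establishes the $121$ values of $\gamma_{4}(K)=1$ by noting $16$ of the knots are already slice and exhibiting, for the remaining $105$, a single non-oriented band move to a slice knot, invoking exactly the inequality $\gamma_{4}(K)\le\gamma_{4}(K')+1$ (Proposition~2.4 of Jabuka--Kelly). Likewise, $\gamma_{4}\le 2$ for the other $64$ is established by a band move to a knot with $\gamma_{4}=1$. Where you diverge is on the lower bound. You lead with the Ozsv\'ath--Szab\'o correction-term obstruction from the definite filling $D_{F}(B^{4})$ of $D_{K}(S^{3})$, treating the signature/Arf congruence and the linking form as secondary cross-checks. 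The paper does the opposite: its workhorse is the elementary congruence $\sigma(K)+4\,\mathrm{Arf}(K)\equiv 4\pmod 8$ (which rules out a M\"obius band outright) combined with clasp-number information $c_{4}(K)\in\{1,2\}$ via Murakami--Yasuhara, and for the $14$ or so knots escaping this congruence it falls back on the Gilmer--Livingston linking-form obstruction (Corollary~3 there, plus the paper's Lemma~3.9 extending it to $H_{1}=\Z_{p^{2}q}$), and one bespoke definiteness argument for $11n_{38}$. Your $d$-invariant route would in principle subsume the linking-form test, but it is substantially heavier computationally and the paper deliberately avoids it. One concrete objection to your plan: your proposed ``unoriented knot-Floer concordance invariant'' backup will yield nothing new here. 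The paper points out that $\upsilon(K)=\sigma(K)/2$ for quasi-alternating knots, and $182$ of the $185$ knots in question are quasi-alternating (two of the three exceptions are slice), so that bound never improves on the signature for this census. The honest residual class of six knots is the same under both approaches and for the same reason -- no obstruction bites and no band move to a slice knot was found -- though without the paper's explicit band-move tables you would still owe a concrete certificate for each of the $64$ upper bounds of $2$, which is the real labor here and which your breadth-first-search sketch does not by itself supply.
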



The paper is organized as follows: Section 2 is the background on knot invariants, double branched covers, and useful bounds and obstructions for the non-orientable 4-genus. Section 3 is a survey of the techniques used to solve this problem as well as results.

\textbf{Acknowledgements.} I would like to thank Chuck Livingston, Pat Gilmer, and Slaven Jabuka for helpful conversations and comments. Additional thanks to Dror Bar-Natan for permitting my use of the Knot Atlas figures \cite{knotatlas}.\\

\section{BACKGROUND}

We begin by reviewing knot invariants and examining bounds for the non-orientable 4-genus as well as obstructions to a knot bounding a M\"{o}bius band. First, the crossing number of a knot is denoted $n(K)$ and is the crossing number of a diagram of a knot with the fewest crossings that could be drawn on the plane to represent the knot. The unknotting number of a knot $u(K)$ is the minimum number of crossing changes required to transform $K$ into the unknot. Similarly, $u_{s}(K)$ is the minimum number of crossing changes to change $K$ into a slice knot. The 4-dimensional clasp number, $c_{4}(K)$, is the minimum number of double points of transversely immersed 2-disks in the 4-ball bounded by $K$ \cite{MY}. We then have the following triple inequality from Jabuka and Kelly \cite{JK}:
$$ g_{4}(K) \leq c_{4}(K) \leq u_{s}(K) \leq u(K) $$

The orientable genus of a knot also offers an upper bound for the non-orientable 4-genus, respective with smooth and topological for $i = 4$, we have \cite{JK}:

$$ \gamma_{i}(K) \leq 2g_{i}(K) + 1 \text{ for } i = 3, 4 $$

Similar to the orientable 4-genus, we obtain an upper bound for the non-orientable 4-genus from the non-orientable 3-genus of a knot called the \textit{crosscap number} \cite{knotinfo}, which is the minimum genus non-orientable surface a knot bounds in $S^{3}$, denoted $c(K)$, so we have $\gamma_{4}(K) \leq c(K)$.

Following the notation of Murakami and Yasuhara \cite{MY95}, we define $\Gamma_{4}(K) = \text{min} \{ b_{1} (F) | \partial F = K \} $, or similarly $\Gamma_{4}(K) =$ min$\{ 2g_{4}(K), \gamma_{4}(K) \}$, and thus $\Gamma_{4}(K) \leq \gamma_{4}(K)$. Murakami and Yasuhara then give us the following proposition \cite{MY}:

\begin{prop}[Proposition 2.3 in \cite{MY}]\label{crosscapBounds}
    For any knot K, the following inequalities hold. 
           \[ \Gamma_{4} (K) \leq 
  \begin{cases*}
    c_{4}(K) & $\text{ if } c_{4}(K) \text{ is even } $ \\
    c_{4}(K) + 1 & \text{ otherwise}
  \end{cases*}\]

       \[ \gamma_{4} (K) \leq 
  \begin{cases*}
    c_{4}(K) & $\text{ if } c_{4}(K) \text{ is even and } c_{4}(K) \neq 2 $\\
    c_{4}(K) + 1 & \text{ otherwise}
  \end{cases*}\]
\end{prop}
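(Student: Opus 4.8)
The plan is to take an immersed disk realizing $c_4(K)$ and surger away its double points, tracking the Euler characteristic and orientability of what results. By definition of $c_4(K)$ there is a disk $D$ smoothly and transversely immersed in $B^4$ with $\partial D = K$ and exactly $c := c_4(K)$ double points. From $D$ I will produce an embedded surface $F \subset B^4$ with $\partial F = K$ that is connected and has a single boundary circle, so that $b_1(F) = 1 - \chi(F)$; the two displayed inequalities then follow by computing $\chi(F)$ and, for the second, by arranging $F$ to be non-orientable.

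\emph{The surgeries.} Near a double point $p$, with preimages $q, q'$ in the domain disk, the \emph{tubing move} deletes small open disk neighborhoods of $q$ and of $q'$ and glues in an embedded annulus joining the two resulting meridional circles near $p$; it removes $p$ and lowers $\chi$ by $2$. The \emph{paired move} handles two double points $p_1, p_2$ at once: delete a small open disk neighborhood of just one preimage of $p_1$ and of one preimage of $p_2$ --- which already removes both double points --- producing two small meridional circles $\gamma_1, \gamma_2$, and then join $\gamma_1$ to $\gamma_2$ by the normal circle bundle $\beta \times S^1$ over an arc $\beta$ on $D$ that runs from the foot of $\gamma_1$ to the foot of $\gamma_2$ and avoids every double point (such a $\beta$ exists because deleting finitely many points leaves the disk connected, and $\beta \times S^1$ lies in the boundary of a tubular neighborhood of $D$ along $\beta$, hence is disjoint from $D$). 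The paired move removes two double points but lowers $\chi$ by only $2$; both moves keep $F$ connected with unchanged boundary. This already yields the first inequality: if $c$ is even, apply the paired move to $c/2$ disjoint pairs to get $\chi(F) = 1 - c$, so $b_1(F) = c$; if $c$ is odd, apply it to $(c-1)/2$ pairs and a tubing move to the leftover double point to get $\chi(F) = -c$, so $b_1(F) = c + 1$. In either case $\Gamma_4(K) = \min\{2g_4(K), \gamma_4(K)\} \le b_1(F)$, which is $c$ (resp. $c + 1$).

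For the second inequality one wants $F$ non-orientable. The point to establish in the local model of a paired move is that, with the two double points $p_1, p_2$ of the pair chosen of the \emph{same} sign, the annulus $\beta \times S^1$ attaches to the punctured disk incompatibly with its orientation, so $F$ becomes non-orientable --- at no extra cost to $b_1$ --- while an opposite-sign pair yields an orientable repair. Granting this, if $c$ is even with $c \ge 4$ then $c = p_+ + p_-$ with $\max(p_+,p_-) \ge 2$, so among the $c/2$ pairs at least one may be taken of equal sign, making $F$ non-orientable with $b_1(F) = c$ and giving $\gamma_4(K) \le c$; and if $c$ is odd with $c \ge 3$, the same device on the $(c-1)/2$ pairs gives $\gamma_4(K) \le c + 1$. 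The genuinely special case is $c = 2$: a single pair, whose two double points may be forced of opposite signs, in which case the paired move produces an \emph{orientable} once-punctured torus $\Sigma$ with $b_1(\Sigma) = 2$; one then attaches a single small crosscap to $\Sigma$ (delete a tiny disk and cap with a M\"{o}bius band pushed into $B^4$), making it non-orientable at the cost of raising $b_1$ to $3 = c + 1$. The remaining small case $c = 1$ (no pairs) is dealt with by a separate, short argument. This is exactly why the second inequality acquires the hypothesis $c_4(K) \ne 2$ and the extra $+1$ when $c_4(K)$ is odd. (When $c_4(K) = 0$ the knot is slice and $\gamma_4(K) = 1$, so there the second inequality is to be read with $c_4(K) \ge 1$.)

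The main obstacle, and where I would concentrate the work, is the local analysis of the two moves: verifying that the repair annuli are embedded and disjoint from the rest of $D$ (for the paired move this is the identification of the repair annulus with $\beta \times S^1$ in the boundary of a tubular neighborhood of $D$, where one must check the connecting arc can be routed past every other double-point preimage), and carrying out the orientability bookkeeping --- that a same-sign pair gives a non-orientable repair while an opposite-sign pair is forced orientable, which is precisely what makes $c_4 = 2$ exceptional. Both of these reduce to computations in a $4$-ball around the double points. After that the argument is routine: the identity $b_1 = 1 - \chi$ for a connected surface with one boundary circle, the counting of deleted disks against added annuli, the elementary fact that $c \ge 4$ always leaves room for a same-sign pairing, and, for the leftover small cases, the already-available inequalities $g_4(K) \le c_4(K)$ and $\gamma_4(K) \le 2g_4(K)+1$.
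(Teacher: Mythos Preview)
The paper does not give its own proof of this proposition --- it is quoted from Murakami--Yasuhara --- so there is no in-paper argument to compare against. Your surgery strategy (resolve the double points in pairs by the normal-circle tube over a connecting arc, and read orientability off the signs of the paired points) is the standard one and is essentially how the cited result is established.

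There is, however, a genuine gap at $c_4(K)=1$. You defer that case to ``the already-available inequalities $g_4(K)\le c_4(K)$ and $\gamma_4(K)\le 2g_4(K)+1$,'' but chaining those yields only $\gamma_4(K)\le 2\cdot 1+1=3$, whereas the proposition demands $\gamma_4(K)\le c_4(K)+1=2$. Tubing the single double point produces an \emph{orientable} punctured torus with $b_1=2$, and attaching a crosscap (your device for $c=2$) pushes $b_1$ to $3$; neither route hits the target. The fix is a one-line addition: before surgering, introduce an extra local double point of the \emph{same} sign as the existing one by a small finger move on the immersed disk (this does not alter $\partial D=K$). You now have two same-sign double points, and your paired move yields a non-orientable surface with $b_1=2$, giving $\gamma_4(K)\le 2$ as required. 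The same kink trick in fact handles the odd case uniformly: for any odd $c\ge 1$, add one double point of a sign matching some existing one to reach $c+1$ even with a guaranteed same-sign pair, then perform $(c+1)/2$ paired moves to obtain a non-orientable surface with $b_1=c+1$.
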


\begin{cor}[Corollary 2.4  in \cite{MY}]\label{bigG}
For a knot K, if $g_{4}(K) = c_{4}(K) \geq 1$, then $\Gamma_{4}(K) = \gamma_{4}(K)$.
    
\end{cor}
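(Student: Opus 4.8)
\emph{Proof proposal.} The plan is to reduce the claimed equality to a single inequality and then verify it by a short parity argument using Proposition~\ref{crosscapBounds}. Since $\Gamma_4(K) = \min\{2g_4(K),\gamma_4(K)\}$ and $\Gamma_4(K) \leq \gamma_4(K)$ always, proving $\Gamma_4(K) = \gamma_4(K)$ is equivalent to proving $\gamma_4(K) \leq 2g_4(K)$. Under the hypothesis $g_4(K) = c_4(K)$ we may substitute $c_4(K)$ for $g_4(K)$, so it suffices to show $\gamma_4(K) \leq 2c_4(K)$ whenever $c_4(K) \geq 1$.

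Next I would split into the parity cases supplied by the second inequality of Proposition~\ref{crosscapBounds}. If $c_4(K)$ is odd, then $\gamma_4(K) \leq c_4(K) + 1 \leq 2c_4(K)$, the last step using $c_4(K) \geq 1$. If $c_4(K)$ is even with $c_4(K) \neq 2$, then $\gamma_4(K) \leq c_4(K) \leq 2c_4(K)$. The only remaining possibility is $c_4(K) = 2$, where Proposition~\ref{crosscapBounds} only yields $\gamma_4(K) \leq 3$; but in that case $2g_4(K) = 2c_4(K) = 4 > 3$, so again $\gamma_4(K) \leq 2g_4(K)$.

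In every case $\gamma_4(K) \leq 2g_4(K)$, hence $\Gamma_4(K) = \min\{2g_4(K),\gamma_4(K)\} = \gamma_4(K)$, which is the assertion. The only point that needs care is the exceptional value $c_4(K) = 2$, where the non-orientable bound of Proposition~\ref{crosscapBounds} is weaker than the bound for $\Gamma_4$; the hypothesis $g_4(K) = c_4(K)$ is exactly what rescues the argument there, since it forces $2g_4(K) = 4$. I do not anticipate any genuine obstacle beyond bookkeeping these parity cases.
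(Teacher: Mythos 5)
Your argument is correct, and since the paper states this corollary by citation to Murakami--Yasuhara without reproducing their proof, there is no in-paper argument to compare against line by line; the route through Proposition~\ref{crosscapBounds} is exactly the one the cited reference takes. One remark: the parity casework is more than is needed. Both branches of the second inequality in Proposition~\ref{crosscapBounds} imply the uniform bound $\gamma_{4}(K) \leq c_{4}(K) + 1$, and under the hypothesis $g_{4}(K) = c_{4}(K) \geq 1$ this already gives $\gamma_{4}(K) \leq g_{4}(K) + 1 \leq 2g_{4}(K)$ in one stroke, with no need to single out $c_{4}(K) = 2$ or split on parity. Equivalently, if $\Gamma_{4}(K) < \gamma_{4}(K)$ then $\Gamma_{4}(K) = 2g_{4}(K)$, so $2g_{4}(K) < \gamma_{4}(K) \leq g_{4}(K) + 1$ forces $g_{4}(K) < 1$, contradicting the hypothesis. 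Your version is correct but spends effort distinguishing cases that the crude bound handles uniformly; the exceptional value $c_{4}(K) = 2$ is not actually a point where the hypothesis $g_{4}(K) = c_{4}(K)$ is doing any rescuing beyond what $g_{4}(K) \geq 1$ already provides.
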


The crossing number of a knot offers an upper bound, so we have \cite{MY95}:
 $$ \Gamma(K) \leq \left\lfloor \dfrac{n(K)}{2} \right\rfloor \text{ and } \gamma_{4}(K) \leq \left\lfloor \dfrac{n(K)}{2} \right\rfloor $$

The signature of a knot $\sigma (K)$ is defined to be the signature of the sum of knot's Seifert matrix and it's transpose, $\sigma (V + V^{t})$. The Arf invariant of a knot is denoted Arf$(K)$ and is a concordance invariant in $\mathbb{Z}_{2}$ which is calculated using the Seifert form of a knot \cite{knotinfo}. These two invariants form a lower bound for the non-oriented 4-genus of a knot, so we have the following proposition. 

\begin{prop}[Proposition 2.4 in \cite{N10}]\label{sigArf}
    Given a knot $K$, if $\sigma(K) + 4 \rm{Arf}(K) \equiv 4 \pmod{8} $, then $\gamma_{4}(K) \geq 2$.
\end{prop}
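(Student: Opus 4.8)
The statement is equivalent to its contrapositive: a knot that bounds a Möbius band in $B^4$ cannot satisfy $\sigma(K) + 4\,\mathrm{Arf}(K) \equiv 4 \pmod{8}$. So the plan is to assume $\gamma_4(K) = 1$, fix a smoothly embedded Möbius band $F \subset B^4$ with $\partial F = K$, and pass to the double cover $W = \Sigma_2(B^4, F)$ of $B^4$ branched over $F$. Standard properties of branched double covers supply the package to work with: $\partial W = \Sigma_2(S^3, K)$, a rational homology sphere with $H_1$ of odd order $\det(K)$; $b_2(W) = b_1(F) = 1$, so the intersection form on $H_2(W; \Q) \cong \Q$ is nondegenerate of rank one and $\sigma(W) = \pm 1$; the branch locus lifts to a properly embedded characteristic surface $\widetilde F \subset W$, again a Möbius band, whose self-intersection is determined by the normal Euler number $e(F)$; and the signature of a branched double cover of $B^4$ along a spanning surface obeys $\sigma(W) = \sigma(K) - \tfrac{1}{2}e(F)$.

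The heart of the argument is then a Rokhlin-type congruence modulo $16$ — the Guillou–Marin congruence for a (possibly non-orientable, properly embedded) characteristic surface in a $4$-manifold with boundary — applied to the pair $(W, \widetilde F)$. Schematically it expresses $\sigma(W)$ modulo $16$ in terms of $[\widetilde F]^2$, twice the Brown invariant $\beta$ of the $\Z/4$-valued Guillou–Marin quadratic refinement on $H_1(\widetilde F; \Z/2) \cong \Z/2$, the Rokhlin invariant $\mu(\Sigma_2(S^3, K))$, and (since $\widetilde F$ has nonempty boundary) a correction for the branch knot $\widetilde K \subset \Sigma_2(S^3,K)$. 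Two inputs make this bite: because $\widetilde F$ is non-orientable along its core, the value of the refinement on the core is odd, so $\beta \equiv \pm 1 \pmod{8}$; and the Rokhlin invariant of the double branched cover of a knot is known explicitly in terms of $\sigma(K)$ and $\mathrm{Arf}(K)$.

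Feeding the signature formula $\sigma(W) = \sigma(K) - \tfrac12 e(F)$, the value of $[\widetilde F]^2$, the oddness of $\beta$, and the Rokhlin-invariant formula into the congruence, the dependence on $e(F)$ cancels and one is left with a congruence modulo $16$ relating only $\sigma(K)$, $\mathrm{Arf}(K)$, and the sign ambiguities $\sigma(W) = \pm 1$ and $\beta \equiv \pm 1$. Reducing this congruence should show that $\sigma(K) + 4\,\mathrm{Arf}(K)$ is never $\equiv 4 \pmod{8}$ when $K$ bounds a Möbius band, which is exactly the contrapositive; hence $\sigma(K) + 4\,\mathrm{Arf}(K) \equiv 4 \pmod{8}$ forces $\gamma_4(K) \geq 2$.

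The main obstacle is the bookkeeping in the middle step: selecting and correctly normalizing the version of the Guillou–Marin congruence valid for a $4$-manifold with boundary whose characteristic surface has a knotted boundary $\widetilde K \subset \Sigma_2(S^3,K)$ that need not be null-homologous, identifying the contribution of that boundary knot, and recalling the precise relation between $\mu(\Sigma_2(S^3,K))$ and the pair $(\sigma(K), \mathrm{Arf}(K))$ — all with the right signs. A practical way to pin down the sign conventions is to run the computation against knots that are known to bound Möbius bands, such as the $(2,n)$-torus knots with $n$ odd, for which the conclusion $\sigma(K) + 4\,\mathrm{Arf}(K) \not\equiv 4 \pmod{8}$ must come out. (For the purposes of this paper one could of course also simply cite Proposition 2.4 of \cite{N10}.)
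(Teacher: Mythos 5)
The paper does not give a proof of this proposition; it cites it from \cite{N10}, and the result traces back to Yasuhara. You have correctly identified the right circle of ideas: a Rokhlin/Guillou--Marin type congruence, the double branched cover, and the signature formula $\sigma\bigl(\Sigma_2(B^4,F)\bigr)$ in terms of $\sigma(K)$ and the normal Euler number $e(F)$. But your packaging is not the standard one, and the detour creates a real gap. The standard argument does \emph{not} apply a relative Guillou--Marin congruence inside the branched double cover. Instead one caps the M\"obius band $F\subset B^4$ with a pushed-in Seifert surface $\Sigma$ for $K$ in the other hemisphere, producing a closed, characteristic, non-orientable surface $\widehat F = F\cup_K\Sigma\subset S^4$, and applies the \emph{closed} Guillou--Marin congruence $\sigma(S^4)-[\widehat F]^2\equiv 2\beta(\widehat F)\pmod{16}$. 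The Brown invariant splits over the two pieces, $\beta(\widehat F)=\beta(F)+\beta(\Sigma)$, with $\beta(F)=\pm1$ because the core of the M\"obius band is orientation-reversing, and $\beta(\Sigma)\equiv 4\,\mathrm{Arf}(K)\pmod 8$ because the Rokhlin quadratic form on the pushed-in Seifert surface is exactly the mod-$2$ reduction of the Seifert form. The double branched cover enters only once, to give the signature constraint $\sigma(K)-e(F)/2=\pm1$ from $b_2\bigl(\Sigma_2(B^4,F)\bigr)=1$. Eliminating $e(F)$ yields $\sigma(K)+4\,\mathrm{Arf}(K)\equiv 0,\pm2\pmod 8$.

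The concrete gap in your version is the assertion that ``the Rokhlin invariant of the double branched cover of a knot is known explicitly in terms of $\sigma(K)$ and $\mathrm{Arf}(K)$.'' In fact $\mu\bigl(\Sigma_2(S^3,K)\bigr)\equiv\sigma(K)\pmod{16}$ with no Arf term at all: $\Sigma_2(S^3,K)$ has a unique spin structure (odd $H_1$), and it bounds the spin $4$-manifold $\Sigma_2(B^4,\Sigma)$ whose intersection form $V+V^T$ is automatically even and has signature $\sigma(K)$. So in your scheme the Arf invariant of $K$ would have to materialize entirely from the Matsumoto-type boundary correction associated to the lifted knot $\widetilde K\subset\Sigma_2(S^3,K)$ together with its framing by $\widetilde F$ --- and relating that correction to $\mathrm{Arf}(K)$ is a substantive step you have not supplied, not merely a sign-chasing exercise. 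The cleaner route above sidesteps both the Rokhlin invariant of $\Sigma_2(S^3,K)$ and any boundary correction, and makes the appearance of $4\,\mathrm{Arf}(K)$ transparent; I would rewrite the argument along those lines (or simply cite \cite{N10}, as the paper does).
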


\subsection{Double Branched Cover}

Recall the definition of the non-orientable 4-genus is $\gamma_{4}(K) = \text{min} \{ b_{1} (F) | \partial F = K \} $ and note that $b_{1}(F) = \text{dim} H_{1} (F, \mathbb{Q}) $. Let $K$ in $S^{3}$ bound a connected surface $F$ in $B^{4}$ and denote $D_{F}(B^{4})$ as the double branched cover of $B^{4}$ branched over $F$. Gilmer and Livingston proved in \cite{GL}, Lemma 1, that $b_{2}(D_{F}(B^{4})) = b_{1}(F)$. The reasoning here is that the double branched cover of $S^{3}$ branched over $K$, denoted $\DK$, is a rational homology sphere and $H_{1}(\DF; \Q) = 0 $. We thus may use the linking form of $\DK$ to provide information on the intersection form of $\DF$. 

We also have that the first homology of $D_{K}(S^{3})$ is finite, so we have a linking form $\lambda$, and this is explored in detail by Murakami and Yasuhara in \cite{MY} 
$$ \lambda : H_{1}(D_{K}(S^{3}); \mathbb{Z}) \times H_{1}(D_{K}(S^{3}); \mathbb{Z}) \to \mathbb{Q}/ \mathbb{Z}  $$ 

Given a Goeritz matrix $G$ for $K$ (see Section III for details), we have that $G$ is a relation matrix for $H_{1}(D_{K}(S^{3}); \mathbb{Z})$ and the linking form $\lambda$ is given by $\pm G^{-1}$, where the sign depends on orientation of $\DK$ \cite{MY}. The double branched cover is a useful tool in obstructing knots bounding a M\"obius band or a Klein bottle.

\begin{cor}[Corollary 3 in \cite{GL}]\label{linkingNumThm}

Suppose that $H_{1}(D_{K}(S^{3})) = \mathbb{Z}_{n} $ where $n$ is the product of primes, all with odd exponent. Then if $K$ bounds a M\"{o}bius band in $B^{4}$, there is a generator $a \in H_{1} ( D_{K}(S^{3})) $ such that $ \lambda (a, a) = \pm 1/n $
    
\end{cor}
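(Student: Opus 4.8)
The plan is to pass to the double branched cover $W := \DF$ of $B^{4}$ along the hypothesized M\"obius band $F$, and then to recover the linking form $\lambda$ of $\DK = \partial W$ from the intersection form of $W$. By Lemma 1 of \cite{GL} we have $b_{2}(W) = b_{1}(F) = 1$, while $H_{1}(W;\Q) = 0$ and $\partial W = \DK$ is a rational homology sphere with $H_{1}(\DK;\Z)\cong\Z_{n}$; thus $H_{1}(W;\Z)$ is purely torsion. Write $T := H_{1}(W;\Z)$ and let $Q_{W}=(e)$ be the intersection form on $H_{2}(W;\Z)/\mathrm{tors}\cong\Z$. A first observation is that $e\neq 0$: over $\Q$, the long exact sequence of $(W,\DK)$ together with Poincar\'e--Lefschetz duality shows that the map $H_{2}(W;\Q)\to H_{2}(W,\DK;\Q)\cong H^{2}(W;\Q)$, which is $Q_{W}\otimes\Q$, must be an isomorphism, since $H_{1}(\DK;\Q)=0$ and both sides have dimension $1$.

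Next I would analyze the integral long exact sequence of the pair,
$$H_{2}(W;\Z)\xrightarrow{\ \iota\ }H_{2}(W,\DK;\Z)\longrightarrow H_{1}(\DK;\Z)\longrightarrow T\longrightarrow 0,$$
identifying $H_{2}(W,\DK;\Z)\cong H^{2}(W;\Z)$ by Lefschetz duality and noting that $\iota$ is induced by $Q_{W}$. If $T=0$ this already finishes the proof: the sequence collapses to $\Z\xrightarrow{\times(\pm e)}\Z\to H_{1}(\DK;\Z)\to 0$, so $n=|e|$ and $H_{1}(\DK;\Z)\cong\Z/e$ carries the linking form of the boundary of a $4$-manifold with intersection form $(e)$ and vanishing $H_{1}$, namely $\lambda(a,a)=\pm 1/e=\pm 1/n$ on a generator $a$. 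In general the finite group $T$ enters both through $\operatorname{coker}\iota$ and through the $\mathrm{Ext}$-contribution of $T$ to $H^{2}(W;\Z)$, and the standard relationship between the linking form of a rational homology sphere and the intersection form of a $4$-manifold it bounds with vanishing rational $H_{1}$ yields a splitting
$$\lambda\ \cong\ \langle\pm 1/e\rangle\ \oplus\ \mu,$$
where $\mu$ is a metabolic (``hyperbolic-type'') linking form on a finite group of square order, built out of $T\oplus T$; in particular $n/|e|$ is a perfect square.

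Finally I would invoke the arithmetic hypothesis on $n$. Since $H_{1}(\DK;\Z)\cong\Z_{n}$ is cyclic, the displayed splitting forces the group underlying $\mu$ to be cyclic and coprime to $|e|$, so $n=|e|\cdot s^{2}$ with $\gcd(|e|,s)=1$ for some integer $s$. For each prime $p\mid n$ we then have $v_{p}(n)=v_{p}(|e|)+2\,v_{p}(s)$; if $p\mid s$ then $p\nmid|e|$ and $v_{p}(n)=2\,v_{p}(s)$ is even, contradicting the assumption that every prime in $n$ occurs to an odd power. Hence no prime divides $s$, i.e. $s=1$, so $|T|=1$ and $H_{1}(W;\Z)=0$. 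We are thus reduced to the case $T=0$ already handled, and the generator $a\in H_{1}(\DK;\Z)$ satisfies $\lambda(a,a)=\pm 1/n$.

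The technical heart is the integral bookkeeping in the middle step: the rational statement that $Q_{W}$ controls the (trivial) rational linking form is immediate, but extracting the precise shape $\langle\pm 1/e\rangle\oplus\mu$ with $\mu$ metabolic of square order requires carefully tracking torsion through the long exact sequence of $(W,\DK)$, Poincar\'e--Lefschetz duality, and universal coefficients. Once that is in hand, the reduction via cyclicity of $H_{1}(\DK)$ and the parity count on the exponents of $n$ are elementary.
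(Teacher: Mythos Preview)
The paper does not give its own proof of this statement; it is quoted from \cite{GL}. The argument in \cite{GL}, and the one the paper itself uses for the closely related Lemma~\ref{myLemma1}, goes through the splitting theorem (Theorem~\ref{lformDirectSum} here, Theorem~2 in \cite{GL}): if $K$ bounds $F$ with $b_1(F)=1$, then $(H_1(\DK),\lambda)\cong(G_1,\lambda_1)\oplus(G_2,\lambda_2)$ with $(G_2,\lambda_2)$ metabolic and $G_1$ presented by a rank-one matrix, hence $\lambda_1=\langle\pm 1/|G_1|\rangle$. Since $H_1(\DK)\cong\Z_n$ is cyclic, $|G_1|$ and $|G_2|$ are coprime; since $|G_2|$ is a perfect square and every prime in $n$ occurs to odd exponent, $G_2=0$ and the conclusion follows.

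Your argument follows exactly this template, so it is the same proof in substance; the difference is that you attempt to \emph{derive} the splitting $\lambda\cong\langle\pm1/e\rangle\oplus\mu$ with $\mu$ metabolic from the long exact sequence of $(W,\partial W)$, duality, and universal coefficients, rather than quoting it. Your final arithmetic step (cyclicity forces $\gcd(|e|,s)=1$, parity of exponents forces $s=1$) is precisely the argument above. The one place to be careful is your middle paragraph: the assertion that the complementary piece $\mu$ is metabolic of order $|T|^2$, ``built out of $T\oplus T$,'' is not a general fact about $4$-manifolds with torsion $H_1$ bounding rational homology spheres; it is exactly the content of Theorem~2 in \cite{GL}, whose proof uses the $\Z/2$-equivariant structure of the branched double cover (transfer maps and the involution) rather than only the untwisted long exact sequence. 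So your sketch is correct in outline, but the step you flag as the ``technical heart'' is genuinely the nontrivial input, and you should either cite Theorem~\ref{lformDirectSum} for it or supply the equivariant argument rather than calling it standard.
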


\begin{thm}[Theorem 4 in \cite{GL}]

Suppose that $H_{1}(D_{K}(S^{3})) = \mathbb{Z}_{p} \oplus \mathbb{Z}_{p} $ where $p$ is prime. Then if $K$ bounds a punctured Klein bottle in $B^{4}$, the discriminant of the linking form is $\pm 1 \in \mathbb{F}^{*}_{p} / (\mathbb{F}^{*}_{p})^{2} $
    
\end{thm}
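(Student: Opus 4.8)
The plan is to pass to the double branched cover and read the linking form of $\DK$ off an integral intersection form. Suppose $K$ bounds a punctured Klein bottle $F$ smoothly embedded in $B^{4}$, so $b_{1}(F) = 2$, and set $W = \DF$. By Lemma~1 of \cite{GL} we have $b_{2}(W) = b_{1}(F) = 2$ and $H_{1}(W;\mathbb{Q}) = 0$, while $\partial W = \DK$ has $H_{1}(\DK) \cong \mathbb{Z}_{p}\oplus\mathbb{Z}_{p}$ by hypothesis. Write $Q$ for the intersection form of $W$ on $H_{2}(W;\mathbb{Z})/(\text{torsion})\cong\mathbb{Z}^{2}$, so that $Q$ is a symmetric $2\times 2$ integral matrix.

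First I would set up the standard dictionary relating $Q$ to the linking form $\lambda$ of the boundary. Since a punctured Klein bottle pushed into $B^{4}$ is a disk with two bands, in the model case $W$ is obtained from a $0$-handle by attaching two $2$-handles, so $H_{2}(W;\mathbb{Z})\cong\mathbb{Z}^{2}$ is free and $H_{1}(W;\mathbb{Z})=0$. Then the portion $H_{2}(W)\xrightarrow{\,Q\,}H_{2}(W,\partial W)\xrightarrow{\,\partial\,}H_{1}(\partial W)\to H_{1}(W)=0$ of the long exact sequence of the pair $(W,\partial W)$, combined with Poincar\'e--Lefschetz duality $H_{2}(W,\partial W)\cong H^{2}(W)\cong\operatorname{Hom}(H_{2}(W),\mathbb{Z})$, identifies $H_{1}(\DK)$ with $\operatorname{coker}(Q\colon\mathbb{Z}^{2}\to\mathbb{Z}^{2})$ and shows that $\lambda$ is represented, up to the overall sign coming from the orientation of $\DK$, by $Q^{-1}$ modulo $\mathbb{Z}$, that is $\lambda([x],[y]) = \pm\,x^{t}Q^{-1}y\in\mathbb{Q}/\mathbb{Z}$. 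In particular $|\det Q| = |H_{1}(\DK)| = p^{2}$.

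Next comes the arithmetic heart of the argument. Because $\operatorname{coker}(Q)\cong\mathbb{Z}_{p}\oplus\mathbb{Z}_{p}$, the Smith normal form of $Q$ is $pI$, so $Q = pQ'$ for some integral matrix $Q'$; symmetry of $Q$ forces $Q'$ symmetric, and $|\det Q| = p^{2}$ forces $\det Q' = \pm 1$, i.e.\ $Q'\in GL_{2}(\mathbb{Z})$. Hence $(Q')^{-1}$ is integral and $\lambda$ is represented by $\pm\tfrac1p(Q')^{-1}$; reducing $(Q')^{-1}$ modulo $p$ yields a nondegenerate symmetric $\mathbb{F}_{p}$-bilinear form of determinant $(\det Q')^{-1}=\pm 1\in\mathbb{F}_{p}^{\times}$. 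When $p=2$ the group $\mathbb{F}_{2}^{\times}/(\mathbb{F}_{2}^{\times})^{2}$ is trivial and there is nothing to prove; for odd $p$ one diagonalizes this form over $\mathbb{F}_{p}$, and since the discriminant of the linking form is by definition the product of the diagonal entries in $\mathbb{F}_{p}^{\times}/(\mathbb{F}_{p}^{\times})^{2}$ — the orientation ambiguity contributing only $(-1)^{2}=1$ — this discriminant equals $(\det Q')^{-1} = \pm 1$, as claimed.

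The step I expect to be the main obstacle is the homological bookkeeping in the second paragraph for a \emph{general} embedded punctured Klein bottle $F$: one must know that $H_{2}(W;\mathbb{Z})$ is torsion-free (equivalently that torsion in $H_{1}(W;\mathbb{Z})$ does not interfere), so that $H_{1}(\DK)$ really is $\operatorname{coker}(Q)$ and $\lambda = \pm Q^{-1}$. For the disk-with-bands model this is immediate from the handle structure; in general I would instead invoke the Gordon--Litherland-type description of the boundary linking form as (minus) the discriminant form of the intersection form of a bounding $4$-manifold, and then check that, after localizing at $p$, the hypothesis $H_{1}(\DK)\cong\mathbb{Z}_{p}\oplus\mathbb{Z}_{p}$ still forces the relevant block of $Q$ to be $p$ times a $\mathbb{Z}_{(p)}$-unimodular symmetric matrix. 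Everything downstream of that is the elementary Smith-normal-form and diagonalization computation above.
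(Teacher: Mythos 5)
The paper does not prove this statement; it is quoted verbatim as Theorem~4 of Gilmer--Livingston \cite{GL}, so there is no in-paper proof to compare against. Judged on its own, your argument has a real gap, and you have in fact put your finger on it yourself: the identification $H_{1}(\DK)\cong\operatorname{coker}(Q)$ and $\lambda=\pm Q^{-1}$ requires $H_{1}(W;\mathbb{Z})=0$, which holds for a disk-with-bands (ribbon) model but is \emph{not} automatic for an arbitrary smoothly embedded punctured Klein bottle. Your proposed repair --- localizing at $p$ and appealing to a ``Gordon--Litherland-type description'' --- does not actually close this gap, because localizing at $p$ does nothing to kill $p$-torsion in $H_{1}(W;\mathbb{Z})$, and it is precisely such torsion that makes the boundary linking form differ from the discriminant form of $Q$. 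The correct bookkeeping is exactly what is packaged in Theorem~2 of \cite{GL}, quoted in this paper as Theorem~\ref{lformDirectSum}: the linking form splits as $(G_{1},\lambda_{1})\oplus(G_{2},\lambda_{2})$ with $G_{2}$ metabolic and $G_{1}$ admitting a rank-$b_{1}(F)=2$ presentation, the metabolic piece absorbing the contribution of torsion in $H_{*}(W;\mathbb{Z})$.

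Once that splitting is in hand, the case analysis for $H_{1}(\DK)\cong\mathbb{Z}_{p}\oplus\mathbb{Z}_{p}$ is short and your Smith-normal-form computation is exactly the right tool for one of the cases. A metabolic form requires a metabolizer $H$ with $|H|^{2}=|G_{2}|$, so $|G_{2}|\in\{1,p^{2}\}$; the case $|G_{2}|=p$ is impossible. If $G_{2}=0$, your argument applies to $G_{1}=\mathbb{Z}_{p}\oplus\mathbb{Z}_{p}$: the rank-$2$ presentation matrix $A$ has Smith form $pI$, so $A=pA'$ with $A'\in GL_{2}(\mathbb{Z})$ and the discriminant is $\det(A')^{-1}=\pm1$. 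If instead $G_{2}=\mathbb{Z}_{p}\oplus\mathbb{Z}_{p}$ is the whole group, then in a basis with the first vector spanning the metabolizer the Gram matrix of $p\lambda$ over $\mathbb{F}_{p}$ is $\bigl(\begin{smallmatrix}0 & a\\ a & c\end{smallmatrix}\bigr)$ with $a\neq0$, giving discriminant $-a^{2}\equiv-1$ in $\mathbb{F}_{p}^{*}/(\mathbb{F}_{p}^{*})^{2}$. Your write-up handles only the first case; adding the second, together with replacing the hoped-for ``$H_{1}(W)=0$'' by the $G_{1}\oplus G_{2}$ splitting, completes the proof.
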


\begin{thm}[Theorem 11 in \cite{GL}] 
    Suppose that $H_{1}(D_{K}(S^{3})) = \mathbb{Z}_{p} \oplus \mathbb{Z}_{p} \oplus \mathbb{Z}_{q} $ where $q \equiv 1 \in \textbf{F}^{*}_{p} / (\textbf{F}^{*}_{p})^{2}$. If $H_{1}(D_{K}(S^{3}))$ is the boundary of a 4-manifold $W$ with second Betti number 2 which has an indefinite intersection form, then the linking form restricted to $\mathbb{Z}_{p} \oplus \mathbb{Z}_{p} \subset H_{1}(D_{K}(S^{3}))$ is metabolic. 
\end{thm}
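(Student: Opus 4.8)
The plan is to turn the topological hypothesis into a statement about a single $2\times2$ integral matrix and then to read metabolicity of the $p$-primary part of the linking form off the sign of a determinant. Recall the principle behind the Goeritz-matrix description of $\lambda$ from Section III: if a rational homology sphere $Y=\DK$ bounds a compact oriented $4$-manifold $W$ with $H_{1}(W;\Q)=0$, then the intersection form $Q_{W}$ on $H_{2}(W)$ is nonsingular over $\Q$, one has $H_{1}(Y)\cong\operatorname{coker}(Q_{W})$, and $\lambda$ is carried by $\pm Q_{W}^{-1}$ reduced modulo $\Z$. I would first reduce to (or, in the general case, extract from) the situation where $H_{2}(W)$ is free of rank $2$ with nonsingular intersection form, represented by a symmetric integer matrix $A$ with $\det A=\pm p^{2}q$; since $Q_{W}$ is indefinite of rank $2$ its signature is $0$, so $\det A<0$, i.e.\ $\det A=-p^{2}q$ after normalizing $q>0$. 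This use of indefiniteness — pinning down the sign of the determinant — is the one and only place the indefinite hypothesis enters.

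Next I would localize at $p$. Because the $p$-primary part of $H_{1}(Y)$ is $\Z_{p}\oplus\Z_{p}$, which requires two generators, the reduction $A\bmod p$ has rank $0$; hence $A=pB$ for a symmetric integer matrix $B$ with $\det B=\det A/p^{2}=-q$, which is a unit modulo $p$ (so $p\nmid q$, as the hypothesis on $q$ already forces). Over $\Z_{(p)}$ the matrix $B$ is unimodular, the $p$-primary part of $H_{1}(Y)$ is $\operatorname{coker}(pB)\cong\mathbb{F}_{p}^{2}$, and a short computation with $(pB)^{-1}=\tfrac{1}{p}B^{-1}$ shows that the restriction of $\lambda$ to this $\Z_{p}\oplus\Z_{p}$ — which splits off orthogonally inside $H_{1}(Y)$ — is carried by $\tfrac1p$ times the nonsingular symmetric $\mathbb{F}_{p}$-bilinear form $(\bar B)^{-1}$, whose determinant is $-q$ up to squares in $\mathbb{F}_{p}^{*}$ (the ambiguity $\pm Q_{W}^{-1}$ is invisible to the determinant of a $2\times2$ matrix).

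Finally I would invoke the classification of nonsingular symmetric bilinear forms on $\mathbb{F}_{p}^{2}$: such a form admits an isotropic line — equivalently it is hyperbolic, equivalently metabolic — precisely when $-\det$ is a square in $\mathbb{F}_{p}^{*}$. Here $-\det$ is $q$ up to squares, and $q$ is a square modulo $p$ exactly when $q\equiv 1\in\mathbf{F}_{p}^{*}/(\mathbf{F}_{p}^{*})^{2}$; this is the hypothesis. Hence the restriction of $\lambda$ to $\Z_{p}\oplus\Z_{p}$ is metabolic, as claimed. Conceptually the same computation says: the $p$-part of the Witt class of $\lambda$ is the image of the class of $Q_{W}$ under the residue homomorphism from the Witt ring of $\Q_{p}$ to that of $\mathbb{F}_{p}$, and indefiniteness is exactly what kills the $\mathbb{R}$-signature contribution that would otherwise obstruct vanishing when $p\equiv3\pmod4$.

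I expect the principal obstacle to be the opening reduction: showing that one may work with a free rank-$2$ intersection lattice — equivalently, that torsion in $H_{1}(W)$ does not disturb the $p$-primary part of $\lambda$ — without enlarging $b_{2}(W)$ or spoiling indefiniteness. This requires care with the homology exact sequence of $(W,Y)$, Poincar\'e--Lefschetz duality, and the universal-coefficient $\operatorname{Ext}$ terms, since surgering out $H_{1}(W)$ naively would raise $b_{2}$. A minor separate point is $p=2$, which is in fact immediate: over $\mathbb{F}_{2}$ every nonsingular symmetric bilinear form on a space of dimension $\ge2$ has a nonzero isotropic vector (the map $x\mapsto B(x,x)$ is linear), so the restricted linking form is automatically metabolic.
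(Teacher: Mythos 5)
Note first that the paper quotes this result (Theorem~11 of Gilmer--Livingston \cite{GL}) without supplying a proof, so there is no in-paper argument against which to compare your attempt. On its own merits, your sketch gets the algebraic core right, and in particular you correctly isolate the single role of indefiniteness: for a rank-two form it forces $\det Q_W<0$, hence $\det Q_W=-p^{2}q$; the $p$-primary hypothesis then forces $Q_W=pB$ with $\bar B$ nonsingular over $\mathbb{F}_p$; the linking form on the orthogonally split-off $p$-part is presented by $\tfrac1p\bar B^{-1}$, whose discriminant is $q$ up to squares, with an overall sign ambiguity that a $2\times 2$ determinant cannot detect; and the classification of nonsingular rank-two symmetric forms over $\mathbb{F}_p$ gives metabolicity exactly when that discriminant is trivial, i.e.\ when $q$ is a square. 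Your $p=2$ aside is also correct.

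The obstacle you flag at the end is, however, a genuine gap, and trying to normalize $H_1(W)$ is not the way to close it. When $H_1(W)$ has torsion, Lefschetz duality and universal coefficients place an $\operatorname{Ext}^1(H_1(W),\Z)$ summand inside $H_2(W,\partial W)$ that is not hit by $j_*\colon H_2(W)\to H_2(W,\partial W)$; hence $\operatorname{coker}(Q_W)$ no longer surjects onto $H_1(\partial W)$ and the identification of the $p$-part of $H_1(\partial W)$ with $\operatorname{coker}(pB)$ fails, while surgery to kill $H_1(W)$ raises $b_2$, as you observe. What Gilmer and Livingston actually use to sidestep this is their splitting theorem (Theorem~2 of \cite{GL}, quoted in this paper as Theorem~\ref{lformDirectSum}): $\lambda$ decomposes as $\lambda_1\oplus\lambda_2$ with $\lambda_2$ metabolic and $\lambda_1$ presented by a symmetric matrix of rank $b_2(W)=2$. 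Since a metabolic form has square order and $q$ is square-free, $|G_2|\in\{1,p^{2}\}$; if $|G_2|=p^{2}$ then $G_2$ is the whole $p$-part and metabolicity of $\lambda_2$ finishes the proof, while if $|G_2|=1$ then $\lambda=\lambda_1$ is presented by a symmetric $2\times 2$ matrix $A$ and your determinant argument runs as written, once one also checks that $A$ inherits the sign of $\det Q_W$ (it does, as $A$ arises from the intersection pairing on a finite-index sublattice of $H_2(W)/\text{torsion}$). Invoking the splitting theorem, rather than altering $W$, is the intended way to discharge the reduction you rightly singled out as the principal obstacle.
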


\section{RESULTS AND TECHNIQUES}

There are a total of 185 knots that are non-alternating and have 11 crossings, according to the knot info database \cite{knotinfo}. Of those knots, there are 16 that are smoothly slice and thus have $\gamma_{4}(K) = 1$.

\begin{rem}
    There are 16 non-alternating 11 crossing knots that are slice and thus bound a M\"obius band:
    $$ 11n_{4}, \hspace{2mm} 11n_{21}, \hspace{2mm}  11n_{37}, \hspace{2mm}  11n_{39}, \hspace{2mm}  11n_{42}, \hspace{2mm}  11n_{49}, \hspace{2mm}  11n_{50}, \hspace{2mm}  11n_{67}, $$ 
    $$  11n_{73}, \hspace{2mm}  11n_{74}, \hspace{2mm} 11n_{83},   \hspace{2mm} 11n_{97}, \hspace{2mm} 11n_{116}, \hspace{2mm} 11n_{132}, \hspace{2mm} 11n_{139}, \hspace{2mm} 11n_{172} $$
\end{rem}

\begin{prop}\label{propforThm1}
    The following knots have $\gamma_{4}(K) = 1$: 
    $$ 11n_{1}, \hspace{2mm} 11n_{3}, \hspace{2mm} 11n_{5}, \hspace{2mm} 11n_{6}, \hspace{2mm} 11n_{7}, \hspace{2mm} 11n_{8}, \hspace{2mm} 11n_{9}, \hspace{2mm} 11n_{11}, \hspace{2mm} 11n_{13}, \hspace{2mm} 11n_{14}, \hspace{2mm} 11n_{15},  $$
    $$   \hspace{2mm} 11n_{16}, \hspace{2mm} 11n_{18}, \hspace{2mm} 11n_{19}, \hspace{2mm} 11n_{20}, \hspace{2mm} 11n_{23}, \hspace{2mm} 11n_{24}, \hspace{2mm} 11n_{25}, \hspace{2mm} 11n_{26}, \hspace{2mm} 11n_{27}, \hspace{2mm} 11n_{31}, \hspace{2mm} 11n_{34}, $$
    $$  \hspace{2mm} 11n_{36}, \hspace{2mm} 11n_{41}, \hspace{2mm} 11n_{44}, \hspace{2mm} 11n_{45}, \hspace{2mm} 11n_{46}, \hspace{2mm} 11n_{47}, \hspace{2mm} 11n_{52}, \hspace{2mm} 11n_{54}, \hspace{2mm} 11n_{57}, \hspace{2mm} 11n_{58}, \hspace{2mm} 11n_{59}, $$
    $$  \hspace{2mm} 11n_{60}, \hspace{2mm} 11n_{62}, \hspace{2mm} 11n_{64}, \hspace{2mm} 11n_{65}, \hspace{2mm} 11n_{66}, \hspace{2mm} 11n_{68}, \hspace{2mm} 11n_{69}, \hspace{2mm} 11n_{70}, \hspace{2mm} 11n_{71}, \hspace{2mm} 11n_{75}, \hspace{2mm} 11n_{76}, $$ 
    $$  \hspace{2mm} 11n_{77}, \hspace{2mm} 11n_{78}, \hspace{2mm} 11n_{79}, \hspace{2mm} 11n_{80}, \hspace{2mm} 11n_{81}, \hspace{2mm} 11n_{82}, \hspace{2mm} 11n_{86}, \hspace{2mm} 11n_{87}, \hspace{2mm} 11n_{88}, \hspace{2mm}  11n_{89}, \hspace{2mm} 11n_{91}, $$ 
    $$ \hspace{2mm} 11n_{93}, \hspace{2mm} 11n_{94}, \hspace{2mm} 11n_{96}, \hspace{2mm} 11n_{102}, \hspace{2mm} 11n_{104}, \hspace{2mm}  11n_{105}, \hspace{2mm} 11n_{106}, \hspace{2mm} 11n_{107}, \hspace{2mm} 11n_{110}, \hspace{2mm} 11n_{111},  $$
    $$  \hspace{2mm} 11n_{113}, \hspace{2mm} 11n_{117}, \hspace{2mm} 11n_{118}, \hspace{2mm} 11n_{120}, \hspace{2mm} 11n_{121}, \hspace{2mm} 11n_{122}, \hspace{2mm} 11n_{123}, \hspace{2mm} 11n_{124}, \hspace{2mm} 11n_{126}, \hspace{2mm} 11n_{127},  $$
    $$  \hspace{2mm} 11n_{128}, \hspace{2mm} 11n_{129}, \hspace{2mm} 11n_{134}, \hspace{2mm} 11n_{135}, \hspace{2mm} 11n_{136}, \hspace{2mm} 11n_{142}, \hspace{2mm} 11n_{143}, \hspace{2mm} 11n_{145}, \hspace{2mm} 11n_{146}, \hspace{2mm} 11n_{147},  $$ 
    $$ \hspace{2mm} 11n_{148},  \hspace{2mm} 11n_{150}, \hspace{2mm} 11n_{151}, \hspace{2mm} 11n_{152}, \hspace{2mm} 11n_{153}, \hspace{2mm} 11n_{154}, \hspace{2mm} 11n_{157}, \hspace{2mm} 11n_{158}, \hspace{2mm} 11n_{160}, \hspace{2mm} 11n_{162}, \hspace{2mm}  $$ 
    $$ 11n_{163}, \hspace{2mm} 11n_{164},   \hspace{2mm} 11n_{167}, \hspace{2mm} 11n_{168}, \hspace{2mm} 11n_{169}, \hspace{2mm} 11n_{170}, \hspace{2mm} 11n_{173}, \hspace{2mm} 11n_{180}, \hspace{2mm} 11n_{181}, \hspace{2mm} 11n_{183} $$
    
\end{prop}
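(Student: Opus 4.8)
The plan is to establish the upper bound $\gamma_4(K) \le 1$ for each of the listed knots by exhibiting, for each one, a M\"obius band that it bounds in $B^4$. Since every knot satisfies $\gamma_4(K) \ge 1$ by definition, producing such a band is sufficient. The cleanest route is to use the upper bound $\gamma_4(K) \le 2g_4(K) + 1$: if a knot has $g_4(K) = 0$ it is slice, already handled by the preceding remark, so the relevant case here is knots with $g_4(K)$ possibly $1$ but which nonetheless bound a M\"obius band for a more direct reason. Concretely, I would look for a single crossing change (or a single band move) turning $K$ into a slice knot — equivalently, I would check the condition $u_s(K) \le 1$. If $K$ can be unknotted, or made slice, by one crossing change, then one can build a M\"obius band: take the slice disk for the resulting slice knot, and do the crossing change by attaching a half-twisted band, which raises $b_1$ by exactly $1$ and changes orientability, yielding a surface with $b_1 = 1$, i.e.\ a M\"obius band. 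This is the standard move (a crossing change is realized by a band sum with a small unknotted, possibly twisted, circle), and it is exactly the mechanism behind $\gamma_4 \le 2g_4 + 1$ combined with sliceness.

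The key steps, in order, are: (1) for each knot in the list, consult \cite{knotinfo} for the data $u(K)$, $u_s(K)$, and the 4-genus $g_4(K)$; (2) identify those for which $u_s(K) = 1$ — these bound a M\"obius band by the crossing-change-to-a-band argument above, giving $\gamma_4(K) = 1$; (3) for any remaining knots in the list not covered by step (2), fall back on the signature/Arf data or a direct band-move computation — for instance, if $K$ has a positive-genus orientable slice surface of genus $1$ one can still sometimes find a more efficient non-orientable surface by combining two bands into one after a surgery. In practice I expect the overwhelming majority to be dispatched by step (2): a crossing change making the knot slice is the generic source of M\"obius bands, and KnotInfo tabulates $u_s$. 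A small number may instead require noting that $K$ is obtained from a slice knot by a single non-orientable band move directly visible in a diagram; these I would handle case by case, citing the band presentations.

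I would organize the writeup as a short lemma — "if $u_s(K) \le 1$ then $\gamma_4(K) = 1$" — whose proof is the band-attachment picture, followed by a table or list verifying $u_s(K) \le 1$ for the knots in question using \cite{knotinfo}. Where KnotInfo gives only $u(K) = 1$ (which already implies $u_s(K) \le 1$) that is enough; where it gives $u(K) \ge 2$ but the knot is still in the list, I would need the slicing-number bound $u_s(K) = 1$ or an explicit band. The presentation should make clear that this proposition, together with the 16 slice knots of the Remark, accounts for $16 + 105 = 121$ of the knots — matching part (a) of Theorem 1.1.

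The main obstacle I anticipate is not any single deep argument but rather the bookkeeping: confirming for each of roughly a hundred knots that the relevant unknotting/slicing data genuinely forces a M\"obius band, and handling the exceptional knots where $u_s(K) = 1$ is not directly tabulated and one must produce an explicit band move (or a single crossing change to a slice knot not recorded as such in the standard tables). Verifying those explicit band moves — drawing the diagram, performing the saddle move, and checking the result is slice — is the genuinely delicate part, since an error in the band or in identifying the resulting knot would invalidate the claim for that entry. The rest is a routine, if lengthy, appeal to \cite{knotinfo}.
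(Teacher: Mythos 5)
The key lemma you propose, ``if $u_s(K) \le 1$ then $\gamma_4(K) = 1$,'' is false, and the error is in the local picture. A crossing change is \emph{not} realized by a single non-coherent band move; it is realized in $B^4$ by an immersed annulus with one transverse double point, and resolving that double point into an embedded surface raises $b_1$ by $2$, not $1$ (a pinch and a re-fuse, i.e.\ two band moves). This is exactly why Proposition~\ref{crosscapBounds} gives only $\gamma_4(K) \le c_4(K) + 1 = 2$ when $c_4(K) = 1$. The figure-eight knot is the canonical counterexample: $u(4_1) = g_4(4_1) = 1$, so $u_s(4_1) = c_4(4_1) = 1$, yet $\sigma(4_1) + 4\,\mathrm{Arf}(4_1) \equiv 4 \pmod 8$ forces $\gamma_4(4_1) \ge 2$ by Proposition~\ref{sigArf}, and in fact $\gamma_4(4_1) = 2$. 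The paper itself contains eleven-crossing counterexamples, e.g.\ $11n_{12}$, $11n_{28}$, $11n_{48}$: each has $u_s(K) = 1$, and each is shown to have $\gamma_4(K) = 2$ via Lemma~\ref{1pfofT2}.

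Because your primary criterion fails, what you relegate to a fallback in step (3) is in fact the entire content of the paper's proof of Proposition~\ref{propforThm1}: for each knot in the list, one must exhibit an explicit non-oriented band move taking it to a smoothly slice knot, and then invoke Proposition~\ref{BMbound}. These bands are produced diagrammatically in Figures~\ref{firstSlice}--\ref{lastSlice}. There is no shortcut through $u_s$ or $c_4$ alone; a non-coherent band move and a crossing change are genuinely different operations, so the band must be found and the resulting knot identified as slice, knot by knot.
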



\begin{prop}\label{propforThm2}
      The following knots have $\gamma_{4}(K) = 2$: 
    $$ 11n_{2}, \hspace{2mm} 11n_{10}, \hspace{2mm} 11n_{12}, \hspace{2mm}  11n_{22}, \hspace{2mm} 11n_{28}, \hspace{2mm} 11n_{29}, \hspace{2mm} 11n_{30},  \hspace{2mm} 11n_{32}, \hspace{2mm} 11n_{33}, \hspace{2mm} 11n_{35}, $$
    $$   \hspace{2mm} 11n_{38}, \hspace{2mm} 11n_{43}, \hspace{2mm} 11n_{48}, \hspace{2mm} 11n_{51}, \hspace{2mm} 11n_{53}, \hspace{2mm}  11n_{55}, \hspace{2mm} 11n_{56}, \hspace{2mm} 11n_{61}, \hspace{2mm} 11n_{63}, \hspace{2mm} 11n_{72},  $$
    $$ \hspace{2mm} 11n_{84}, \hspace{2mm} 11n_{85}, \hspace{2mm} 11n_{90}, \hspace{2mm} 11n_{92}, \hspace{2mm} 11n_{95}, \hspace{2mm} 11n_{98}, \hspace{2mm} 11n_{99}, \hspace{2mm} 11n_{100}, \hspace{2mm} 11n_{101}, \hspace{2mm} 11n_{103}, $$
    $$  \hspace{2mm} 11n_{108},  \hspace{2mm} 11n_{109}, \hspace{2mm} 11n_{112}, \hspace{2mm} 11n_{114}, \hspace{2mm} 11n_{115}, \hspace{2mm} 11n_{119}, \hspace{2mm}  11n_{125}, \hspace{2mm} 11n_{130}, \hspace{2mm} 11n_{131},  \hspace{2mm} 11n_{133}, $$ 
    $$  \hspace{2mm} 11n_{137},  \hspace{2mm} 11n_{138}, \hspace{2mm} 11n_{140}, \hspace{2mm} 11n_{141}, \hspace{2mm} 11n_{144}, \hspace{2mm} 11n_{149}, \hspace{2mm} 11n_{155}, \hspace{2mm} 11n_{156}, \hspace{2mm} 11n_{161}, \hspace{2mm} 11n_{165}   $$
    $$ \hspace{2mm} 11n_{171}, \hspace{2mm} 11n_{174}, \hspace{2mm} 11n_{175}, \hspace{2mm} 11n_{176}, \hspace{2mm} 11n_{179}, \hspace{2mm} 11n_{182}, \hspace{2mm} 11n_{184}, \hspace{2mm} 11n_{185}, $$
\end{prop}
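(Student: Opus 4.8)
Since the list is finite, the plan is to certify the two bounds $\gamma_4(K)\le 2$ and $\gamma_4(K)\ge 2$ for each of the $58$ knots, assembling the tools of Section~2 into a (largely computer-assisted) case analysis along the lines of \cite{N10, JK}. Note that $\gamma_4(K)\le 2$ means $K$ bounds a M\"obius band or a punctured Klein bottle in $B^4$, while $\gamma_4(K)\ge 2$ means $K$ bounds no M\"obius band in $B^4$.

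\emph{Upper bound.} For each $K$ I would first consult the crosscap number: if $c(K)=2$, as \cite{knotinfo} records for many of these knots, then $\gamma_4(K)\le c(K)=2$. Failing that, I would check whether $u(K)=1$, or more generally $u_s(K)=1$; then $c_4(K)\le u_s(K)=1$, so Proposition~\ref{crosscapBounds} gives $\gamma_4(K)\le c_4(K)+1=2$ (and if $c_4(K)=0$ then $K$ is slice, so $\gamma_4(K)=1$). For any knot still uncovered I would exhibit, from a planar diagram, two band moves --- at least one attached along an orientation-reversing band --- carrying $K$ to the unknot; capping the resulting unknot with a disk produces a punctured Klein bottle $F\subset B^4$ with $\partial F=K$ and $b_1(F)=2$. (A single band move onto a $\le 10$-crossing knot known from \cite{N10} to bound a M\"obius band would do equally well.)

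\emph{Lower bound.} The first pass is Proposition~\ref{sigArf}: whenever $\sigma(K)+4\,\mathrm{Arf}(K)\equiv 4\pmod 8$, with values from \cite{knotinfo}, we get $\gamma_4(K)\ge 2$ at once. For each remaining knot the signature--Arf congruence fails, and it suffices to show $K$ bounds no M\"obius band. For this I would compute $H_1(D_K(S^3))$ and, from a Goeritz matrix $G$, the linking form $\lambda=\pm G^{-1}$. When $H_1(D_K(S^3))\cong\mathbb Z_n$ with $n$ a product of primes each to an odd power, Corollary~\ref{linkingNumThm} applies: running over the finitely many generators $a$, if none has $\lambda(a,a)=\pm 1/n$ then $K$ bounds no M\"obius band, so $\gamma_4(K)\ge 2$. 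When $H_1(D_K(S^3))$ is not of this form, a M\"obius band $F$ would still, by \cite{GL}, yield a $4$-manifold $D_F(B^4)$ with $b_2(D_F(B^4))=1$ and $H_1(D_F(B^4);\mathbb Q)=0$ bounded by $D_K(S^3)$; the constraint this forces on the linking form of $D_K(S^3)$ (and, if needed, on its Heegaard Floer correction terms) should still rule out such an $F$.

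The delicate part is the lower bound for the knots on which Proposition~\ref{sigArf} yields nothing: one must compute linking forms from Goeritz matrices, check that $H_1(D_K(S^3))$ has the cyclic, odd-exponent-prime form demanded by Corollary~\ref{linkingNumThm}, and verify that no generator self-pairs to $\pm 1/n$ --- and for any knot where these structural hypotheses fail, replace the argument by a finer obstruction to bounding a M\"obius band (for instance a $d$-invariant computation on $D_K(S^3)$). A secondary difficulty is producing the explicit $b_1=2$ non-orientable surfaces in $B^4$ for any knot of crosscap number $\ge 3$ and unknotting number exceeding one, where the band moves must be found by hand. Once both groups of exceptional knots are settled, collecting the verifications over all $58$ knots proves the proposition.
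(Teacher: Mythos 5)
Your plan lines up with the paper's decomposition in its broad strokes: upper bounds from $c_4$-estimates (via Proposition~\ref{crosscapBounds} and Corollary~\ref{bigG}) and explicit non-oriented band moves to knots of non-orientable $4$-genus one, lower bounds from the signature--Arf obstruction of Proposition~\ref{sigArf} supplemented by linking-form computations. The weakness is concentrated at precisely the two places where your write-up waves its hands, and those are the two places where the paper has to do genuinely new work.

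First, the knot $11n_{38}$ slips through your net. There $H_1(D_K(S^3))\cong\mathbb Z_3$, which is cyclic of square-free order, so it does lie in the hypothesis class of Corollary~\ref{linkingNumThm}; but the generators self-pair to $1/3$, which \emph{is} of the form $\pm 1/n$, so the test you describe (``if no generator has $\lambda(a,a)=\pm1/n$, then no M\"obius band'') yields no obstruction. Your proposal would therefore leave $\gamma_4(11n_{38})$ undetermined. The paper's argument for this knot is finer: it tracks the \emph{sign}, observing (following \cite{GL,GorLit}) that if $K$ bounded a M\"obius band $F$ then $D_F(B^4)$ would be \emph{negative} definite with $b_2=1$, forcing $\lambda$ to be represented by $-1/3$, whereas the Goeritz data force $+1/3$; the contradiction is in the definiteness, not in the absolute value of the self-linking. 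A $d$-invariant computation might recover this, but you have not shown it does, and the sign/definiteness argument is the one the paper actually uses.

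Second, your treatment of the linking form when $|H_1(D_K(S^3))|$ is not square-free is entirely deferred to ``a finer obstruction ... should still rule out such an $F$.'' In the list there are genuine instances: $11n_{125}$ and $11n_{176}$ have $H_1(D_K(S^3))$ of order $63 = 3^2\cdot 7$, so Corollary~\ref{linkingNumThm} does not apply as stated. The paper closes this gap by proving Lemma~\ref{myLemma1}: for $H_1(\DK)\cong\Z_{p^2q}$ with $q$ square-free, a M\"obius band forces a generator $a$ with $\lambda(a,a)=\pm1/p^2q$ or $\pm1/q$. You would need to prove something of this sort (or show explicitly that correction terms obstruct the M\"obius band in these cases); as written the proposal just assumes a suitable obstruction exists. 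Once those two gaps are filled --- a sign-sensitive definiteness argument for $11n_{38}$, and an explicit extension of the linking-form obstruction to the $\Z_{p^2q}$ case --- your case analysis does reproduce the paper's proof.
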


\subsection{Constraints on Invariants} 
The knot invariant information for this paper was extracted from Knot Info \cite{knotinfo}.
\begin{lem}\label{1pfofT2}
    Given K is a knot satisfying $\sigma (K) + 4 \rm{Arf} (K) \equiv 4 \pmod{8}$, and $c_{4} (K) = 1$, then $\gamma_{4}(K) = 2$. 
\end{lem}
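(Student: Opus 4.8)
The plan is to pin $\gamma_4(K)$ from both sides, using each hypothesis for exactly one of the two inequalities. The signature--Arf congruence gives the lower bound $\gamma_4(K) \geq 2$, and the clasp-number hypothesis $c_4(K) = 1$ gives the matching upper bound $\gamma_4(K) \leq 2$.

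For the lower bound, I would simply invoke Proposition \ref{sigArf}: the hypothesis $\sigma(K) + 4\,\mathrm{Arf}(K) \equiv 4 \pmod 8$ is precisely its input, so $\gamma_4(K) \geq 2$ is immediate. (In particular this forces $K$ to be non-slice and to not bound a M\"obius band, consistent with the conclusion.) For the upper bound, I would apply the second displayed inequality of Proposition \ref{crosscapBounds}. Since $c_4(K) = 1$ is odd, we land in the ``otherwise'' case of that piecewise bound, giving $\gamma_4(K) \leq c_4(K) + 1 = 2$; the excluded value $c_4(K) = 2$ in the even branch is irrelevant here because $1$ is odd. Combining the two estimates yields $2 \leq \gamma_4(K) \leq 2$, hence $\gamma_4(K) = 2$.

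There is no genuine obstacle: the content is just an assembly of two results already recorded in Section 2. The one point to keep straight is the bookkeeping, namely that the two hypotheses are not interchangeable --- the congruence feeds the non-orientable lower bound while $c_4(K) = 1$ feeds the Murakami--Yasuhara upper bound, and neither hypothesis alone suffices. One could alternatively note that $c_4(K) = 1$ together with $g_4(K) \leq c_4(K)$ and the non-sliceness just observed gives $g_4(K) = c_4(K) = 1$, so that Corollary \ref{bigG} also identifies $\Gamma_4(K)$ with $\gamma_4(K)$; but for the stated claim the two-line sandwich above is all that is needed.
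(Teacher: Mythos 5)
Your proof is correct and matches the paper's own argument, which simply cites Proposition \ref{crosscapBounds} (the odd case of the Murakami--Yasuhara bound gives $\gamma_4(K) \leq c_4(K)+1 = 2$) and Proposition \ref{sigArf} (the signature--Arf congruence gives $\gamma_4(K) \geq 2$). The extra remark about routing through Corollary \ref{bigG} is a valid alternative but, as you note, unnecessary for the stated claim.
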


The result is clear from Proposition \ref{crosscapBounds} and Corollary \ref{sigArf}. We now examine knots that have $g_{4}(K) = u(K) = 1 $ (or $g_{4}(K) = u_{s}(K) = 1 $ )  and $\sigma (K) + 4 \rm{Arf} (K) \equiv 4 \pmod{8}$ to see the following knots have $\gamma_{4}(K) = 2$:

    $$ 11n_{12}, \hspace{2mm} 11n_{28}, \hspace{2mm} 11n_{48}, \hspace{2mm} 11n_{53}, \hspace{2mm} 11n_{55}, \hspace{2mm} 11n_{85}, \hspace{2mm} 11n_{100} $$ 
    $$\hspace{2mm} 11n_{114}, \hspace{2mm} 11n_{115}, \hspace{2mm} 11n_{119}, \hspace{2mm} 11n_{130}, \hspace{2mm} 11n_{156}, \hspace{2mm} 11n_{179}, \hspace{2mm} 11n_{182} $$

All the above listed knots satisfy $\sigma (K) + 4 \rm{Arf}(K) \equiv 4 \pmod{8}$. Since they satisfy $g_{4}(K) = 1 = u(K)$ (or $g_{4}(K) = u_{s}(K) = 1 $ ) by the hypothesis, we have $c_{4}(K) = 1$, and thus by Lemma \ref{1pfofT2} we may conclude $\gamma_{4}(K) = 2$. 

\begin{lem}\label{2pfofT2}
 Given K is a knot satisfying $\sigma (K) + 4 \rm{Arf}(K) \equiv 4 \pmod{8}$, and $c_{4} (K) = 2$, then $\gamma_{4}(K) = 2$.
   
\end{lem}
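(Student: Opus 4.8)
The plan is to trap $\gamma_4(K)$ between $2$ and $2$. The lower bound is immediate: the hypothesis $\sigma(K)+4\,\mathrm{Arf}(K)\equiv 4\pmod 8$ is precisely the hypothesis of Proposition \ref{sigArf}, giving $\gamma_4(K)\ge 2$. The subtlety is the matching upper bound. Since $c_4(K)=2$ falls into the exceptional even case of Proposition \ref{crosscapBounds}, that proposition only yields $\gamma_4(K)\le c_4(K)+1=3$; one cannot bound $\gamma_4$ directly and must instead pass through $\Gamma_4$.

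The key step is Corollary \ref{bigG}. Because $c_4(K)=2$ is even, Proposition \ref{crosscapBounds} gives $\Gamma_4(K)\le c_4(K)=2$. Provided $g_4(K)=c_4(K)=2$, Corollary \ref{bigG} then gives $\Gamma_4(K)=\gamma_4(K)$, so $\gamma_4(K)=\Gamma_4(K)\le 2$, and together with the lower bound we conclude $\gamma_4(K)=2$. Thus the lemma reduces to checking $g_4(K)=2$. One half is automatic: $g_4(K)\le c_4(K)=2$ by the triple inequality $g_4(K)\le c_4(K)\le u_s(K)\le u(K)$. For the other half, $K$ is not slice (a slice knot has $\sigma=0$ and $\mathrm{Arf}=0$, violating the hypothesis), so $g_4(K)\ge 1$; and if $\mathrm{Arf}(K)=0$ the congruence forces $\sigma(K)\equiv 4\pmod 8$, hence $|\sigma(K)|\ge 4$, and the classical bound $2g_4(K)\ge|\sigma(K)|$ pushes $g_4(K)$ up to $2$.

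The remaining case $\mathrm{Arf}(K)=1$ only gives $\sigma(K)\equiv 0\pmod 8$, which still allows $\sigma(K)=0$ and $g_4(K)=1$; this is the main obstacle, since then the cited tools leave $\gamma_4(K)\in\{2,3\}$. To close it I would import the value $g_4(K)=2$ from KnotInfo for the finitely many relevant knots — in practice the lemma is applied to knots recorded with $g_4(K)=u(K)=2$ or $g_4(K)=u_s(K)=2$, which by the triple inequality simultaneously certifies $c_4(K)=2$ and $g_4(K)=c_4(K)$, after which the sandwich of the first two paragraphs finishes the proof.
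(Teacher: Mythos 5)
Your proposal takes the same route as the paper: the lower bound $\gamma_4(K)\ge 2$ from Proposition~\ref{sigArf}, the bound $\Gamma_4(K)\le 2$ from the even case of Proposition~\ref{crosscapBounds}, and then Corollary~\ref{bigG} to promote $\Gamma_4$ to $\gamma_4$. But you are sharper than the paper about the hypothesis of Corollary~\ref{bigG}: that corollary requires $g_4(K)=c_4(K)$, whereas the lemma as stated only assumes $c_4(K)=2$. You are right that this leaves a gap as written, and right about how it gets closed in practice — the paper verifies afterwards that the listed knots satisfy $g_4(K)=u(K)=2$, which by the triple inequality $g_4\le c_4\le u_s\le u$ pins down $g_4(K)=c_4(K)=2$, exactly the missing hypothesis. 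Your extra observation, that when $\mathrm{Arf}(K)=0$ the congruence forces $|\sigma(K)|\ge 4$ and hence $g_4(K)\ge 2$ via the classical bound $|\sigma(K)|\le 2g_4(K)$, is a nice partial repair not in the paper, though as you note it fails in the $\mathrm{Arf}(K)=1$, $\sigma(K)=0$ case, so the data check cannot be eliminated. In short: your argument is correct, it is essentially the paper's, and the discrepancy you flag is a real imprecision in the lemma's stated hypotheses — a cleaner formulation would assume $g_4(K)=c_4(K)=2$ outright.
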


 By Corollary \ref{bigG}, we have $\Gamma_{4}(K) = \gamma_{4}(K)$, and thus applying Proposition \ref{crosscapBounds} we achieve $\gamma_{4}(K) \leq 2$. Therefore, $\gamma_{4}(K) = 2$. We now observe that the following knots have $\gamma_{4}(K) = 2$:
    $$ 11n_{2}, \hspace{2mm} 11n_{35}, \hspace{2mm} 11n_{95}, \hspace{2mm}  11n_{103}, \hspace{2mm} 11n_{108}, \hspace{2mm} 11n_{109}, \hspace{2mm} 11n_{144}, \hspace{2mm} 11n_{149}, \hspace{2mm} 11n_{174}, \hspace{2mm} 11n_{175}, \hspace{2mm} 11n_{185} $$
    The above listed knots all satisfy $\sigma (K) + 4  \rm{Arf}(K) \equiv 4 \pmod{8}$ and thus $\gamma_{4}(K) \geq 2$. Additionally, these knots all satisfy $g_{4}(K) = u(K) = 2$, and thus $c_{4}(K) = 2$.

\subsection{Non-Oriented Band Moves} 

The primary method used in calculations was via non-oriented band moves. We begin with an oriented knot $K$ and an oriented band, $[0, 1] \times [0, 1]$. Following the conventions of Jabuka and Kelly \cite{JK}, we attach the band to $K$ in the sense that the orientation of the band agrees with the orientation of $K$ on $[0, 1] \times \{0\}$ but disagrees on $[0, 1] \times \{1\}$, or vise versa. One then does surgery along the band. The result of non-orientable band surgery will always be a knot, while the result after \textit{orientable} band surgery is a link. Non-orientable band surgery is explored by Moore and Vazquez in \cite{MV} and is called \textit{non-coherent band surgery}.\\

The notation for a knot $K$ that has been transformed into a knot $K'$ by a non-oriented band move is $K \stackrel{h}{\longrightarrow} K'$ where $h$ is either 0, 1, or -1. These three band moves can be seen in the figure below. From left to right, we have $\stackrel{0}{\longrightarrow}$ is the band move without a twist, $\stackrel{-1}{\longrightarrow}$ is the band move with a left-handed twist, and $\stackrel{1}{\longrightarrow}$ is the band move with a right handed twist. 

\begin{figure}[h]
    \centering
    \includegraphics[width = 3in]{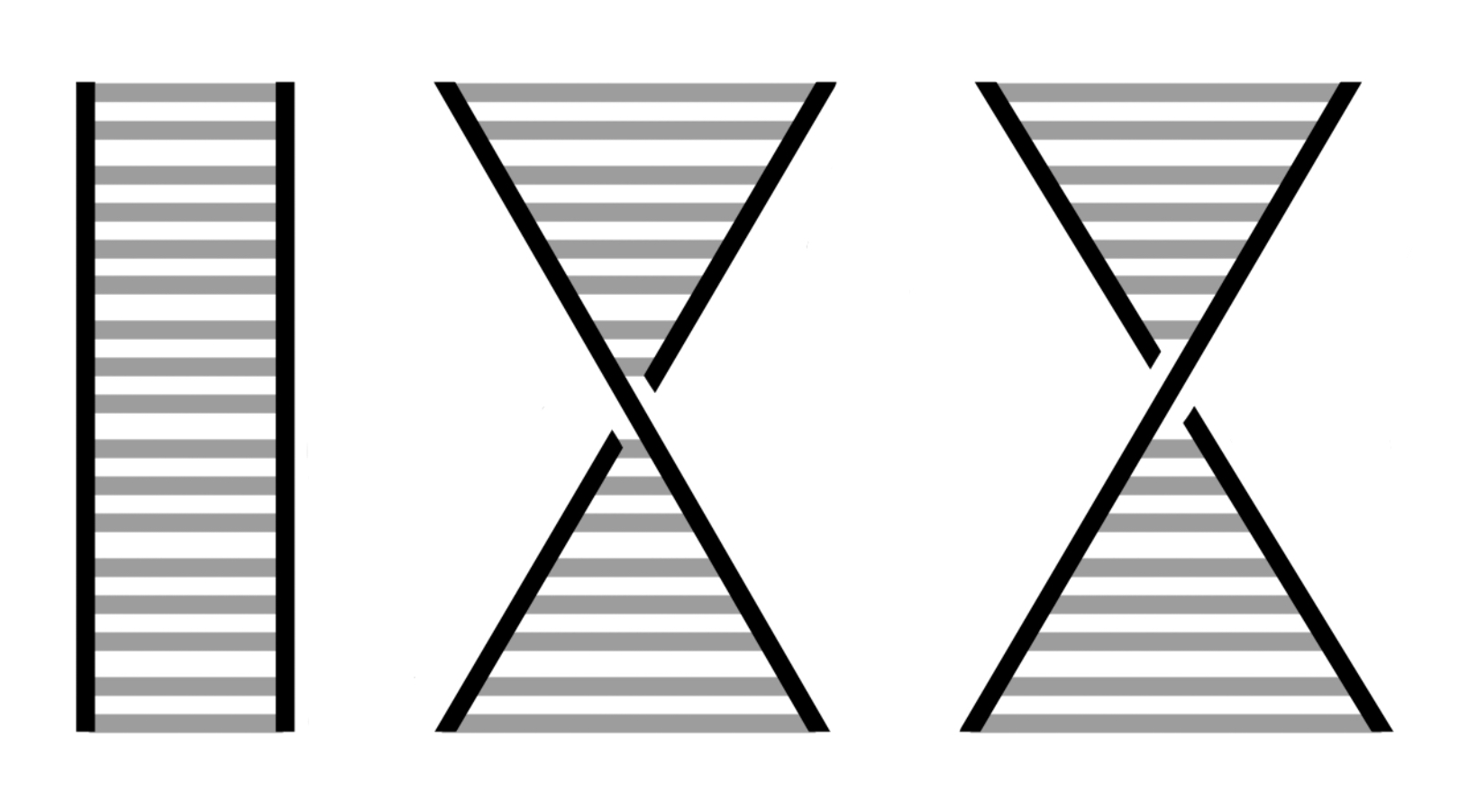}
    \caption{Band Moves}
    \label{fig:enter-label}
\end{figure}

\begin{prop}[Proposition 2.4 in \cite{JK}]\label{BMbound}

If the knots $K$ and $K'$ are related by a non-oriented band move, then 
$$ \gamma_{4}(K) \leq \gamma_{4}(K') + 1 $$

If a knot $K$ is related to a slice knot $K'$ by a non-oriented band move, then $\gamma_{4}(K) = 1$.
    
\end{prop}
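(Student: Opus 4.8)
The plan is to realize the non-oriented band move as a cobordism surface sitting in a collar of $\partial B^{4}$ and to glue an optimal surface for $K'$ onto it. First I would describe the trace of the band move. If $K'$ is converted to $K$ by a non-oriented band move, then inside $S^{3} \times [0,1]$ there is a smoothly embedded surface $C$ with $C \cap (S^{3} \times \{0\}) = K' \times \{0\}$ and $C \cap (S^{3} \times \{1\}) = K \times \{1\}$, obtained from the product annulus $K' \times [0,1]$ by attaching a single band (a $1$-handle) along two arcs in $K' \times \{1\}$. Such a surface has exactly two boundary circles --- namely $K'$ and $K$, using that non-coherent band surgery on a knot again yields a knot --- and Euler characteristic $\chi(C) = \chi(K' \times [0,1]) - 1 = -1$. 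Moreover, because the band is attached non-coherently, the core of the band together with a suitable arc of $K' \times \{1\}$ forms a simple closed curve in $C$ whose regular neighborhood in $C$ is a M\"obius band; in particular $C$ is non-orientable, and hence any surface containing $C$ is non-orientable.

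Next I would choose a non-orientable surface $F' \subset B^{4}$ with $\partial F' = K'$ and $b_{1}(F') = \gamma_{4}(K')$ (taking $F'$ connected, which costs nothing). Attach a collar $S^{3} \times [0,1]$ to $\partial B^{4}$ to form a ball again, place $F'$ in the original $B^{4}$ and $C$ in the collar so that they agree along $K' = \partial F' = C \cap \partial B^{4}$, and set $F = F' \cup_{K'} C$; after smoothing the corner along $K'$ this is a smoothly embedded surface in $B^{4}$ with $\partial F = K$. Since $C \subset F$, the surface $F$ is non-orientable, so it is a valid competitor for $\gamma_{4}(K)$. Additivity of Euler characteristic along the circle $K'$ (equivalently, a Mayer--Vietoris argument) gives $\chi(F) = \chi(F') + \chi(C) - \chi(K') = \chi(F') - 1$, hence $b_{1}(F) = 1 - \chi(F) = (1 - \chi(F')) + 1 = b_{1}(F') + 1 = \gamma_{4}(K') + 1$. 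This proves $\gamma_{4}(K) \leq \gamma_{4}(K') + 1$.

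For the second assertion, note that the inequality just proved only yields $\gamma_{4}(K) \leq 2$, so a direct argument is needed. If $K'$ is slice it bounds a smooth disk $D \subset B^{4}$; gluing $F = D \cup_{K'} C$ exactly as above produces a non-orientable surface with $\chi(F) = \chi(D) + \chi(C) - \chi(K') = 1 - 1 - 0 = 0$, hence $b_{1}(F) = 1$. Combined with the general lower bound $\gamma_{4}(K) \geq 1$, valid for every knot, this forces $\gamma_{4}(K) = 1$.

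The one point requiring genuine care is confirming that the glued surface $F$ really is non-orientable --- that is, that the non-coherence of the band move is not lost after gluing $F'$ (or $D$) onto $C$; this is handled precisely by the M\"obius-neighborhood curve identified in the first step, which survives inside $F$. The remaining ingredients --- the corner-smoothing, the identification of $C$ as an annulus-plus-one-band, and the Euler-characteristic bookkeeping --- are routine.
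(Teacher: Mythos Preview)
The paper does not supply its own proof of this proposition; it is quoted from Jabuka--Kelly and used as a black box. Your argument is correct and is the standard one: the trace of a non-coherent band move is a non-orientable cobordism $C \subset S^{3}\times[0,1]$ with $\chi(C)=-1$, and capping the $K'$ end with an optimal non-orientable surface (respectively, a slice disk) yields a non-orientable surface for $K$ with first Betti number $\gamma_{4}(K')+1$ (respectively, $1$). The one point that actually needs checking---that the glued surface remains non-orientable---you handle correctly via the M\"obius neighborhood of the band core.
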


\textit{Proof of Theorem 1.1 part (a)}. Every knot listed in Proposition \ref{propforThm1} is either a slice knot or one non-oriented band move away from a slice knot. See Figure~\ref{firstSlice} - Figure~\ref{lastSlice} for details.

\begin{lem}\label{3pfofT2}
    The following knots have $\gamma_{4}(K) = 2$:

     $$ 11n_{10}, \hspace{2mm} 11n_{12}, \hspace{2mm} 11n_{30}, \hspace{2mm} 11n_{32}, \hspace{2mm} 11n_{43}, \hspace{2mm} 11n_{48}, \hspace{2mm} 11n_{51}, \hspace{2mm} 11n_{55}, \hspace{2mm} 11n_{61}, \hspace{2mm} 11n_{72} $$
$$ 11n_{85}, \hspace{2mm} 11n_{90}, \hspace{2mm} 11n_{98}, \hspace{2mm} 11n_{103}, \hspace{2mm} 11n_{130}, \hspace{2mm} 11n_{133} $$
    
\end{lem}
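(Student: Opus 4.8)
The plan is to pin down $\gamma_{4}(K)$ for each of the sixteen listed knots by establishing the two bounds $\gamma_{4}(K)\ge 2$ and $\gamma_{4}(K)\le 2$ separately, and then combining them. Since every knot here is non-slice (all appear in Proposition~\ref{propforThm2} and none in the Remark), the lower bound amounts to showing that $K$ does \emph{not} bound a M\"obius band in $B^{4}$.

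For the lower bound I would first read $\sigma(K)$ and $\mathrm{Arf}(K)$ off KnotInfo \cite{knotinfo} and check the congruence $\sigma(K)+4\,\mathrm{Arf}(K)\equiv 4\pmod 8$; whenever it holds, Proposition~\ref{sigArf} gives $\gamma_{4}(K)\ge 2$ at once. For any knot on the list where this test fails, I would switch to the double branched cover: compute a Goeritz matrix $G$ from a diagram, use it as a relation matrix to identify $H_{1}(\DK)$, and — in the case $H_{1}(\DK)\cong\Z_{n}$ with $n$ a product of primes each to an odd power — apply Corollary~\ref{linkingNumThm}. Concretely, I would form the linking form $\lambda=\pm G^{-1}$ on $\Z_{n}$ and verify that no generator $a$ satisfies $\lambda(a,a)=\pm 1/n$; that obstruction rules out a M\"obius band and again yields $\gamma_{4}(K)\ge 2$.

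For the upper bound I would exhibit, for each $K$, an explicit non-oriented band move $K\stackrel{h}{\longrightarrow}K'$ with $h\in\{-1,0,1\}$ whose target $K'$ is a knot already known to have $\gamma_{4}(K')=1$ — typically one of the knots in Proposition~\ref{propforThm1} (each of which is itself slice or one band move from a slice knot). Proposition~\ref{BMbound} then gives $\gamma_{4}(K)\le\gamma_{4}(K')+1=2$. Note that $K'$ cannot be taken to be slice itself, for then the second clause of Proposition~\ref{BMbound} would force $\gamma_{4}(K)=1$, contradicting the lower bound; so the band must land on a genuinely non-slice knot with non-orientable $4$-genus one. Putting the two inequalities together gives $2\le\gamma_{4}(K)\le 2$, hence $\gamma_{4}(K)=2$. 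I would present these band moves in a table of the form $K\stackrel{h}{\longrightarrow}K'$ accompanied by diagrams, in the same style as the figures supporting Proposition~\ref{propforThm1}.

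The main obstacle is the search step in the upper bound: starting from a minimal diagram of $K$, one must choose a band, carry out the non-coherent surgery, simplify the resulting diagram, and correctly identify the knot type $K'$ (in practice by matching a normalized Gauss code or a battery of polynomial invariants against the census) while ensuring $K'$ is non-slice with $\gamma_{4}(K')=1$. This is neither algorithmic nor unique and is where essentially all the work and all the risk of error lie. The remaining ingredient — the linking-form computation for those knots not already dispatched by the Arf--signature test — is routine but requires care in fixing the orientation-dependent sign in $\pm G^{-1}$ and in checking the condition $\lambda(a,a)=\pm 1/n$ across all generators of $\Z_{n}$.
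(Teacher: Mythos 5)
Your proposal matches the paper's proof essentially exactly: the lower bound $\gamma_4(K)\ge 2$ comes from checking $\sigma(K)+4\,\mathrm{Arf}(K)\equiv 4\pmod 8$ via Proposition~\ref{sigArf} (the paper confirms all sixteen knots satisfy this, so your linking-form fallback is never invoked), and the upper bound comes from exhibiting a single non-oriented band move to a knot $K'$ with $\gamma_4(K')=1$ and applying Proposition~\ref{BMbound}, which is precisely what Figures~\ref{first1G}--\ref{last1G} record. Your side remark that $K'$ must be non-slice (else the second clause of Proposition~\ref{BMbound} would contradict the lower bound) is a correct sanity check consistent with the targets chosen in those figures.
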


We now recall Proposition \ref{sigArf} and note the knots listed in the above lemma all satisfy $\sigma(K) + 4 \rm{Arf}(K) \equiv 4 \pmod{8} $.
So we know the above knots have $\gamma_{4}(K) \geq 2$. The above listed knots all are one non-oriented band move away from a knot $K'$ so that $\gamma_{4}(K') = 1$ (see Figure~\ref{first1G} - Figure~\ref{last1G}), thus we conclude $\gamma_{4}(K) = 2$.

\subsection{Linking Form Calculation}

We look for a knot $K$ so that $\sigma(K) + 4  \rm{Arf}(K) \equiv 0, \pm 2 \pmod{8} $, and thus $K$ does not meet the obstruction from Proposition \ref{sigArf}. We calculate the linking form of $H_{1}(D_{K}(S^{3}))$ to see if $K$ meets the obstruction from Corollary \ref{linkingNumThm}. The first thing we do is calculate the Goeritz matrix for $K$. We will do an example here, but an interested reader is referred to Gordan and Litherland \cite{GorLit}.

To construct the Goeritz matrix, we first make a checkerboard coloring of a knot. \\

\begin{figure}[h] 
    \centering
    \includegraphics[width = 2in]{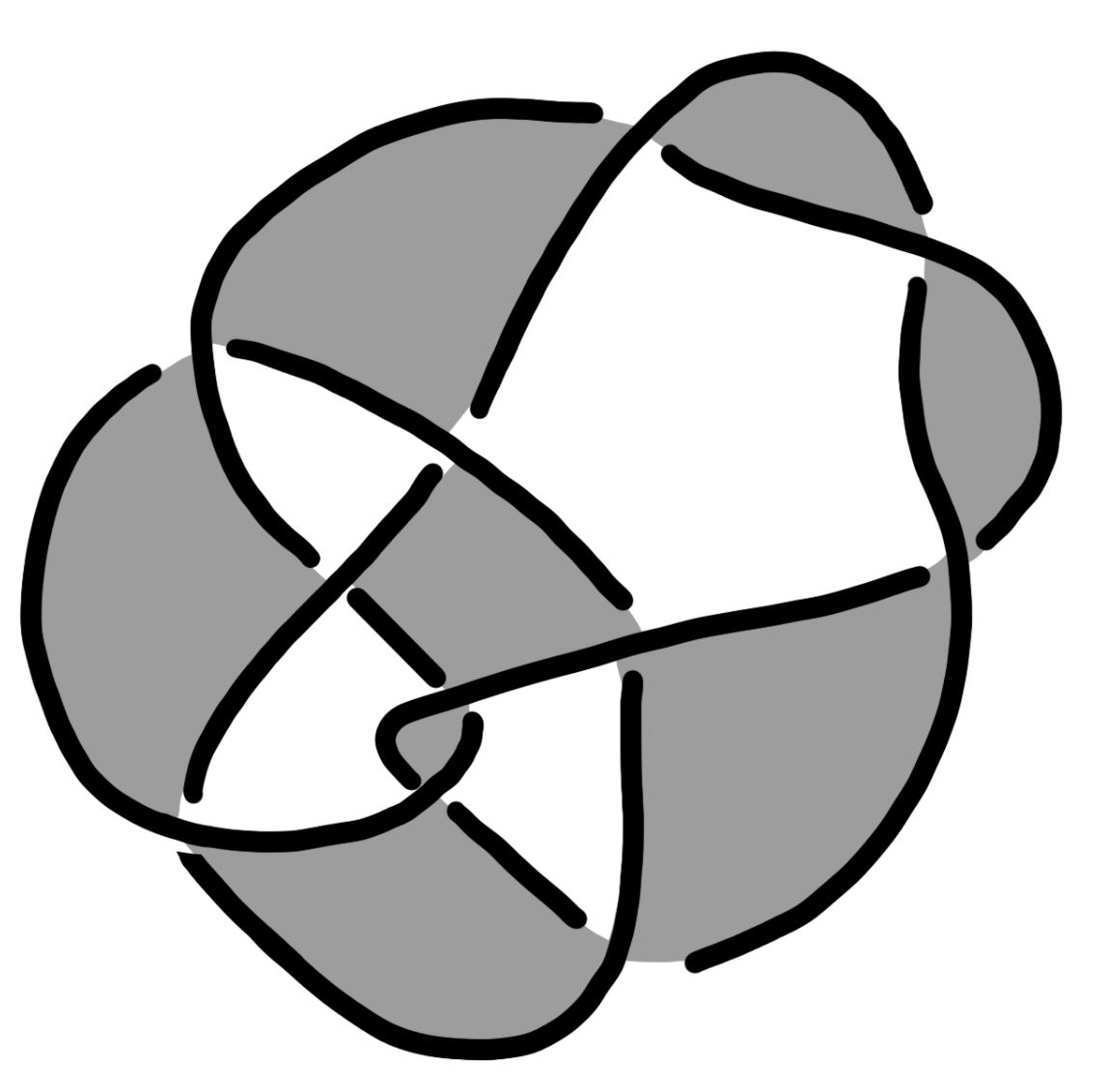}
    \caption{Checkerboard coloring for $11n_{155}$}
    \label{fig:enter-label}
\end{figure}

Each white region is labeled $R_{i}$ and the unbounded region is $R_{0}$. We then assign a value to each crossing $C$, $\eta (C) = \pm 1$, via the figure below, and following the conventions from Gordan and Litherland \cite{GorLit}. 

\begin{figure}[h]
    \centering
    \includegraphics[width = 2.5in]{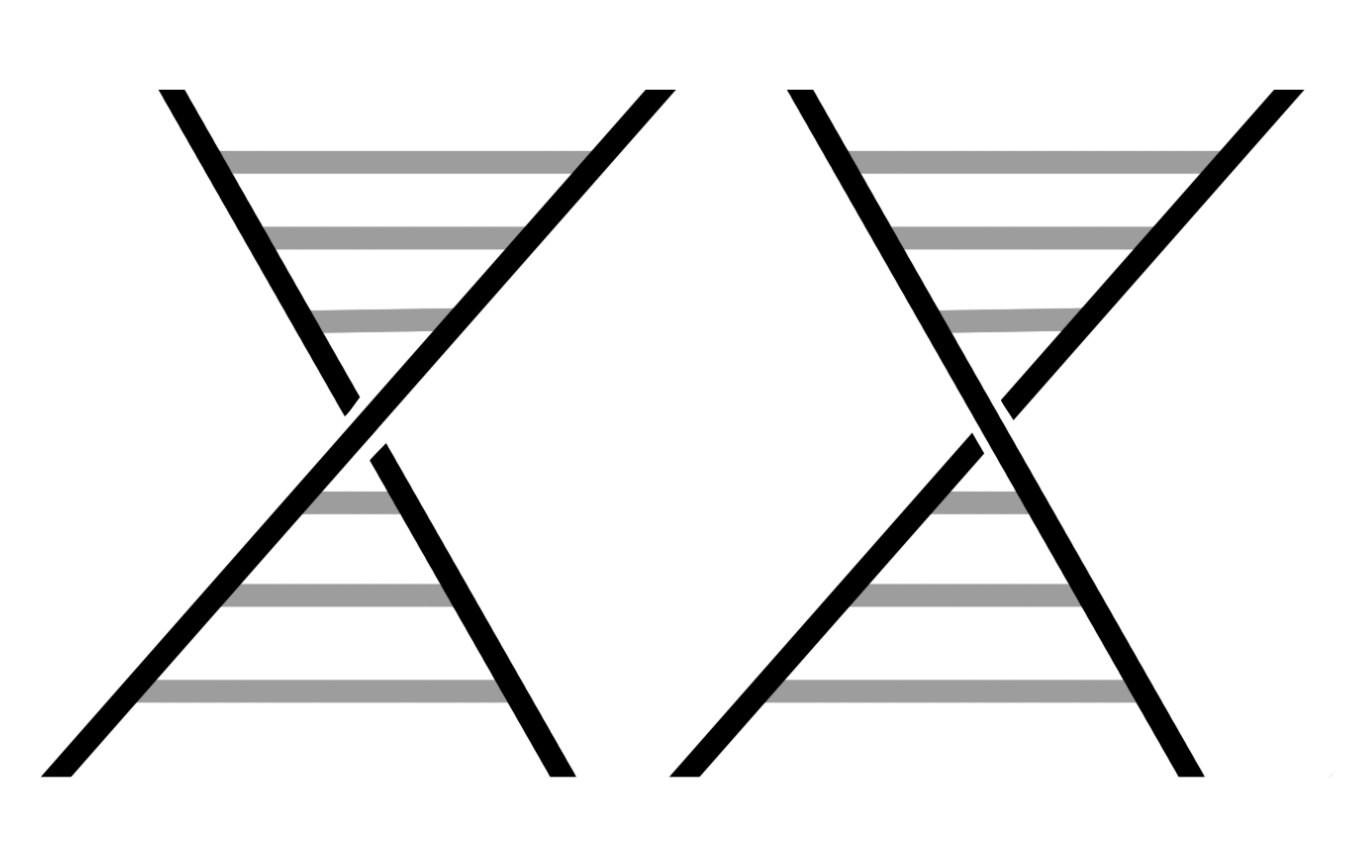}
    \caption{left: $\eta (C) = 1$, right: $\eta (C) = -1$ }
    \label{fig:enter-label}
\end{figure}

Next, we construct a matrix $G'$ with the algorithm: 

\[ g'(i, j) = 
  \begin{cases*}
    - \sum \eta (C) \text{ where the sum ranges over all crossings } C \text{ incident to } R_{i} \text{ and } R_{j}, i \neq j \\
    - \displaystyle\sum_{k\neq i} g'(i, k) = g'(i, i) \hspace{2mm} \text{if } i = j
  \end{cases*}\]

Then, the Goeritz matrix $G$ is obtained from $G'$ by deleting the $0^{th}$ row and column. The determinant of $G$ is an invariant of the knot, and $G$ is a linking matrix for $H_{1}(D_{K}(S^{3}))$ \cite{GorLit, MY}. 

Now, we may calculate the linking form. As previously mentioned, $\pm G^{-1}$ represents the linking form $\lambda$ where $ \lambda : H_{1}(D_{K}(S^{3}); \mathbb{Z}) \times H_{1}(D_{K}(S^{3}); \mathbb{Z}) \to \mathbb{Q}/ \mathbb{Z}  $. To continue the example, we have $G$ and $G^{-1}$ for the knot $11n_{155}$ as: 
$$ G = 
\begin{bmatrix}
    3 & -1 & 0 & -1 \\
    -1 & 5 & -1 & 0 \\
    0 & -1 & 0 & 2 \\
    -1 & 0 & 2 & 0
\end{bmatrix}
\hspace{15mm}G^{-1} = 
\begin{bmatrix}
    \frac{20}{51} & \frac{2}{17} & \frac{10}{51} & \frac{1}{17} \\
    \frac{2}{17} & \frac{4}{17} & \frac{1}{17} & \frac{2}{17} \\
    \frac{10}{51} & \frac{1}{17} & \frac{5}{51} & \frac{9}{17} \\
    \frac{1}{17} & \frac{2}{17} & \frac{9}{17} & \frac{1}{17}
\end{bmatrix}
$$

Now we have the linking form $\lambda(g, g) = \pm 20/51$. Suppose $11n_{155}$ bounds a M\"{o}bius band. We wish to find an $n \in \mathbb{Z}$ so that $\lambda(ng, ng) = \pm 1/51$. This means $\pm 20/51 = \lambda(ng, ng) = n^{2}\lambda(g, g) = \pm 20 n^{2}/51 = \pm 1/51$, so $20n^{2} \equiv \pm 1 \pmod{51} $. A quick calculation shows this is not possible, and thus $11n_{155}$ does not bound a M\"{o}bius band. 

\subsection{Results}

\begin{thm}[Theorem 2 in \cite{GL}]\label{lformDirectSum}

Let $K$ in $S^{3}$ be a knot. The linking form $(H_{1} (D_{K}(S^{3}), \lambda)$ splits as a direct sum $(G_{1}, \lambda_{1} ) \oplus (G_{2}, \lambda_{2}) $ where $(G_{2}, \lambda_{2})$ is metabolic and $(G_{1}, \lambda_{1} )$ has a presentation of rank $\lambda_{1}(F)$. 

\end{thm}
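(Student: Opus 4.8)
The plan is to obtain the splitting from a four-dimensional filling of $\DK$. Fix a non-orientable surface $F$ in $B^{4}$ with $\partial F = K$ (we may take $b_{1}(F) = \gamma_{4}(K)$, and the argument works for any such $F$), and set $W = \DF$, the double branched cover of $B^{4}$ along $F$. By Lemma~1 of \cite{GL}, recalled in Section~2, we have $\partial W = \DK$, $H_{1}(W;\Q) = 0$, and $b_{2}(W) = b_{1}(F)$; moreover $\DK$ is a rational homology sphere with finite first homology. The goal is then the purely algebraic assertion: \emph{for a compact oriented $4$-manifold $W$ with $H_{1}(W;\Q) = 0$ whose boundary is a rational homology sphere, the linking form of $\partial W$ is isometric to an orthogonal sum of a form presented by the intersection pairing $Q_{W}$ and a metabolic form carried by $\mathrm{Tor}\, H_{1}(W)$.} Granting this, the theorem follows by letting $(G_{1},\lambda_{1})$ be the first summand, since $Q_{W}$ is a symmetric integer matrix on $H_{2}(W)/\mathrm{torsion} \cong \Z^{b_{2}(W)}$, hence a presentation of rank $b_{2}(W) = b_{1}(F)$.

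First I would run the homology long exact sequence of the pair $(W,\partial W)$,
$$ 0 \to H_{2}(W) \xrightarrow{\,j\,} H_{2}(W,\partial W) \xrightarrow{\,\delta\,} H_{1}(\partial W) \xrightarrow{\,i\,} H_{1}(W) \to H_{1}(W,\partial W) \to \cdots , $$
noting $H_{2}(\partial W) = 0$ since $\partial W$ is a rational homology sphere. Poincar\'e--Lefschetz duality identifies $H_{2}(W,\partial W) \cong H^{2}(W)$, and universal coefficients gives $H^{2}(W) \cong \mathrm{Hom}(H_{2}(W),\Z) \oplus \mathrm{Ext}(H_{1}(W),\Z) \cong \Z^{b_{2}(W)} \oplus \mathrm{Tor}\, H_{1}(W)$. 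Under these identifications, modulo torsion the map $j$ is the intersection matrix $Q_{W}$; because $\partial W$ is a rational homology sphere, the radical of $Q_{W}$ is the image of $H_{2}(\partial W;\Q) \to H_{2}(W;\Q)$, which vanishes, so $Q_{W}$ is nondegenerate over $\Q$, $\det Q_{W} \neq 0$, and $j$ is injective with finite cokernel. A diagram chase then exhibits $H_{1}(\partial W)$ as an extension assembled from $\mathrm{coker}(Q_{W})$ and a subquotient of $\mathrm{Tor}\, H_{1}(W)$, and I would verify that on the $\mathrm{coker}(Q_{W})$ part the linking form is the familiar $-Q_{W}^{-1} \bmod \Z$ (compare the Goeritz description $\lambda = \pm G^{-1}$ used in Section~3 and \cite{GorLit, MY}).

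Next I would split off the metabolic summand. The subgroup $\delta\big(\mathrm{Tor}\, H_{2}(W,\partial W)\big) \subseteq H_{1}(\partial W)$ consists of classes that are rationally null-homologous in $W$, and a standard Poincar\'e duality argument shows any two such classes have vanishing linking pairing; thus they span a self-annihilating subgroup $M$ for $\lambda$. Since $\lambda$ is nondegenerate, comparing $|H_{1}(\partial W)|$ with $|M|$ and $|M^{\perp}|$ forces the pairing carried by $M^{\perp}/M$-style bookkeeping to collapse, so that the piece supported on $M$ is metabolic, and the classification of linking forms on finite abelian groups lets me realize this metabolic piece as an \emph{orthogonal} direct summand $(G_{2},\lambda_{2})$. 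What remains is $(G_{1},\lambda_{1})$, presented by $Q_{W}$, a form of rank $b_{2}(W) = b_{1}(F)$ — exactly the claimed decomposition.

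I expect the torsion bookkeeping to be the main obstacle. Two points need care: (i) promoting the self-annihilating subgroup $M$ coming from $\mathrm{Tor}\, H_{1}(W)$ to a genuine orthogonal metabolic \emph{direct summand} of $(H_{1}(\partial W),\lambda)$, rather than merely a self-annihilating subgroup, which requires working prime-by-prime and treating the $2$-primary part of the form separately; and (ii) confirming that the complementary summand is precisely the one presented by the integral intersection matrix $Q_{W}$ at rank $b_{2}(W)$, with no drop in rank absorbed into the extension in the exact sequence. A minor additional point, sidestepped here because $\DK$ is assumed to be a rational homology sphere, is the degenerate case $\det K = 0$; there one restricts to the torsion linking form and runs the same filling argument.
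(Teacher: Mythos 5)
Note first that the paper does not prove this theorem---it is quoted (with two small defects) from Gilmer and Livingston \cite{GL}, Theorem~2, and is then used as a black box in Lemma~\ref{myLemma1} and in the linking-form computations. You have correctly supplied the missing hypothesis that $K$ bounds a surface $F$ in $B^{4}$, and correctly read ``rank $\lambda_{1}(F)$'' as ``rank $b_{1}(F)$.'' Your strategy---pass to $W = D_{F}(B^{4})$, run the long exact sequence of $(W,\partial W)$, use Poincar\'e--Lefschetz duality and universal coefficients to identify $H_{2}(W,\partial W)\cong\Z^{b_{2}(W)}\oplus\mathrm{Tor}\,H_{1}(W)$, and read off $Q_{W}$ as a presentation matrix of rank $b_{2}(W)=b_{1}(F)$---is the expected one and matches the route a proof of this theorem should take.

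The two points you flag as ``torsion bookkeeping'' are, however, where essentially all the content lives, and they are not routine. Showing $M:=\delta\bigl(\mathrm{Tor}\,H_{2}(W,\partial W)\bigr)$ is isotropic is the easy half: torsion classes vanish rationally, so the $\Q$-valued intersection numbers in $W$ that compute their pairwise linking are zero. But an isotropic subgroup of the right order (the exact-sequence count gives $|H_{1}(\partial W)| = |\det Q_{W}|\cdot|M|^{2}$) does not by itself yield an \emph{orthogonal direct-sum} decomposition: $M$ need not be a direct summand of $H_{1}(\partial W)$ even as an abelian group (compare $\{0,2\}\subset\Z_{4}$), and the step from ``self-annihilating'' to ``metabolizer of an orthogonal summand $(G_{2},\lambda_{2})$ with complementary summand $(M^{\perp}/M,\bar{\lambda})$'' is a genuine theorem in the algebra of torsion linking forms (Wall; Kawauchi--Kojima), proved prime by prime with the $2$-primary part handled separately. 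Likewise, identifying the complement with the form presented by $Q_{W}$---that on $\mathrm{coker}(Q_{W})$ the form is $-Q_{W}^{-1}\bmod\Z$ and that no rank is lost into the extension $0\to\mathrm{coker}(j)\to H_{1}(\partial W)\to\mathrm{Im}(i_{*})\to 0$---is essentially the Gordon--Litherland/Goeritz computation run in the 4-manifold setting and must be carried out. So: right scaffolding, honestly flagged gaps, but the deferred linking-form algebra \emph{is} the proof rather than an afterthought to it.
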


\begin{lem}\label{myLemma1}

Let $K$ in $S^{3}$ be a knot and suppose that $H_{1}( \DK ) = \Z_{p^{2}q}$ where p is prime and q is a product of primes, all with odd exponent. Then if $K$ bounds a M\"{o}bius band in $B^{4}$, there is a generator $a \in H_{1} (\DK ) $ such that either $ \lambda (a, a) = \pm 1/p^{2}q$ or $ \lambda (a, a) = \pm 1/q$.
    
\end{lem}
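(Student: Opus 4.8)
The plan is to reduce Lemma~\ref{myLemma1} to Corollary~\ref{linkingNumThm} by extracting from the linking form of $\DK$ a summand whose order has the shape required there, namely a product of primes all with odd exponent. Since $H_1(\DK) = \Z_{p^2 q}$ and $\gcd(p^2, q) = 1$ after absorbing any power of $p$ dividing $q$ into the exponent count (so we may assume $p \nmid q$), the group splits canonically as $\Z_{p^2} \oplus \Z_q$, and the linking form splits correspondingly as $(\Z_{p^2}, \lambda_p) \oplus (\Z_q, \lambda_q)$ because the two summands are orthogonal under $\lambda$. The hypothesis that $K$ bounds a M\"obius band $F$ in $B^4$ gives, via Theorem~\ref{lformDirectSum}, a splitting $(H_1(\DK), \lambda) = (G_1, \lambda_1) \oplus (G_2, \lambda_2)$ with $(G_2, \lambda_2)$ metabolic and $G_1$ presented by a rank-$b_1(F) = 1$ form. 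So $G_1$ is cyclic, generated by some $a$ with $\lambda(a,a) = r/m$ where $m = |G_1|$, and $G_2$ is metabolic; in particular $G_2$ contains a subgroup of order $\sqrt{|G_2|}$ on which $\lambda_2$ vanishes.

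The key step is a case analysis on how the order $p^2 q$ distributes between $G_1$ and $G_2$. Because $G_2$ is metabolic, $|G_2|$ is a perfect square; since the only square dividing $p^2 q$ (with $q$ squarefree in the relevant sense, i.e. $q$ a product of distinct primes once we've pulled out the maximal $p$-power) is $1$ or $p^2$, we get $|G_2| \in \{1, p^2\}$. If $|G_2| = 1$ then $G_1 = H_1(\DK)$ is cyclic of order $p^2 q$ with generator $a$ and $\lambda(a,a) = r/(p^2 q)$; metabolicity is vacuous and one argues as in the proof of Corollary~\ref{linkingNumThm} that $r$ must be a unit, so after replacing $a$ by a suitable multiple $\lambda(a,a) = \pm 1/(p^2 q)$ — but we actually want the cleaner dichotomy, so instead we observe that the presence of a $\Z_q$ quotient on which $\lambda$ is nondegenerate forces, by the Gilmer--Livingston linking-number argument applied to the order-$q$ part, a generator $a$ of the $\Z_q$ summand with $\lambda(a,a) = \pm 1/q$. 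If instead $|G_2| = p^2$, then $G_2 = \Z_{p^2}$ carries a metabolic form, so $\lambda_p$ restricted to this summand is metabolic, and $G_1 = \Z_q$ with generator $a$ satisfying $\lambda(a,a) = r/q$ for a unit $r$; rescaling gives $\lambda(a,a) = \pm 1/q$. In either case we land on one of the two stated conclusions.

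The main obstacle I anticipate is justifying that in the first case ($|G_2| = 1$) one genuinely gets $\lambda(a,a) = \pm 1/(p^2 q)$ for a \emph{generator} $a$ of the full cyclic group — this is exactly the content of Corollary~\ref{linkingNumThm}, so it should follow by citing that result directly rather than re-proving it, provided one checks that $p^2 q$ "is the product of primes all with odd exponent" in the degenerate sense needed (here $p$ appears with exponent $2$, which is even, so Corollary~\ref{linkingNumThm} does \emph{not} apply verbatim — this is precisely why the lemma is nontrivial and why the $\Z_{p^2}$ summand must be handled separately). The careful point is therefore: when $|G_2|=1$, the metabolic summand is trivial, $G_1$ is the whole group, and the rank-one presentation together with nondegeneracy of $\lambda$ on $\DK$ forces $\lambda(a,a) = u/(p^2q)$ with $u$ a unit mod $p^2 q$; then $u$ is in particular a unit mod $q$, and projecting to $\Z_q$ recovers $\lambda(\bar a, \bar a) = \pm 1/q$ after rescaling within $\Z_q$. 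So the honest dichotomy is driven entirely by whether the metabolic summand absorbs the full $p^2$ or not, and I would organize the write-up around that single binary split, citing Theorem~\ref{lformDirectSum} for the splitting and the arithmetic of metabolic forms (perfect-square order) for the constraint on $|G_2|$.
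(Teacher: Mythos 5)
Your overall strategy matches the paper's: invoke Theorem~\ref{lformDirectSum} to split $(H_1(\DK),\lambda)$ as $(G_1,\lambda_1)\oplus(G_2,\lambda_2)$ with $G_2$ metabolic and $G_1$ admitting a rank-one presentation, then do a case analysis on whether the $\Z_{p^2}$ summand lands in $G_1$ or $G_2$. Your observation that $|G_2|$ must be a perfect square dividing $p^2q$ (hence $|G_2|\in\{1,p^2\}$) is a clean reformulation of the paper's remark that $\Z_q$ must sit entirely inside $G_1$ because $q$ is squarefree. So the route is the same.

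However, your handling of the case $|G_2|=1$ contains two real errors. First, writing $\lambda(a,a)=r/(p^2q)$ for a unit $r$ and then claiming that ``after replacing $a$ by a suitable multiple $\lambda(a,a)=\pm 1/(p^2q)$'' is not justified: replacing $a$ by $na$ scales $\lambda(a,a)$ by $n^2$, so you can only reach $\pm 1/(p^2q)$ if $\pm r$ is a square mod $p^2q$, which a general unit need not be. The correct mechanism --- and what the paper actually uses --- is that the rank-one presentation matrix is a $1\times 1$ nondegenerate symmetric integer matrix $A$ with $\Z^1/A\Z^1\cong G_1$, forcing $A=(\pm|G_1|)$ and hence $\lambda_1=\pm 1/|G_1|$ on the nose, with no rescaling required. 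Second, your pivot (``but we actually want the cleaner dichotomy, so instead we observe \ldots $\lambda(a,a)=\pm 1/q$'') is both unnecessary and incorrect: in the case $G_2=\{0\}$ the dichotomy already supplies the first alternative $\pm 1/(p^2q)$, and projecting a generator to the $\Z_q$ summand (i.e., passing to $p^2a$) gives $\lambda(p^2a,p^2a)=\pm p^4/(p^2q)=\pm p^2/q$, which is $\pm 1/q$ only when $p^2\equiv\pm 1\pmod q$. Drop that pivot entirely. Stated plainly, the dichotomy is driven precisely by $|G_2|\in\{1,p^2\}$: if $|G_2|=p^2$ then $G_1=\Z_q$ and the rank-one presentation forces $\lambda_1=\pm 1/q$; if $|G_2|=1$ then $G_1=\Z_{p^2q}$ and the same argument forces $\lambda_1=\pm 1/(p^2q)$. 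Once you replace the rescaling step by the presentation-matrix argument and delete the detour to $\pm 1/q$ in the first case, your proof is correct and essentially identical to the paper's.
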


\begin{proof}
    As we see in Theorem \ref{lformDirectSum}, $(H_{1} (\DK) , \lambda)$ splits as a direct sum $(G_{1}, \lambda_{1}) \oplus (G_{2}, \lambda_{2})$ where $(G_{2}, \lambda_{2})$ is metabolic and $\lambda_{1}$ is presented by the linking matrix of $\DK$, which has a presentation of rank one. As $q$ is square-free, we have that $\Z_{q}$ is completely contained in $G_{1}$. Then either $\Z_{p^{2}}$ is completely contained in $G_{2}$, which implies it is metabolic, or $\Z_{p^{2}}$ is contained in $G_{1}$. \\
    If $\Z_{p^{2}}$ is completely contained in $G_{2}$, then there exists a subgroup $H$ of $\Z_{p^{2}}$ so that $|H|^{2} = p^{2}$ and $\lambda(g, g') = 0$ for any $g, g' \in H$, since $\lambda_{2}$ is metabolic. Then, as $\lambda_{1}$ must have a presentation of rank one, we have that the presentation matrix must be of the form $(\pm |G_{1}|) = (\pm q)$. Therefore, the linking form $\lambda_{1}$ on $G_{1}$ is given by $\pm1/q$.  \\
    If $\Z_{p^{2}}$ is completely contained in $G_{1}$, a similar argument shows $\lambda_{1}$ is given by $\pm 1/q$
\end{proof}

The following knots: 
$$ 11n_{22}, \hspace{2mm} 11n_{29} , \hspace{2mm} 11n_{33}, \hspace{2mm} 11n_{56}, \hspace{2mm} 11n_{84}, \hspace{2mm} 11n_{92}, \hspace{2mm} 11n_{101}, \hspace{2mm} 11n_{112}, \hspace{2mm} 11n_{125}, \hspace{2mm} 11n_{131},  \hspace{2mm} 11n_{138}, \hspace{2mm} 11n_{155}, $$ 
$$ 11n_{176}, \hspace{2mm} 11n_{184}  $$

have the respective linking forms: 

$$ \dfrac{42}{55}, \hspace{2mm} \dfrac{14}{51}, \hspace{2mm} \dfrac{22}{51}, \hspace{2mm} \dfrac{12}{35}, \hspace{2mm} \dfrac{18}{35}, \hspace{2mm} \dfrac{2}{15}, \hspace{2mm} \dfrac{19}{39}, \hspace{2mm} \dfrac{53}{55}, \hspace{2mm} \dfrac{61}{63}, \hspace{2mm} \dfrac{39}{67}, \hspace{2mm} \dfrac{13}{15}, \hspace{2mm} \dfrac{20}{51}, \hspace{2mm} \dfrac{11}{63}, \hspace{2mm} \dfrac{2}{87} $$
\vspace{2mm}

All of which satisfy the obstruction from Corollary \ref{linkingNumThm} and Lemma \ref{myLemma1}. Additionally, all of these knots have an non-orientable band move to a knot $K'$ where $\gamma_{4}(K') = 1$ (Figures \ref{first1G} -\ref{last1G}). Thus, each of these knots has non-orientable 4-genus equal to 2.

\subsection{Knot Floer Homology}

Ozsv\'ath, Stipsicz, and Szab\'o explored non-orientable knot floer homology and how the Upsilon invariant can be used as a lower bound for the non-orientable 4-genus \cite{OS2}. Given $K$ is a knot, denote $\Upsilon_{K}(1) $ as $\upsilon (K)$ (lower case upsilon), and then we have: 
\begin{equation*}
    \left| \upsilon(K) - \frac{\sigma(K)}{2} \right| \leq \gamma_{4} (K)
\end{equation*}

However, if $K$ is not an L-space knot, this invariant is rather difficult to compute. Additionally, we have from \cite{OS2} that for an alternating (or quasi-alternating) knot $K$, 
\begin{equation*}
    \upsilon(K) = \frac{\sigma(K)}{2}
\end{equation*}

For the 185 non-alternating 11-crossing knots, only 3 are not quasi-alternating. Of those 3, two are slice and one is not. This is thus not a useful lower bound for the knots being considered in this paper. However, this is a useful invariant for torus knots, demonstrated in detail by Binns, Kang, Simone, and Tru\"ol  in \cite{torus1}. Additionally, Allen explored a geography problem where the upsilon invariant was wonderfully utilized in \cite{Allen}.

\section{SPECIAL CASES}

\begin{lem}\label{11n38}
    The knot $11n_{38}$ does not bound a M\"{o}bius band. 
\end{lem}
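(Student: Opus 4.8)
The plan is to obstruct the Möbius band homologically, working with the double branched cover $\DK$ and the linking-form splitting theorem (Theorem~\ref{lformDirectSum}). First I would take a checkerboard-colored diagram of $11n_{38}$ and run the Goeritz algorithm of Section~3 to produce a Goeritz matrix $G$; this pins down $H_1(\DK)$ as an abelian group and gives its linking form $\lambda \cong \pm G^{-1}$. The reason $11n_{38}$ needs separate treatment is that the ready-made obstructions of Section~2 say nothing about it: $\sigma(11n_{38}) + 4\,\mathrm{Arf}(11n_{38}) \not\equiv 4 \pmod 8$, so Proposition~\ref{sigArf} is silent, and the prime factorization of $\det(11n_{38}) = |H_1(\DK)|$ is not of the shape required by Corollary~\ref{linkingNumThm} or Lemma~\ref{myLemma1}. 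So one has to work directly with $(H_1(\DK),\lambda)$.

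Suppose, toward a contradiction, that $11n_{38}$ bounds a Möbius band $F \subset B^4$, so $b_1(F) = 1$. By Theorem~\ref{lformDirectSum}, $(H_1(\DK),\lambda) \cong (G_1,\lambda_1) \oplus (G_2,\lambda_2)$ with $G_2$ metabolic and $G_1$ admitting a rank-one presentation; hence $G_1$ is cyclic, $\lambda_1$ is presented by the $1\times1$ matrix $(\pm|G_1|)$, and a generator $a\in G_1$ satisfies $\lambda_1(a,a) = \pm1/|G_1|$. I would then enumerate the finitely many candidate splittings: $|G_2|$ must be a perfect square, which together with the explicit group structure of $H_1(\DK)$ leaves only a short list of possible pairs $(G_1,G_2)$; for each I would (i) check whether $G_2$ is actually metabolic — a discriminant / quadratic-residue test read off from $\pm G^{-1}$ — and (ii) for the survivors, check whether $\lambda_1$ is equivalent to $\langle\pm1/|G_1|\rangle$, i.e., whether $c\,x^2 \equiv \pm1 \pmod{|G_1|}$ is solvable, where $\lambda_1(a,a) = c/|G_1|$. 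Showing every candidate fails (i) or (ii) gives the contradiction.

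If $H_1(\DK)$ turns out to contain a two-generator $p$-torsion summand $\mathbb{Z}_p\oplus\mathbb{Z}_p$, I would also (or instead) invoke the Klein-bottle obstructions from \cite{GL} quoted in Section~2: an embedded Möbius band in $B^4$ bounded by $K$ can be enlarged, by ambient connected sum with an unknotted $\mathbb{RP}^2 \subset S^4$, to an embedded punctured Klein bottle in $B^4$ with the same boundary, so those theorems obstruct a Möbius band as well. Computing the discriminant of $\lambda$ restricted to the $\mathbb{Z}_p\oplus\mathbb{Z}_p$ part and checking it is not $\pm1$ in $\mathbb{F}_p^*/(\mathbb{F}_p^*)^2$ would then finish the proof along that route.

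The main obstacle is precisely that no single theorem from Section~2 applies, so the proof lives in the case analysis: correctly listing every way $(H_1(\DK),\lambda)$ can split as cyclic-plus-metabolic and ruling each out by a residue computation modulo the prime powers dividing $\det(11n_{38})$. A secondary point to be careful about is the ``add a crosscap'' enlargement move, which must be checked to yield a genuine smoothly embedded punctured Klein bottle with unchanged boundary before the Klein-bottle theorems of \cite{GL} may be applied.
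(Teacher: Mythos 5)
Your plan misses the feature of $11n_{38}$ that actually makes the lemma work, and as written the argument would come up empty. The knot has $\det(11n_{38}) = 3$, so $H_1(\DK) \cong \Z_3$. This is already a product of primes with odd exponent, so Corollary~\ref{linkingNumThm} is not silent here — it simply gives no obstruction, because on $\Z_3$ the self-linking of a generator is automatically $\pm 1/3$ (the only nonzero residues mod $3$ are $\pm 1$). The same is true of the whole case analysis you propose: the only splitting of $\Z_3$ as ``cyclic $\oplus$ metabolic'' is $G_1 = \Z_3$, $G_2 = 0$, and the solvability test $c\,x^2 \equiv \pm 1 \pmod 3$ has a solution for every nonzero $c$. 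Every candidate on your list survives both (i) and (ii), so the residue computation gives no contradiction. The Klein-bottle branch is likewise moot because $H_1(\DK)$ is cyclic and contains no $\Z_p \oplus \Z_p$ summand.

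The ingredient you omitted, and the one the paper actually uses, is the \emph{sign} constraint: the obstruction comes from comparing the sign of the self-linking value $\lambda(a,a) \in \{+1/3,\ -1/3\}$ (a genuine invariant of the linking form on $\Z_3$, not defined merely up to sign) with the definiteness of the intersection form of the double branched cover $D_F(B^4)$. If $11n_{38}$ bounded a Möbius band $F$, then $b_2(D_F(B^4)) = 1$ and, by the results of Gilmer--Livingston and the Gordon--Litherland relation between the Goeritz form and the intersection form of $D_F(B^4)$, the rank-one form on $H_2(D_F(B^4))$ would have to be negative definite; but the Goeritz matrix computation shows the linking form is $[+1/3]$, which would force the intersection form to be positive definite. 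That sign clash is the contradiction. Your proposal tests only whether a generator with self-linking $\pm 1/|G_1|$ exists, which is weaker than tracking the sign, so it cannot distinguish $[1/3]$ from $[-1/3]$ and will not rule out the Möbius band for this knot.
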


The knot $11n_{38}$ has $H_{1}(\DK ) = \Z_{3}$ and thus the linking form is represented by the $1 \times 1$ matrix $[ 1/3]$. This is clear, as the non-zero elements of $\Z_{3}$ are 1 and -1. Then, if $K$ bounds a M\"{o}bius band $F$ in $B^{4}$, we have $b(F) = b(\DF) = 1$ and $\DF$ is negative definite \cite{GL}. From Theorem 3 in \cite{GorLit}, we have that the intersection form on $H_{2}(\DF))$ is represented by the linking matrix on $H_{1} (\DK)$, which can be viewed from the entries in the Goeritz matrix. The Goeritz matrix $G$ is a $4 \times 4$ matrix that is indefinite, and when diagonalized, $G = SJS^{-1}$, the matrix $J$ is also indefinite. We may suppose that there exists a presentation matrix that represents the linking form, and by checking the diagonal entries on $-G^{-1}$, we have that $1/3$ represents the form. This implies the manifold is positive definite, which is a contradiction. Thus, $11n_{38}$ does not bound a M\"{o}bius band. We then have that there is a non-orientable band move from $11n_{38}$ to the trefoil knot, which has $\gamma_{4}(3_{1}) = 1$, therefore we may conclude that $\gamma_{4}(11n_{38}) = 2$. The figure below was obtained from Knot Atlas \cite{knotatlas}.

\begin{figure}[h]
    \centering
    \includegraphics[width = 0.32\textwidth]{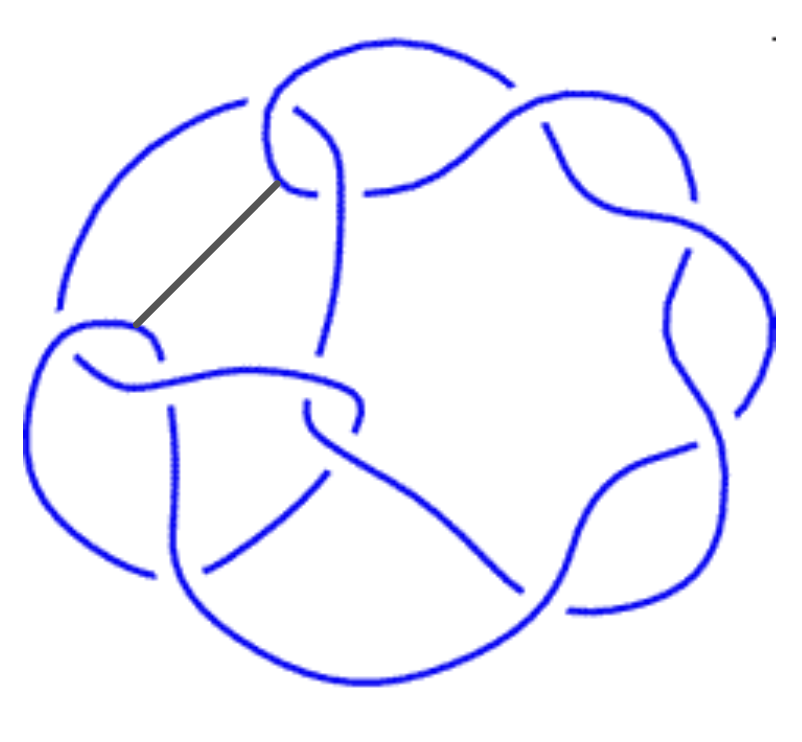}
    \caption{A non-oriented band move from $11n_{38}\stackrel{0}{\longrightarrow} 3_1$}
    \label{fig:enter-label}
\end{figure}

We thus have a combination of Lemmas \ref{1pfofT2}, \ref{2pfofT2}, and \ref{11n38}, Proposition \ref{BMbound}, and Theorem \ref{lformDirectSum} showing Proposition \ref{propforThm2} is true, thus proving part (b) of Theorem 1.1.

\begin{lem}
    The knots $11n_{17}, \hspace{2mm} 11n_{40}, \hspace{2mm} 11n_{159}, \hspace{2mm} 11n_{166}, \hspace{2mm} 11n_{177} $ and $11n_{178}$ all have $\gamma_{4}(K) =$ 1 or 2. 
\end{lem}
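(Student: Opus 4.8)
The plan is to establish, for each of the six remaining knots $11n_{17}, 11n_{40}, 11n_{159}, 11n_{166}, 11n_{177}, 11n_{178}$, an upper bound of $2$ and rule out the value $0$; since $\gamma_4(K)\geq 1$ always and a value of $\gamma_4(K)\geq 3$ is what we want to exclude, the statement reduces to showing $\gamma_4(K)\leq 2$ for each. The cleanest route to the upper bound is the crossing-number estimate already quoted in Section 2: $\gamma_4(K)\leq\lfloor n(K)/2\rfloor$. Since every knot in the list is an $11$-crossing knot, this immediately gives $\gamma_4(K)\leq 5$, which is not strong enough. So first I would look up in KnotInfo the values of $g_4(K)$, $u(K)$, $u_s(K)$, and the crosscap number $c(K)$ for each of the six knots, and apply whichever of the bounds $\gamma_4(K)\leq 2g_4(K)+1$, $\gamma_4(K)\leq c(K)$, or Proposition~\ref{crosscapBounds} (via $c_4(K)\leq u_s(K)\leq u(K)$) yields $\gamma_4(K)\leq 2$. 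For any knot with $g_4(K)=1$ we already get $\gamma_4(K)\leq 3$ from $2g_4+1$, and if additionally $c_4(K)=1$ (forced when $g_4=u=1$, or more generally $g_4=u_s=1$) then Proposition~\ref{crosscapBounds} gives $\gamma_4(K)\leq 2$; if $c(K)\leq 2$ we are likewise done.

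If the tabulated invariants do not immediately force the bound — in particular if $g_4(K)\geq 2$ and the crosscap number is large — then I would fall back on the explicit band-move method used throughout Section 3. Concretely, for each recalcitrant knot I would exhibit a single non-oriented band move $K\stackrel{h}{\longrightarrow}K'$ to a knot $K'$ with $\gamma_4(K')=1$ (for instance a slice knot, the unknot, or one of the knots already shown in Proposition~\ref{propforThm1} to have $\gamma_4=1$); then Proposition~\ref{BMbound} gives $\gamma_4(K)\leq\gamma_4(K')+1=2$. Finding such a band move is a diagram-chasing exercise: start from the KnotInfo/Knot Atlas diagram, and search for a band whose surgery simplifies the diagram to a recognizable knot of non-orientable $4$-genus one. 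This is exactly the technique already applied to $11n_{38}$ and the linking-form list, so it fits the paper's framework and figures.

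The reason these six knots cannot be pinned down to a single value is the lower-bound side: none of them satisfy $\sigma(K)+4\,\mathrm{Arf}(K)\equiv 4\pmod 8$, so Proposition~\ref{sigArf} does not apply; and — presumably — the double-branched-cover obstructions of Corollary~\ref{linkingNumThm} and Lemma~\ref{myLemma1} are either inapplicable (wrong homology group structure) or are satisfied, so they give no obstruction to bounding a M\"obius band. Hence we cannot distinguish $\gamma_4=1$ from $\gamma_4=2$, and the honest conclusion is the interval $\{1,2\}$. So the proof structure I would write is: (i) for each of the six knots, quote the relevant KnotInfo values and deduce $\gamma_4(K)\leq 2$ from one of the Section~2 inequalities or from an explicit band move with figure; (ii) note $\gamma_4(K)\geq 1$ by definition, with equality iff $K$ bounds a M\"obius band; (iii) observe that the available M\"obius-band obstructions (signature-Arf, and the linking-form criteria of Gilmer--Livingston and Lemma~\ref{myLemma1}) all fail to obstruct, so the value $1$ cannot be excluded; conclude $\gamma_4(K)\in\{1,2\}$.

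The main obstacle is step (i) in the hard cases: if $g_4(K)\geq 2$ and the crosscap number exceeds $2$, then the only remaining tool is to produce an explicit non-oriented band move to a genus-one knot, and finding that band by hand can be fiddly — one may need to first apply Reidemeister moves to a workable diagram, or chain two band moves and argue the intermediate knot already appears in Proposition~\ref{propforThm1}. If even that fails for some knot, the upper bound would have to come from the crosscap number or from $2g_4(K)+1$ with $g_4(K)=1$, so the genuine risk is a knot with $g_4(K)=2$, crosscap number $\geq 3$, and no evident single band move to a $\gamma_4=1$ knot; for such a knot one would be forced to look harder at Proposition~\ref{crosscapBounds} via a direct estimate of $c_4(K)$, which is the subtlest invariant in play.
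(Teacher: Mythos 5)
Your proposal is correct and matches the paper's approach: the paper establishes the upper bound $\gamma_4(K)\le 2$ for all six knots by exhibiting explicit non-oriented band moves to knots of non-orientable $4$-genus one (Figure~\ref{problems}) and invoking Proposition~\ref{BMbound}, then notes that neither the signature-Arf criterion (Proposition~\ref{sigArf}) nor the linking-form obstructions apply — it actually carries out the linking-form computation and checks that the sign of $\lambda$ is consistent with the required definiteness of $D_F(B^4)$, so $\gamma_4=1$ cannot be ruled out. Your instinct to first try $2g_4+1$, crosscap-number, or clasp-number bounds is a reasonable alternative entry point, but the paper skips straight to the band-move argument for the upper bound, and it replaces your "presumably" about the obstructions with the explicit verification you should include.
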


We have the following table:
    \renewcommand\arraystretch{2.1}
\begin{center} 
\begin{tabular}{ |c|c|c|c| } 
 \hline
 Knot & linking form & definiteness of $\DF$ & 4-genus  \\ 
 \hline
 $11n_{17}$ & $1/47$ & positive & 1 \\ 
 \hline
 $11n_{40}$ & $-1/79$ & negative & 1 \\
 \hline
 $11n_{159} $ & $1/71$ & positive & 1 \\
 \hline
 $11n_{166} $ & $1/59$ & positive & 1 \\
 \hline
 $11n_{177} $ & $1/83$ & positive & 1 \\
 \hline
 $11n_{178} $ & $-1/95$ & negative & 1 \\
 \hline

\end{tabular}
\end{center}

\begin{proof}
    Denote $K$ as a knot listed in Lemma 4.2. We first examine the knot signature and Arf invariant to see $\sigma(K) + 4 \rm{Arf}(K) \equiv \pm 2 \pmod{8} $. Thus, we do not meet the obstruction from Proposition 2.3, so we may only conclude $\gamma_{4} (K) \geq 1$. We then move on to examining the linking form of $K$. Note that the determinant of $K$, $d = \text{det}(K)$, is either a prime number or a product of exactly 2 prime numbers. As $d = | H_{1} (\DK ) |$, we cannot have a splitting of $H_{1}(\DK)$ into $G_{1} \oplus G_{2}$ where $G_{2}$ is metabolic, since $d$ is square free. We thus see that the linking form $\lambda$ for each knot is of the form $\pm 1/ d$. We also compare the linking form of the knot to the definitness of $\DF$. The sign of the 4-manifold $\DF$ corresponds to the sign of the quadratic form \cite{GL}, thus the linking form, and we see that our signs are corresponding for the linking form and definiteness of $\DF$. Additionally, each knot is one band move away from a knot $K'$ so that $\gamma_{4}(K') = 1$, see Figure \ref{problems}, and thus $\gamma_{4}(K) \leq 2$. We thus cannot find an obstruction to these knots bounding a M\"{o}bius band, but also cannot find the desired band move to a slice knot. Therefore, $\gamma_{4}(K) \leq 2$ for the knots in Lemma 4.2. 
\end{proof}

This concludes the proof for Theorem 1.1.\\

\begin{figure}[!htbp] 
	\centering
	\begin{subfigure}[b]{0.25\textwidth}
		\includegraphics[width=\textwidth]{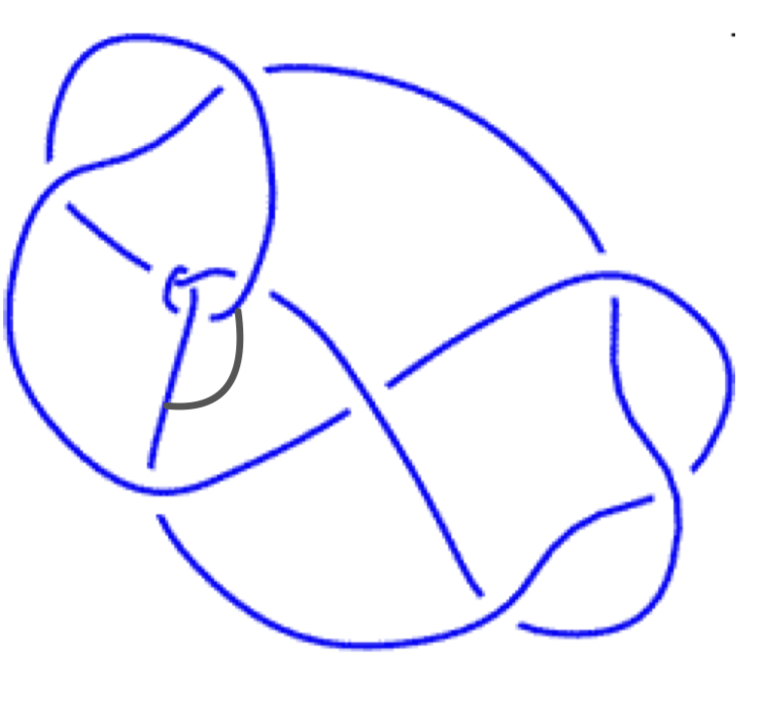}
		\caption{$11n_{17}\stackrel{1}{\longrightarrow} 10_{130}$}
		
	\end{subfigure}
	~
	\begin{subfigure}[b]{0.25\textwidth}
		\includegraphics[width=\textwidth]{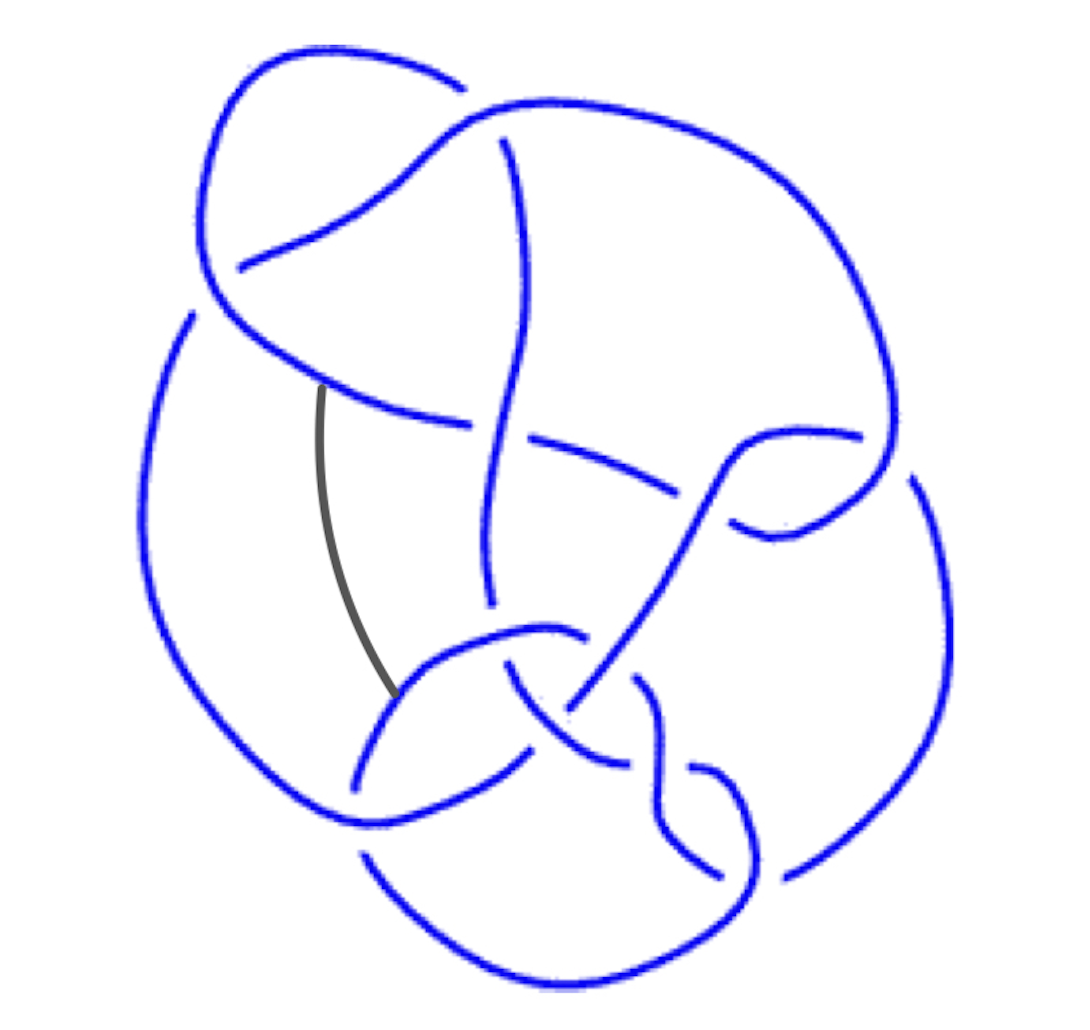}
		\caption{$11n_{40}\stackrel{-1}{\longrightarrow} 8_{4}$}
		
	\end{subfigure}
	~
	\begin{subfigure}[b]{0.25\textwidth}
		\includegraphics[width=\textwidth]{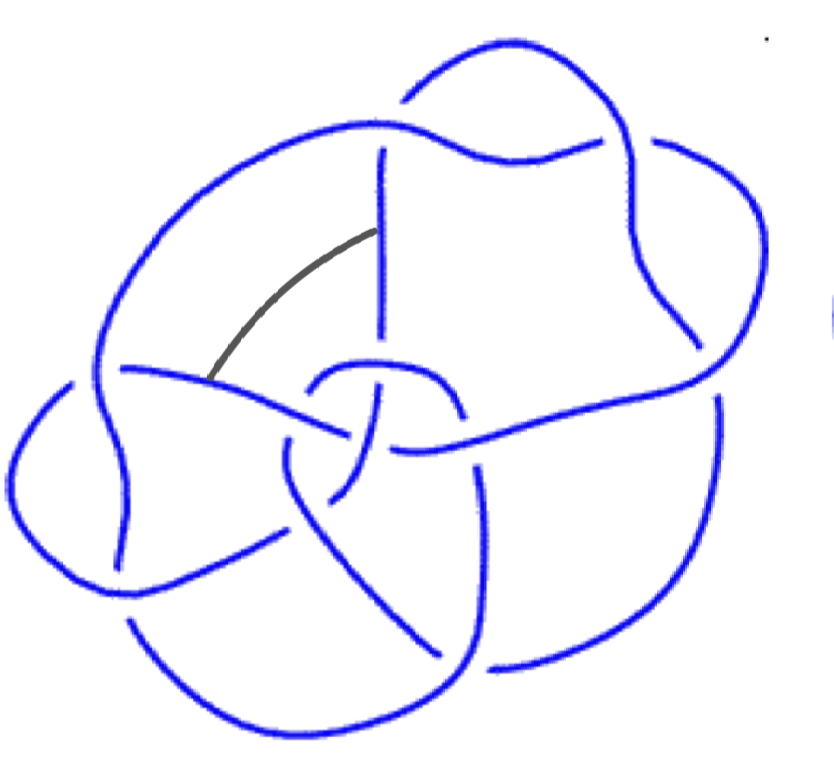}
		\caption{$11n_{159}\stackrel{0\phantom{i}}{\longrightarrow} 3_{1}$}
		
	\end{subfigure}
	\vskip3mm
	\begin{subfigure}[b]{0.25\textwidth}
		\includegraphics[width=\textwidth]{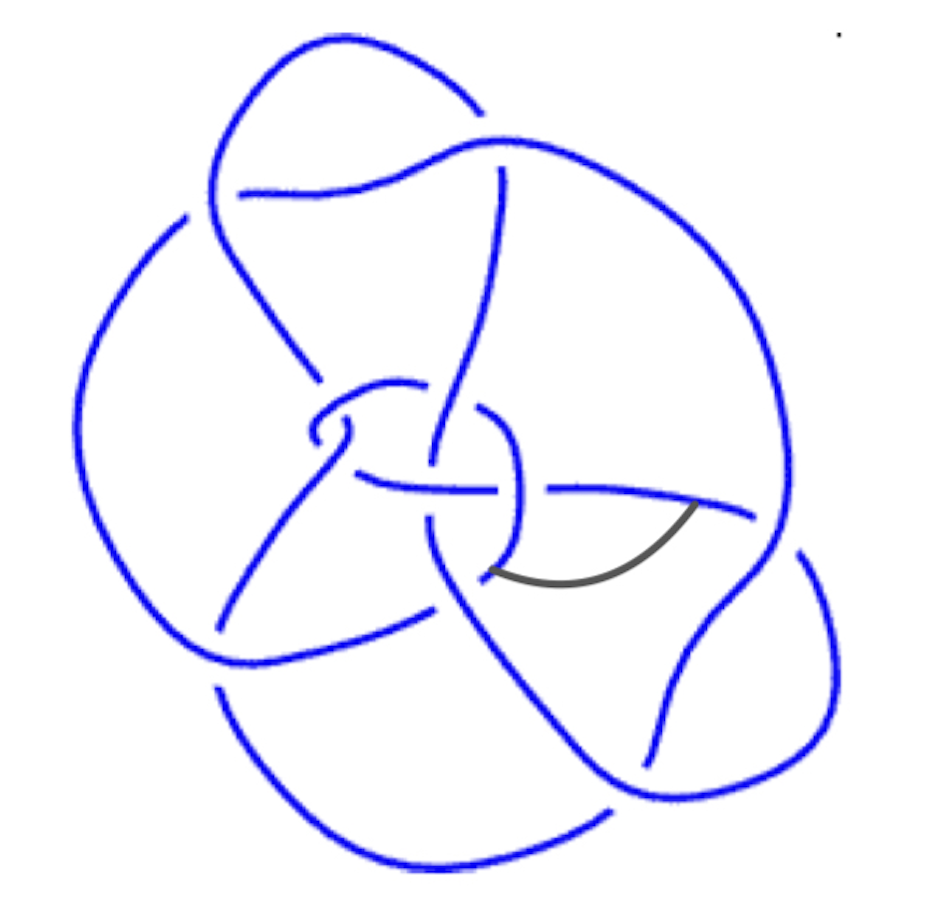}
		\caption{$11n_{166}\stackrel{1}{\longrightarrow} 10_{142}$}
		
	\end{subfigure}
	~
	\begin{subfigure}[b]{0.25\textwidth}
		\includegraphics[width=\textwidth]{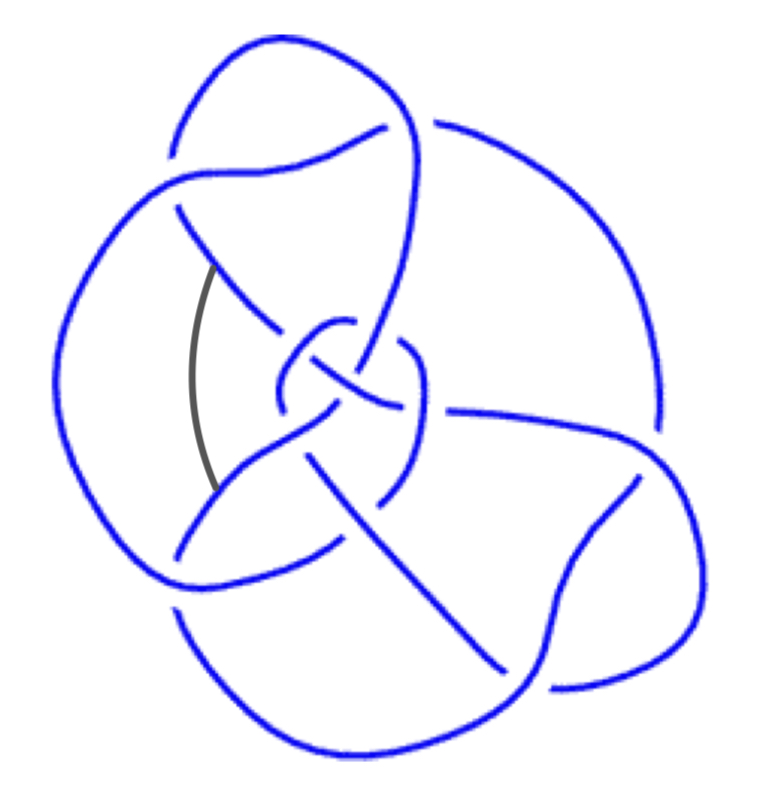}
		\caption{$11n_{177}\stackrel{0}{\longrightarrow} 3_{1}$}
		
	\end{subfigure}
	~
	\begin{subfigure}[b]{0.25\textwidth}
		\includegraphics[width=\textwidth]{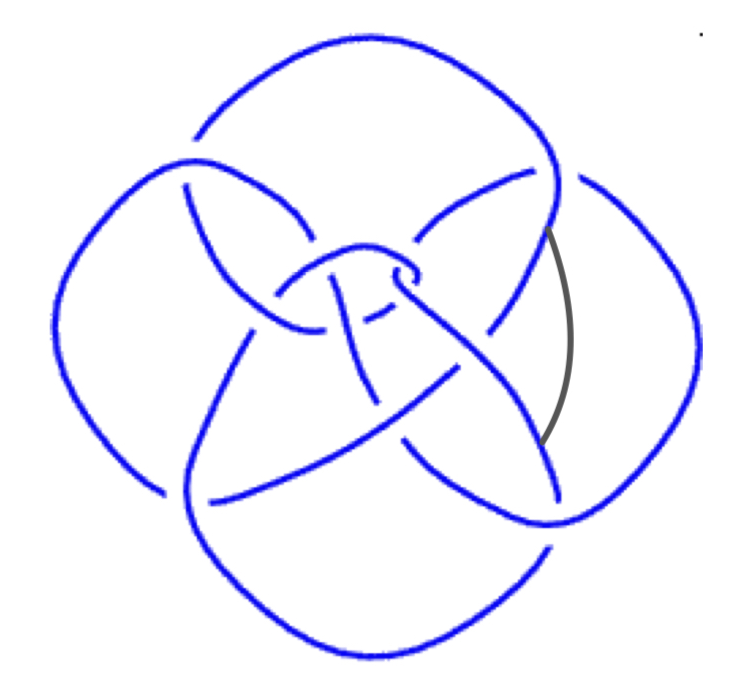}
		\caption{$11n_{178}\stackrel{-1}{\longrightarrow} 9_{32}$}
		
	\end{subfigure}
	\vskip3mm

	\caption{Non-oriented band moves from the knots $11n_{17},  11n_{40},  11n_{159}, $ \\ $ 11n_{166}, 11n_{177}, \text{ and } 11n_{178} $ to knots with non-orientable genus 1.}\label{problems}
\end{figure}

\newpage

\subsection{Concordance}

Knot concordance is a great tool that could be used to solve for the non-orientable 4-genus. For the six remaining knots, their concordance genus is known \cite{knotinfo}, however the knots to which they are concordant is still unknown. Suppose a given knot $K$ is concordant to $K'$, then it is clear that $\gamma_{4}(K) = \gamma_{4} (K')$. 

\begin{ques}
    Is $11n_{40}$ concordant to $10_{57}$?
\end{ques}

$10_{57}$ is a wonderful candidate for concordance to $11n_{40}$, just by a simple analysis of their invariants \cite{knotinfo}. If the answer to Question 4.3 is yes, then the $11n_{40}$ knot has $\gamma_{4} (11n_{40}) = 1$.  

\begin{conj}
    The knots $11n_{17}$, $11n_{159}$, $11n_{166}$, $11n_{177}$, and $11n_{178}$ are not concordant to any knot with 11 or fewer crossings. Moreover, $11n_{17}$, $11n_{159}$, and $11n_{166}$ are not concordant to any knot with 12 or fewer crossings. 
\end{conj}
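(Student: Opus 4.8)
The plan is to reduce the conjecture to a finite verification. Recall that $K$ is concordant to $K'$ if and only if $K \# K''$ is slice, where $K''$ denotes the concordance inverse of $K'$, i.e.\ its reverse mirror image. Fixing $K$ to be one of the five knots $11n_{17}$, $11n_{159}$, $11n_{166}$, $11n_{177}$, $11n_{178}$, it therefore suffices to run through the census of all knots $K' \neq K$ with at most $11$ (respectively $12$) crossings and show, for each one, that $K \# K''$ is not slice --- equivalently, that some concordance invariant that is additive under connected sum takes different values on $K$ and $K'$.

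First I would apply the Fox--Milnor condition as a coarse filter: if $K$ is concordant to $K'$, then $\Delta_K(t)\,\Delta_{K'}(t) \doteq f(t)\,f(t^{-1})$ for some $f \in \Z[t,t^{-1}]$. The Alexander polynomials of the five knots (read off from KnotInfo) are in each case either irreducible over $\Q$ or a product of factors none of which is a norm $g(t)g(t^{-1})$; consequently any $K'$ concordant to $K$ must have $\Delta_K \mid \Delta_{K'}$, which forces $g(K') \geq g(K)$ and eliminates all but a short list of candidates from the census. This is presumably also why the stronger $12$-crossing statement is expected precisely for $11n_{17}$, $11n_{159}$, $11n_{166}$: their Alexander polynomials are the most restrictive, leaving the fewest $12$-crossing survivors to analyze.

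For the handful of knots that survive the Fox--Milnor filter, I would compare the standard suite of computable concordance invariants: the signature $\sigma(K)$, the Arf invariant, the full Tristram--Levine signature function $\sigma_\omega(K)$ at prime-power roots of unity, the algebraic concordance class computed from the Seifert form (in particular its order in the algebraic concordance group), the Ozsv\'ath--Szab\'o invariant $\tau$, the Rasmussen invariant $s$, the Upsilon invariant $\Upsilon_K(t)$, and the Heegaard Floer $d$-invariants of the double branched cover $\DK$ (which, by Manolescu--Owens, are concordance invariants and are additive under connected sum). All of these are tabulated or directly computable for $11$- and $12$-crossing knots, and I expect them to separate every remaining pair. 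As an independent consistency check, the conclusion should be compatible with the known values of the concordance genus $g_c(K)$: no knot of Seifert genus below $g_c(K)$ can be concordant to $K$, which prunes the census further and corroborates the eliminations obtained above.

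The main obstacle is that there is no a priori guarantee that this finite list of invariants suffices: there could be some $K'$ sharing with $K$ its Alexander polynomial, signature function, Arf invariant, $\tau$, $s$, $\Upsilon$, and branched-cover $d$-invariants, yet not concordant to it. For such a knot one must fall back on Casson--Gordon invariants or twisted Alexander polynomials of $K \# K''$, whose computation is delicate and which are not guaranteed to obstruct sliceness. Determining whether any such genuinely hard case arises within the $\leq 12$-crossing census --- and, if so, resolving it --- is the crux of the problem, and is the reason the statement is offered here as a conjecture rather than a theorem.
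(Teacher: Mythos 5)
The statement you are addressing is a \emph{conjecture} in the paper, and the paper supplies no proof of it whatsoever: the author merely remarks that the concordance genus of each of these six knots is known from KnotInfo while the knots to which they might be concordant are not. So there is no ``paper's proof'' to compare against. What you have written is a strategy sketch rather than a proof, and to your credit you say so explicitly in your final paragraph. Your reduction is sound: $K$ is concordant to $K'$ if and only if $K \# K''$ is slice (where $K''$ is the reverse mirror of $K'$), so the conjecture becomes a finite-but-large verification over the census of knots with at most $11$ (respectively $12$) crossings. Your use of Fox--Milnor as a first-pass filter is correct --- if $\Delta_K$ is irreducible and $K\#K''$ is slice, then indeed $\Delta_K \mid \Delta_{K'}$, which, together with $\deg \Delta_{K'} \leq 2g(K')$, lets you discard most candidates by genus alone --- and the further battery of invariants you list (Tristram--Levine signatures, $\tau$, $s$, $\Upsilon$, Manolescu--Owens $\delta$/$d$-invariants of $\DK$, the algebraic concordance class) is the standard toolbox.

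The genuine gap is exactly the one you name: none of these invariants, nor all of them together, are known to be complete obstructions to sliceness, so even an exhaustive computation that found no concordant partner would not by itself constitute a proof that none exists. One would still need, for each surviving candidate pair, a positive obstruction (Casson--Gordon invariants, twisted Alexander polynomials, metabelian Blanchfield pairings, etc.) to sliceness of $K\#K''$, and there is no algorithm guaranteed to produce one. This is precisely why the statement is a conjecture in the paper, and your proposal does not close that gap. It would be more accurate to present your text as a roadmap for gathering evidence for the conjecture --- or, optimistically, for upgrading it to a theorem via a case-by-case computation --- rather than as a proof attempt.

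Two smaller points worth tightening if you pursue this. First, the Fox--Milnor divisibility argument as you state it applies cleanly only when $\Delta_K$ is irreducible; if $\Delta_K$ factors, you only get that each irreducible factor of $\Delta_K$ that is not itself a norm must divide $\Delta_{K'}$, which is a weaker conclusion and should be stated as such. Second, your heuristic for why the $12$-crossing claim is restricted to $11n_{17}$, $11n_{159}$, $11n_{166}$ --- that their Alexander polynomials are ``the most restrictive'' --- is a plausible guess but is not supported by anything in the paper; the paper gives no rationale for the stronger claim about those three knots, so you should flag that as your own speculation rather than an inference from the source.
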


It should be noted that Kearny has found the concordance genus of 11-crossing knots in \cite{K11}, as well as specific concordances from 11-crossing knots to knots of lower crossings. 


\newpage

\begin{figure}[!htbp] 
	\centering
	\begin{subfigure}[b]{0.26\textwidth}
		\includegraphics[width=\textwidth]{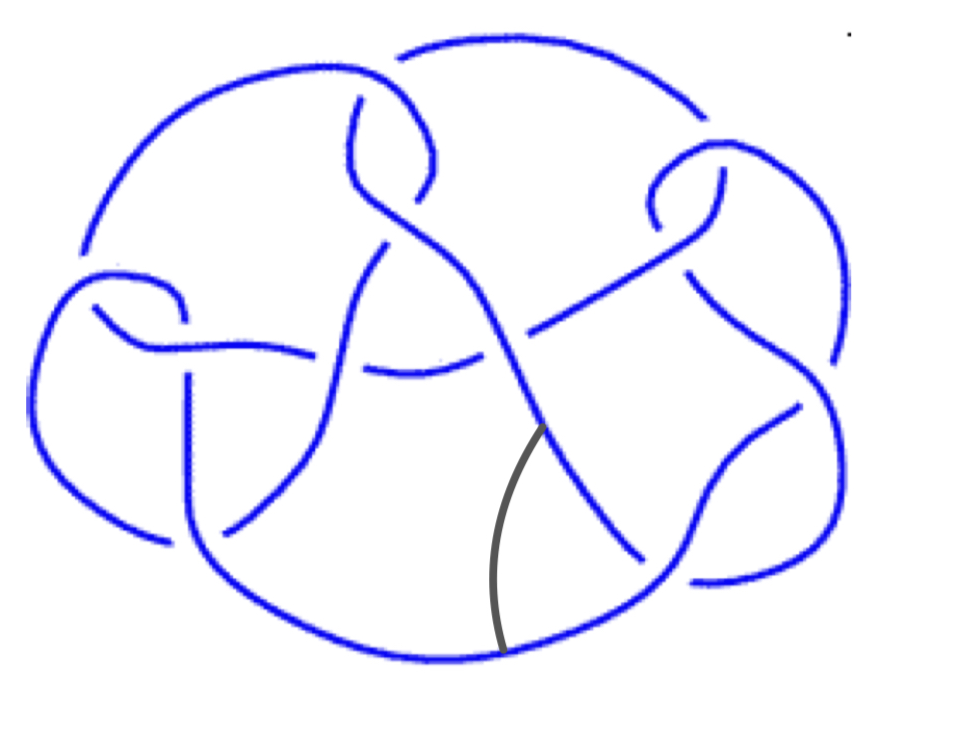}
		\caption{$11n_{1}\stackrel{-1}{\longrightarrow} 0_1$}
		
	\end{subfigure}
	~
	\begin{subfigure}[b]{0.25\textwidth}
		\includegraphics[width=\textwidth]{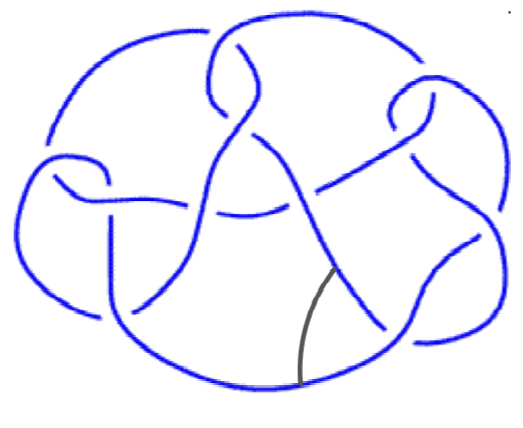}
		\caption{$11n_{3}\stackrel{-1}{\longrightarrow} 10_{137}$}
		
	\end{subfigure}
	~
	\begin{subfigure}[b]{0.25\textwidth}
		\includegraphics[width=\textwidth]{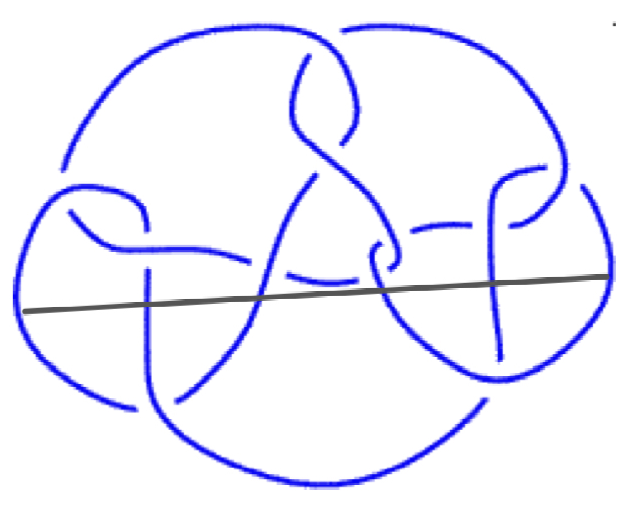}
		\caption{$11n_{5}\stackrel{-1\phantom{i}}{\longrightarrow} 8_{20}$}
		
	\end{subfigure}
	\vskip3mm
	\begin{subfigure}[b]{0.25\textwidth}
		\includegraphics[width=\textwidth]{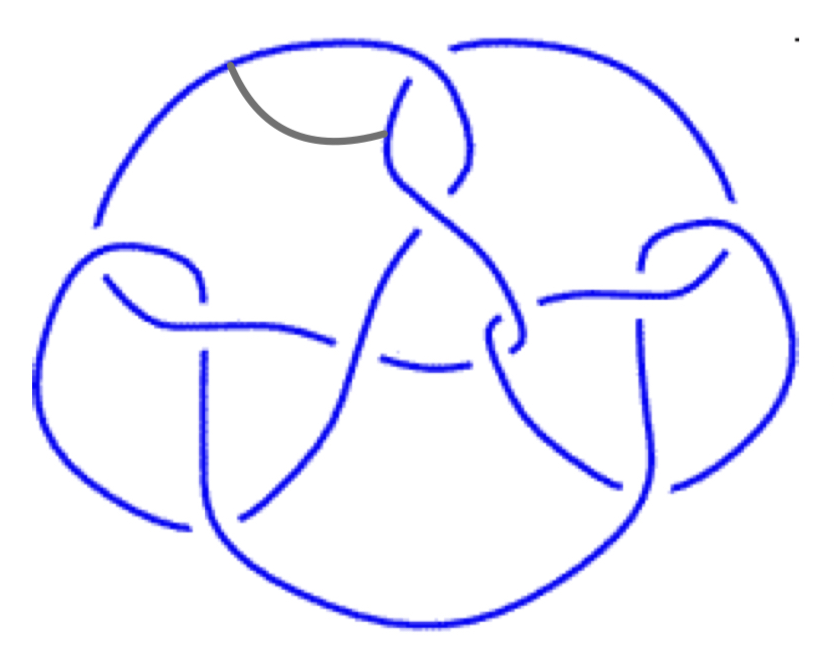}
		\caption{$11n_{6}\stackrel{1}{\longrightarrow} 8_{20}$}
		
	\end{subfigure}
	~
	\begin{subfigure}[b]{0.25\textwidth}
		\includegraphics[width=\textwidth]{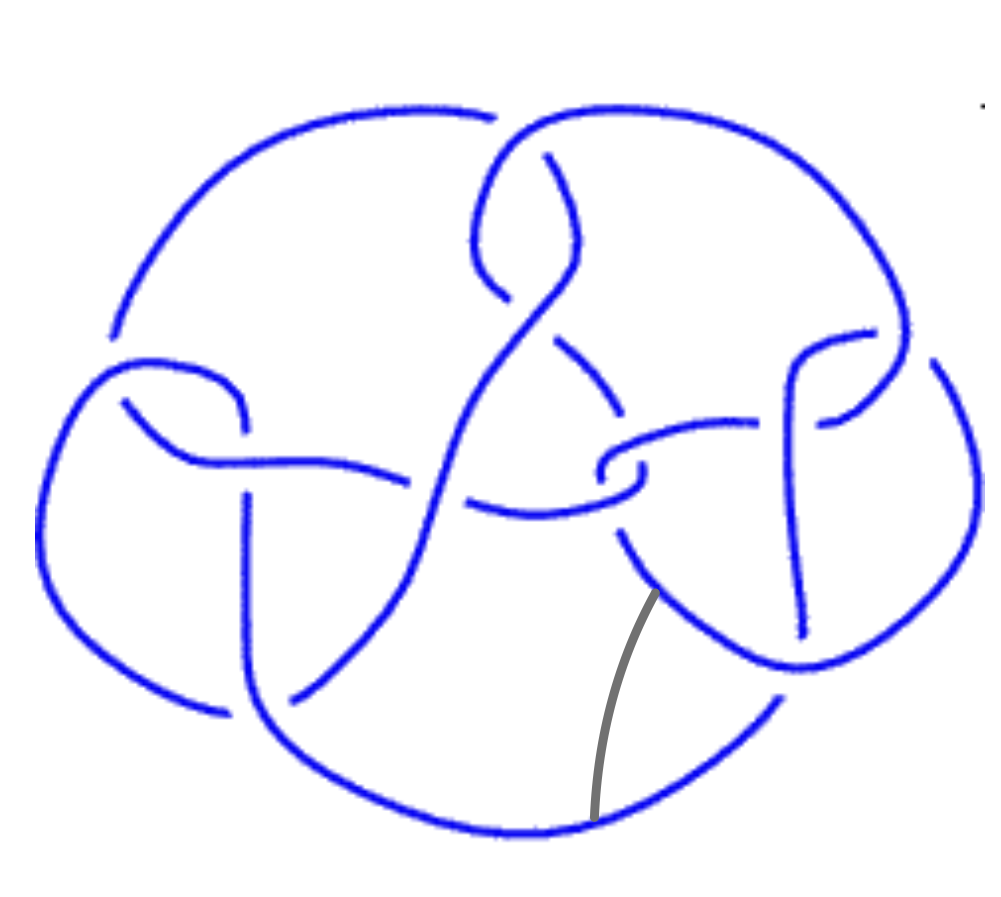}
		\caption{$11n_{7}\stackrel{1}{\longrightarrow} 10_{137}$}
		
	\end{subfigure}
	~
	\begin{subfigure}[b]{0.25\textwidth}
		\includegraphics[width=\textwidth]{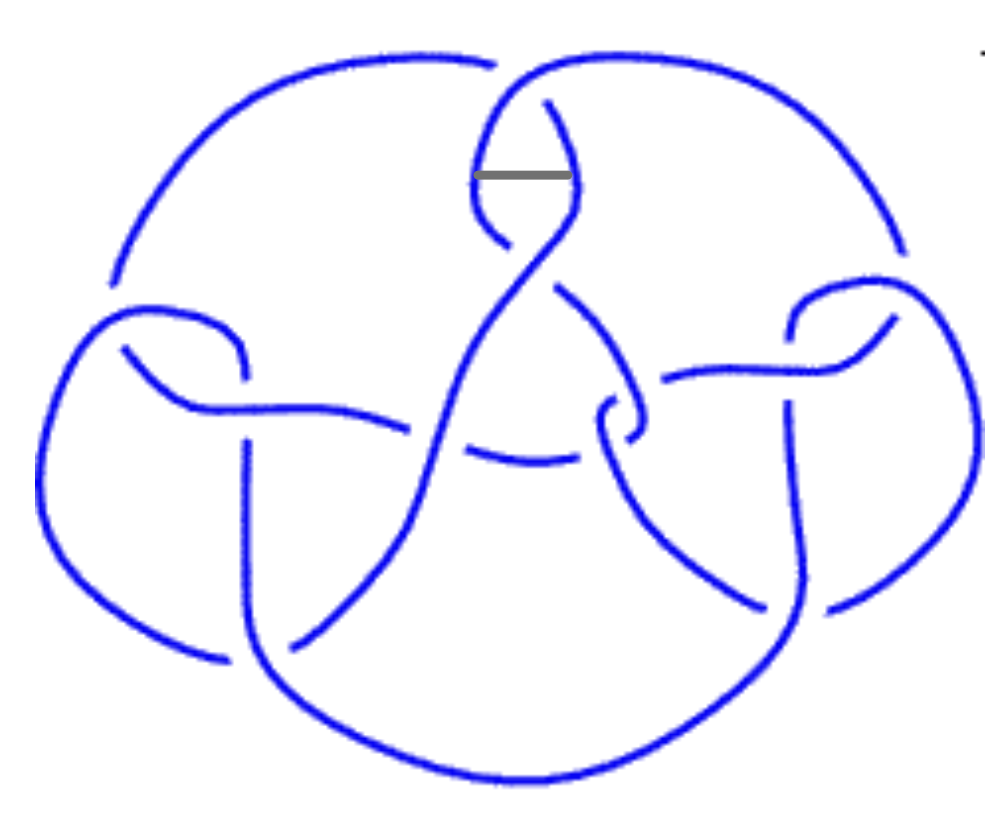}
		\caption{$11n_{8}\stackrel{0}{\longrightarrow} 8_{20}$}
		
	\end{subfigure}
	\vskip3mm
	\begin{subfigure}[b]{0.25\textwidth}
		\includegraphics[width=\textwidth]{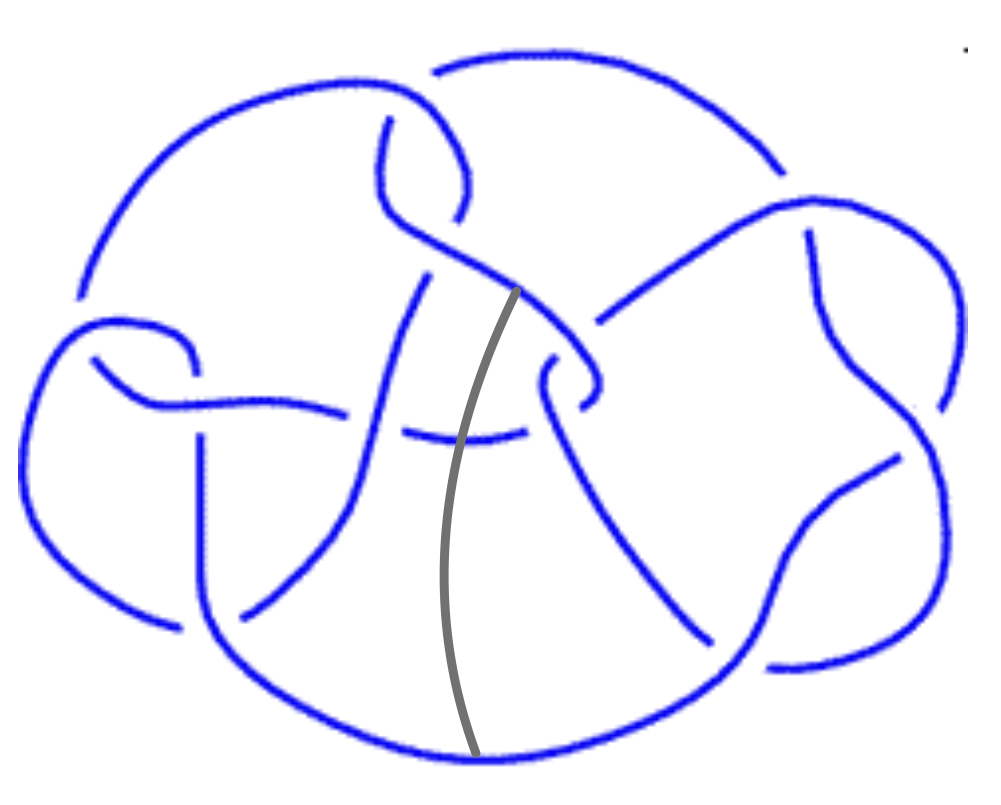}
		\caption{$11n_{9}\stackrel{0}{\longrightarrow} 0_{1}$}
		
	\end{subfigure}
	~
	\begin{subfigure}[b]{0.25\textwidth}
		\includegraphics[width=\textwidth]{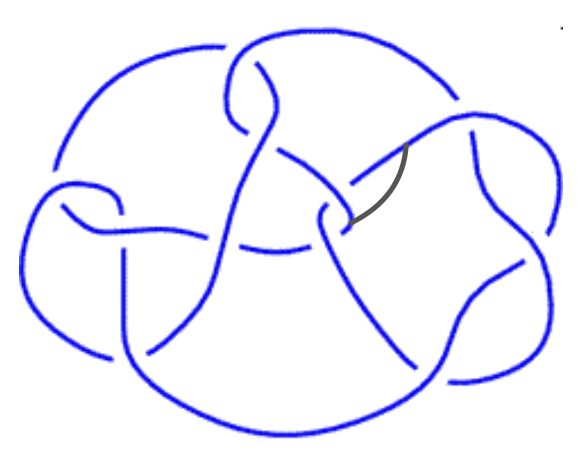}
		\caption{$11n_{11}\stackrel{-1}{\longrightarrow} 12n_{49}$}
		
	\end{subfigure}
	~
	\begin{subfigure}[b]{0.25\textwidth}
		\includegraphics[width=\textwidth]{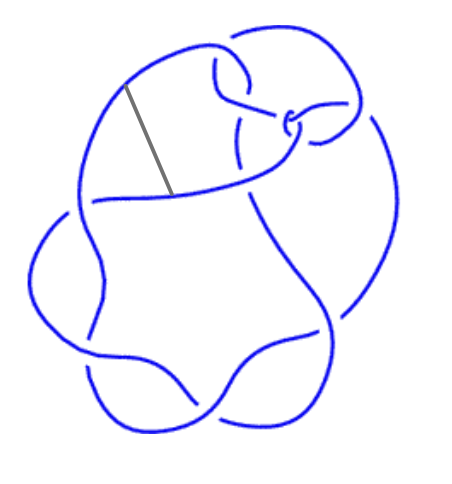}
		\caption{$11n_{13}\stackrel{0}{\longrightarrow} 0_1$}
		
	\end{subfigure}
 \vskip3mm
 ~
	\begin{subfigure}[b]{0.25\textwidth}
		\includegraphics[width=\textwidth]{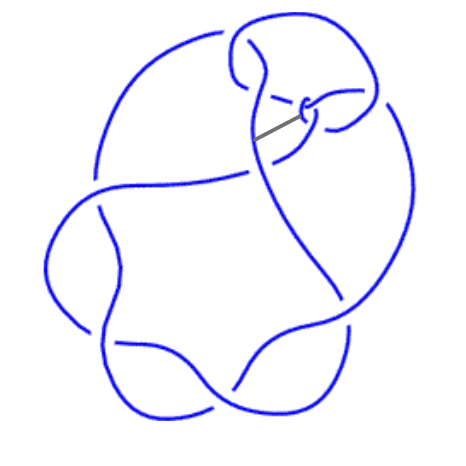}
		\caption{$11n_{14}\stackrel{0}{\longrightarrow} 6_1$}
		
	\end{subfigure}
 ~
	\begin{subfigure}[b]{0.25\textwidth}
		\includegraphics[width=\textwidth]{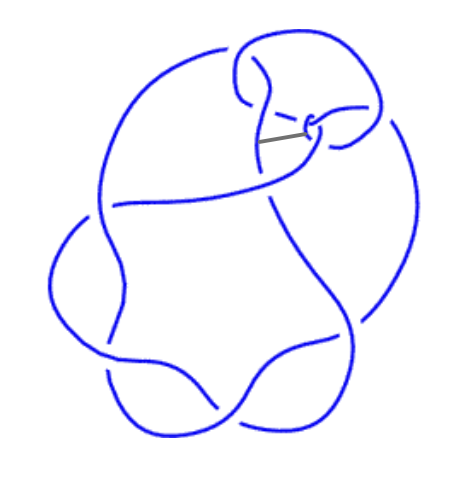}
		\caption{$11n_{15}\stackrel{0}{\longrightarrow} 0_1$}
		
	\end{subfigure}
 ~
	\begin{subfigure}[b]{0.25\textwidth}
		\includegraphics[width=\textwidth]{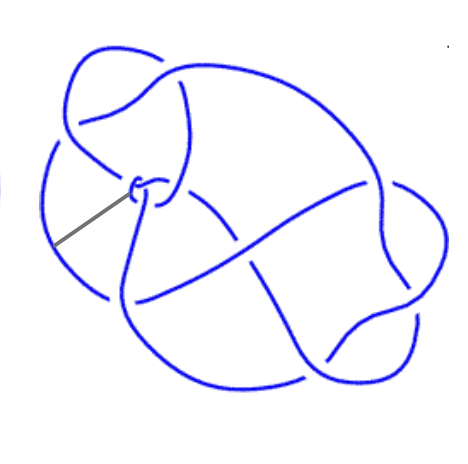}
		\caption{$11n_{16}\stackrel{0}{\longrightarrow} 0_1$}
		
	\end{subfigure}
	\vskip3mm
	\caption{Non-oriented band moves from the knots $11n_{1},  11n_{3},  11n_{5},   $ \\ $11n_{6}, 11n_{7}, 11n_{8}, 11n_{9}, 11n_{11}, 11n_{13}, 11n_{14}, 11n_{15}, \text{ and } 11n_{16} $ to smoothly slice knots.}\label{firstSlice}
\end{figure}
%
\newpage

\begin{figure}[!htbp]
	\centering
	\begin{subfigure}[b]{0.26\textwidth}
		\includegraphics[width=\textwidth]{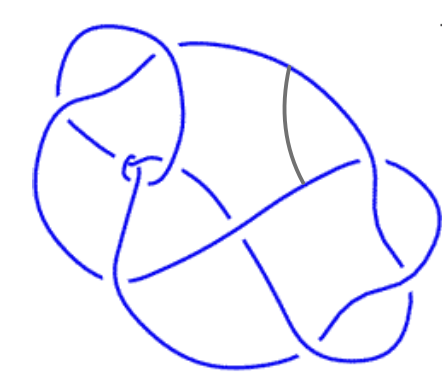}
		\caption{$11n_{18}\stackrel{1}{\longrightarrow} 10_{137}$}
		
	\end{subfigure}
	~
	\begin{subfigure}[b]{0.22\textwidth}
		\includegraphics[width=\textwidth]{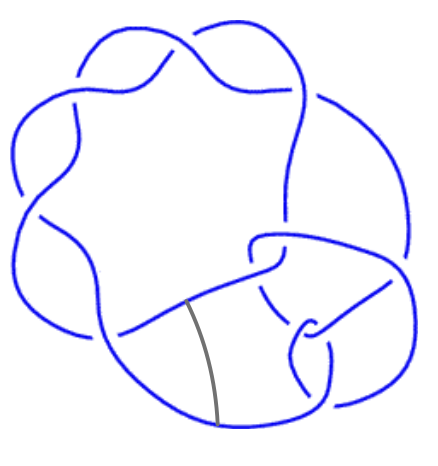}
		\caption{$11n_{19}\stackrel{0}{\longrightarrow} 0_{1}$}
		
	\end{subfigure}
	~
	\begin{subfigure}[b]{0.25\textwidth}
		\includegraphics[width=\textwidth]{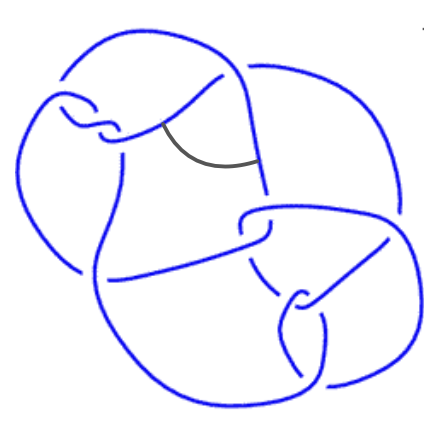}
		\caption{$11n_{20}\stackrel{1\phantom{i}}{\longrightarrow} 12n_{24}$}
		
	\end{subfigure}
	\vskip3mm
	\begin{subfigure}[b]{0.25\textwidth}
		\includegraphics[width=\textwidth]{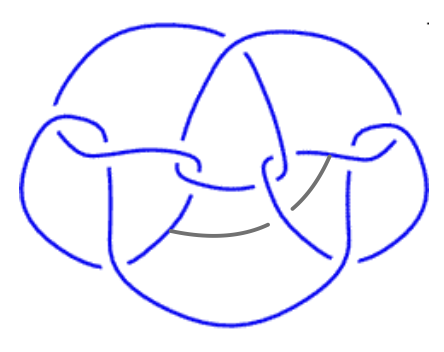}
		\caption{$11n_{23}\stackrel{0}{\longrightarrow} 6_{1}$}
		
	\end{subfigure}
	~
	\begin{subfigure}[b]{0.25\textwidth}
		\includegraphics[width=\textwidth]{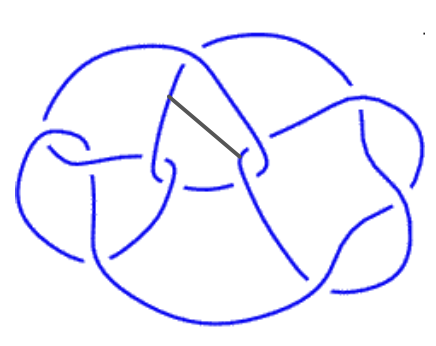}
		\caption{$11n_{24}\stackrel{0}{\longrightarrow} 0_{1}$}
		
	\end{subfigure}
	~
	\begin{subfigure}[b]{0.25\textwidth}
		\includegraphics[width=\textwidth]{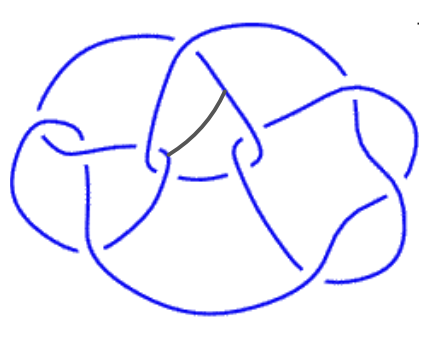}
		\caption{$11n_{25}\stackrel{-1}{\longrightarrow} 12n_{24}$}
		
	\end{subfigure}
	\vskip3mm
	\begin{subfigure}[b]{0.25\textwidth}
		\includegraphics[width=\textwidth]{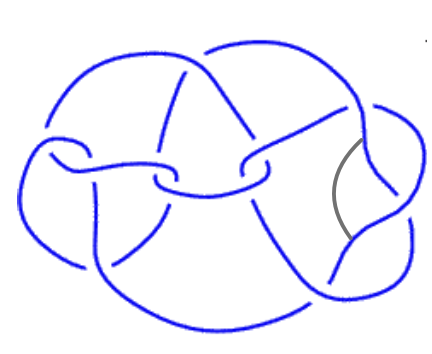}
		\caption{$11n_{26}\stackrel{-1}{\longrightarrow} 8_{20}$}
		
	\end{subfigure}
	~
	\begin{subfigure}[b]{0.25\textwidth}
		\includegraphics[width=\textwidth]{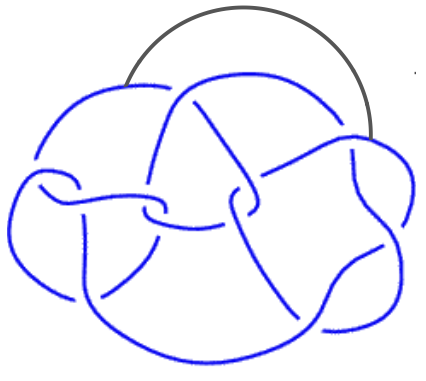}
		\caption{$11n_{27}\stackrel{-1}{\longrightarrow} 8_{8}$}
		
	\end{subfigure}
	~
	\begin{subfigure}[b]{0.22\textwidth}
		\includegraphics[width=\textwidth]{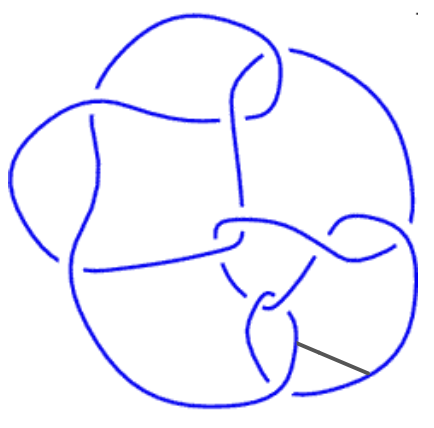}
		\caption{$11n_{31}\stackrel{0}{\longrightarrow} 10_{137}$}
		
	\end{subfigure}
 \vskip3mm
 ~
	\begin{subfigure}[b]{0.26\textwidth}
		\includegraphics[width=\textwidth]{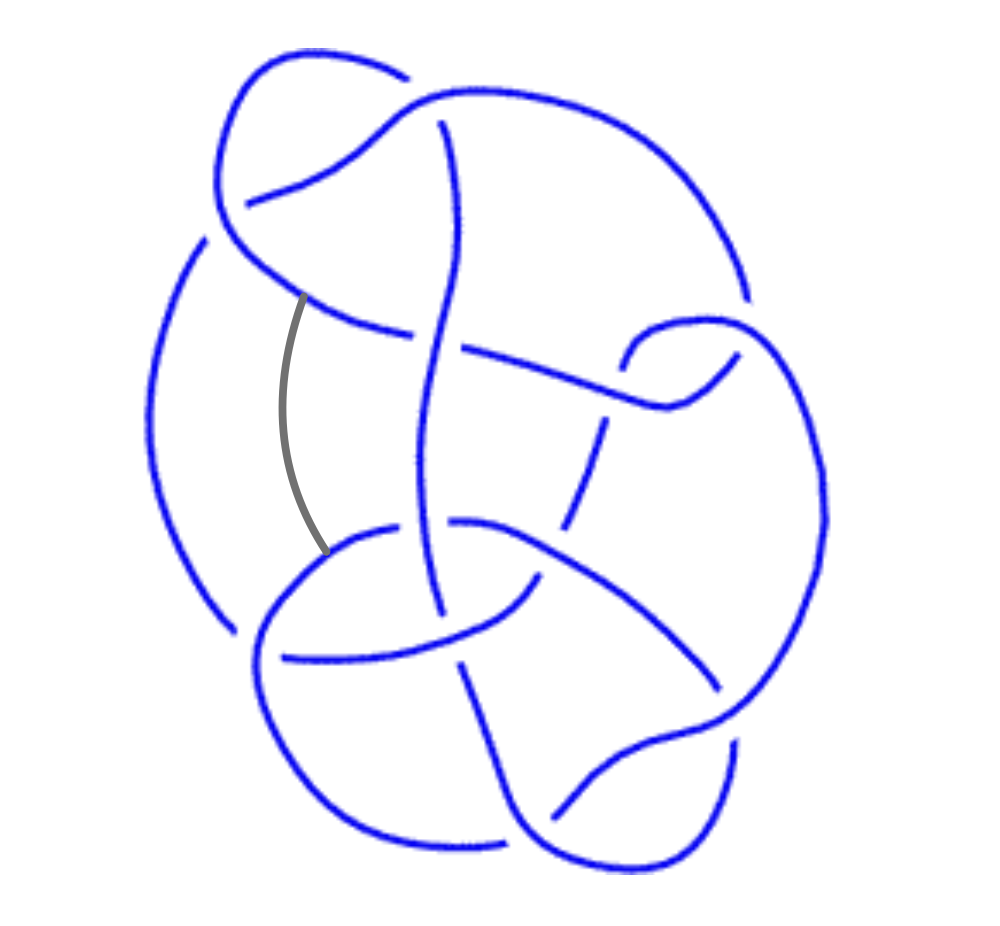}
		\caption{$11n_{34}\stackrel{0}{\longrightarrow} 0_1$}
		
	\end{subfigure}
 ~
	\begin{subfigure}[b]{0.25\textwidth}
		\includegraphics[width=\textwidth]{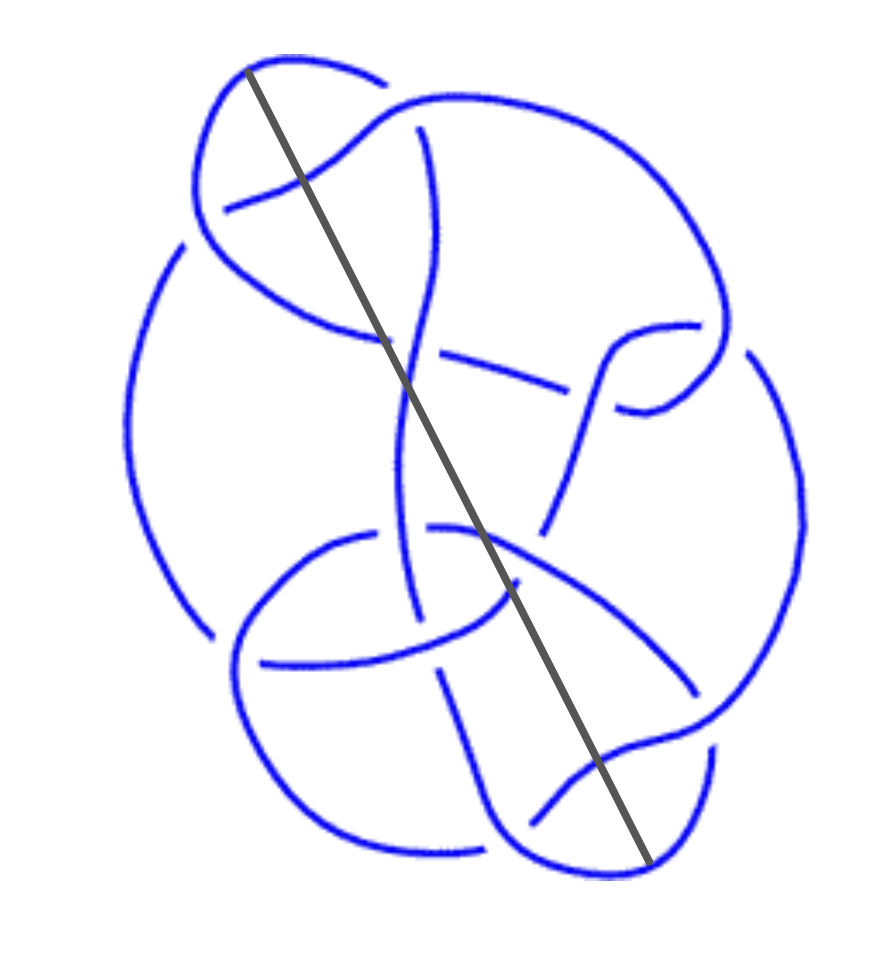}
		\caption{$11n_{36}\stackrel{0}{\longrightarrow} 10_{129}$}
		
	\end{subfigure}
 ~
	\begin{subfigure}[b]{0.25\textwidth}
		\includegraphics[width=\textwidth]{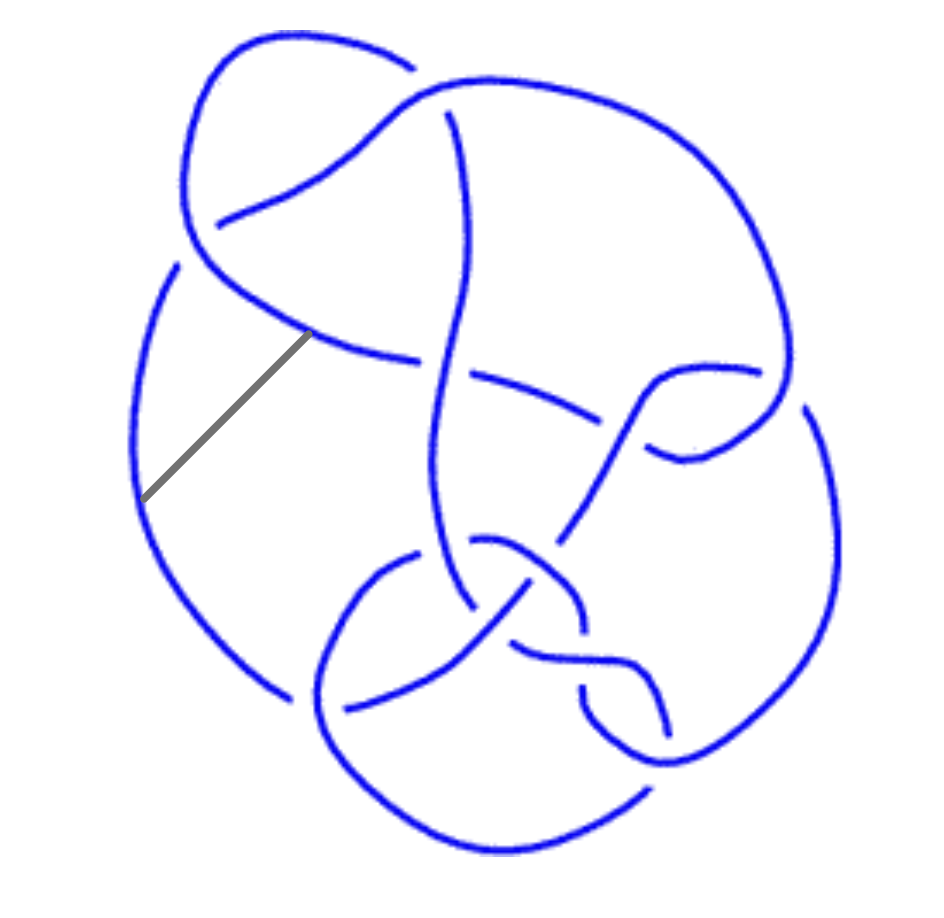}
		\caption{$11n_{41}\stackrel{0}{\longrightarrow} 8_{20}$}
		
	\end{subfigure}
	\vskip3mm
	\caption{Non-oriented band moves from the knots $11n_{18},  11n_{19},  11n_{20},    $ \\ $11n_{23}, 11n_{24},  11n_{25}, 11n_{26}, 11n_{27}, 11n_{31}, 11n_{34}, 11n_{36}, \text{ and } 11n_{41} $ to smoothly slice knots.}
\end{figure}
%

\newpage

\begin{figure}[!htbp]
	\centering
	\begin{subfigure}[b]{0.26\textwidth}
		\includegraphics[width=\textwidth]{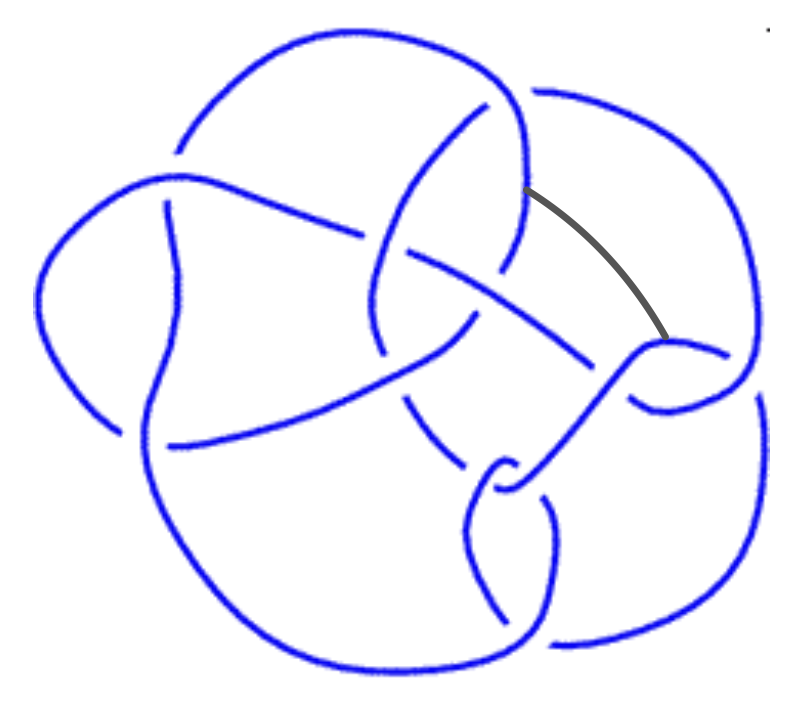}
		\caption{$11n_{44}\stackrel{1}{\longrightarrow} 6_{1}$}
		
	\end{subfigure}
	~
	\begin{subfigure}[b]{0.26\textwidth}
		\includegraphics[width=\textwidth]{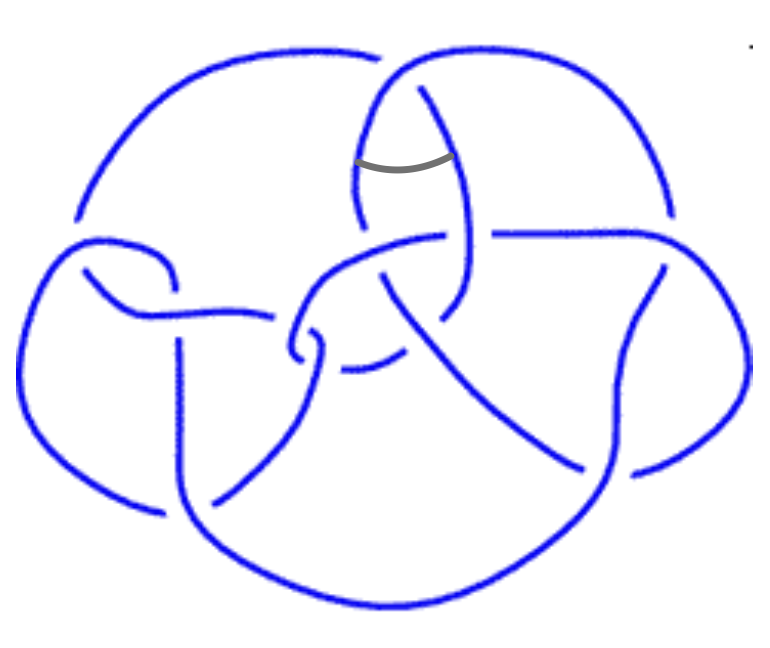}
		\caption{$11n_{45}\stackrel{1}{\longrightarrow} 10_{129}$}
		
	\end{subfigure}
	~
	\begin{subfigure}[b]{0.26\textwidth}
		\includegraphics[width=\textwidth]{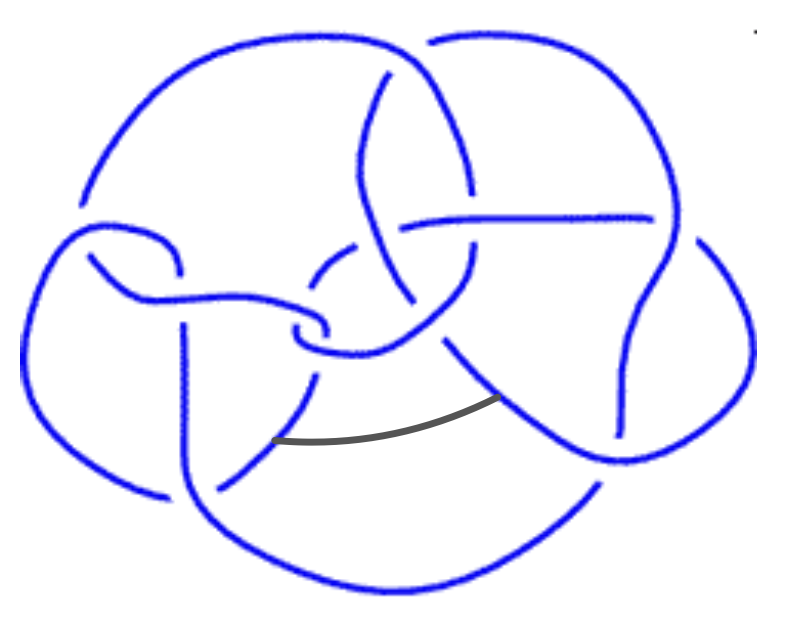}
		\caption{$11n_{46}\stackrel{0\phantom{i}}{\longrightarrow} 6_{1}$}
		
	\end{subfigure}
	\vskip3mm
	\begin{subfigure}[b]{0.26\textwidth}
		\includegraphics[width=\textwidth]{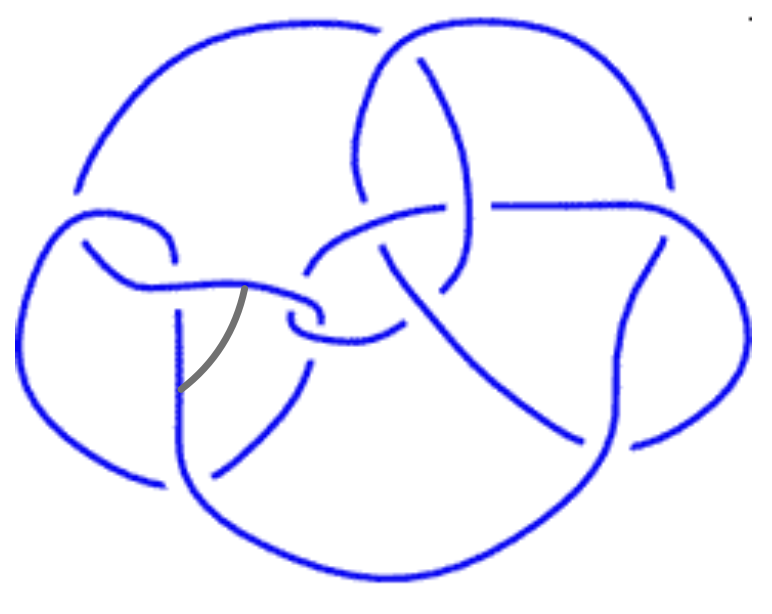}
		\caption{$11n_{47}\stackrel{0}{\longrightarrow} 8_{20}$}
		
	\end{subfigure}
	~
	\begin{subfigure}[b]{0.26\textwidth}
		\includegraphics[width=\textwidth]{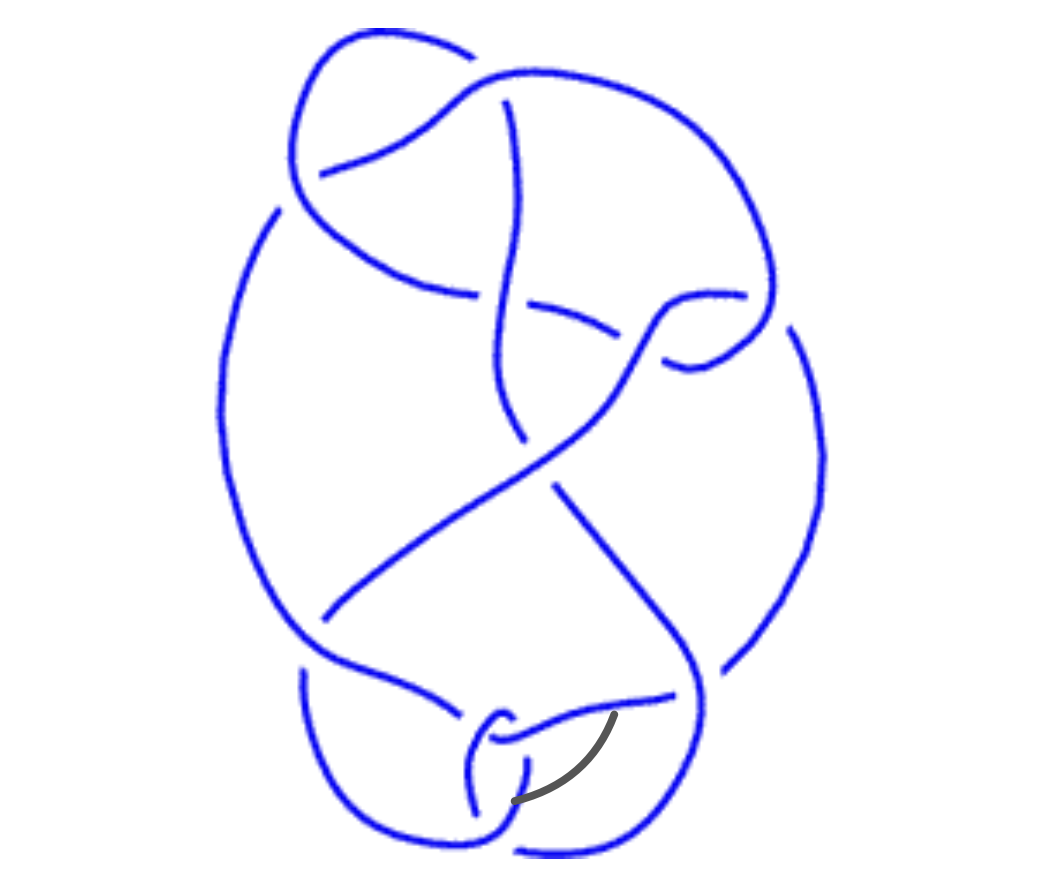}
		\caption{$11n_{52}\stackrel{1}{\longrightarrow} 12n_{170}$}
		
	\end{subfigure}
	~
	\begin{subfigure}[b]{0.26\textwidth}
		\includegraphics[width=\textwidth]{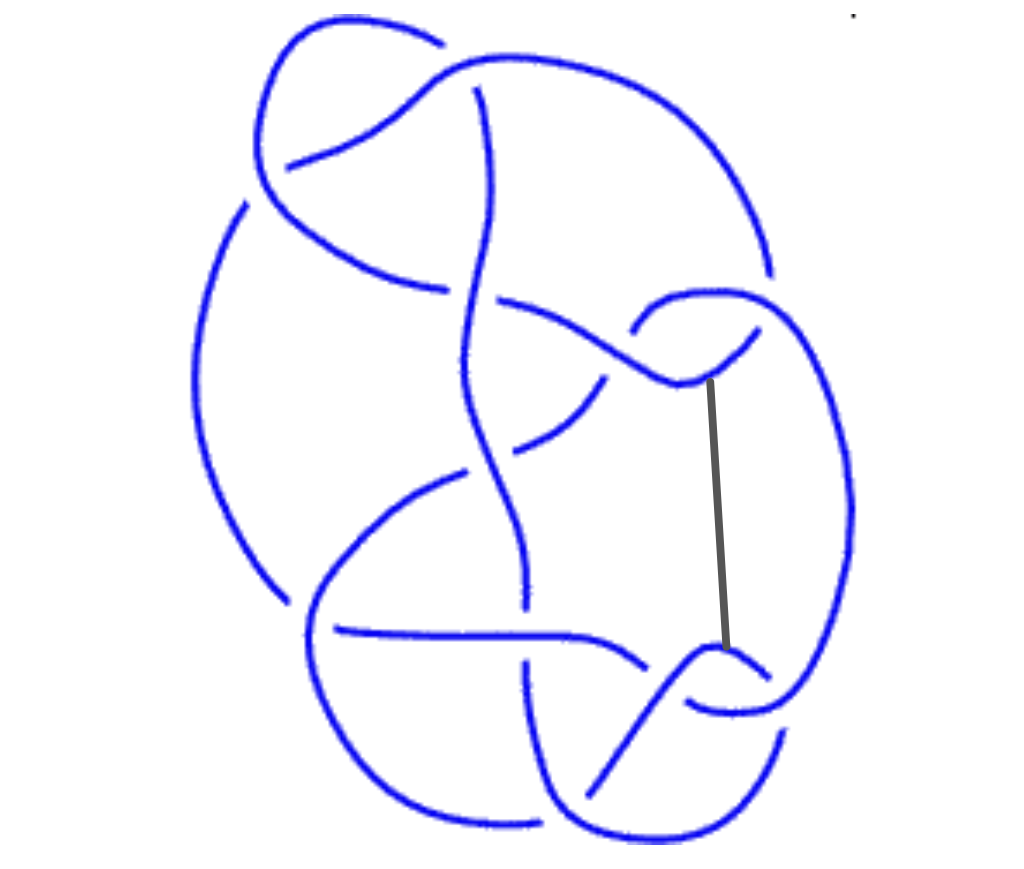}
		\caption{$11n_{54}\stackrel{0}{\longrightarrow} 6_{1}$}
		
	\end{subfigure}
	\vskip3mm
	\begin{subfigure}[b]{0.29\textwidth}
		\includegraphics[width=\textwidth]{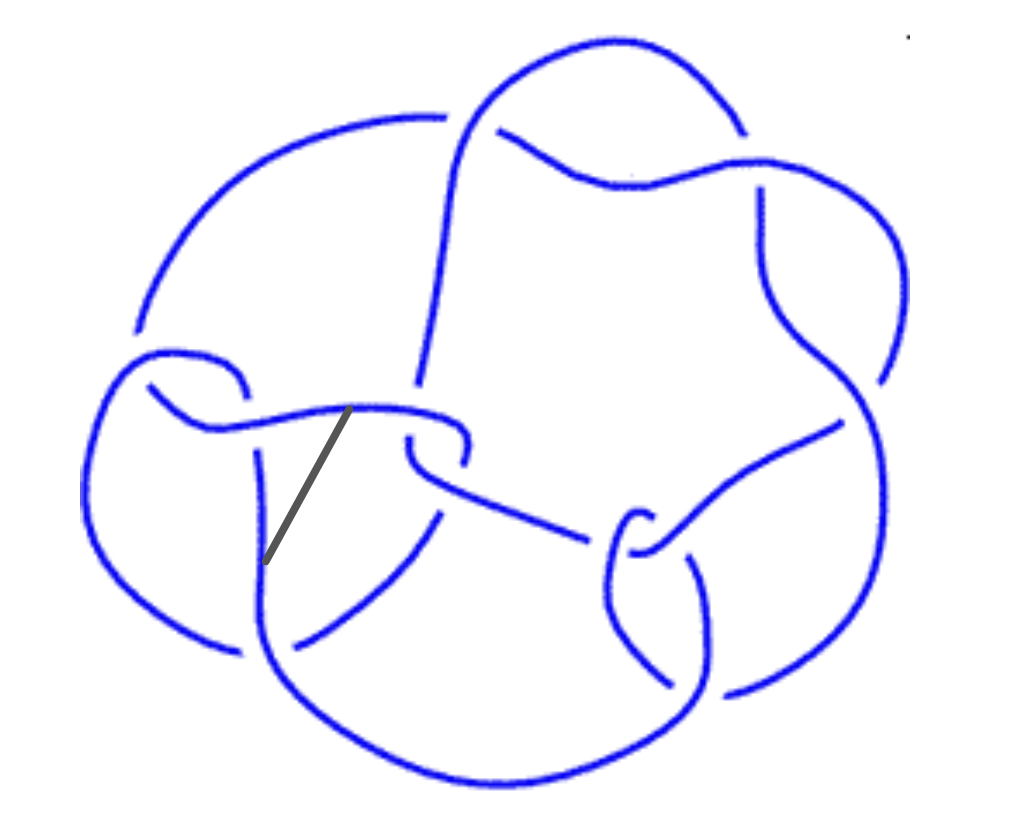}
		\caption{$11n_{57}\stackrel{0}{\longrightarrow} 0_{1}$}
		
	\end{subfigure}
	~
	\begin{subfigure}[b]{0.26\textwidth}
		\includegraphics[width=\textwidth]{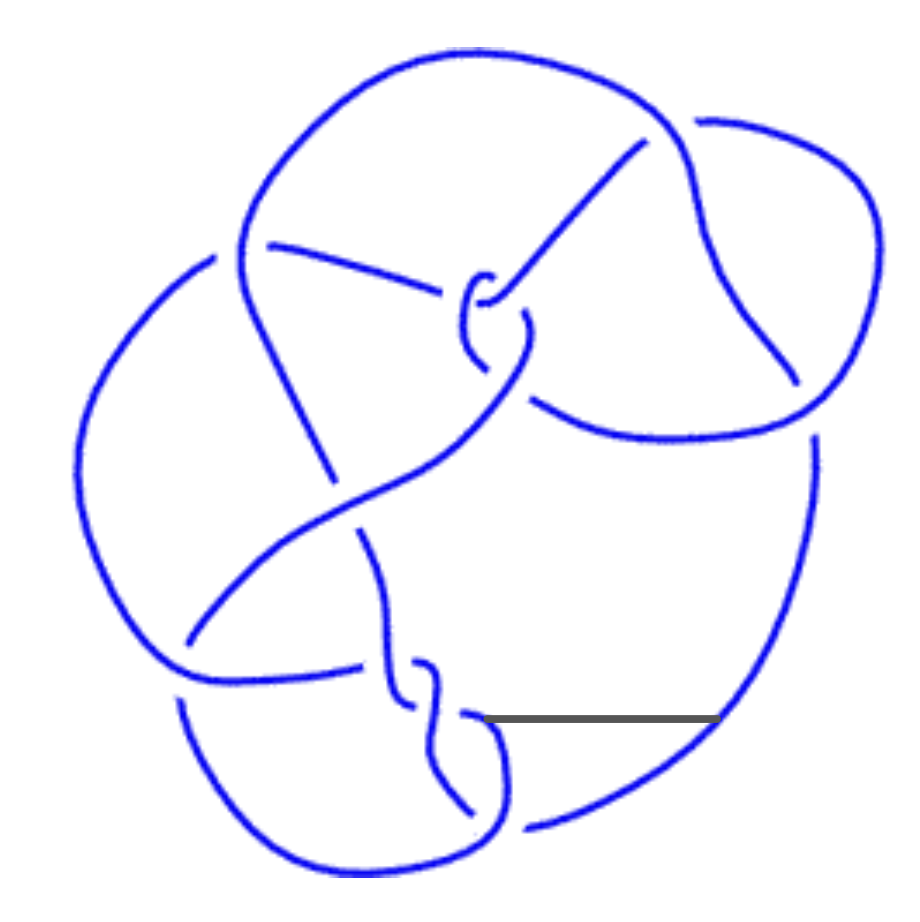}
		\caption{$11n_{58}\stackrel{-1}{\longrightarrow} 8_{20}$}
		
	\end{subfigure}
	~
	\begin{subfigure}[b]{0.26\textwidth}
		\includegraphics[width=\textwidth]{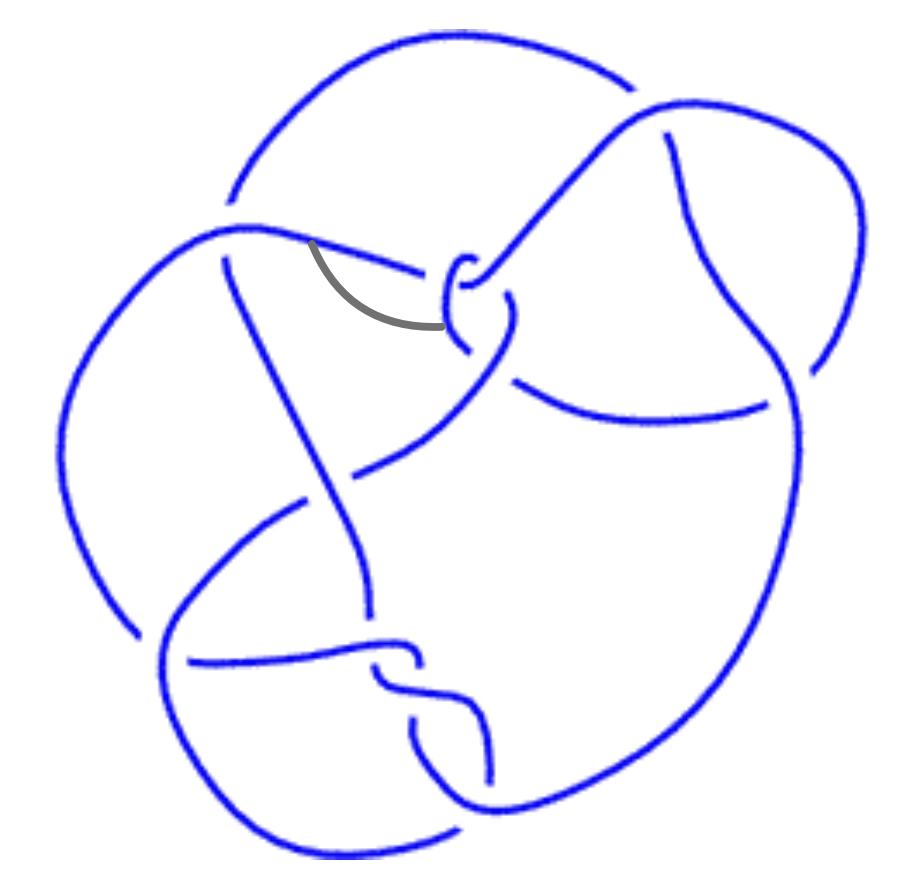}
		\caption{$11n_{59}\stackrel{0}{\longrightarrow} 8_{9}$}
		
	\end{subfigure}
 \vskip3mm
	\begin{subfigure}[b]{0.25\textwidth}
		\includegraphics[width=\textwidth]{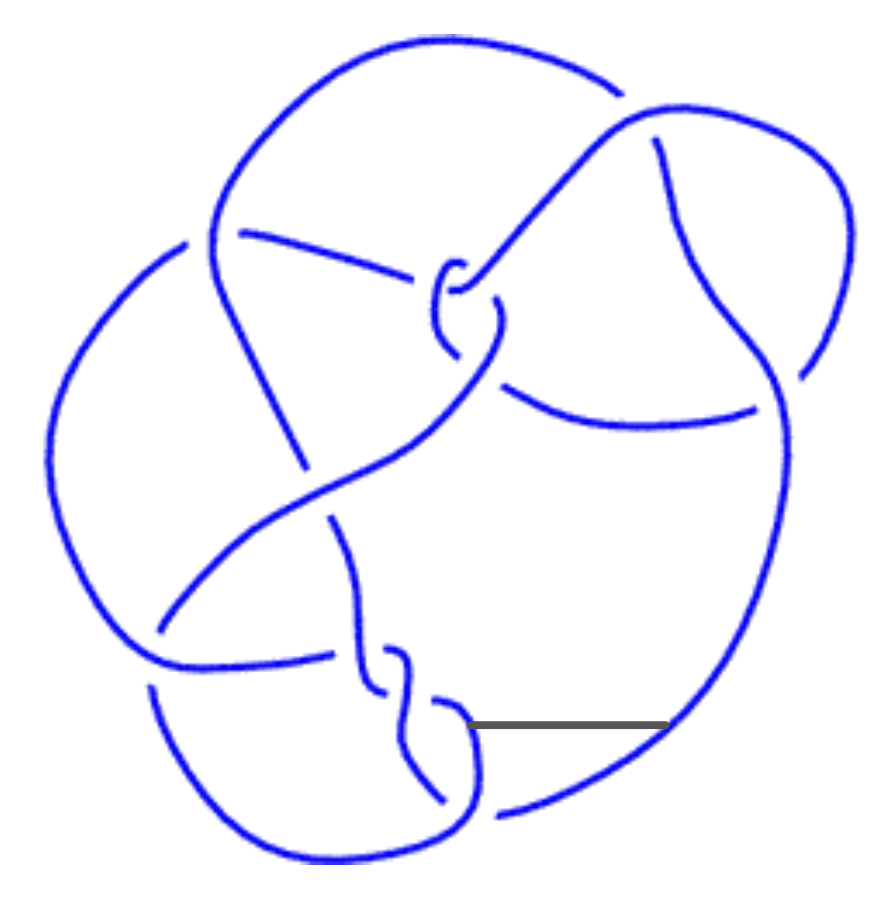}
		\caption{$11n_{60}\stackrel{-1}{\longrightarrow} 8_{20}$}
		
	\end{subfigure}
 ~
	\begin{subfigure}[b]{0.25\textwidth}
		\includegraphics[width=\textwidth]{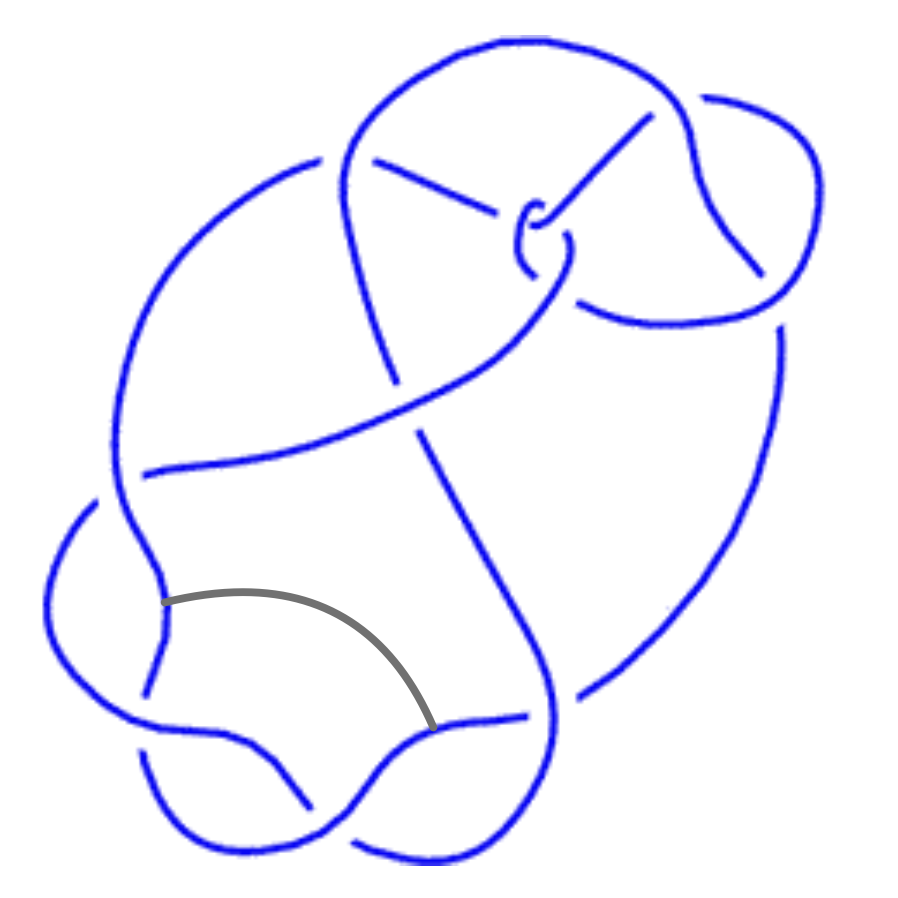}
		\caption{$11n_{62}\stackrel{1}{\longrightarrow} 0_{1}$}
		
	\end{subfigure}
 ~
	\begin{subfigure}[b]{0.25\textwidth}
		\includegraphics[width=\textwidth]{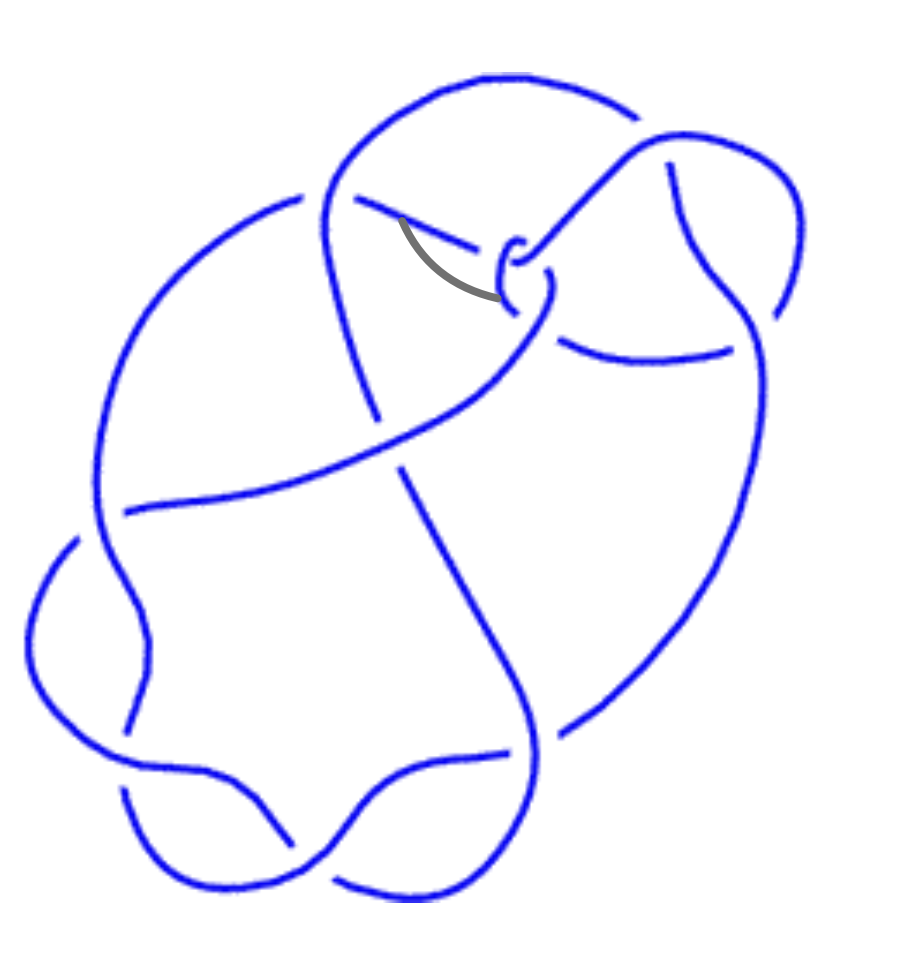}
		\caption{$11n_{64}\stackrel{0}{\longrightarrow} 0_{1}$}
		
	\end{subfigure}
	\vskip3mm
	\caption{Non-oriented band moves from the knots $11n_{44},  11n_{45},  11n_{46},    $ \\ $11n_{47},  11n_{52}, 11n_{54}, 11n_{57}, 11n_{58}, 11n_{59}, 11n_{60}, 11n_{62}, \text{ and } 11n_{64} $ to smoothly slice knots.}
\end{figure}
%


\newpage

\begin{figure}[!htbp]
	\centering
	\begin{subfigure}[b]{0.25\textwidth}
		\includegraphics[width=\textwidth]{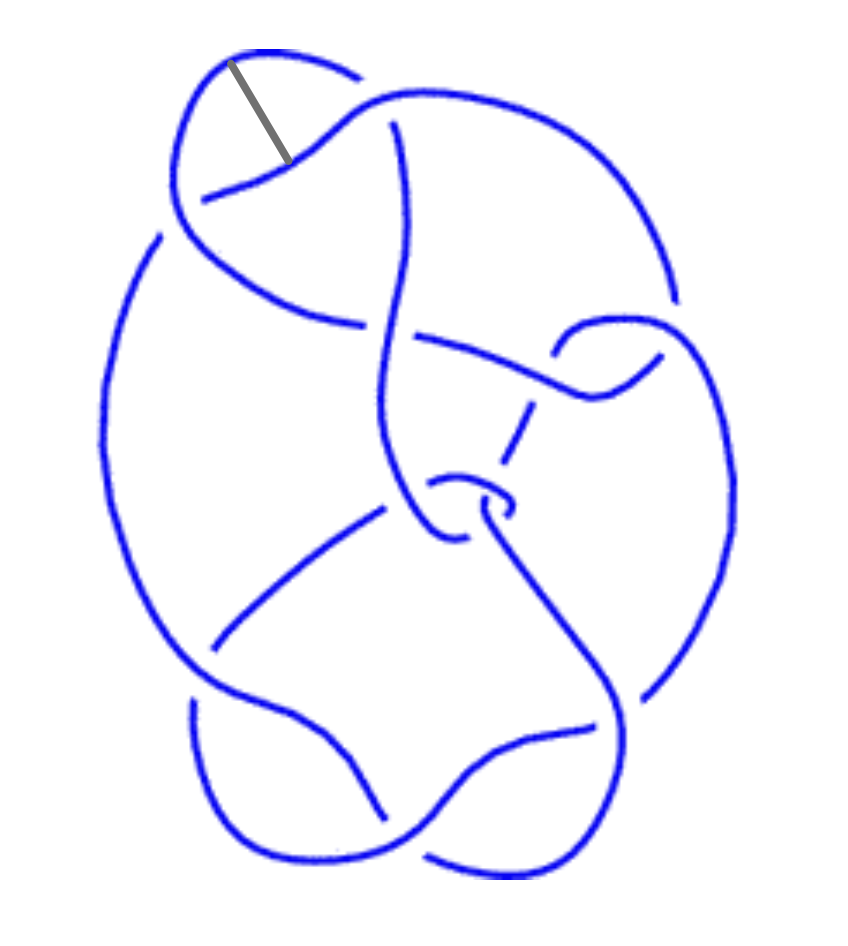}
		\caption{$11n_{65}\stackrel{0}{\longrightarrow} 9_{46}$}
		
	\end{subfigure}
	~
	\begin{subfigure}[b]{0.26\textwidth}
		\includegraphics[width=\textwidth]{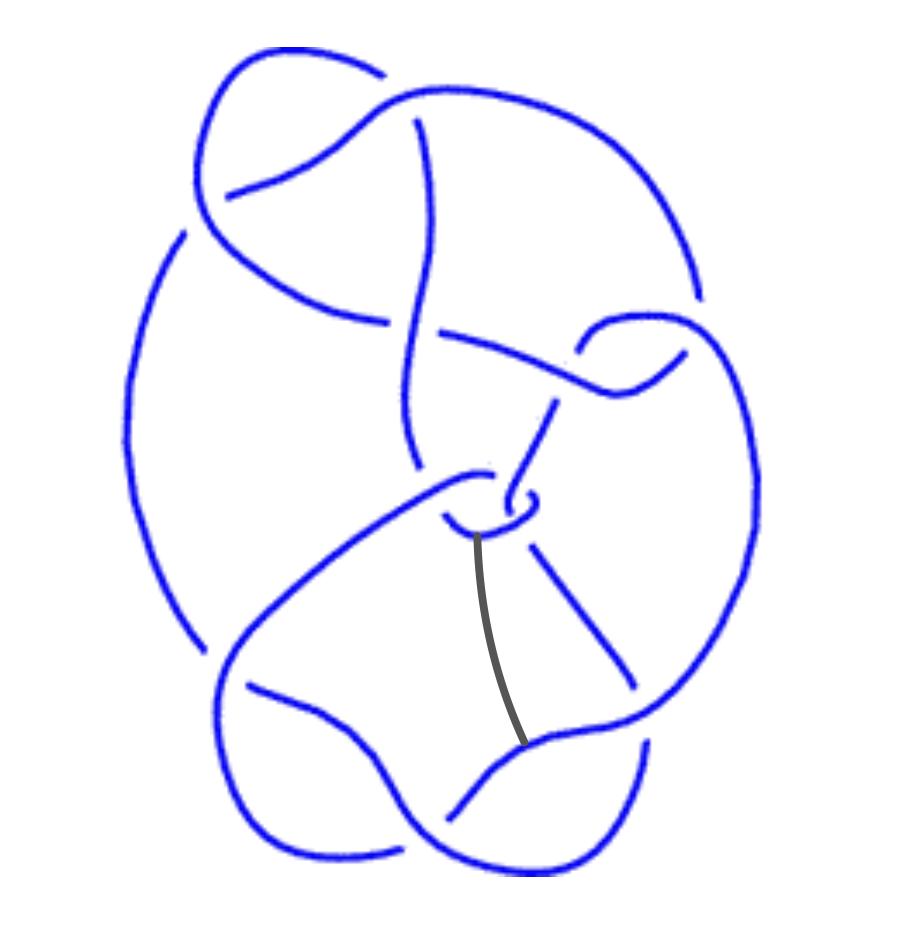}
		\caption{$11n_{66}\stackrel{0\phantom{i}}{\longrightarrow} 8_{8}$}
		
	\end{subfigure}
	~
	\begin{subfigure}[b]{0.26\textwidth}
		\includegraphics[width=\textwidth]{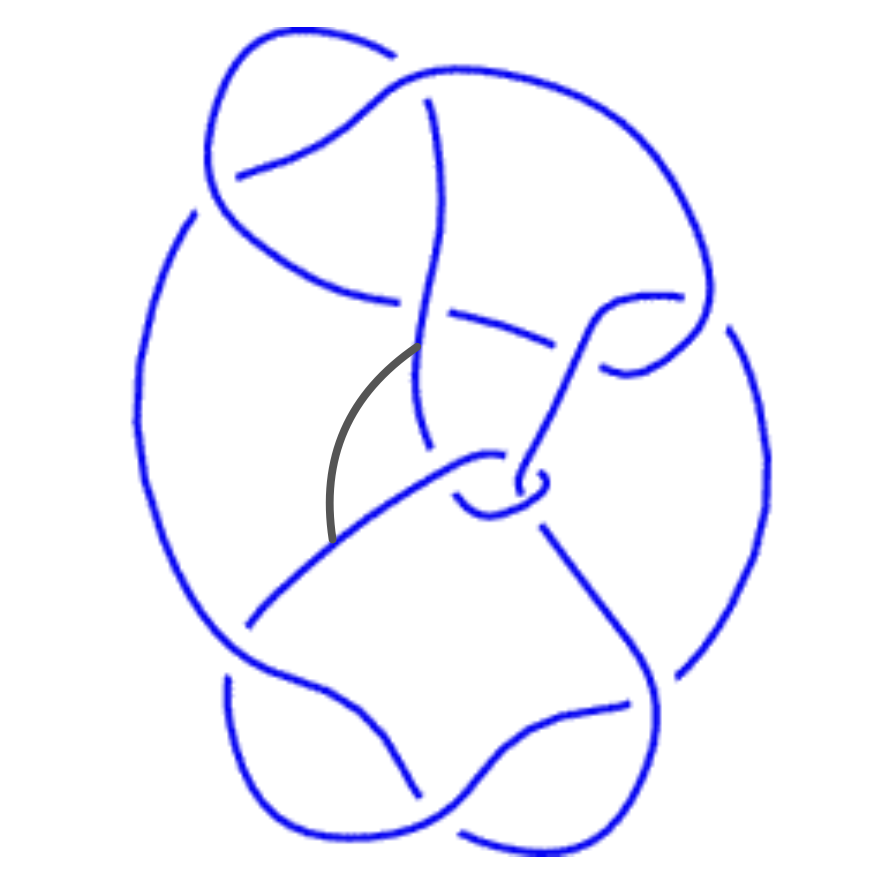}
		\caption{$11n_{68}\stackrel{-1}{\longrightarrow} 10_{129}$}
		
	\end{subfigure}
	
 \vskip3mm
	\begin{subfigure}[b]{0.25\textwidth}
		\includegraphics[width=\textwidth]{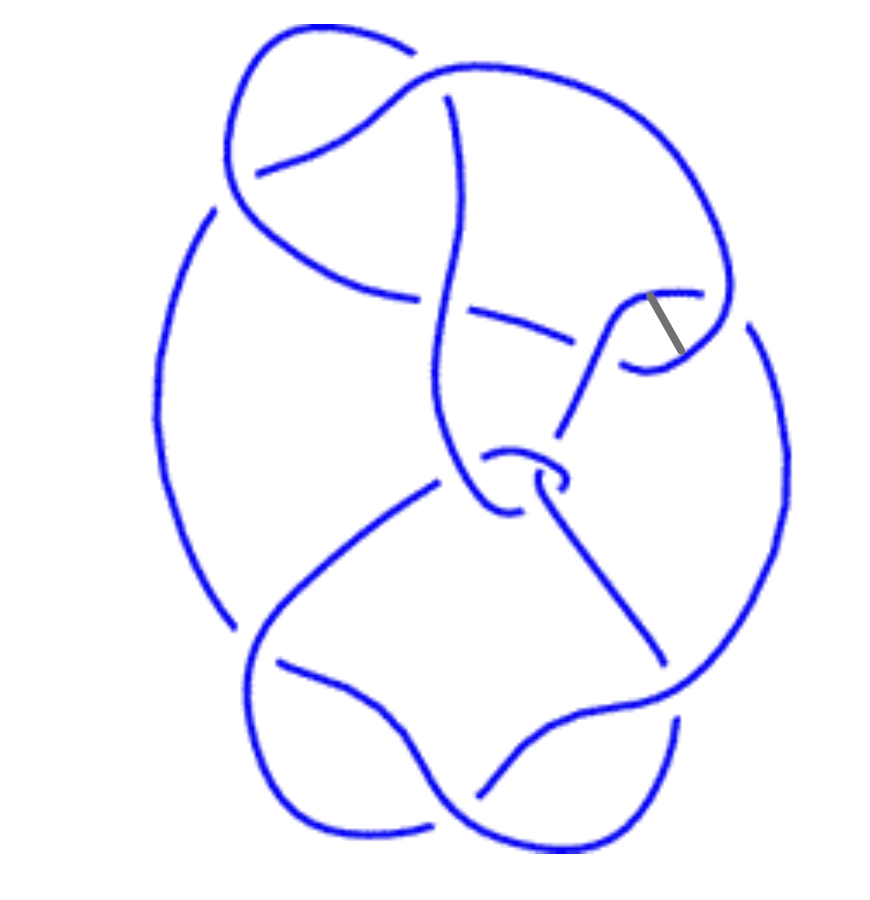}
		\caption{$11n_{69}\stackrel{0}{\longrightarrow} 8_{20}$}
		
	\end{subfigure}
	~
	\begin{subfigure}[b]{0.25\textwidth}
		\includegraphics[width=\textwidth]{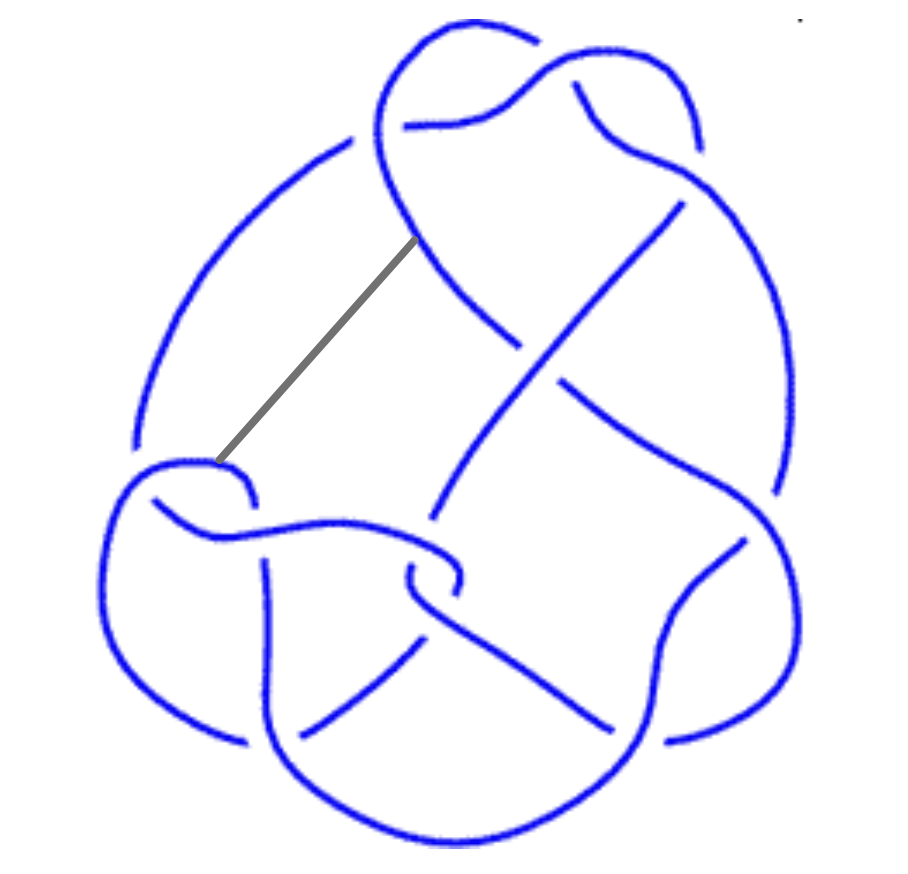}
		\caption{$11n_{70}\stackrel{0}{\longrightarrow} 8_{20}$}
		
	\end{subfigure}
	~
	\begin{subfigure}[b]{0.25\textwidth}
		\includegraphics[width=\textwidth]{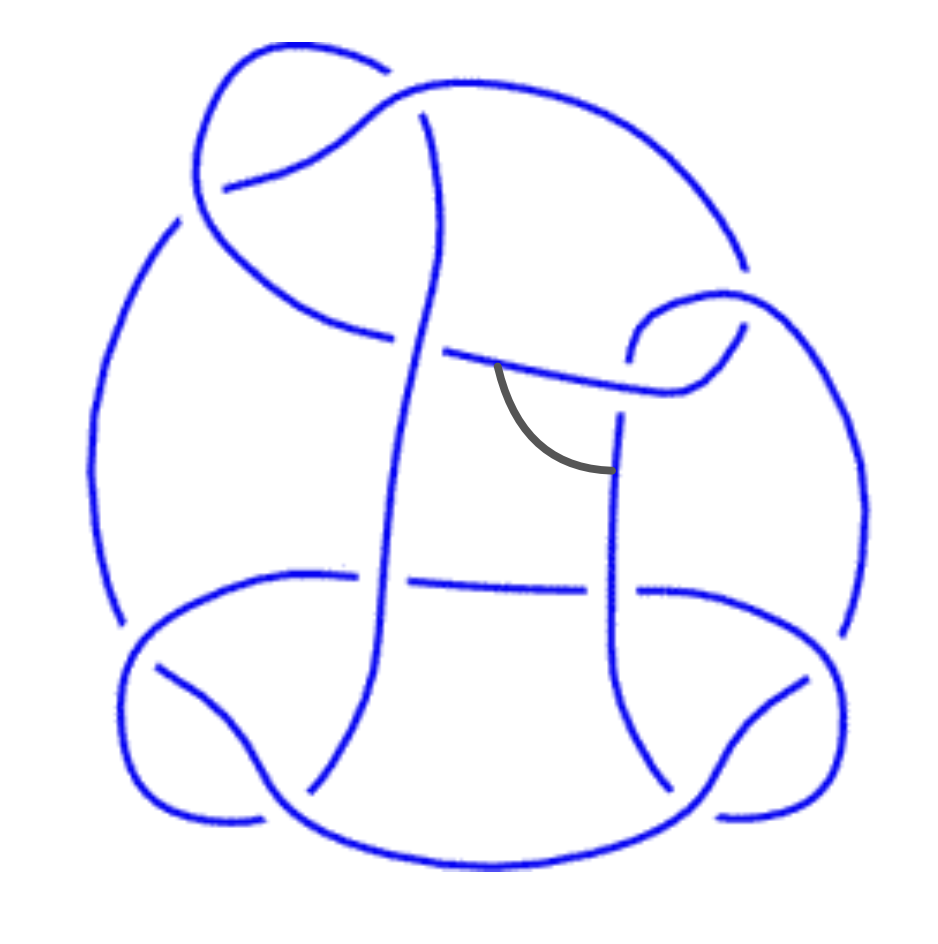}
		\caption{$11n_{71}\stackrel{-1}{\longrightarrow} 12n_{556}$}
		
	\end{subfigure}
	
 \vskip3mm
	\begin{subfigure}[b]{0.25\textwidth}
		\includegraphics[width=\textwidth]{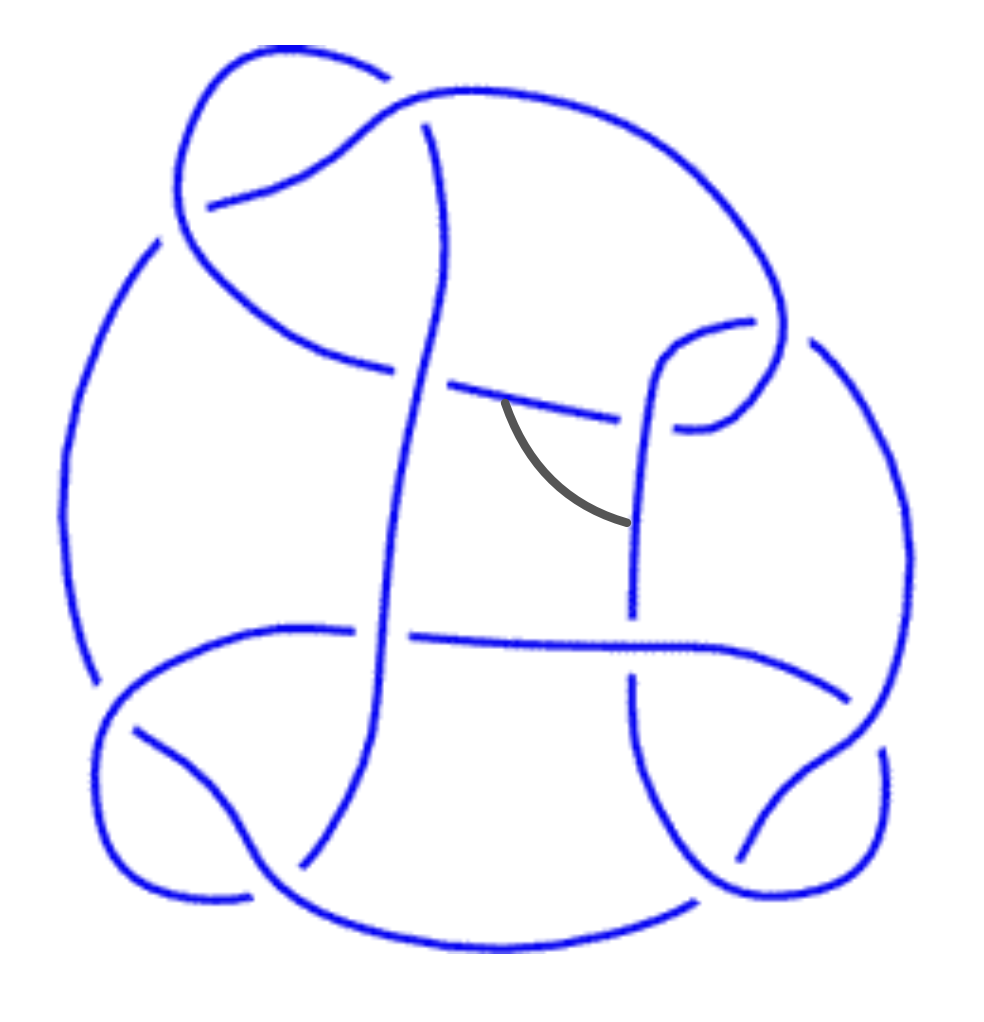}
		\caption{$11n_{75}\stackrel{1}{\longrightarrow} 12n_{553}$}
		
	\end{subfigure}
	~
	\begin{subfigure}[b]{0.25\textwidth}
		\includegraphics[width=\textwidth]{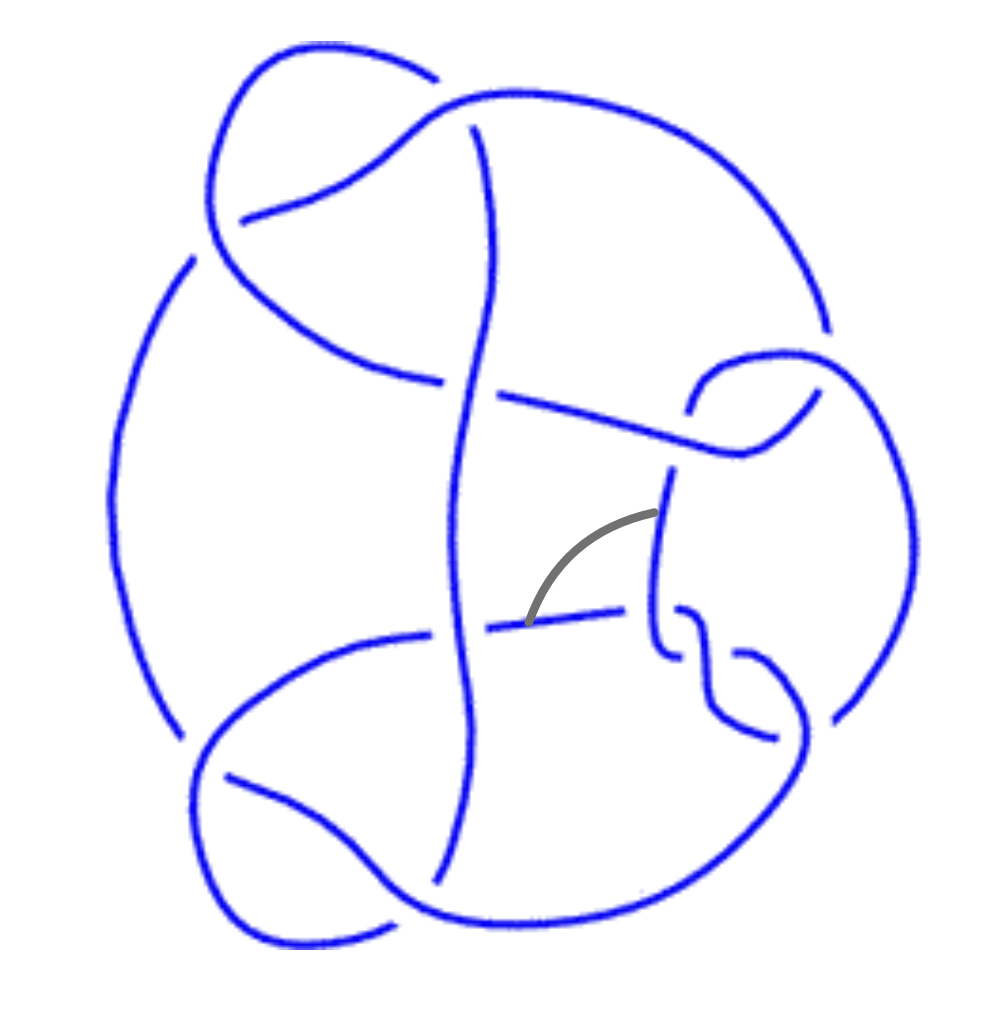}
		\caption{$11n_{76}\stackrel{0}{\longrightarrow} 8_{20}$}
		
	\end{subfigure}
 ~
	\begin{subfigure}[b]{0.25\textwidth}
		\includegraphics[width=\textwidth]{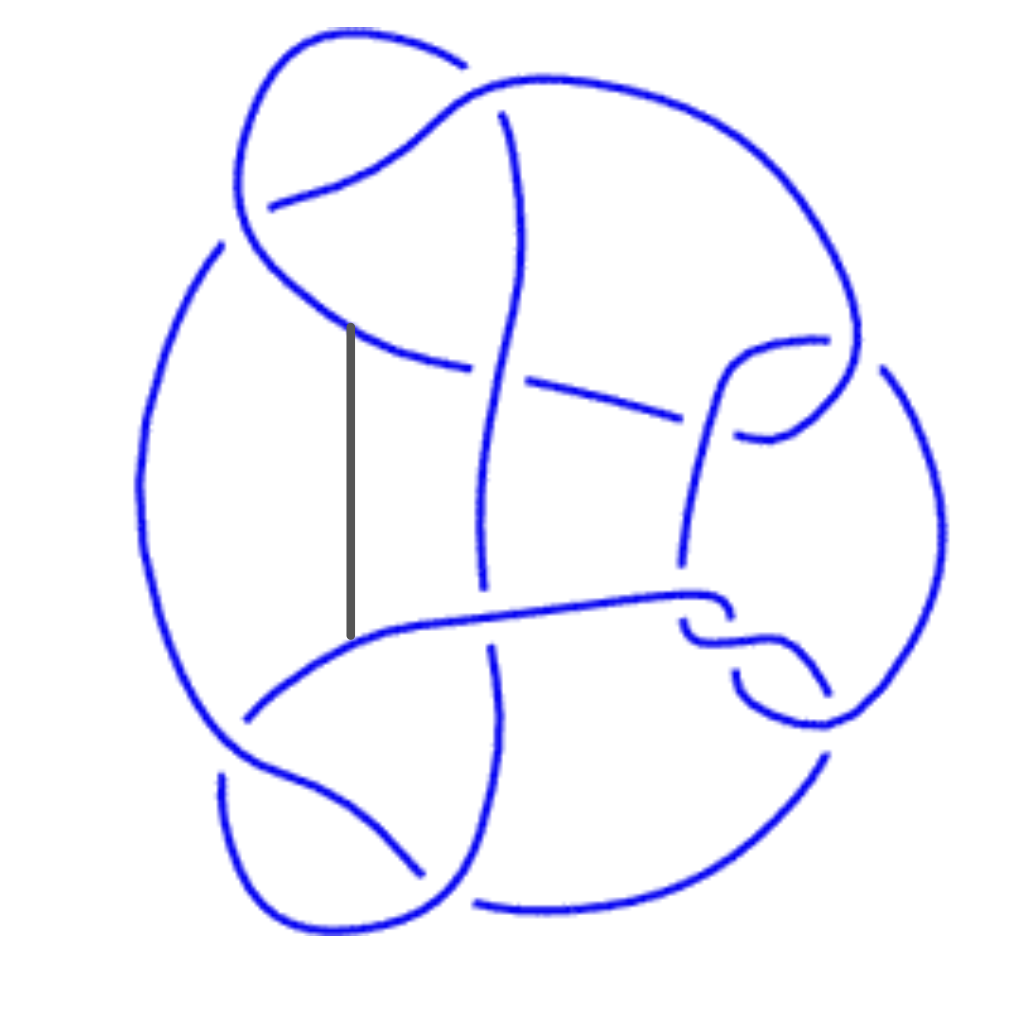}
		\caption{$11n_{77}\stackrel{-1}{\longrightarrow} 8_{20}$}
		
	\end{subfigure}
 
 \vskip3mm
	\begin{subfigure}[b]{0.25\textwidth}
		\includegraphics[width=\textwidth]{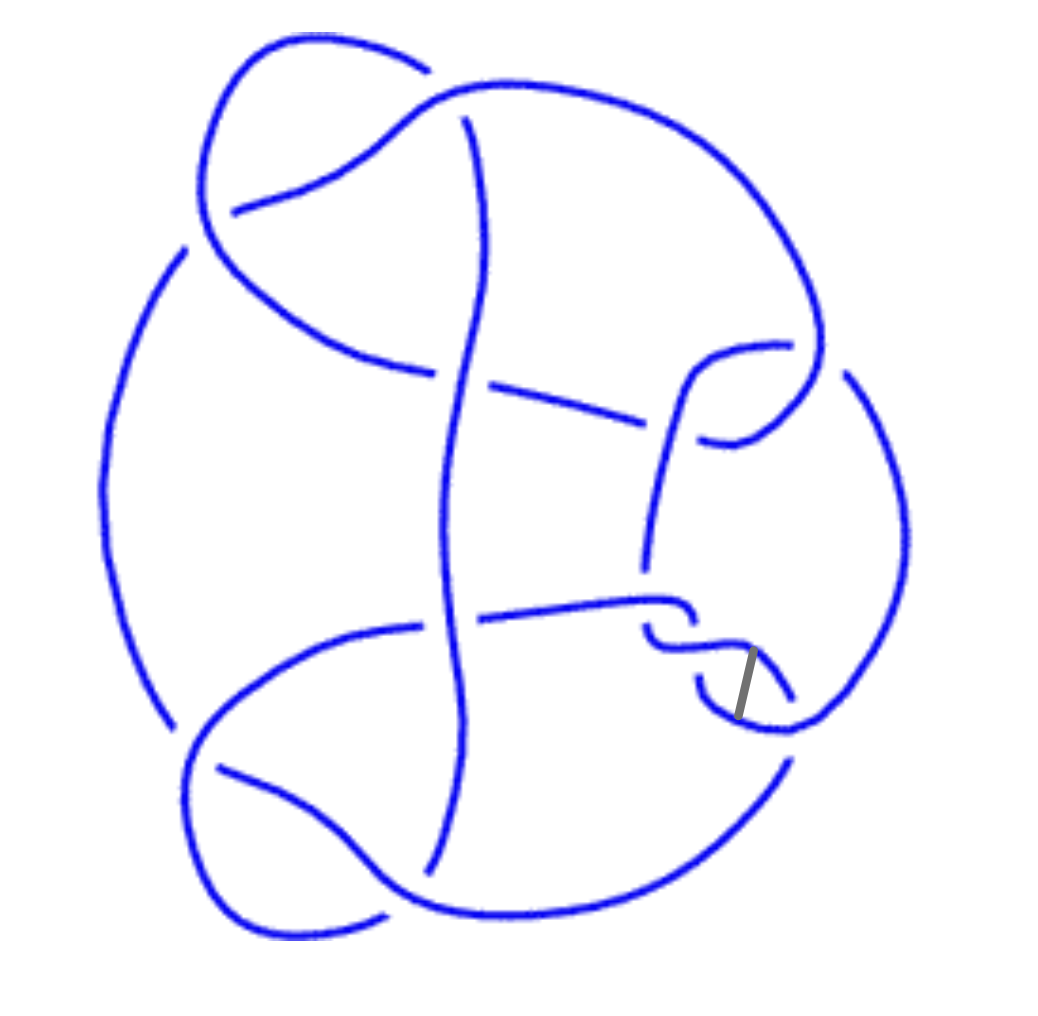}
		\caption{$11n_{78}\stackrel{0}{\longrightarrow} 8_{20}$}
		
	\end{subfigure}
 ~
	\begin{subfigure}[b]{0.25\textwidth}
		\includegraphics[width=\textwidth]{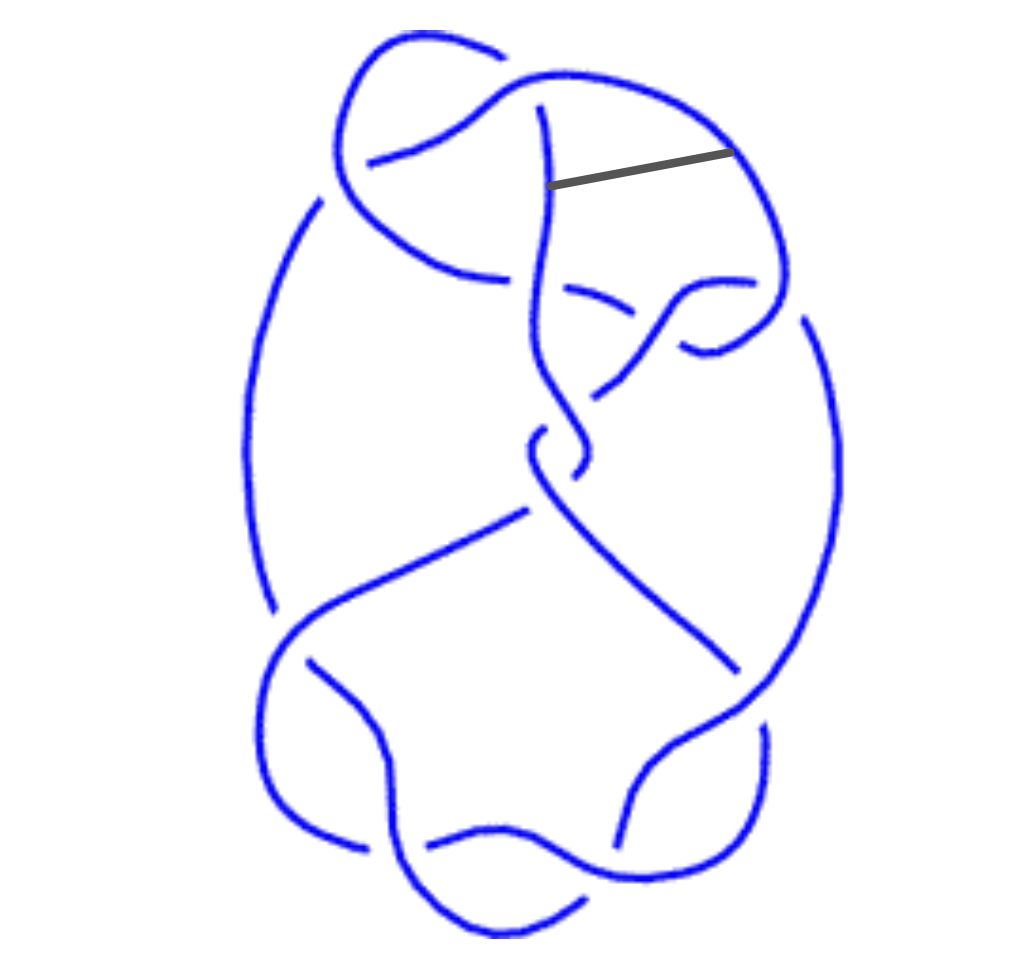}
		\caption{$11n_{79}\stackrel{0}{\longrightarrow} 0_1$}
		
	\end{subfigure}
 ~
 \begin{subfigure}[b]{0.26\textwidth}
		\includegraphics[width=\textwidth]{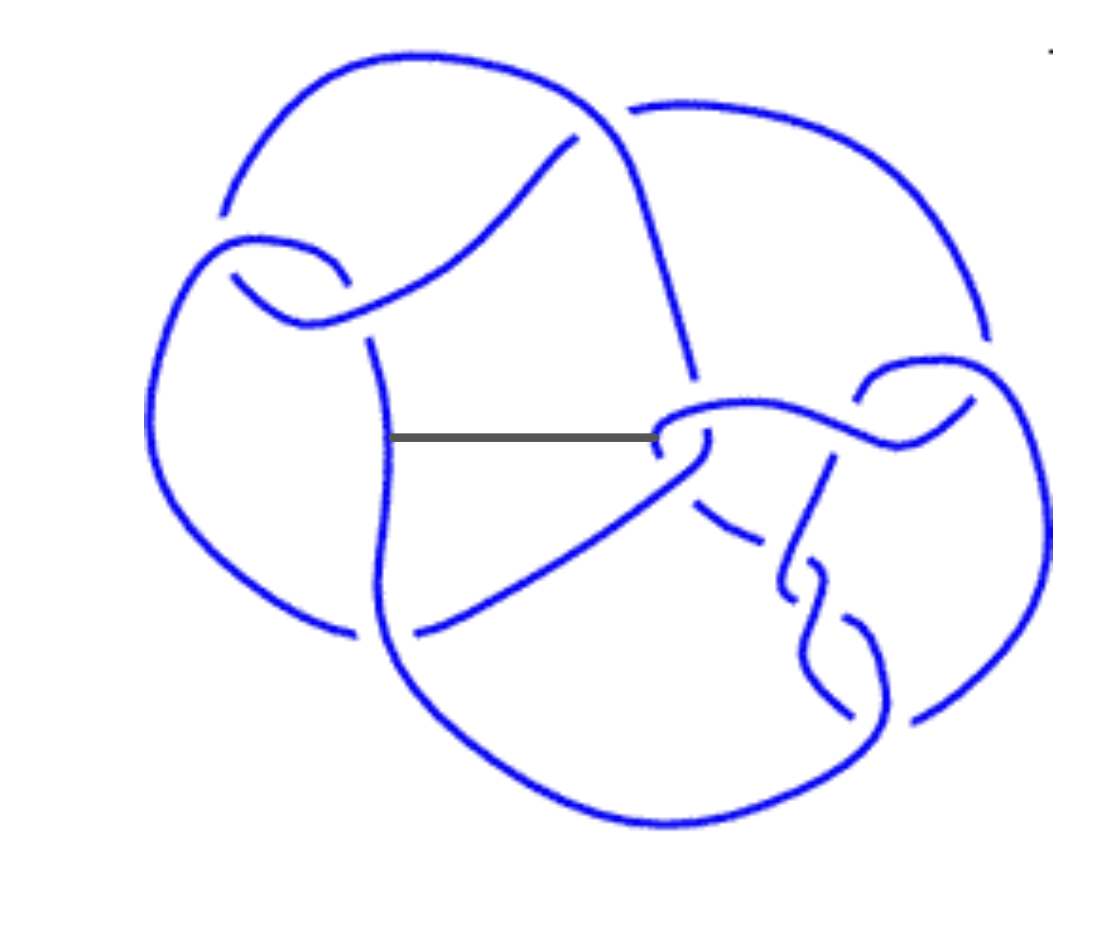}
		\caption{$11n_{80}\stackrel{-1}{\longrightarrow} 0_1$}
		
	\end{subfigure}
	\vskip3mm
	\caption{Non-oriented band moves from the knots $11n_{65},  11n_{66},  11n_{68},    $ \\ $  11n_{69}, 11n_{70}, 11n_{71}, 11n_{75}, 11n_{76}, 11n_{77}, 11n_{78}, 11n_{79}, \text{ and } 11n_{80} $ to smoothly slice knots.}
\end{figure}
%

\newpage

\begin{figure}[!htbp]
	\centering
	\begin{subfigure}[b]{0.26\textwidth}
		\includegraphics[width=\textwidth]{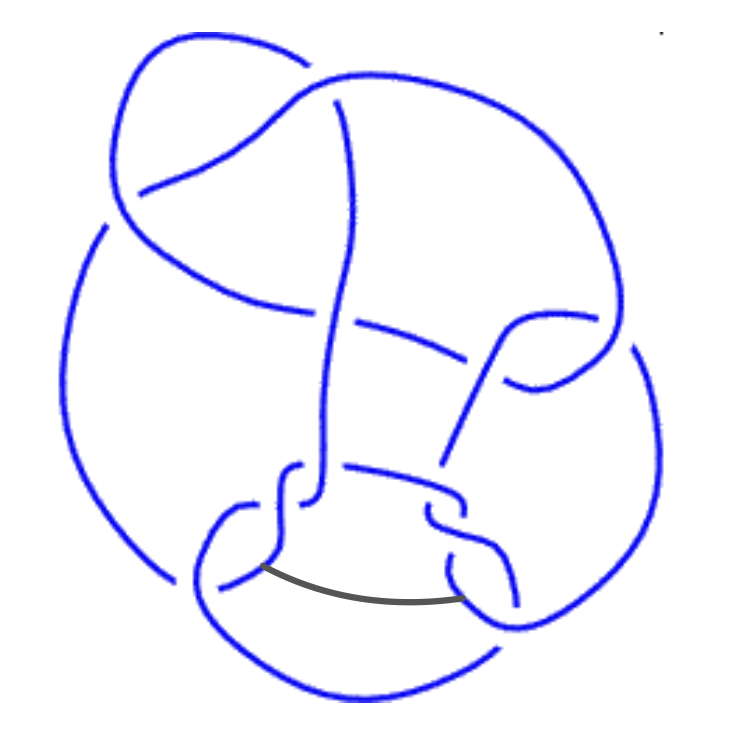}
		\caption{$11n_{81}\stackrel{1}{\longrightarrow} 8_{20}$}
		
	\end{subfigure}
	~
	\begin{subfigure}[b]{0.26\textwidth}
		\includegraphics[width=\textwidth]{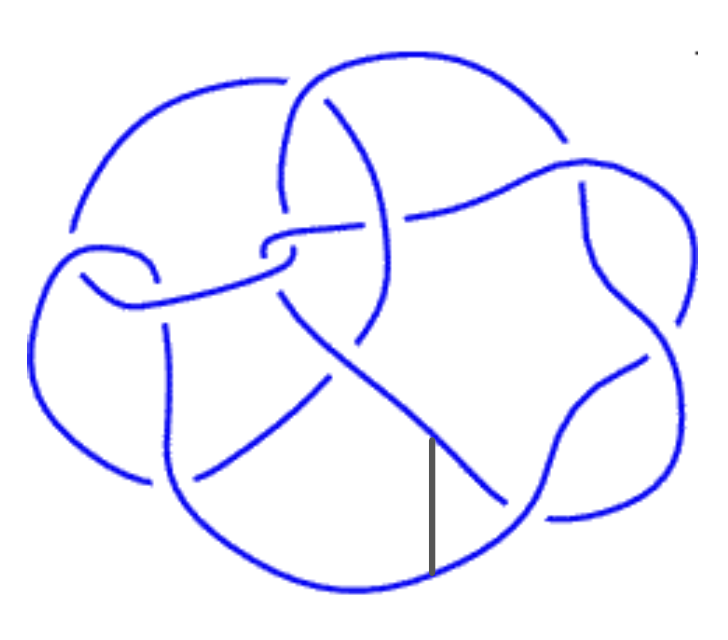}
		\caption{$11n_{82}\stackrel{0}{\longrightarrow} 0_1$}
		
	\end{subfigure}
	~
	\begin{subfigure}[b]{0.26\textwidth}
		\includegraphics[width=\textwidth]{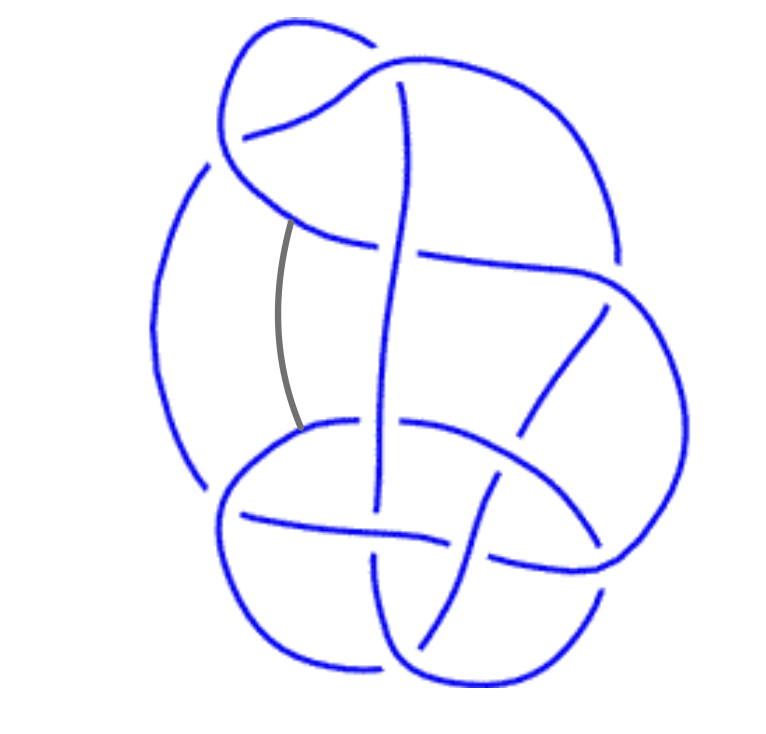}
		\caption{$11n_{86}\stackrel{0\phantom{i}}{\longrightarrow} 0_{1}$}
		
	\end{subfigure}
	\vskip3mm
	\begin{subfigure}[b]{0.26\textwidth}
		\includegraphics[width=\textwidth]{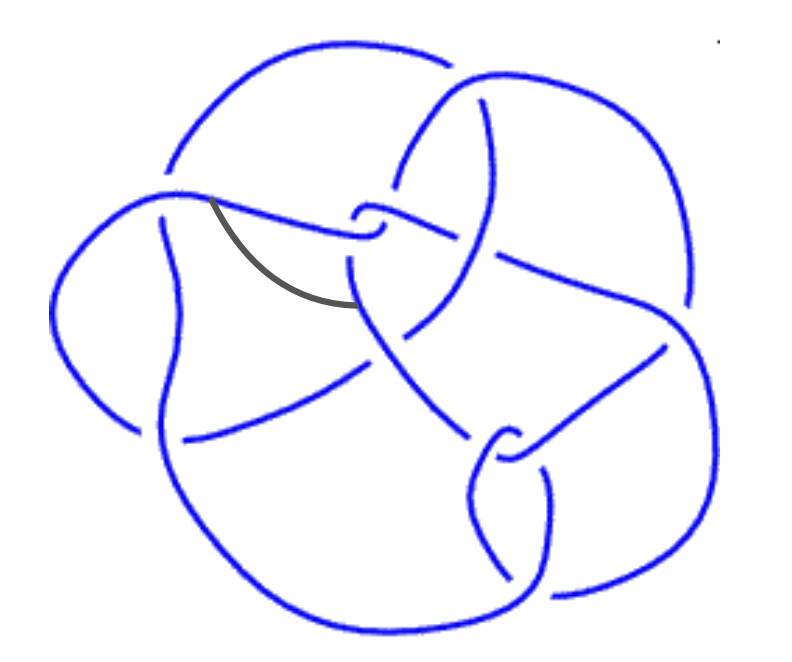}
		\caption{$11n_{87}\stackrel{0}{\longrightarrow} 8_{8}$}
		
	\end{subfigure}
	~
	\begin{subfigure}[b]{0.26\textwidth}
		\includegraphics[width=\textwidth]{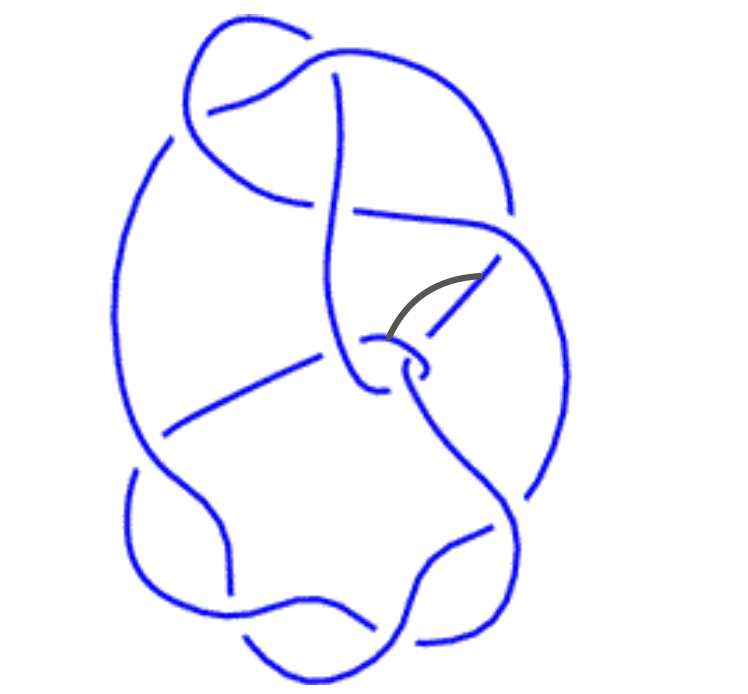}
		\caption{$11n_{88}\stackrel{0}{\longrightarrow} 6_{1}$}
		
	\end{subfigure}
	~
	\begin{subfigure}[b]{0.26\textwidth}
		\includegraphics[width=\textwidth]{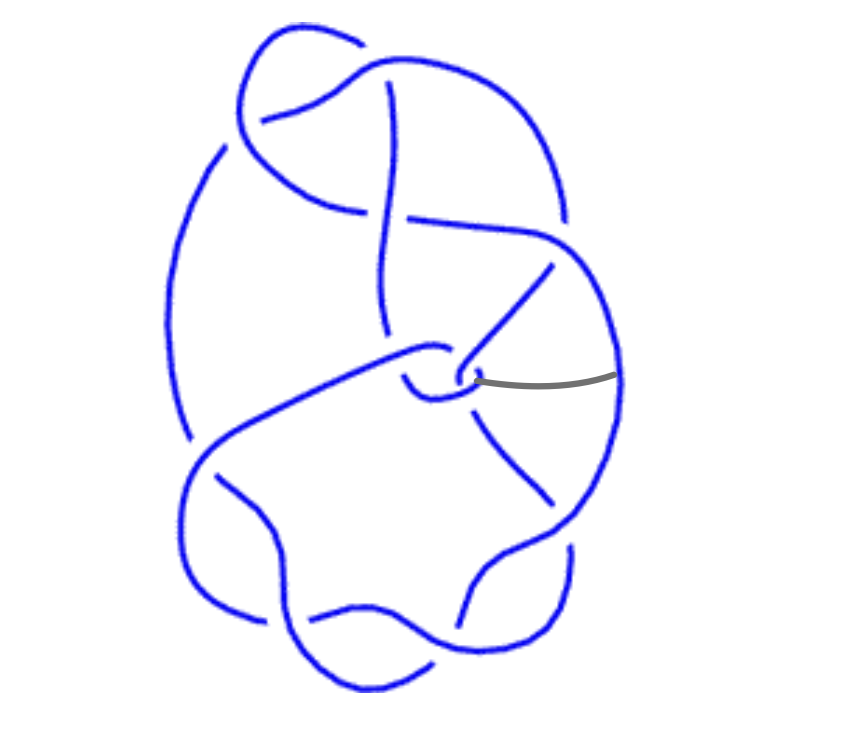}
		\caption{$11n_{89}\stackrel{0}{\longrightarrow} 8_{8}$}
		
	\end{subfigure}
	\vskip3mm
	\begin{subfigure}[b]{0.25\textwidth}
		\includegraphics[width=\textwidth]{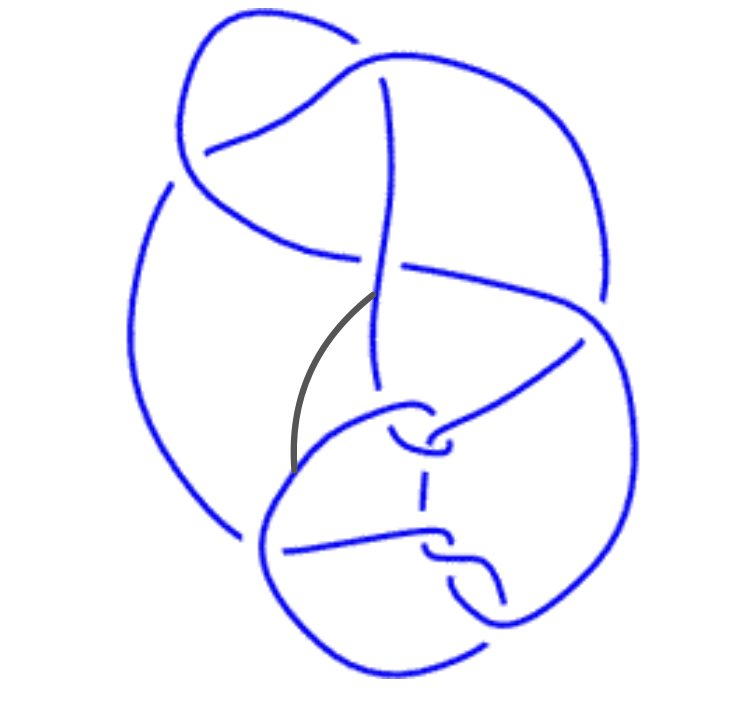}
		\caption{$11n_{91}\stackrel{1}{\longrightarrow} 12n_{145}$}
		
	\end{subfigure}
	~
	\begin{subfigure}[b]{0.25\textwidth}
		\includegraphics[width=\textwidth]{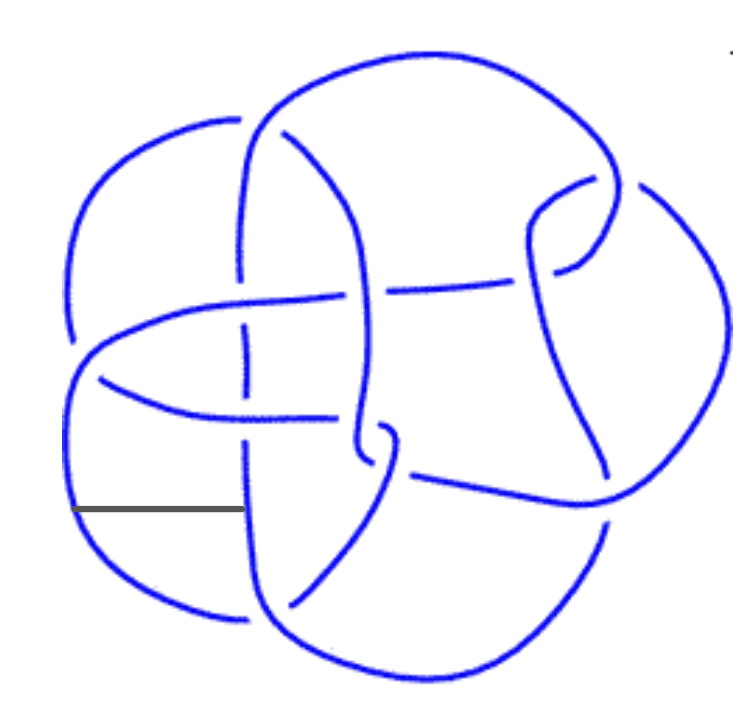}
		\caption{$11n_{93}\stackrel{1}{\longrightarrow} 10_{137}$}
		
	\end{subfigure}
	~
	\begin{subfigure}[b]{0.25\textwidth}
		\includegraphics[width=\textwidth]{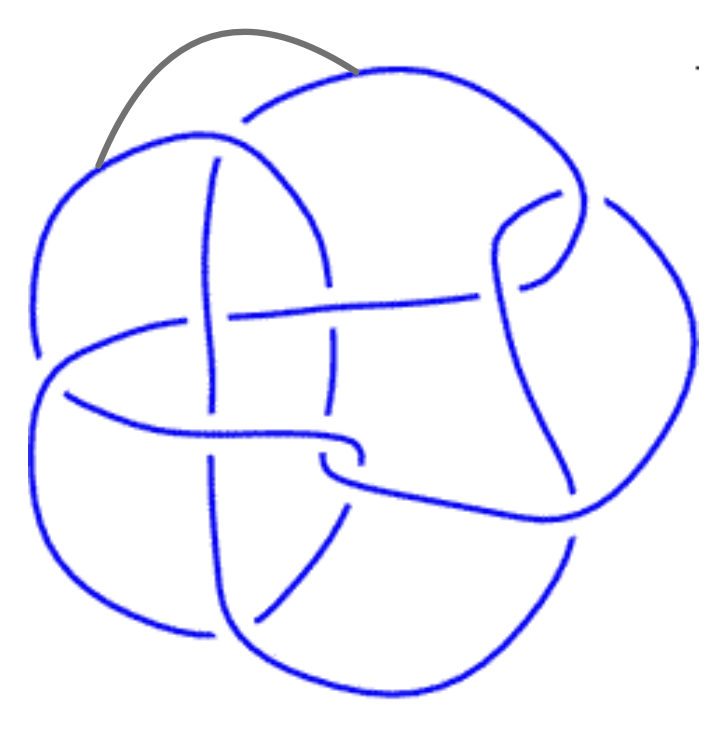}
		\caption{$11n_{94}\stackrel{-1}{\longrightarrow} 10_{137}$}
		
	\end{subfigure}
 \vskip3mm
 ~
	\begin{subfigure}[b]{0.25\textwidth}
		\includegraphics[width=\textwidth]{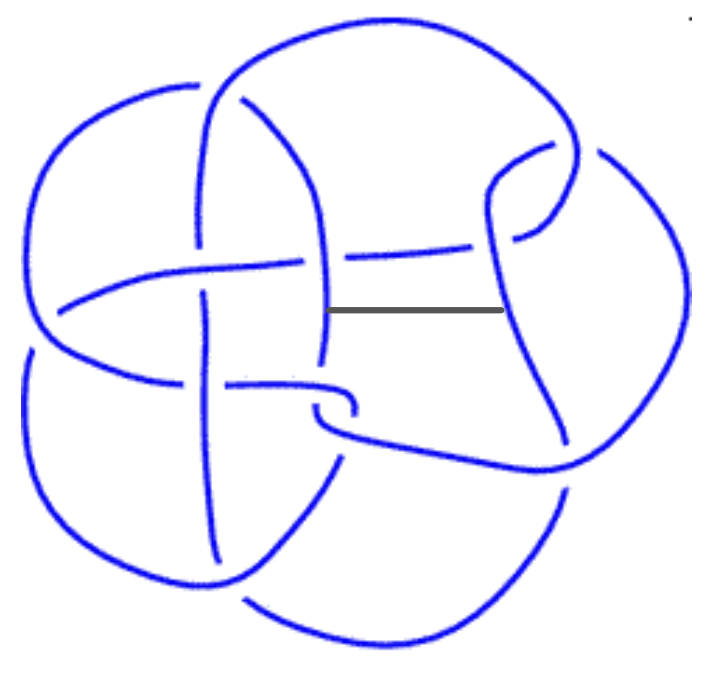}
		\caption{$11n_{96}\stackrel{0}{\longrightarrow} 0_1$}
		
	\end{subfigure}
 ~
	\begin{subfigure}[b]{0.25\textwidth}
		\includegraphics[width=\textwidth]{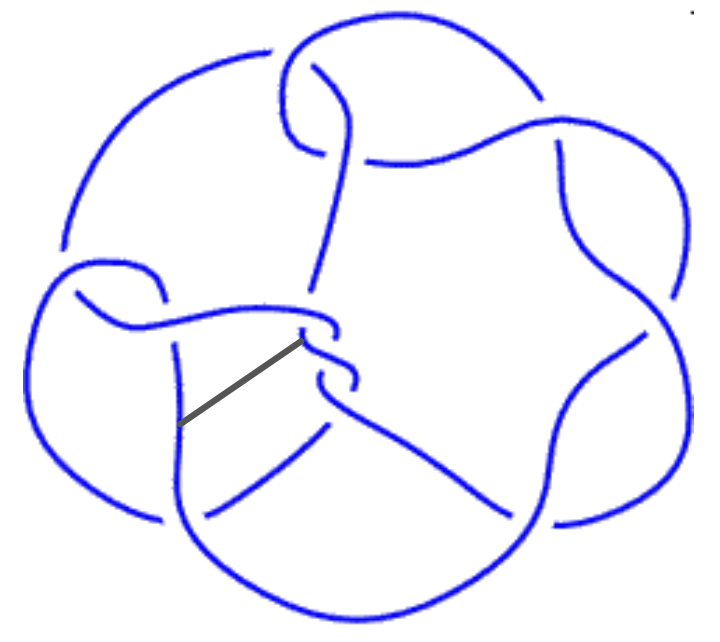}
		\caption{$11n_{102}\stackrel{0}{\longrightarrow} 0_1$}
		
	\end{subfigure}
 ~
	\begin{subfigure}[b]{0.25\textwidth}
		\includegraphics[width=\textwidth]{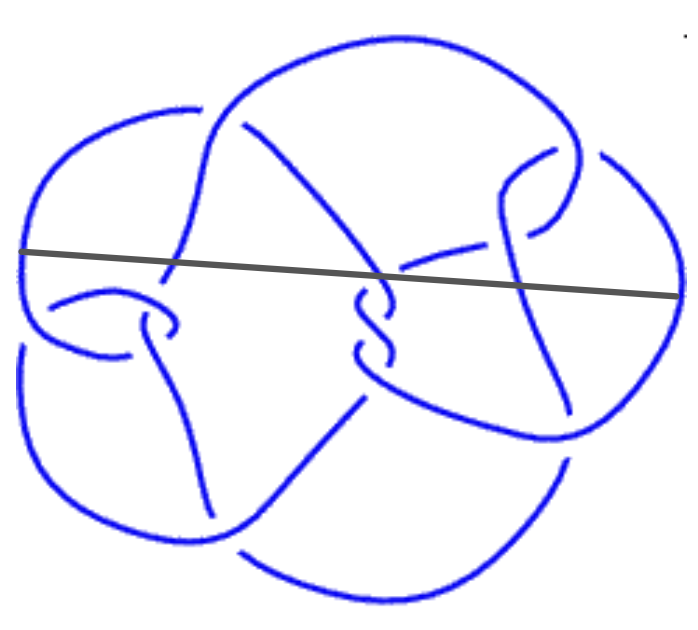}
		\caption{$11n_{104}\stackrel{0}{\longrightarrow} 0_1$}
		
	\end{subfigure}
	\vskip3mm
	\caption{Non-oriented band moves from the knots $11n_{81},  11n_{82},  11n_{86},    $ \\ $ 11n_{87},  11n_{88}, 11n_{89}, 11n_{91}, 11n_{93}, 11n_{94}, 11n_{96}, 11n_{102}, \text{ and } 11n_{104} $ to smoothly slice knots.}
\end{figure}
%

\newpage

\begin{figure}[!htbp]
	\centering
	\begin{subfigure}[b]{0.25\textwidth}
		\includegraphics[width=\textwidth]{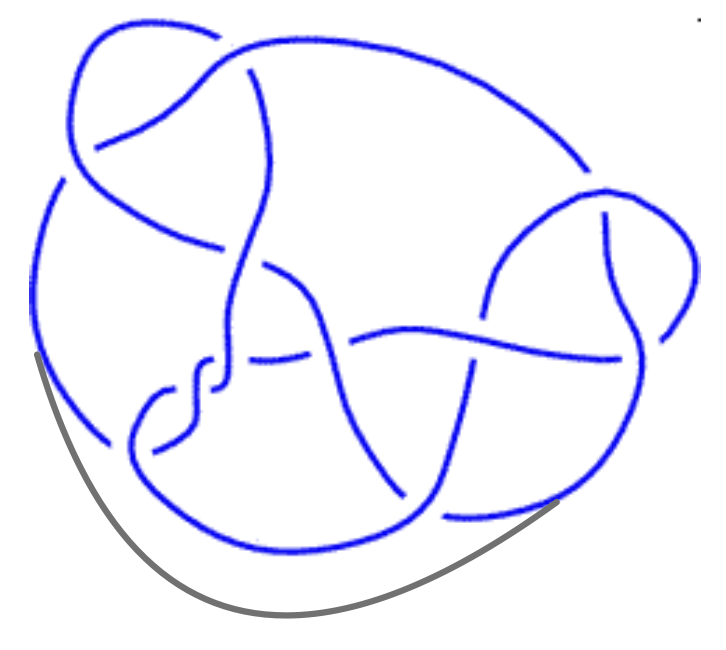}
		\caption{$11n_{105}\stackrel{1}{\longrightarrow} 9_{27}$}
		
	\end{subfigure}
	~
	\begin{subfigure}[b]{0.25\textwidth}
		\includegraphics[width=\textwidth]{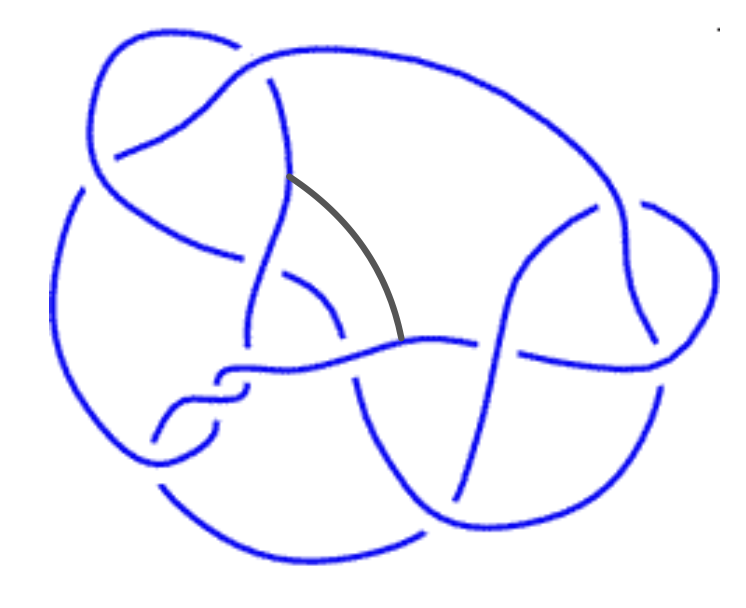}
		\caption{$11n_{106}\stackrel{0}{\longrightarrow} 0_1$}
		
	\end{subfigure}
	~
	\begin{subfigure}[b]{0.25\textwidth}
		\includegraphics[width=\textwidth]{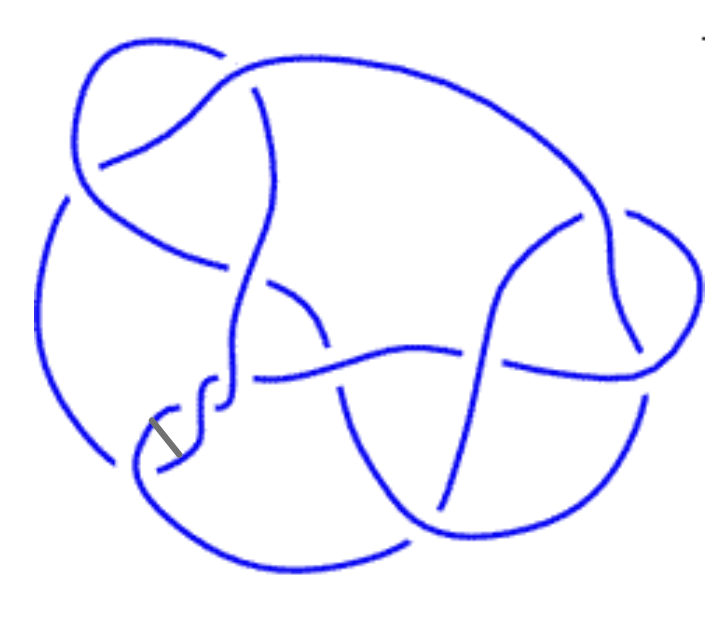}
		\caption{$11n_{107}\stackrel{0\phantom{i}}{\longrightarrow} 0_1$}
		
	\end{subfigure}
	\vskip3mm
	\begin{subfigure}[b]{0.25\textwidth}
		\includegraphics[width=\textwidth]{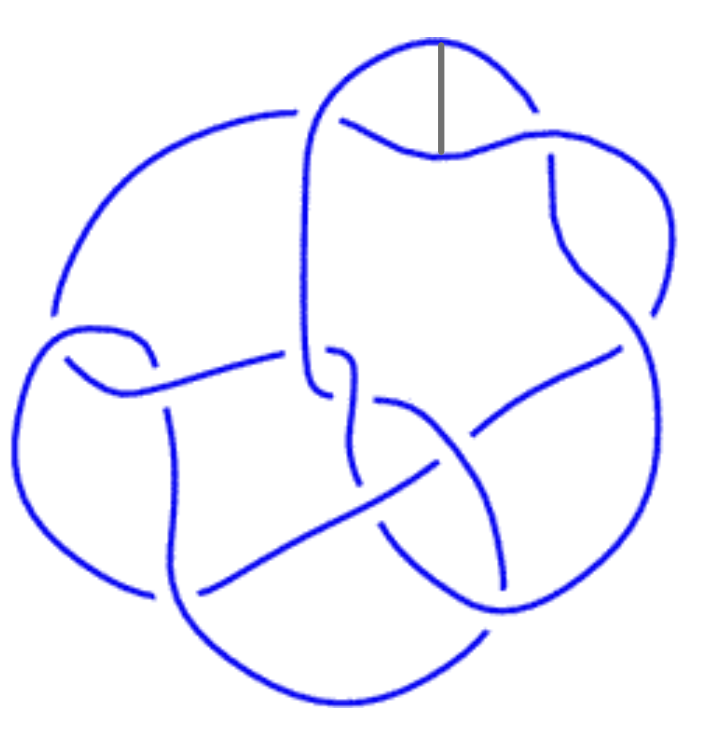}
		\caption{$11n_{110}\stackrel{0}{\longrightarrow} 8_{8}$}
		
	\end{subfigure}
	~
	\begin{subfigure}[b]{0.25\textwidth}
		\includegraphics[width=\textwidth]{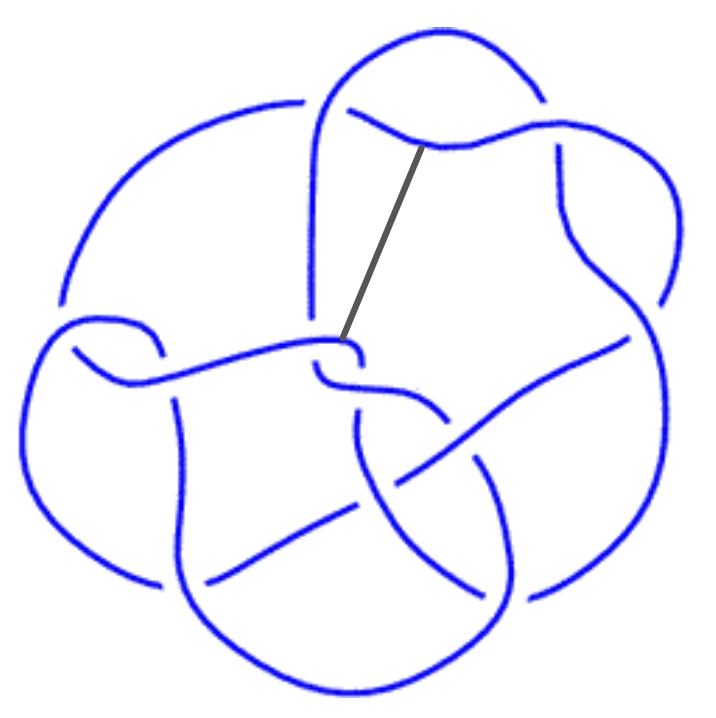}
		\caption{$11n_{111}\stackrel{1}{\longrightarrow} 0_{1}$}
		
	\end{subfigure}
	~
	\begin{subfigure}[b]{0.26\textwidth}
		\includegraphics[width=\textwidth]{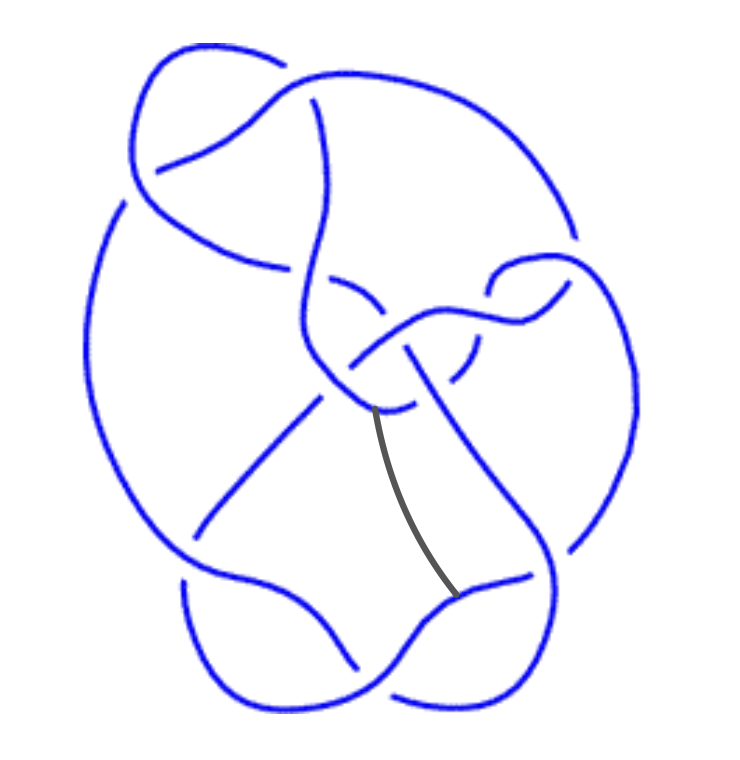}
		\caption{$11n_{113}\stackrel{-1}{\longrightarrow} 9_{47}$}
		
	\end{subfigure}
	\vskip3mm
	\begin{subfigure}[b]{0.25\textwidth}
		\includegraphics[width=\textwidth]{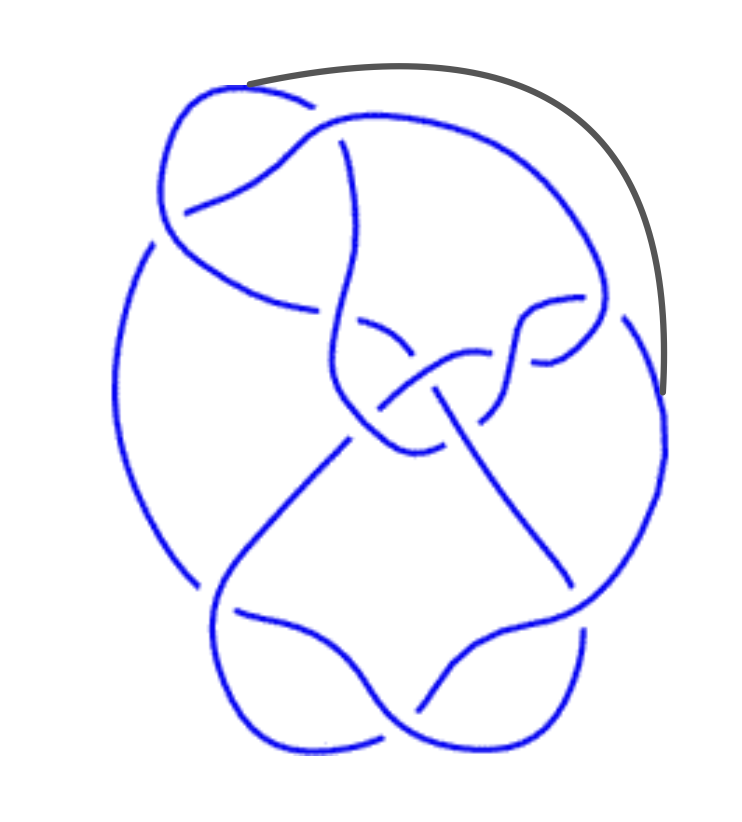}
		\caption{$11n_{117}\stackrel{1}{\longrightarrow} 12n_{414}$}
		
	\end{subfigure}
	~
	\begin{subfigure}[b]{0.26\textwidth}
		\includegraphics[width=\textwidth]{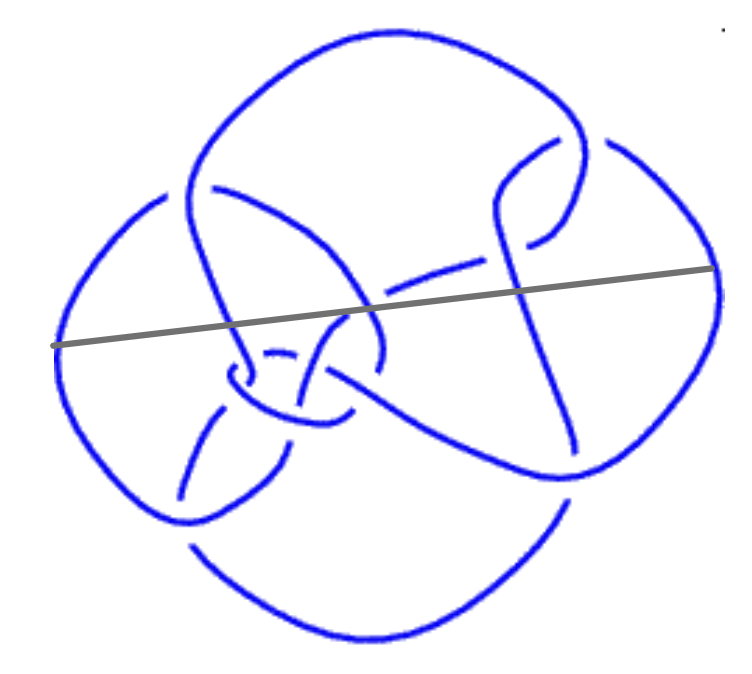}
		\caption{$11n_{118}\stackrel{0}{\longrightarrow} 0_{1}$}
		
	\end{subfigure}
	~
	\begin{subfigure}[b]{0.25\textwidth}
		\includegraphics[width=\textwidth]{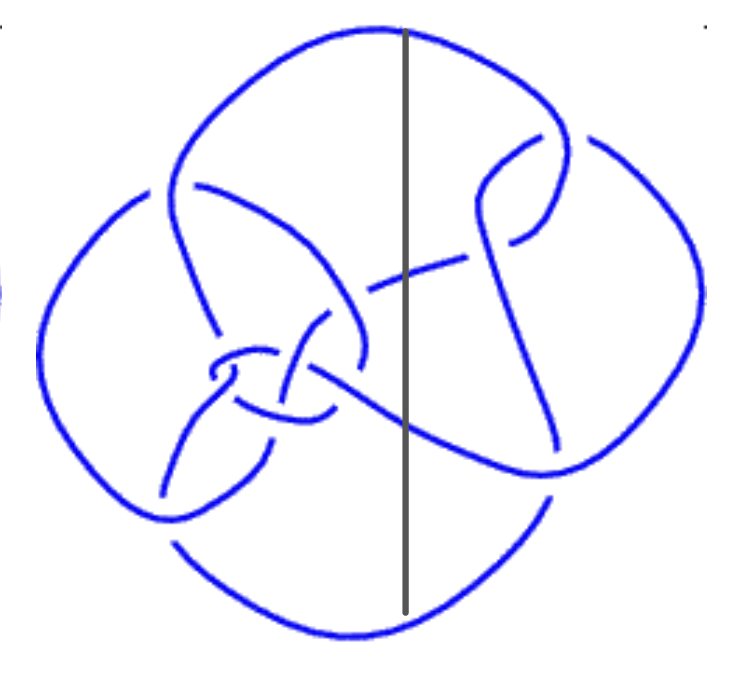}
		\caption{$11n_{120}\stackrel{1}{\longrightarrow} 12n_{312}$}
		
	\end{subfigure}
 \vskip3mm
 ~
	\begin{subfigure}[b]{0.25\textwidth}
		\includegraphics[width=\textwidth]{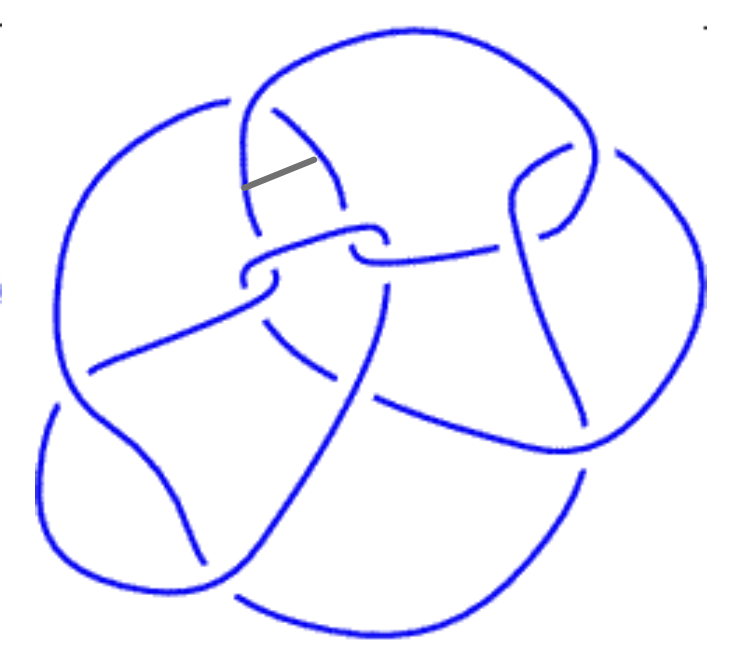}
		\caption{$11n_{121}\stackrel{0}{\longrightarrow} 0_1$}
		
	\end{subfigure}
 ~
	\begin{subfigure}[b]{0.25\textwidth}
		\includegraphics[width=\textwidth]{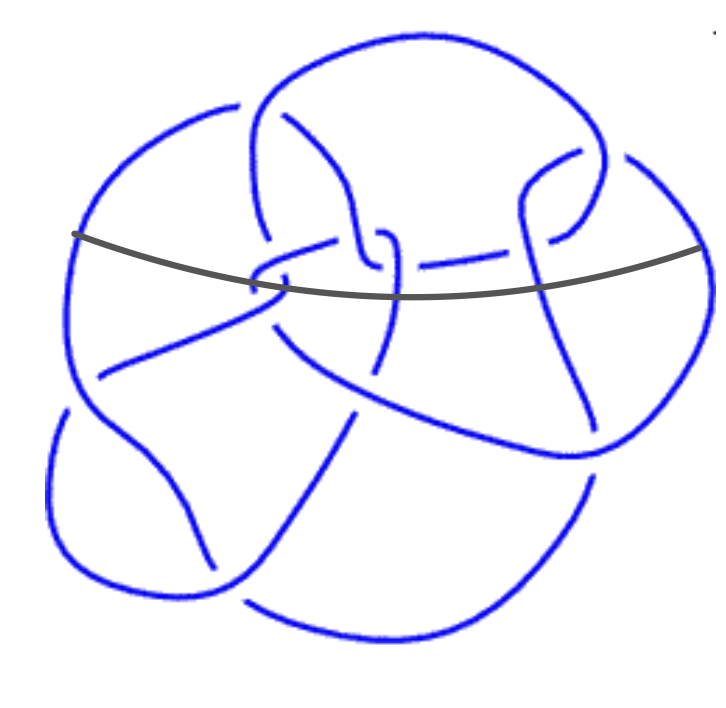}
		\caption{$11n_{122}\stackrel{0}{\longrightarrow} 0_1$}
		
	\end{subfigure}
 ~
	\begin{subfigure}[b]{0.25\textwidth}
		\includegraphics[width=\textwidth]{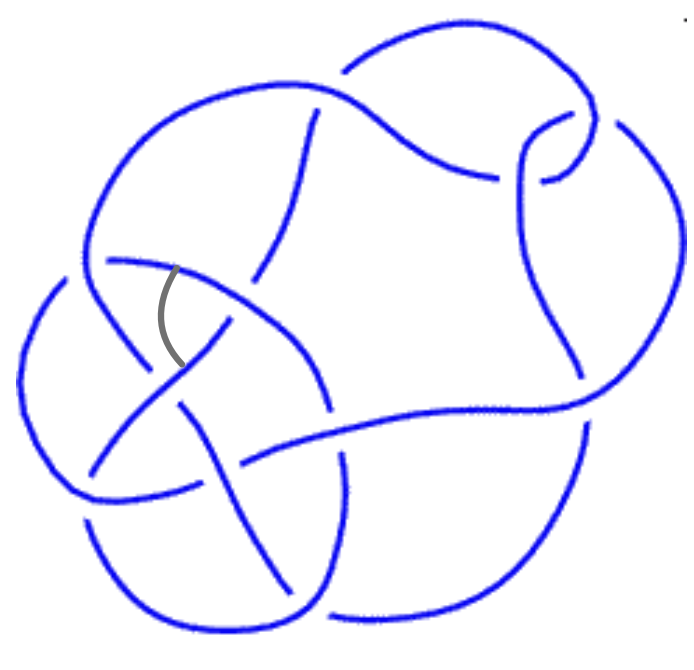}
		\caption{$11n_{123}\stackrel{0}{\longrightarrow} 9_{27}$}
		
	\end{subfigure}
	\vskip3mm
	\caption{Non-oriented band moves from the knots $11n_{105},  11n_{106},  11n_{107},    $ \\ $ 11n_{110},  11n_{111}, 11n_{113}, 11n_{117}, 11n_{118}, 11n_{120}, 11n_{121}, 11n_{122}, \text{ and } 11n_{123} $ \\ to smoothly slice knots.}
\end{figure}
%

\newpage

\begin{figure}[!htbp]
	\centering
	\begin{subfigure}[b]{0.26\textwidth}
		\includegraphics[width=\textwidth]{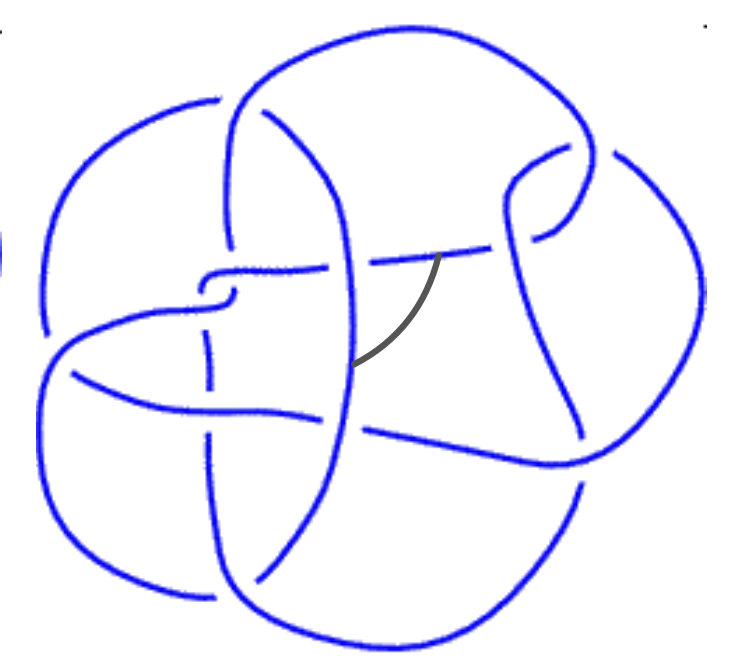}
		\caption{$11n_{124}\stackrel{1}{\longrightarrow} 8_{20}$}
		
	\end{subfigure}
	~
	\begin{subfigure}[b]{0.25\textwidth}
		\includegraphics[width=\textwidth]{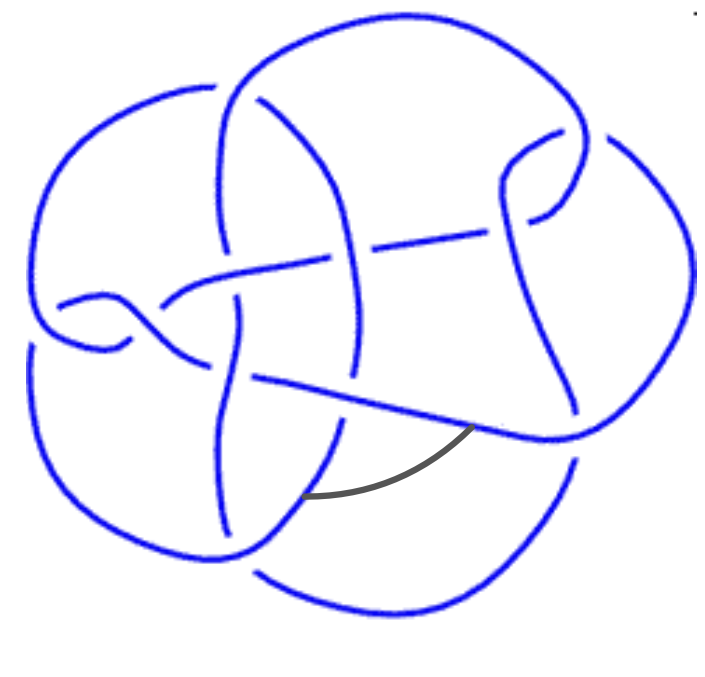}
		\caption{$11n_{126}\stackrel{0}{\longrightarrow} 8_{20}$}
		
	\end{subfigure}
	~
	\begin{subfigure}[b]{0.25\textwidth}
		\includegraphics[width=\textwidth]{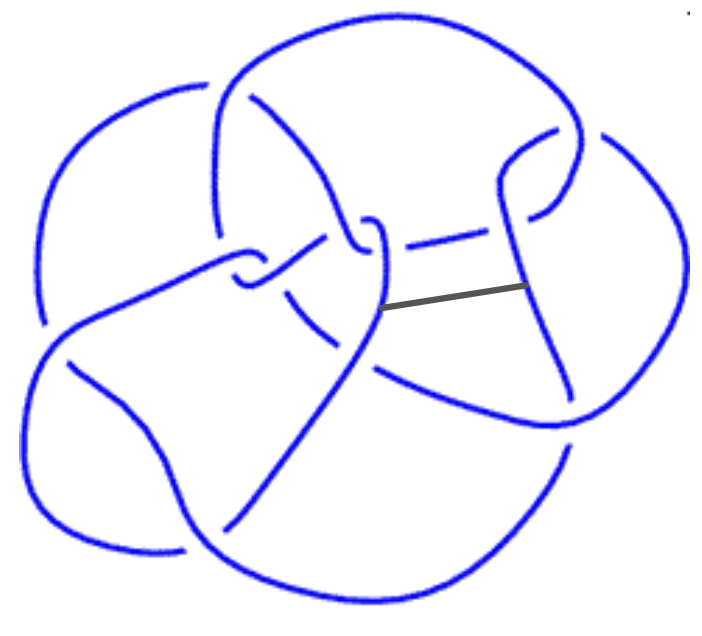}
		\caption{$11n_{127}\stackrel{0\phantom{i}}{\longrightarrow} 0_{1}$}
		
	\end{subfigure}
	\vskip3mm
	\begin{subfigure}[b]{0.25\textwidth}
		\includegraphics[width=\textwidth]{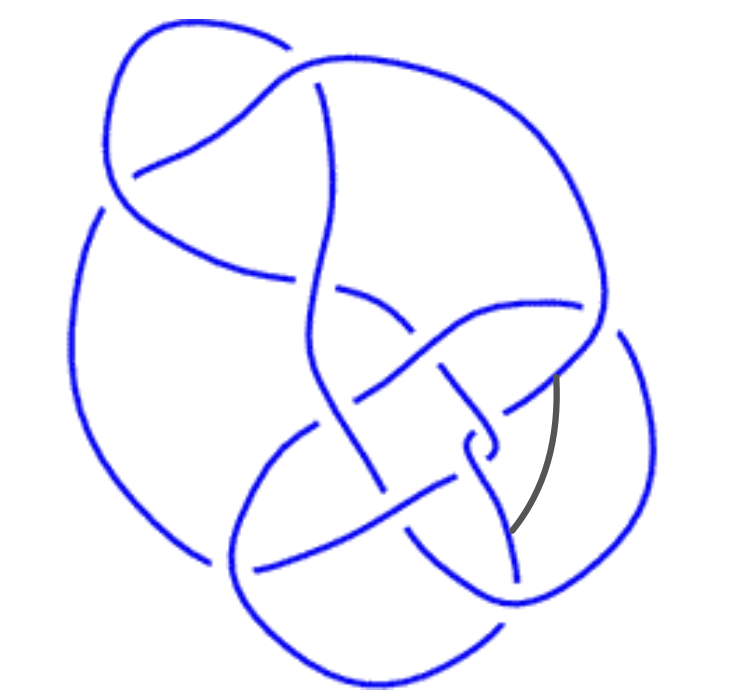}
		\caption{$11n_{128}\stackrel{1}{\longrightarrow} 10_{140}$}
		
	\end{subfigure}
	~
	\begin{subfigure}[b]{0.25\textwidth}
		\includegraphics[width=\textwidth]{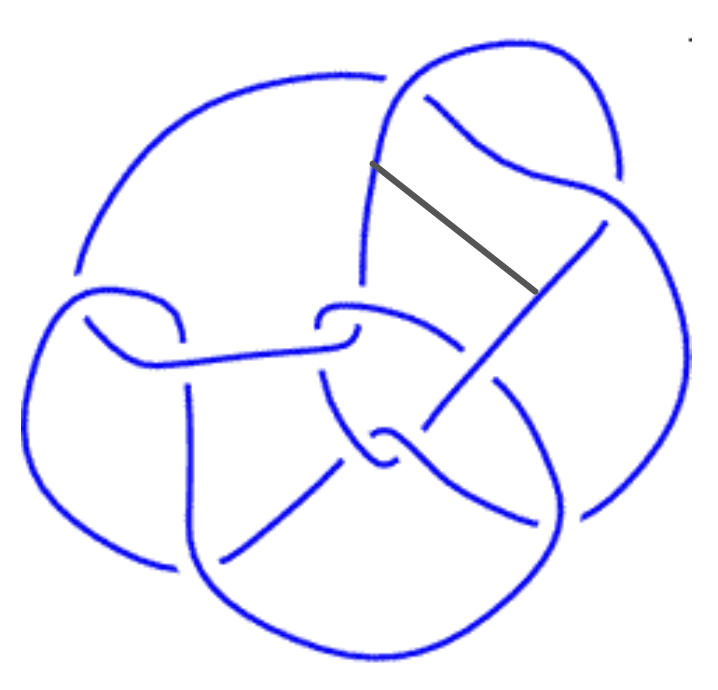}
		\caption{$11n_{134}\stackrel{1}{\longrightarrow} 11_{116}$}
		
	\end{subfigure}
	~
	\begin{subfigure}[b]{0.25\textwidth}
		\includegraphics[width=\textwidth]{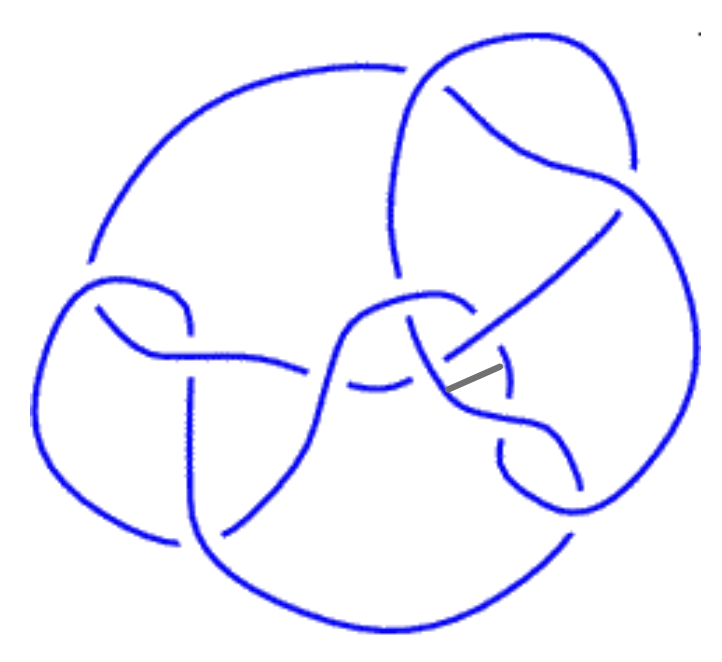}
		\caption{$11n_{135}\stackrel{0}{\longrightarrow} 8_{20}$}
		
	\end{subfigure}
	\vskip3mm
	\begin{subfigure}[b]{0.25\textwidth}
		\includegraphics[width=\textwidth]{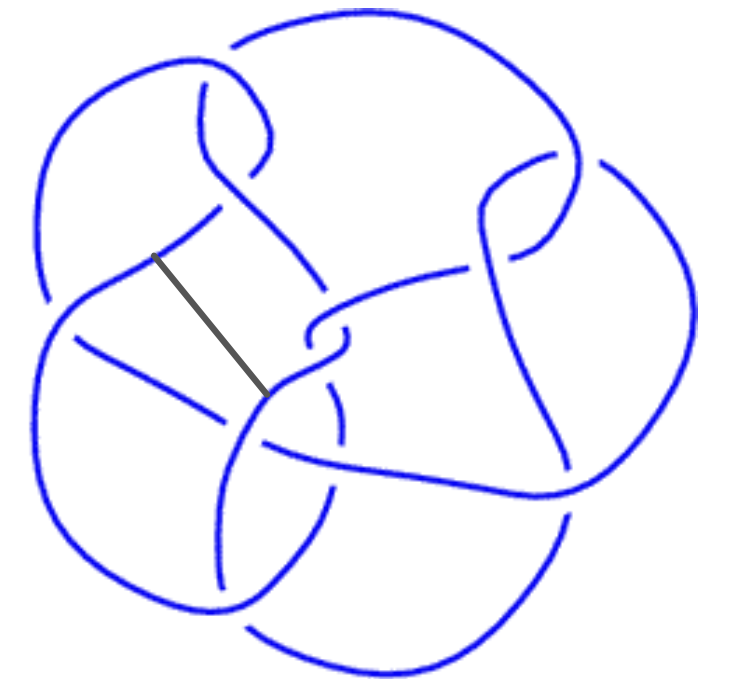}
		\caption{$11n_{136}\stackrel{0}{\longrightarrow} 6_{1}$}
		
	\end{subfigure}
	~
	\begin{subfigure}[b]{0.25\textwidth}
		\includegraphics[width=\textwidth]{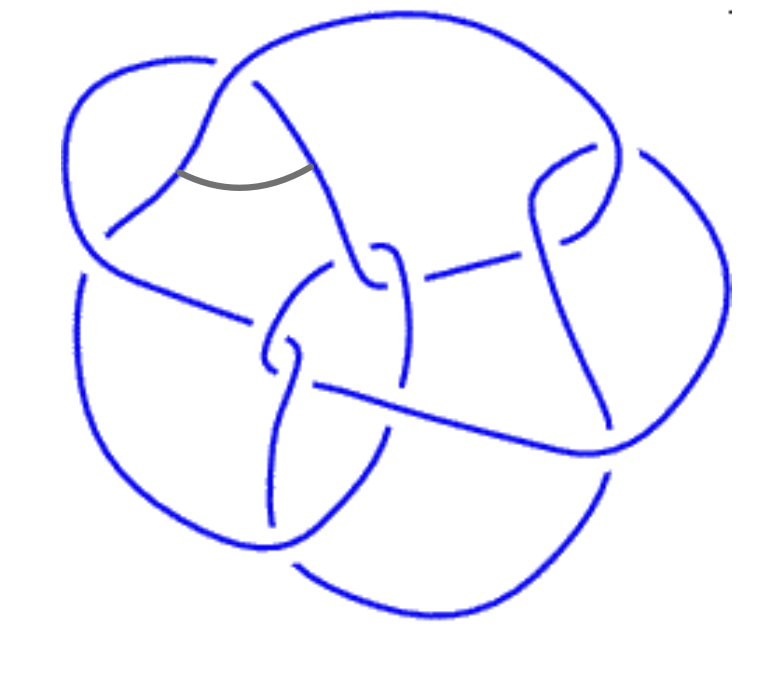}
		\caption{$11n_{142}\stackrel{0}{\longrightarrow} 10_{129}$}
		
	\end{subfigure}
	~
	\begin{subfigure}[b]{0.25\textwidth}
		\includegraphics[width=\textwidth]{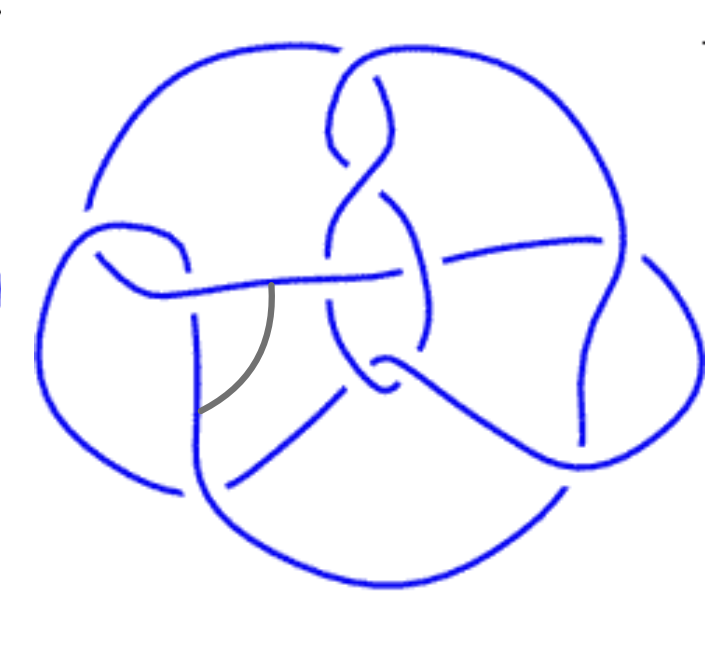}
		\caption{$11n_{143}\stackrel{0}{\longrightarrow} 8_{20}$}
		
	\end{subfigure}
 \vskip3mm
 ~
	\begin{subfigure}[b]{0.25\textwidth}
		\includegraphics[width=\textwidth]{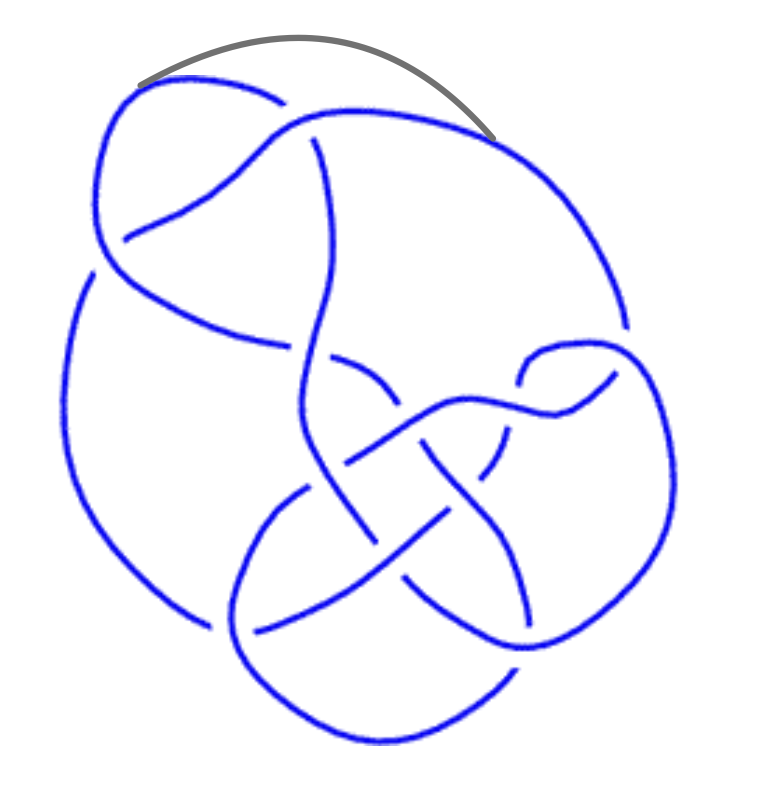}
		\caption{$11n_{145}\stackrel{1}{\longrightarrow} 6_1$}
		
	\end{subfigure}
 ~
	\begin{subfigure}[b]{0.25\textwidth}
		\includegraphics[width=\textwidth]{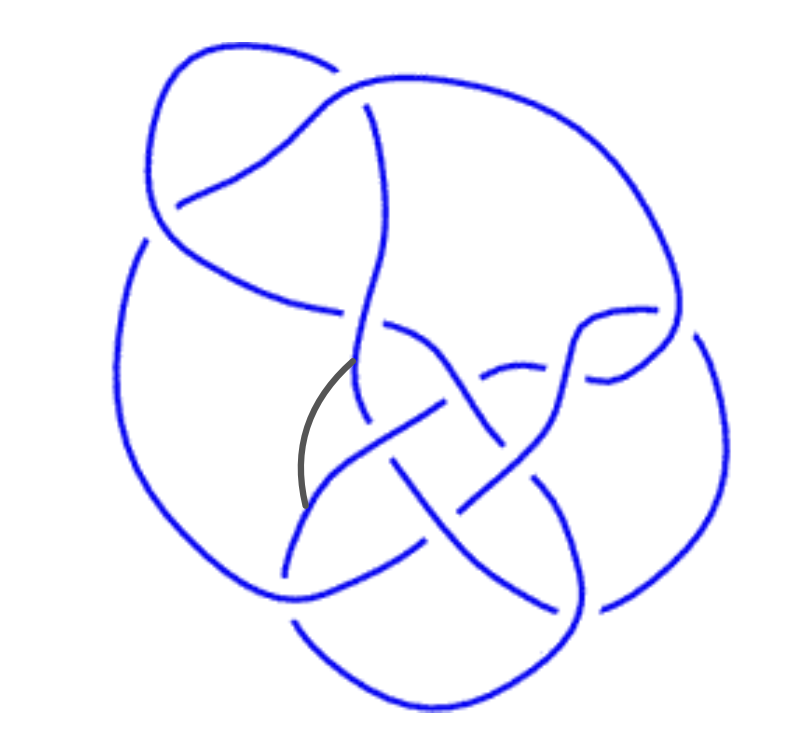}
		\caption{$11n_{146}\stackrel{-1}{\longrightarrow} 10_{137}$}
		
	\end{subfigure}
 ~
	\begin{subfigure}[b]{0.25\textwidth}
		\includegraphics[width=\textwidth]{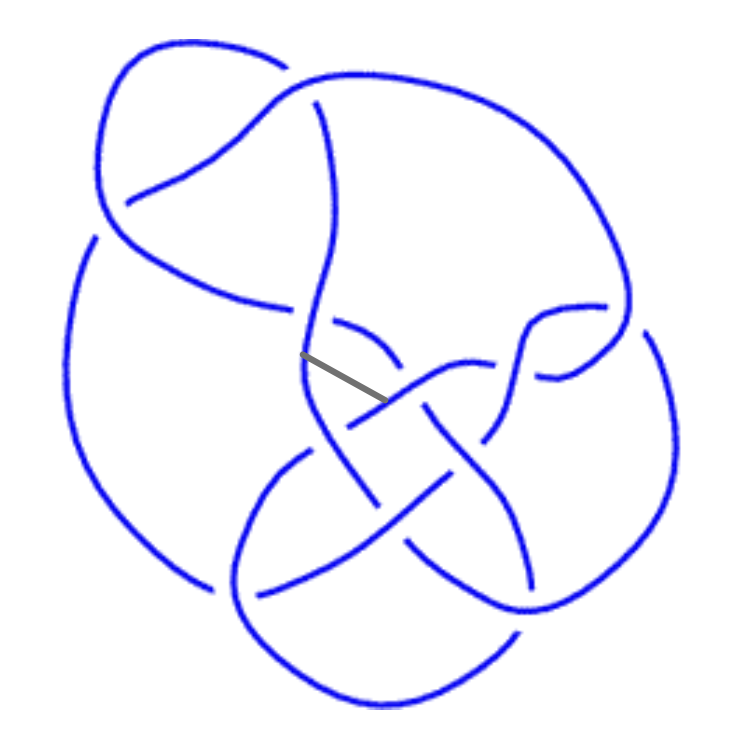}
		\caption{$11n_{147}\stackrel{0}{\longrightarrow} 6_1$}
		
	\end{subfigure}
	\vskip3mm
	\caption{Non-oriented band moves from the knots $11n_{124},  11n_{126},  11n_{127},     $ \\ $ 11n_{128}, 11n_{134}, 11n_{135}, 11n_{136}, 11n_{142}, 11n_{143}, 11n_{145}, 11n_{146}, \text{ and } 11n_{147} $ \\ to smoothly slice knots.}
\end{figure}
%


\begin{figure}[!htbp]
	\centering
	\begin{subfigure}[b]{0.26\textwidth}
		\includegraphics[width=\textwidth]{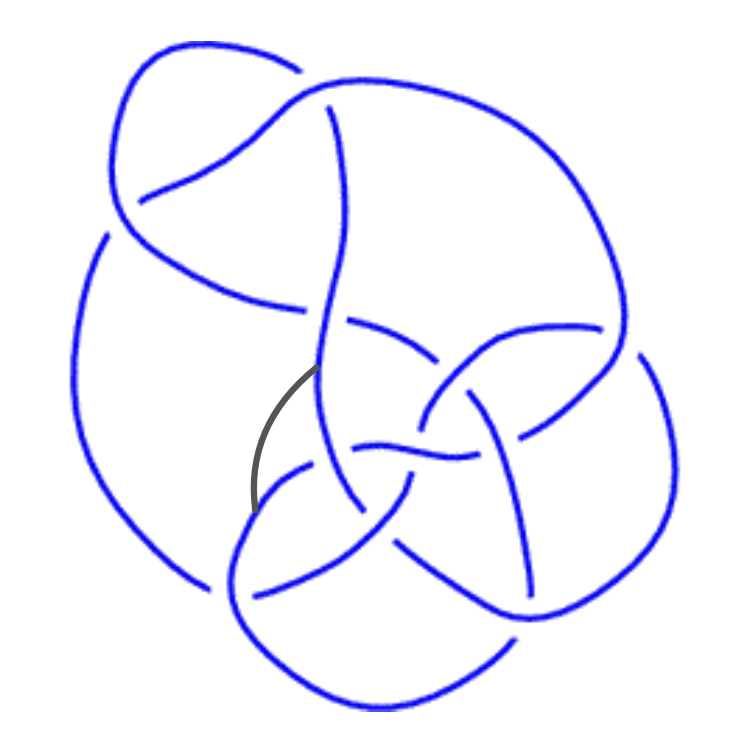}
		\caption{$11n_{148}\stackrel{1}{\longrightarrow} 10_{137}$}
		
	\end{subfigure}
	~
	\begin{subfigure}[b]{0.26\textwidth}
		\includegraphics[width=\textwidth]{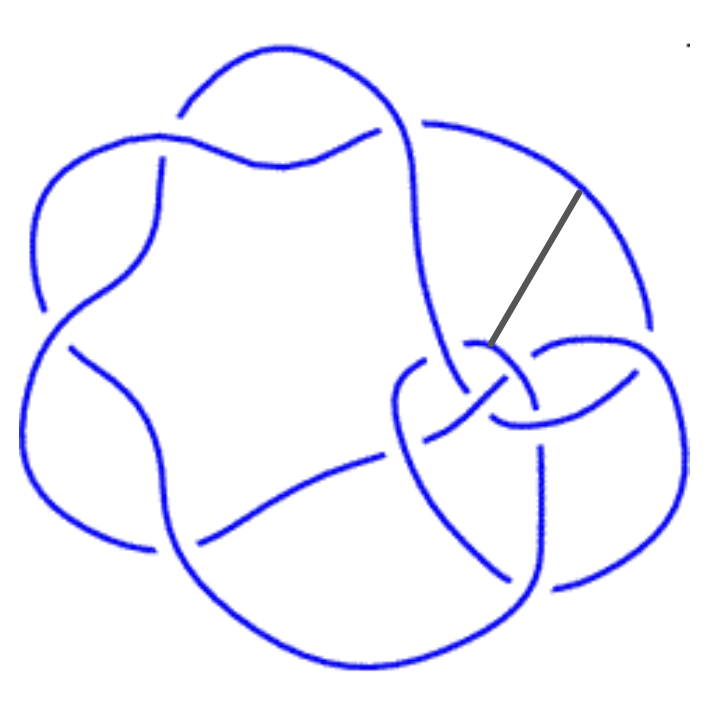}
		\caption{$11n_{150}\stackrel{0}{\longrightarrow} 8_{8}$}
		
	\end{subfigure}
	~
	\begin{subfigure}[b]{0.26\textwidth}
		\includegraphics[width=\textwidth]{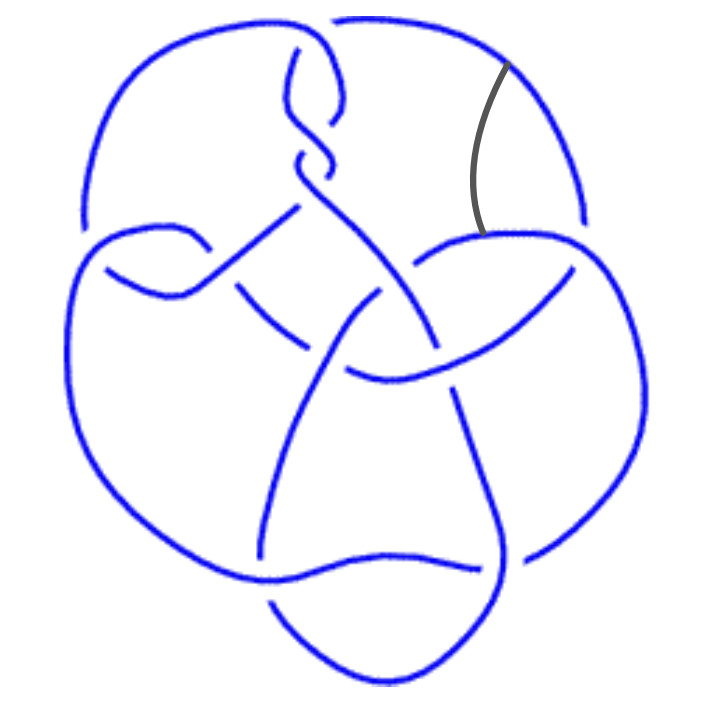}
		\caption{$11n_{151}\stackrel{-1\phantom{i}}{\longrightarrow} 10_{153}$}
		
	\end{subfigure}
	\vskip3mm
	\begin{subfigure}[b]{0.26\textwidth}
		\includegraphics[width=\textwidth]{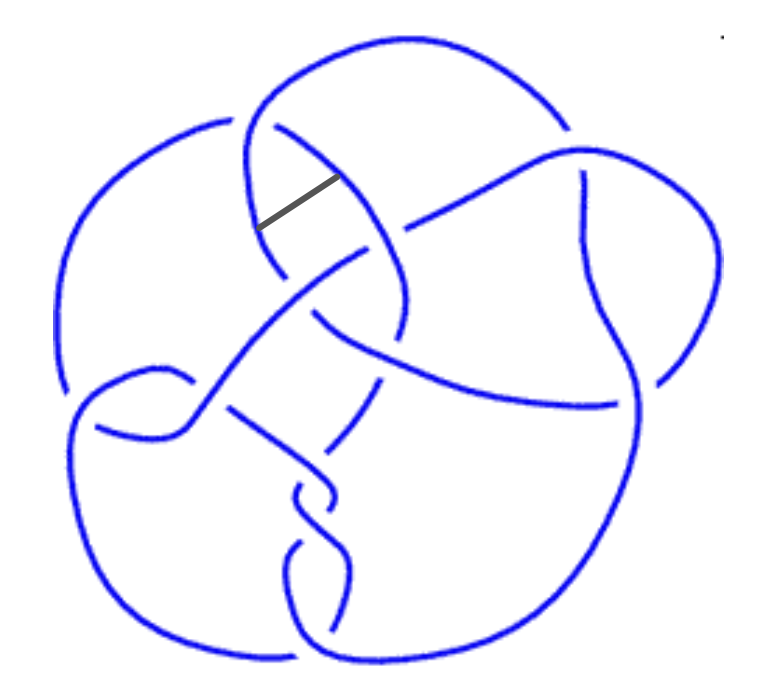}
		\caption{$11n_{152}\stackrel{0}{\longrightarrow} 10_{153}$}
		
	\end{subfigure}
	~
	\begin{subfigure}[b]{0.26\textwidth}
		\includegraphics[width=\textwidth]{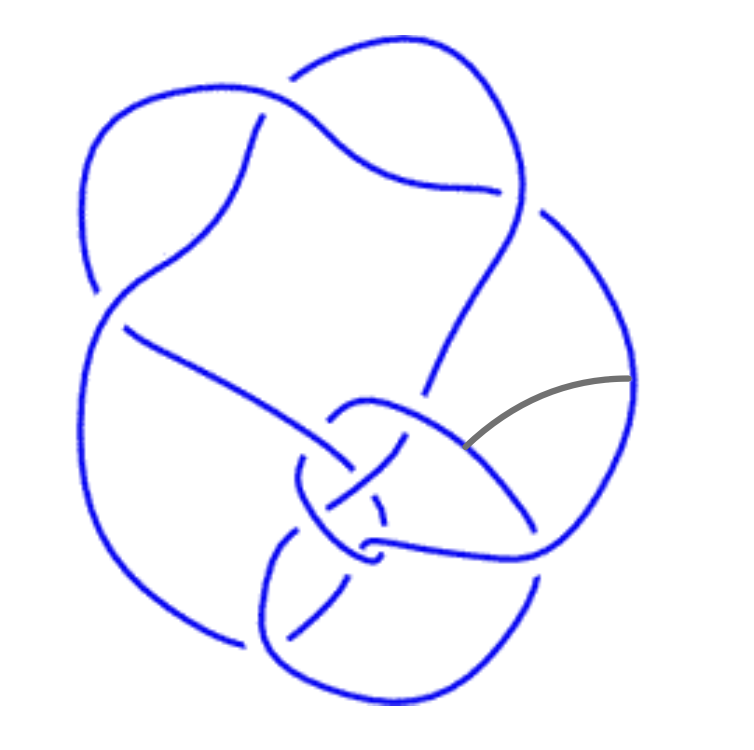}
		\caption{$11n_{153}\stackrel{1}{\longrightarrow} 10_{129}$}
		
	\end{subfigure}
	~
	\begin{subfigure}[b]{0.26\textwidth}
		\includegraphics[width=\textwidth]{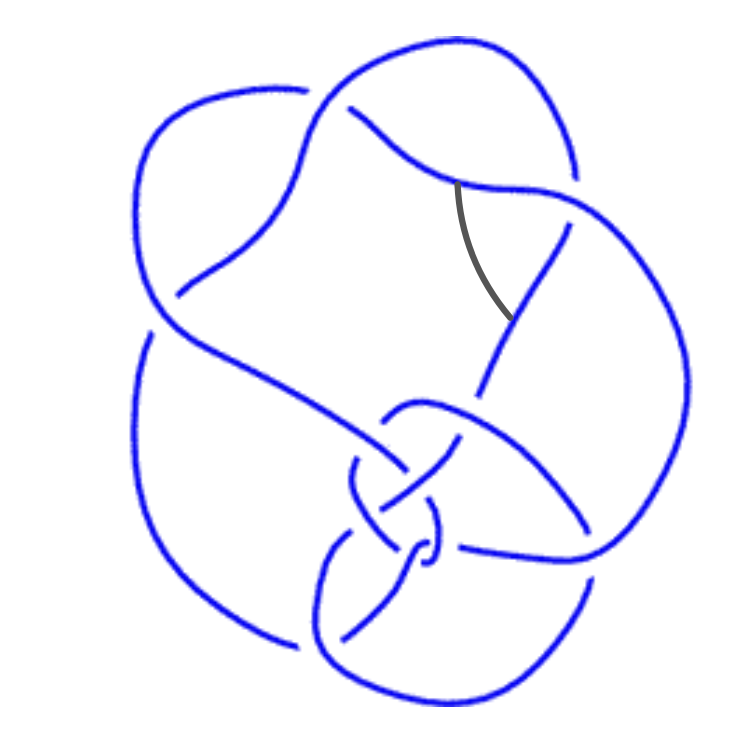}
		\caption{$11n_{154}\stackrel{-1}{\longrightarrow} 12n_{504}$}
		
	\end{subfigure}
	\vskip3mm
	\begin{subfigure}[b]{0.25\textwidth}
		\includegraphics[width=\textwidth]{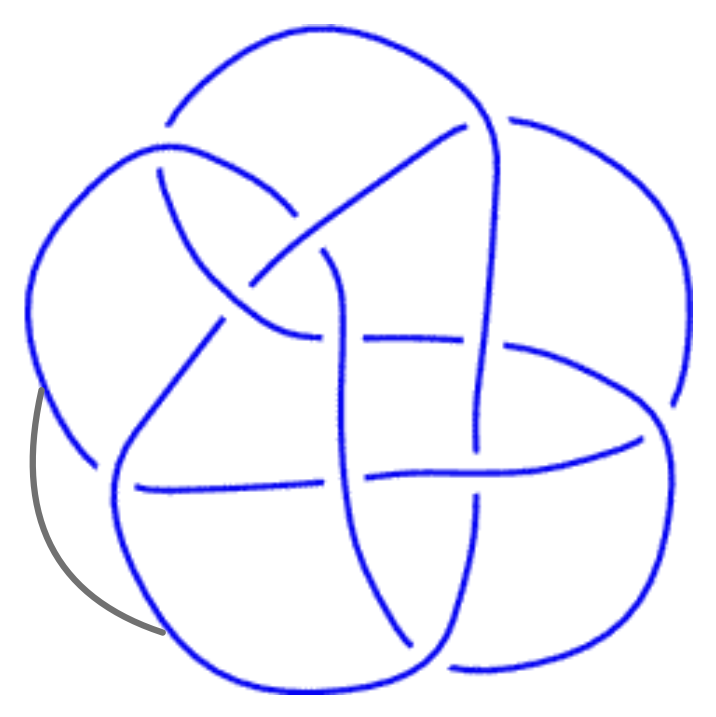}
		\caption{$11n_{157}\stackrel{-1}{\longrightarrow} 9_{27}$}
		
	\end{subfigure}
	~
	\begin{subfigure}[b]{0.25\textwidth}
		\includegraphics[width=\textwidth]{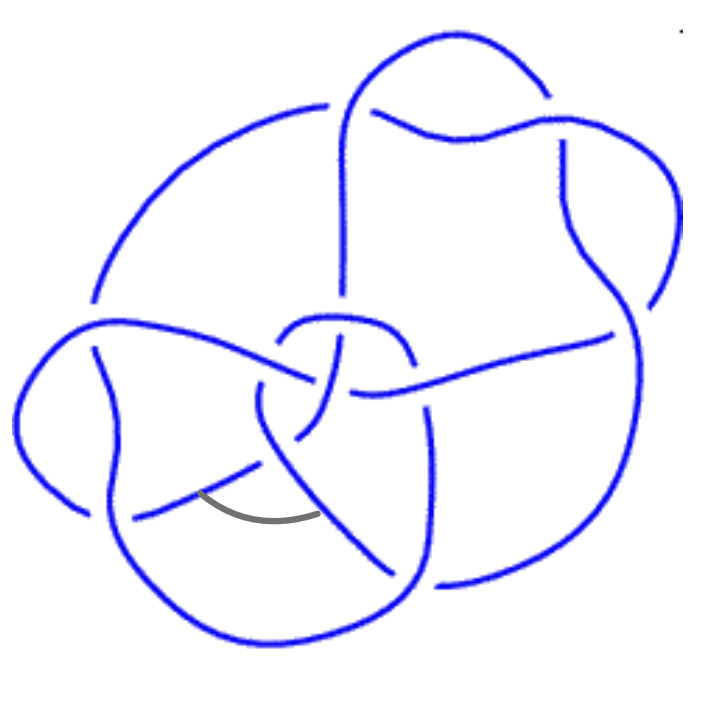}
		\caption{$11n_{158}\stackrel{0}{\longrightarrow} 0_{1}$}
		
	\end{subfigure}
	~
	\begin{subfigure}[b]{0.25\textwidth}
		\includegraphics[width=\textwidth]{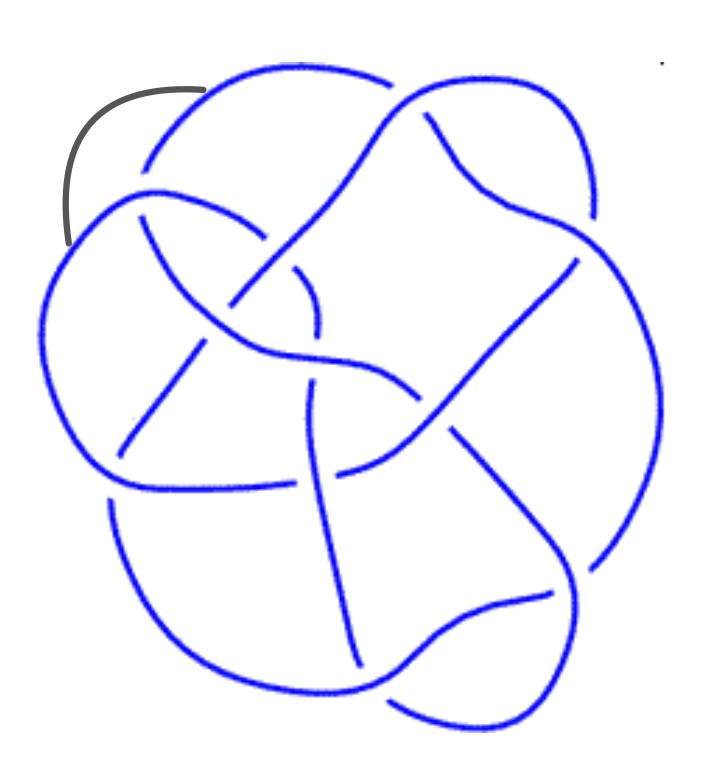}
		\caption{$11n_{160}\stackrel{1}{\longrightarrow} 12n_{802}$}
		
	\end{subfigure}
 \vskip3mm
 ~
	\begin{subfigure}[b]{0.25\textwidth}
		\includegraphics[width=\textwidth]{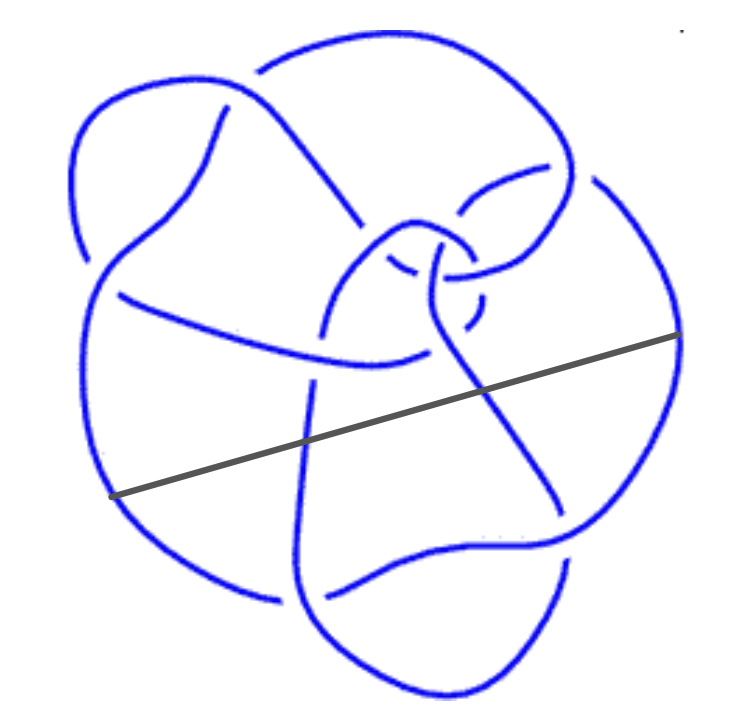}
		\caption{$11n_{162}\stackrel{-1}{\longrightarrow} 10_{140}$}
		
	\end{subfigure}
 ~
	\begin{subfigure}[b]{0.25\textwidth}
		\includegraphics[width=\textwidth]{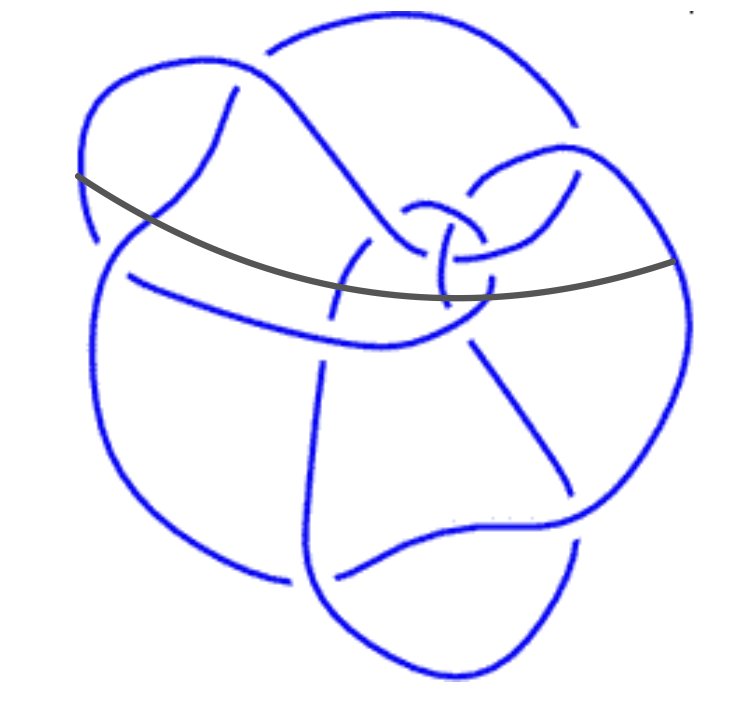}
		\caption{$11n_{163}\stackrel{0}{\longrightarrow} 8_8$}
		
	\end{subfigure}
 ~
	\begin{subfigure}[b]{0.25\textwidth}
		\includegraphics[width=\textwidth]{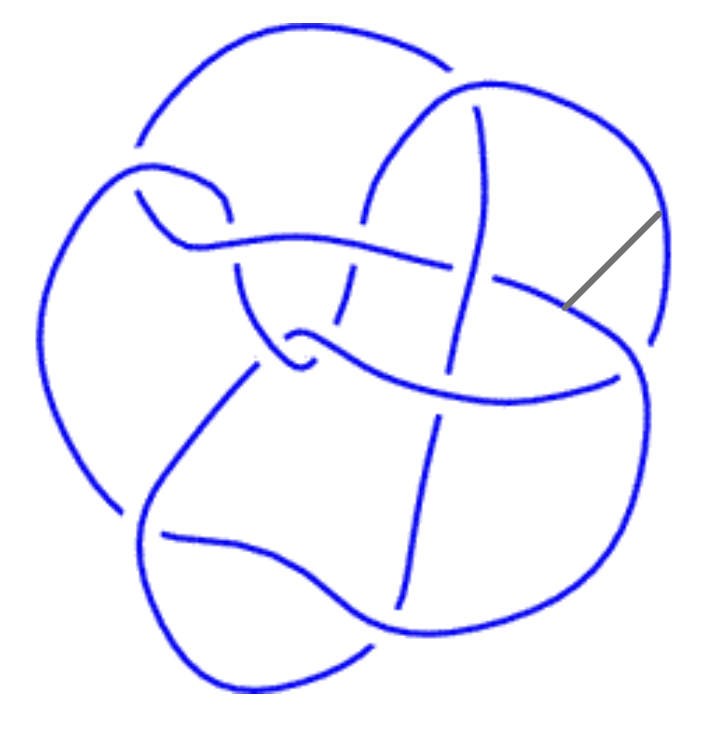}
		\caption{$11n_{164}\stackrel{0}{\longrightarrow} 8_{20}$}
		
	\end{subfigure}
	\vskip3mm
	\caption{Non-oriented band moves from the knots $11n_{148},  11n_{150},  11n_{151},    $ \\ $ 11n_{152},  11n_{153}, 11n_{154}, 11n_{157}, 11n_{158}, 11n_{160}, 11n_{162}, 11n_{163}, \text{ and } 11n_{164} $ \\ to smoothly slice knots.}
\end{figure}
%



\begin{figure}[!htbp]
	\centering
	\begin{subfigure}[b]{0.27\textwidth}
		\includegraphics[width=\textwidth]{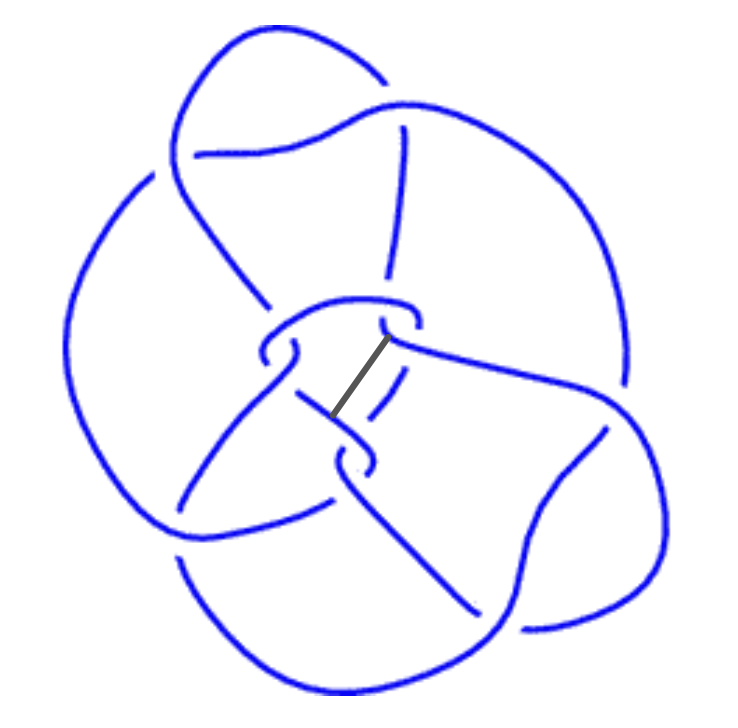}
		\caption{$11n_{167}\stackrel{0}{\longrightarrow} 6_1$}
		
	\end{subfigure}
	~
	\begin{subfigure}[b]{0.25\textwidth}
		\includegraphics[width=\textwidth]{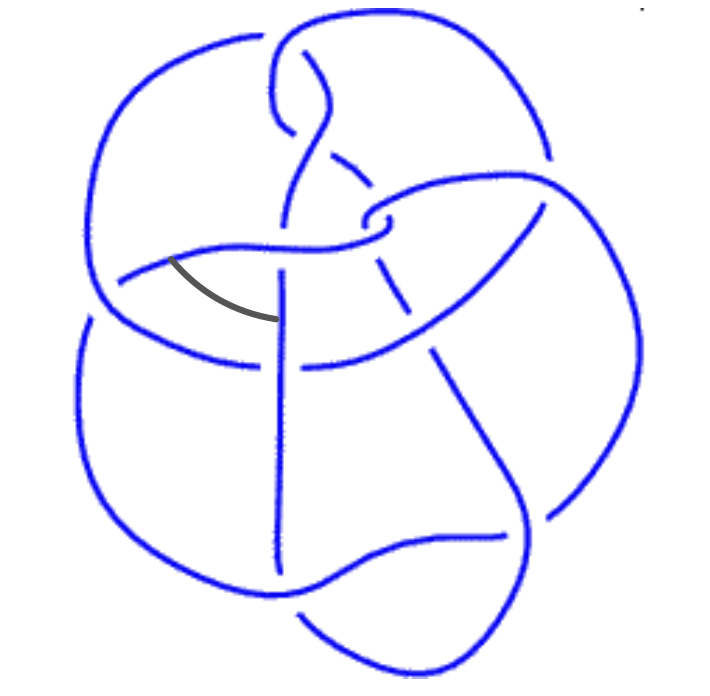}
		\caption{$11n_{168}\stackrel{-1}{\longrightarrow} 10_{137}$}
		
	\end{subfigure}
	~
	\begin{subfigure}[b]{0.25\textwidth}
		\includegraphics[width=\textwidth]{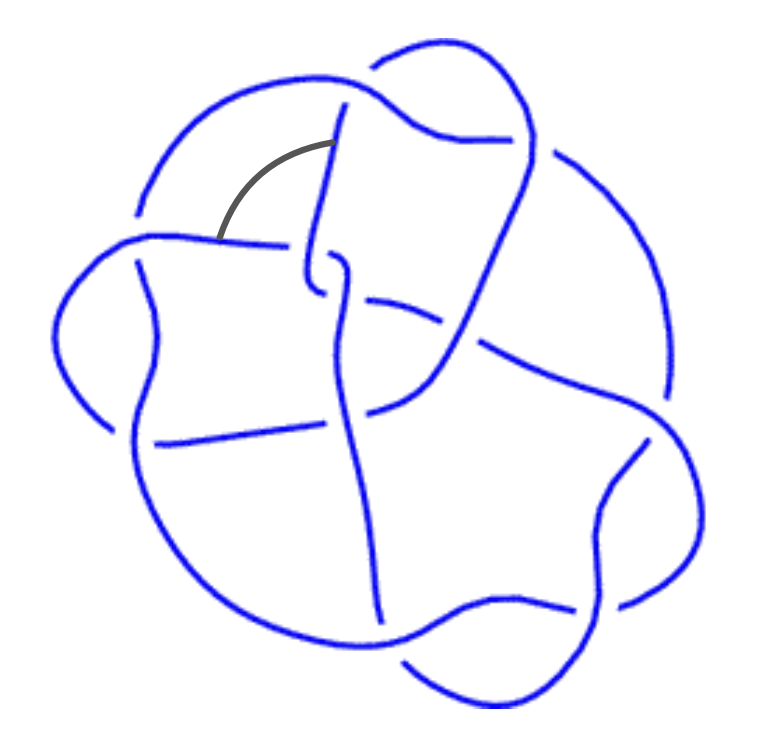}
		\caption{$11n_{169}\stackrel{-1\phantom{i}}{\longrightarrow} 12n_{817}$}
		
	\end{subfigure}
	\vskip3mm
	\begin{subfigure}[b]{0.25\textwidth}
		\includegraphics[width=\textwidth]{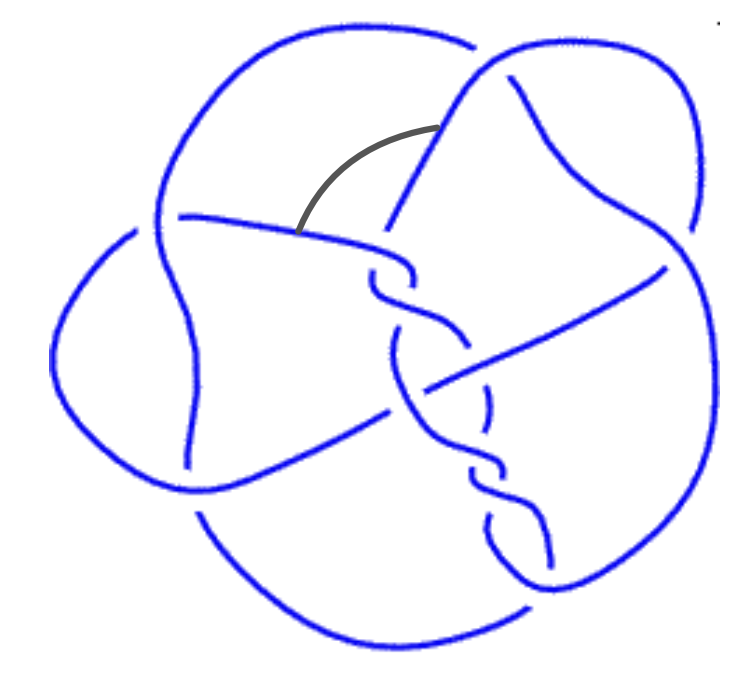}
		\caption{$11n_{170}\stackrel{1}{\longrightarrow} 12n_{876}$}
		
	\end{subfigure}
	~
	\begin{subfigure}[b]{0.25\textwidth}
		\includegraphics[width=\textwidth]{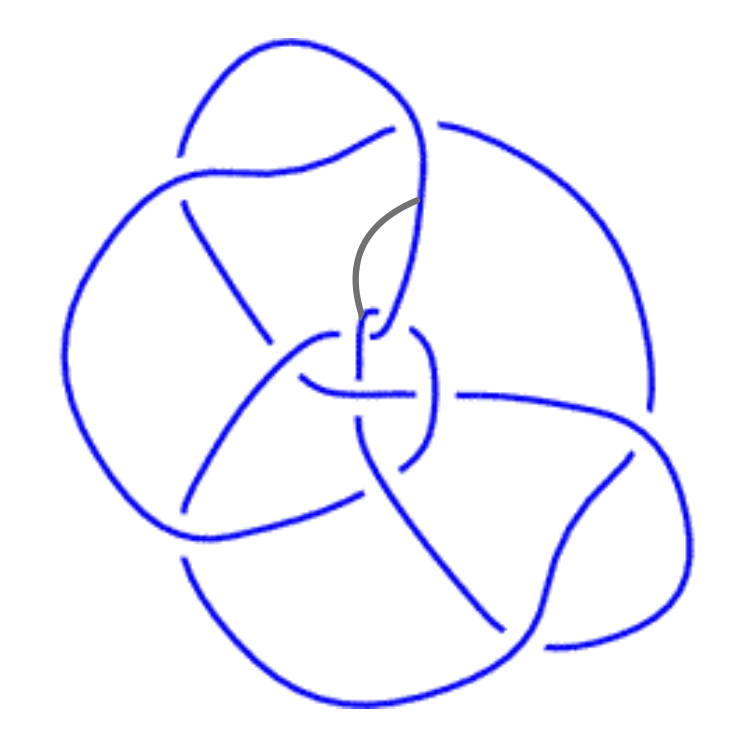}
		\caption{$11n_{173}\stackrel{1}{\longrightarrow} 9_{46}$}
		
	\end{subfigure}
	~
	\begin{subfigure}[b]{0.25\textwidth}
		\includegraphics[width=\textwidth]{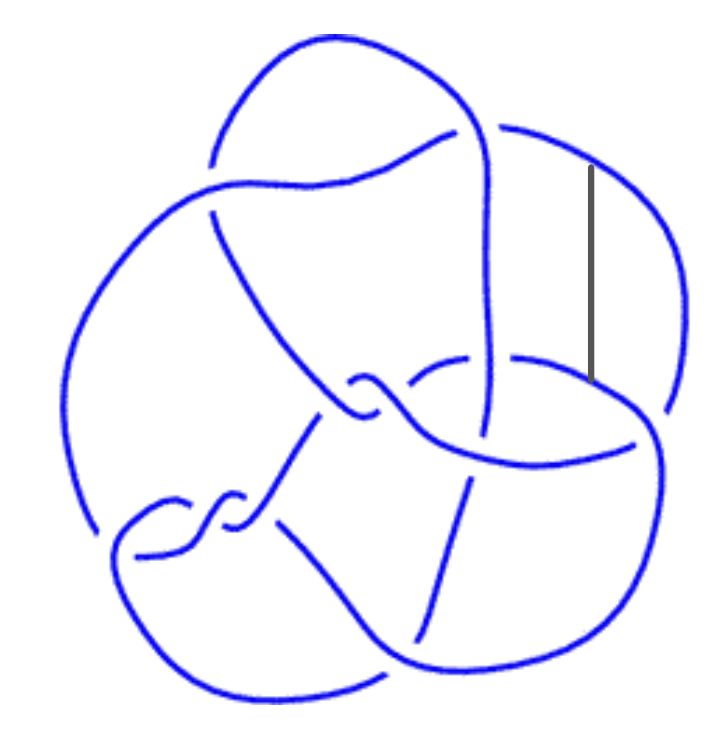}
		\caption{$11n_{180}\stackrel{0}{\longrightarrow} 6_{1}$}
		
	\end{subfigure}
	\vskip3mm
	\begin{subfigure}[b]{0.25\textwidth}
		\includegraphics[width=\textwidth]{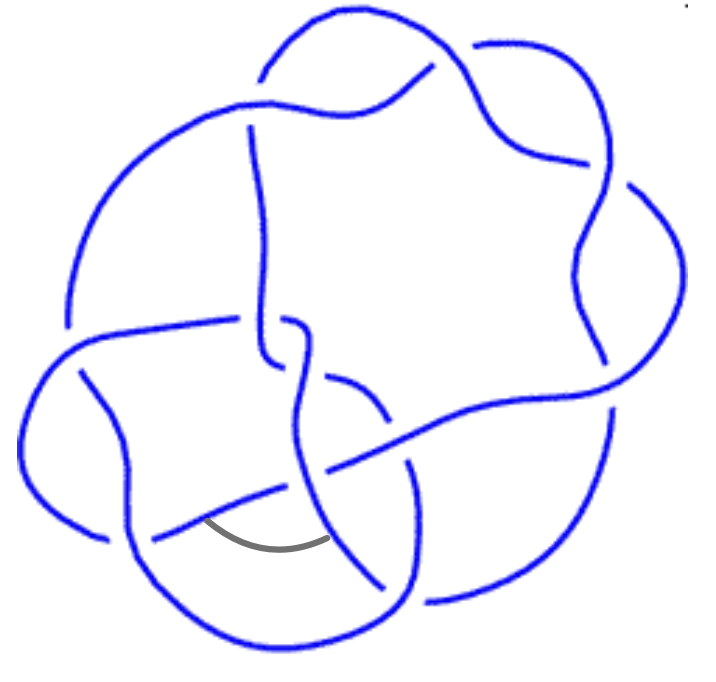}
		\caption{$11n_{181}\stackrel{0}{\longrightarrow} 6_{1}$}
		
	\end{subfigure}
	~
	\begin{subfigure}[b]{0.25\textwidth}
		\includegraphics[width=\textwidth]{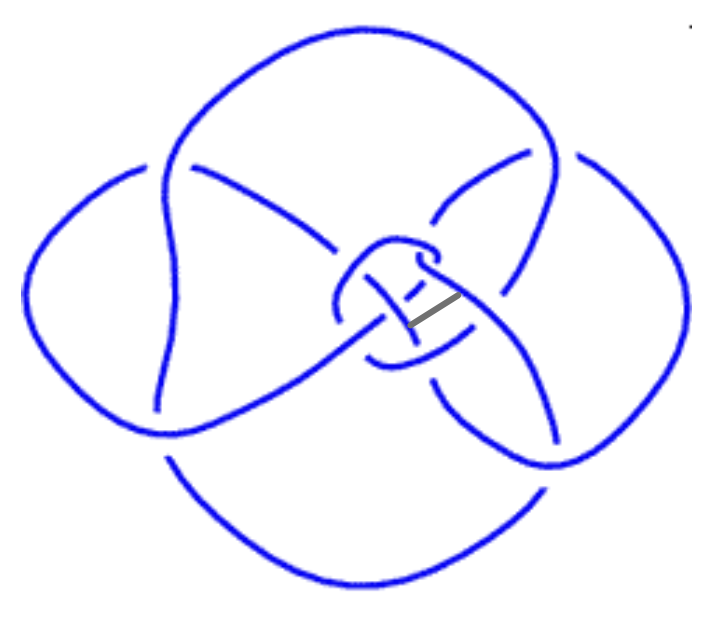}
		\caption{$11n_{183}\stackrel{0}{\longrightarrow} 0_{1}$}
		
	\end{subfigure}

 \vskip3mm
	\caption{Non-oriented band moves from the knots $11n_{167},  11n_{168},  11n_{169},    $ \\ $ 11n_{170},  11n_{173}, 11n_{180}, 11n_{181}, \text{ and } 11n_{183} $ to smoothly slice knots.}\label{lastSlice}
\end{figure}
\clearpage


\newpage

\begin{figure}[!htbp]
	\centering
	\begin{subfigure}[b]{0.27\textwidth}
		\includegraphics[width=\textwidth]{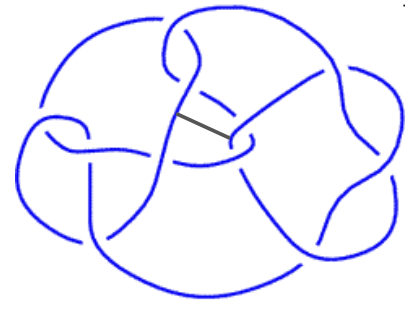}
		\caption{$11n_{10}\stackrel{0}{\longrightarrow} 7_6$}
		
	\end{subfigure}
	~
	\begin{subfigure}[b]{0.25\textwidth}
		\includegraphics[width=\textwidth]{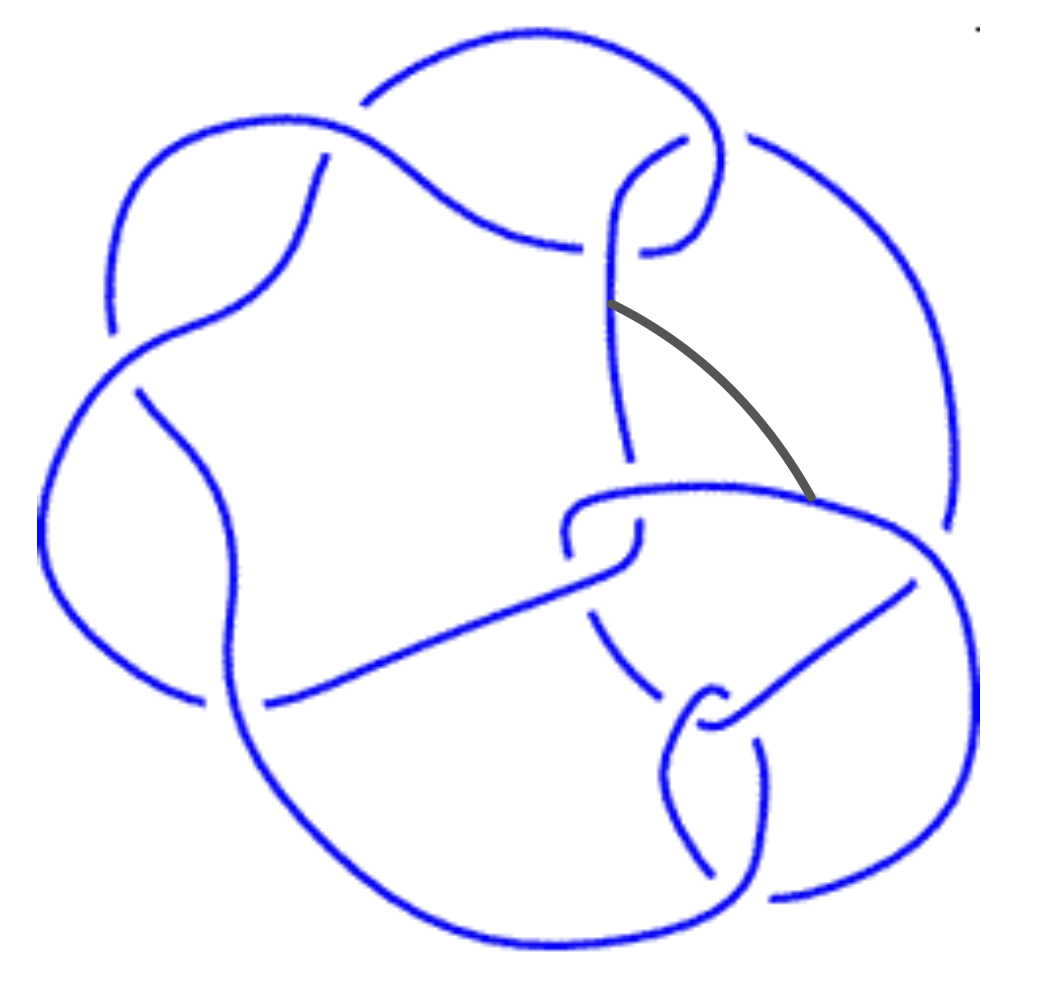}
		\caption{$11n_{12}\stackrel{0}{\longrightarrow} 6_{2}$}
		
	\end{subfigure}
	~
	\begin{subfigure}[b]{0.25\textwidth}
		\includegraphics[width=\textwidth]{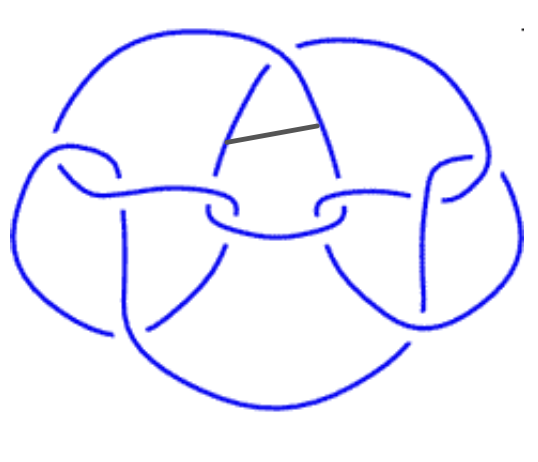}
		\caption{$11n_{22}\stackrel{-1\phantom{i}}{\longrightarrow} 5_{2}$}
		
	\end{subfigure}
	\vskip3mm
	\begin{subfigure}[b]{0.25\textwidth}
		\includegraphics[width=\textwidth]{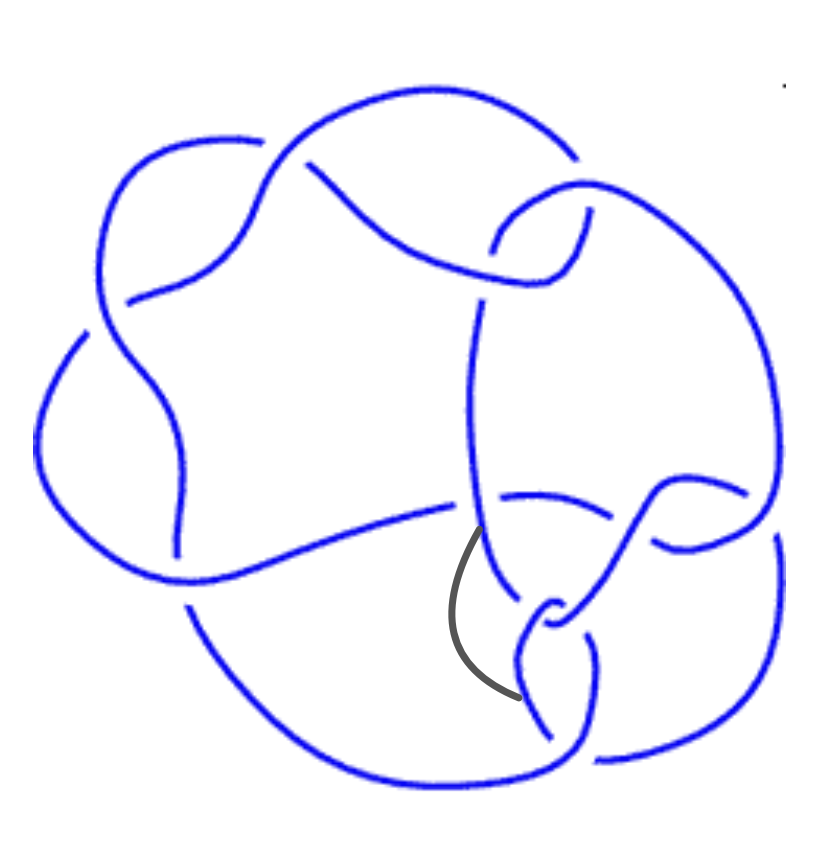}
		\caption{$11n_{29}\stackrel{0}{\longrightarrow} 8_{6}$}
		
	\end{subfigure}
	~
	\begin{subfigure}[b]{0.25\textwidth}
		\includegraphics[width=\textwidth]{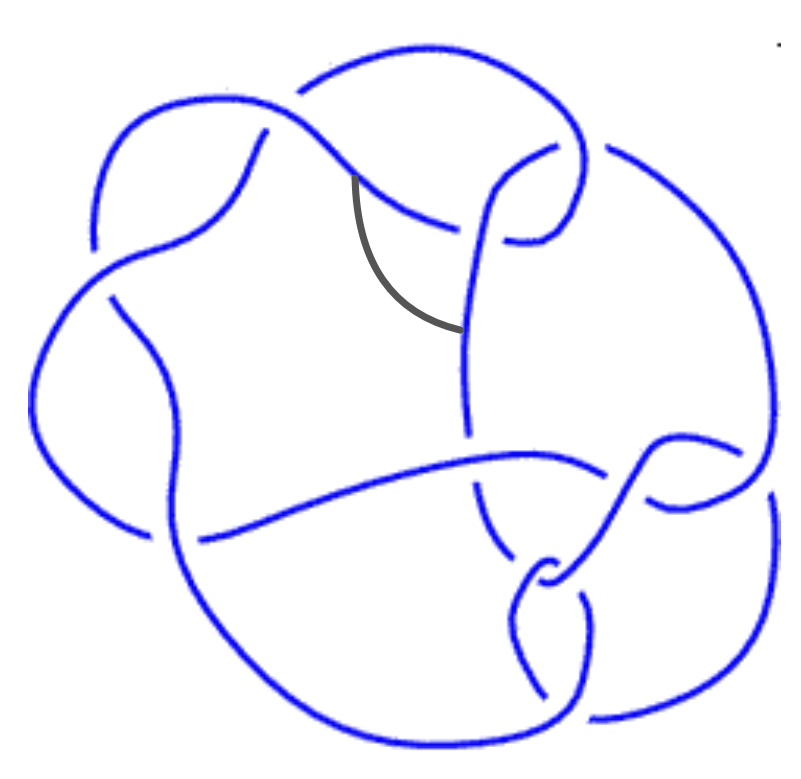}
		\caption{$11n_{30}\stackrel{-1}{\longrightarrow} 10_{126}$}
		
	\end{subfigure}
	~
	\begin{subfigure}[b]{0.25\textwidth}
		\includegraphics[width=\textwidth]{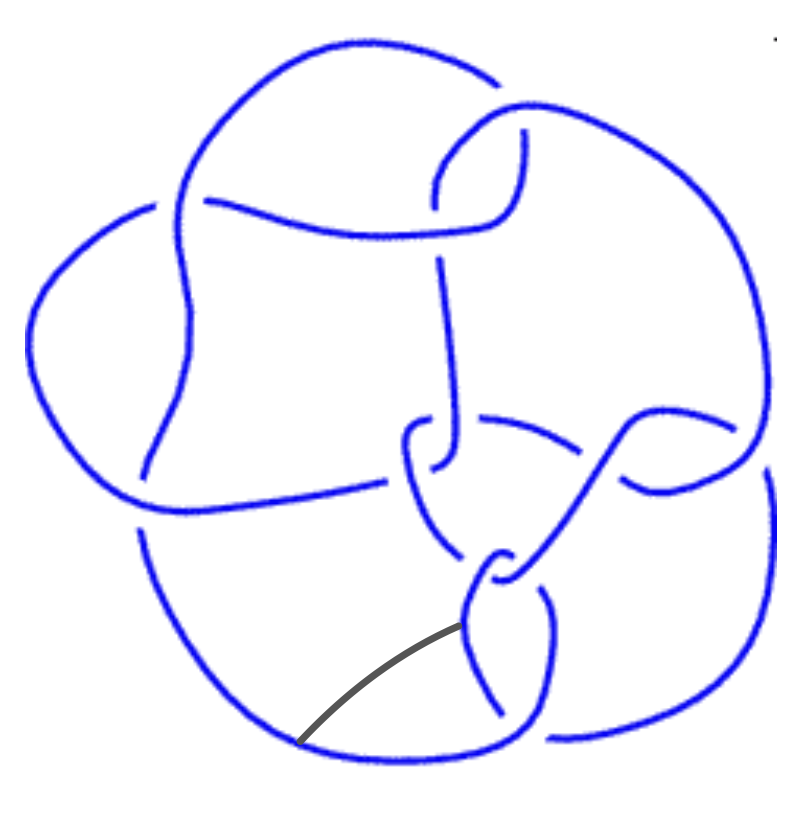}
		\caption{$11n_{32}\stackrel{0}{\longrightarrow} 9_{25}$}
		
	\end{subfigure}
	\vskip3mm
	\begin{subfigure}[b]{0.25\textwidth}
		\includegraphics[width=\textwidth]{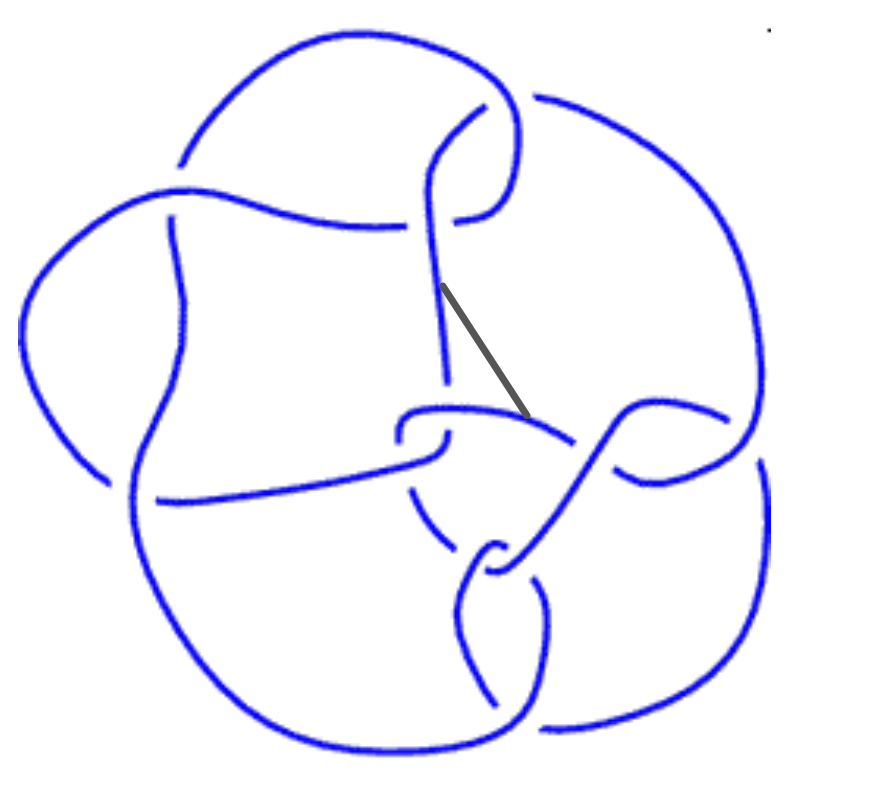}
		\caption{$11n_{33}\stackrel{1}{\longrightarrow} 10_{134}$}
		
	\end{subfigure}
	~
	\begin{subfigure}[b]{0.25\textwidth}
		\includegraphics[width=\textwidth]{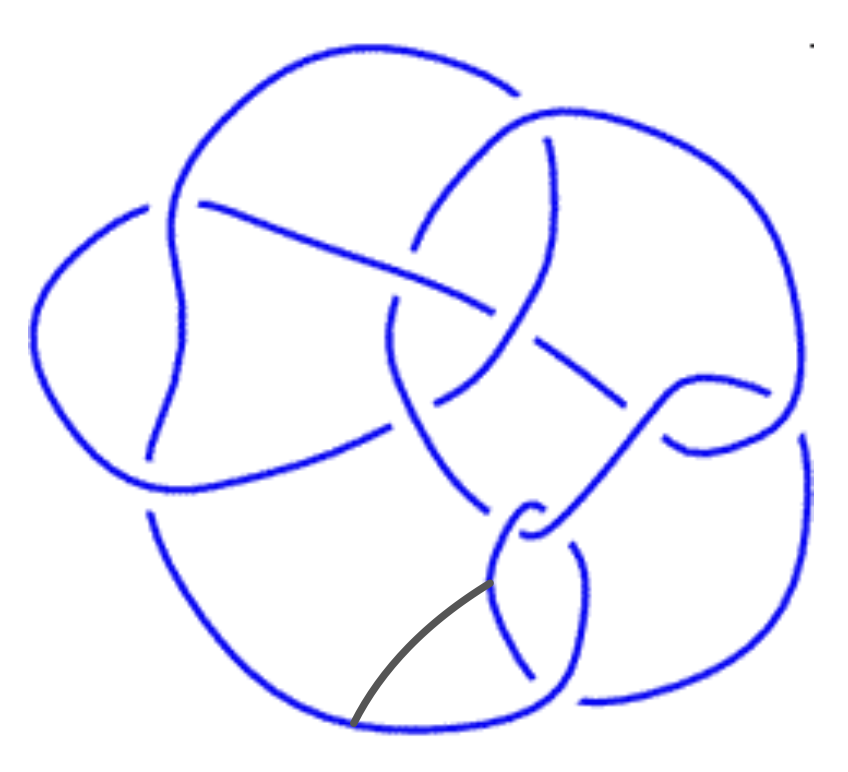}
		\caption{$11n_{43}\stackrel{0}{\longrightarrow} 9_{32}$}
		
	\end{subfigure}
	~
	\begin{subfigure}[b]{0.25\textwidth}
		\includegraphics[width=\textwidth]{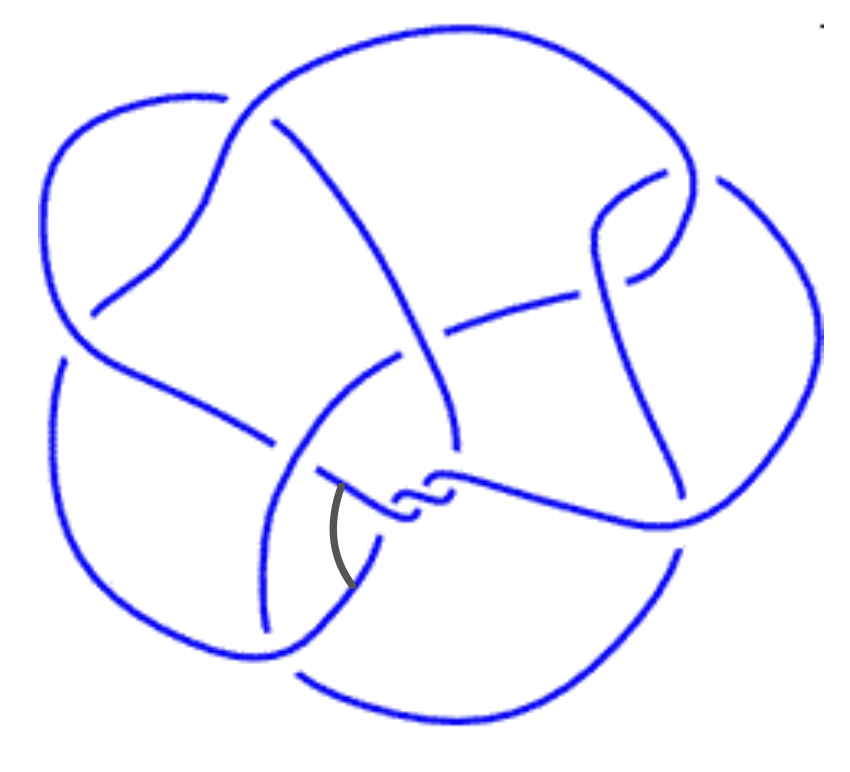}
		\caption{$11n_{48}\stackrel{0}{\longrightarrow} 7_2$}
		
	\end{subfigure}
 \vskip3mm
 ~
	\begin{subfigure}[b]{0.25\textwidth}
		\includegraphics[width=\textwidth]{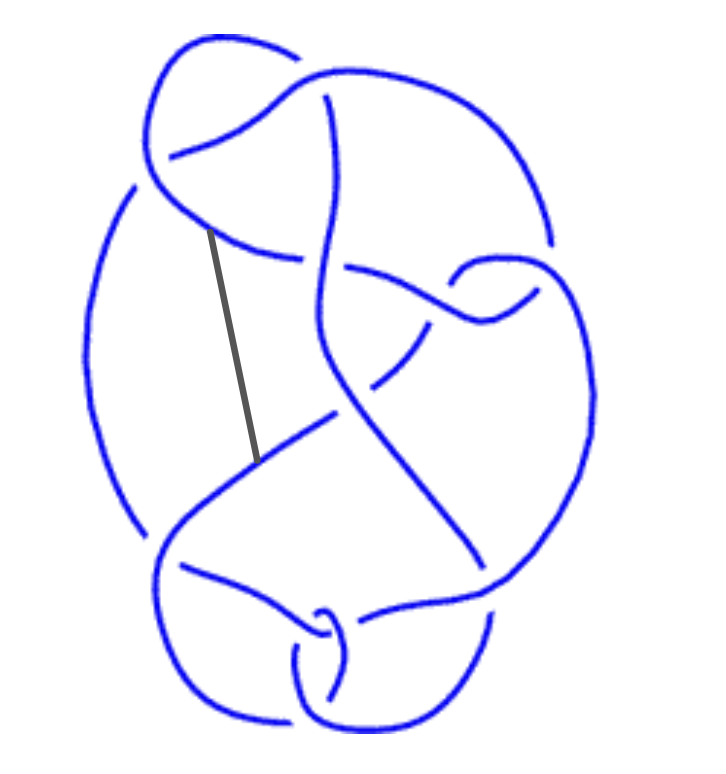}
		\caption{$11n_{51}\stackrel{1}{\longrightarrow} 9_8$}
		
	\end{subfigure}
 ~
	\begin{subfigure}[b]{0.25\textwidth}
		\includegraphics[width=\textwidth]{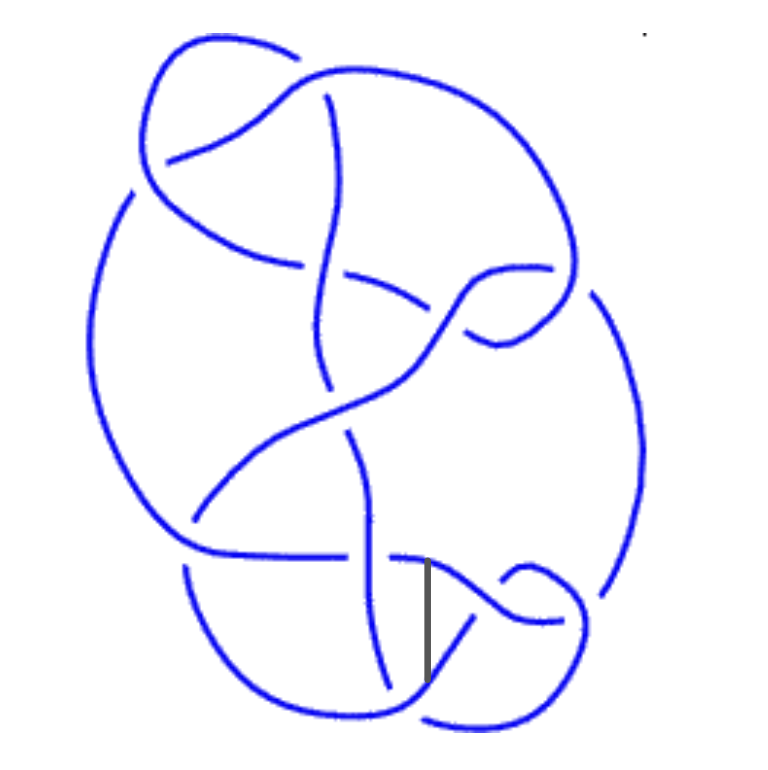}
		\caption{$11n_{55}\stackrel{0}{\longrightarrow} 9_{45}$}
		
	\end{subfigure}
 ~
	\begin{subfigure}[b]{0.25\textwidth}
		\includegraphics[width=\textwidth]{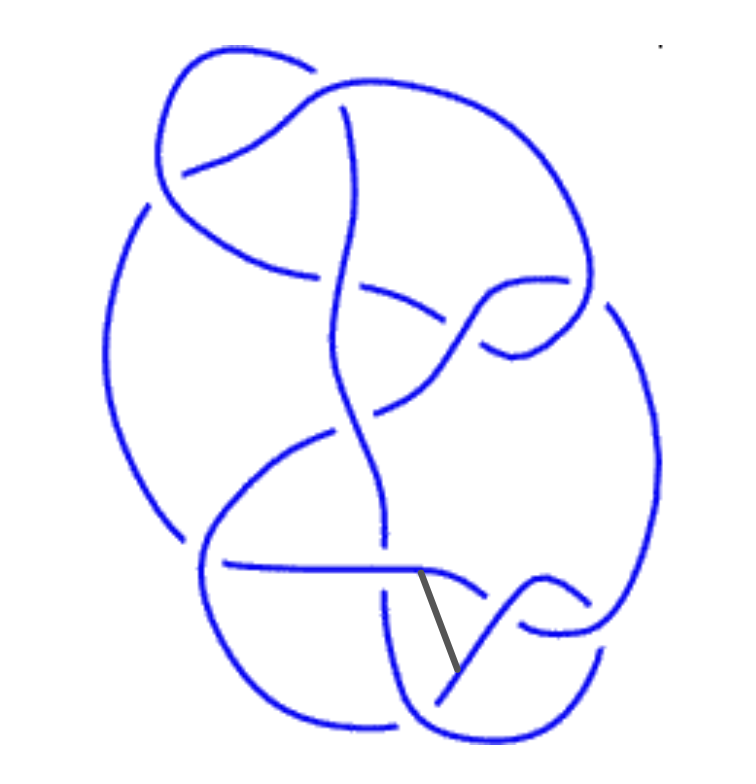}
		\caption{$11n_{56}\stackrel{0}{\longrightarrow} 9_{43}$}
		
	\end{subfigure}
	\vskip3mm
	\caption{Non-oriented band moves from the knots $11n_{10},  11n_{12},  11n_{22}, $ \\ $ 11n_{29}, 11n_{30}, 11n_{32}, 11n_{33}, 11n_{43}, 11n_{48}, 11n_{51}, 11n_{55}, \text{ and } 11n_{56} $ to knots with non- orientable genus 1.}\label{first1G}
\end{figure}
%



\newpage

\begin{figure}[!htbp]
	\centering
	\begin{subfigure}[b]{0.27\textwidth}
		\includegraphics[width=\textwidth]{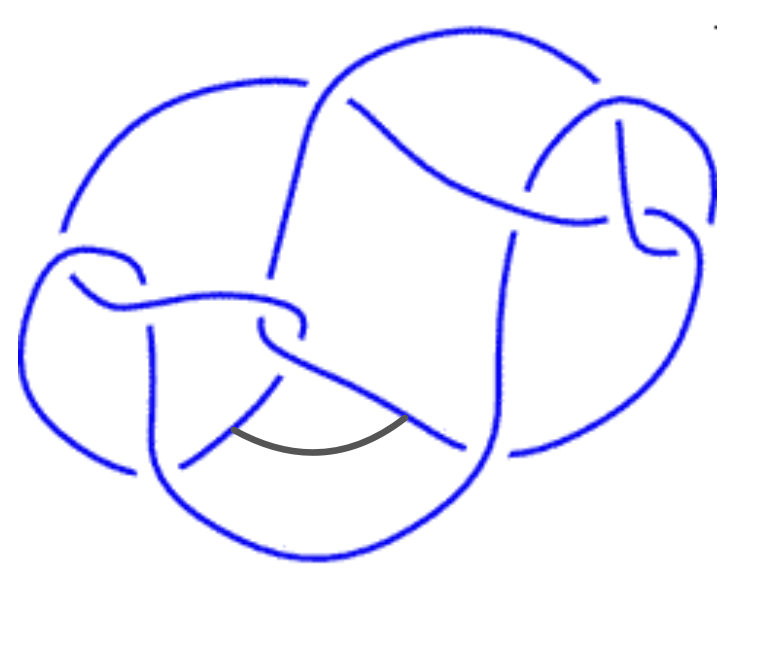}
		\caption{$11n_{61}\stackrel{0}{\longrightarrow} 6_2$}
		
	\end{subfigure}
	~
	\begin{subfigure}[b]{0.25\textwidth}
		\includegraphics[width=\textwidth]{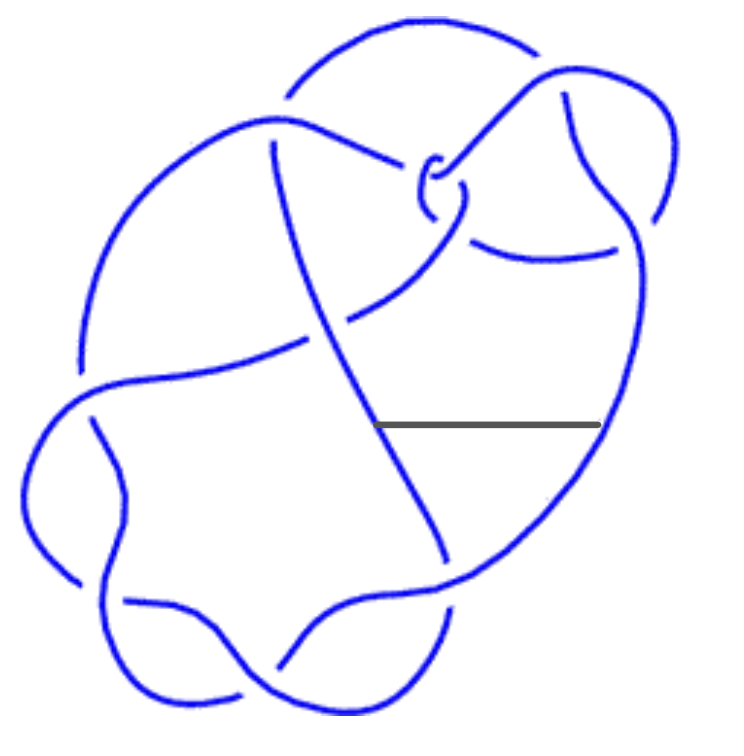}
		\caption{$11n_{63}\stackrel{1}{\longrightarrow} 10_{131}$}
		
	\end{subfigure}
	~
	\begin{subfigure}[b]{0.25\textwidth}
		\includegraphics[width=\textwidth]{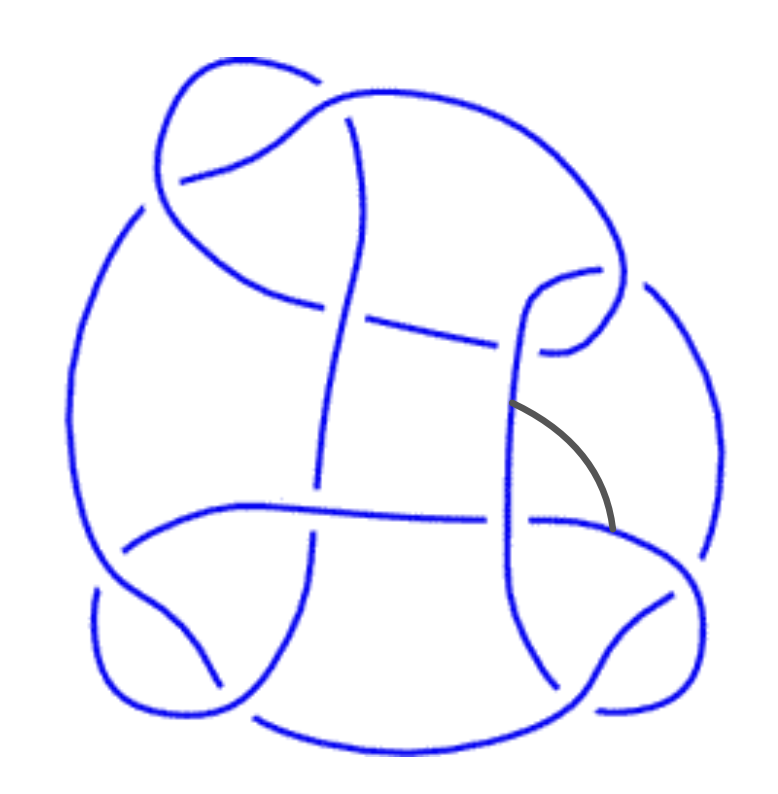}
		\caption{$11n_{72}\stackrel{0\phantom{i}}{\longrightarrow} 9_{28}$}
		
	\end{subfigure}
	\vskip3mm
	\begin{subfigure}[b]{0.25\textwidth}
		\includegraphics[width=\textwidth]{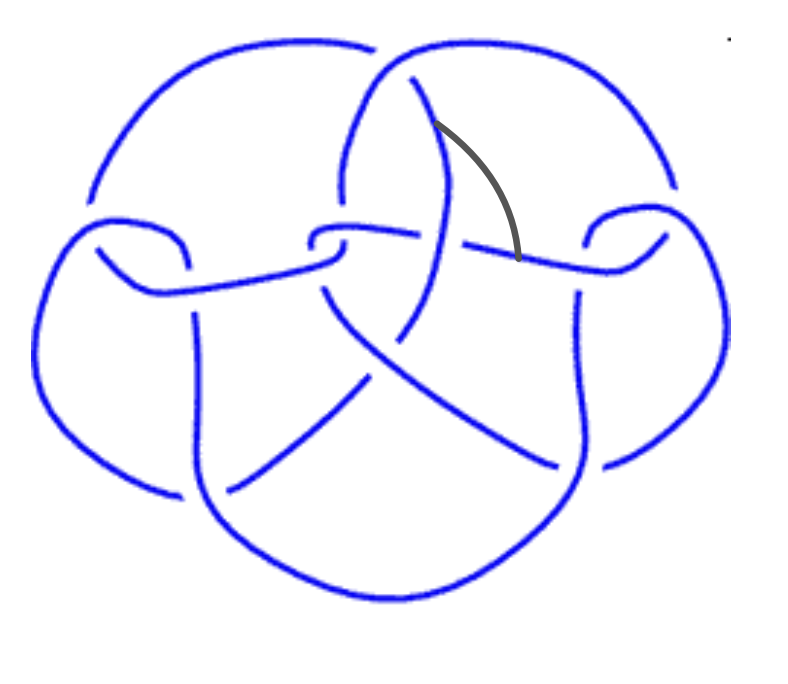}
		\caption{$11n_{84}\stackrel{-1}{\longrightarrow} 9_{44}$}
		
	\end{subfigure}
	~
	\begin{subfigure}[b]{0.25\textwidth}
		\includegraphics[width=\textwidth]{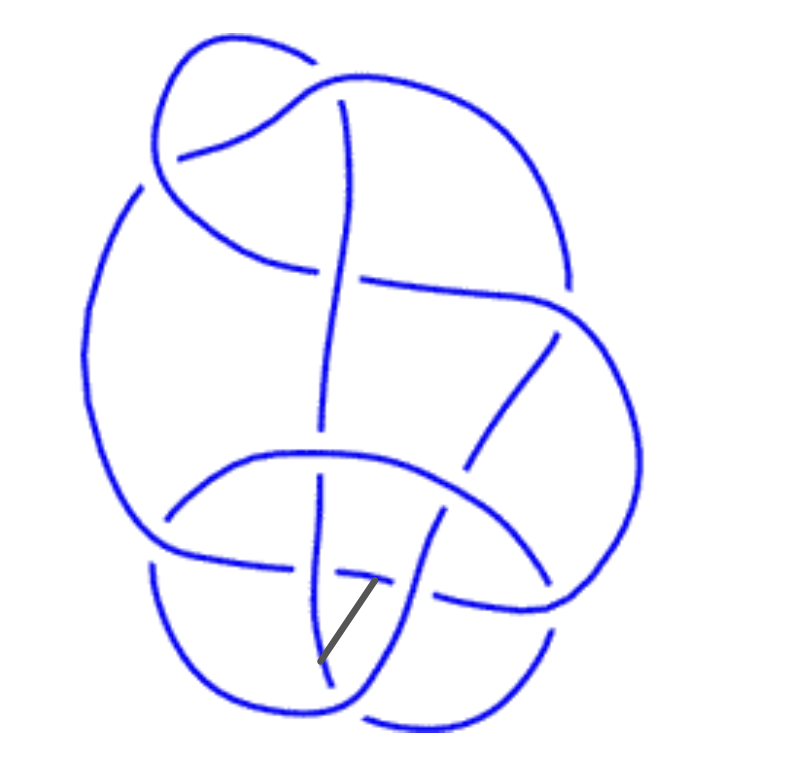}
		\caption{$11n_{85}\stackrel{0}{\longrightarrow} 5_{2}$}
		
	\end{subfigure}
	~
	\begin{subfigure}[b]{0.25\textwidth}
		\includegraphics[width=\textwidth]{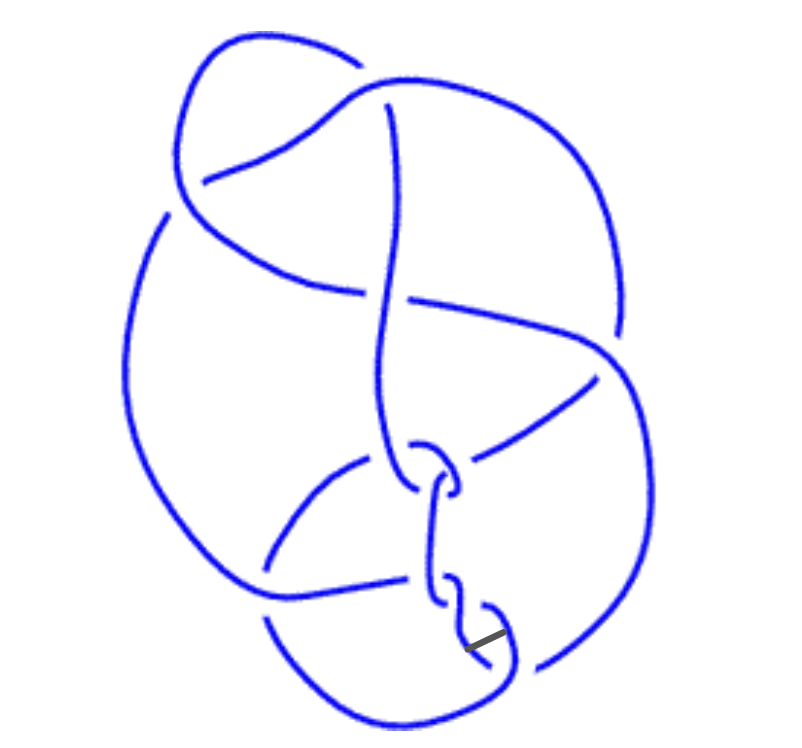}
		\caption{$11n_{90}\stackrel{1}{\longrightarrow} 10_147$}
		
	\end{subfigure}
	\vskip3mm
	\begin{subfigure}[b]{0.25\textwidth}
		\includegraphics[width=\textwidth]{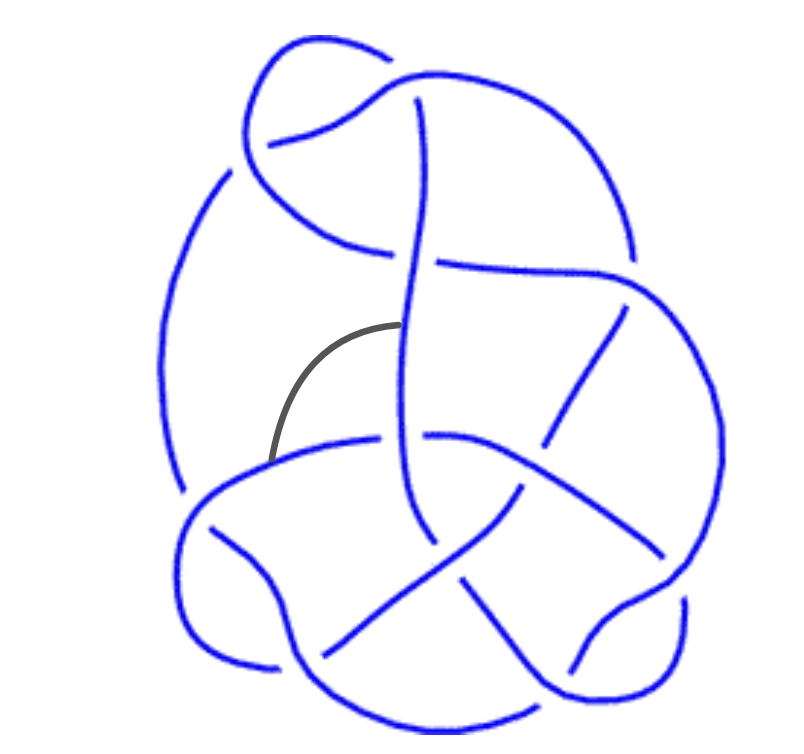}
		\caption{$11n_{92}\stackrel{1}{\longrightarrow} 9_{8}$}
		
	\end{subfigure}
	~
	\begin{subfigure}[b]{0.25\textwidth}
		\includegraphics[width=\textwidth]{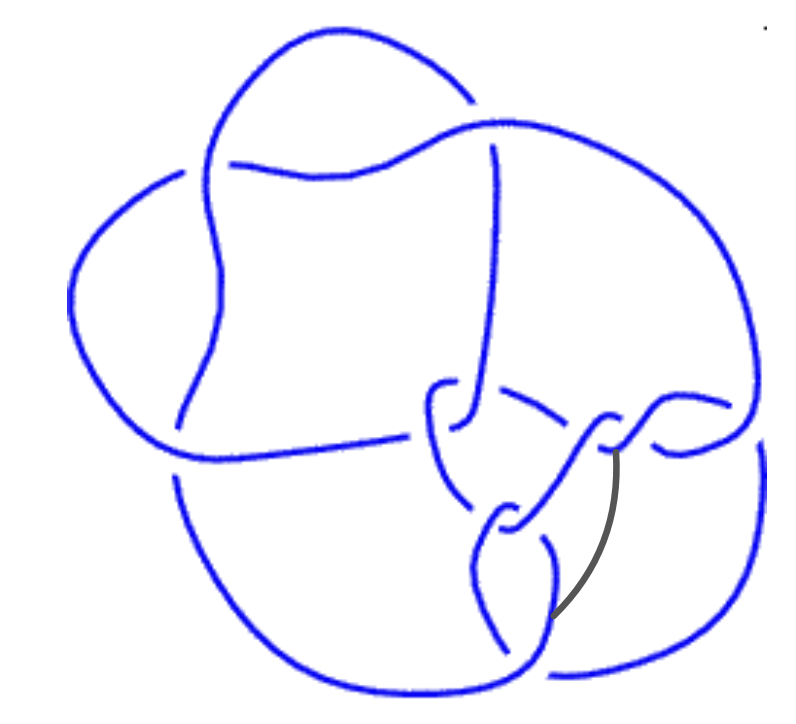}
		\caption{$11n_{98}\stackrel{0}{\longrightarrow} 8_{6}$}
		
	\end{subfigure}
	~
	\begin{subfigure}[b]{0.25\textwidth}
		\includegraphics[width=\textwidth]{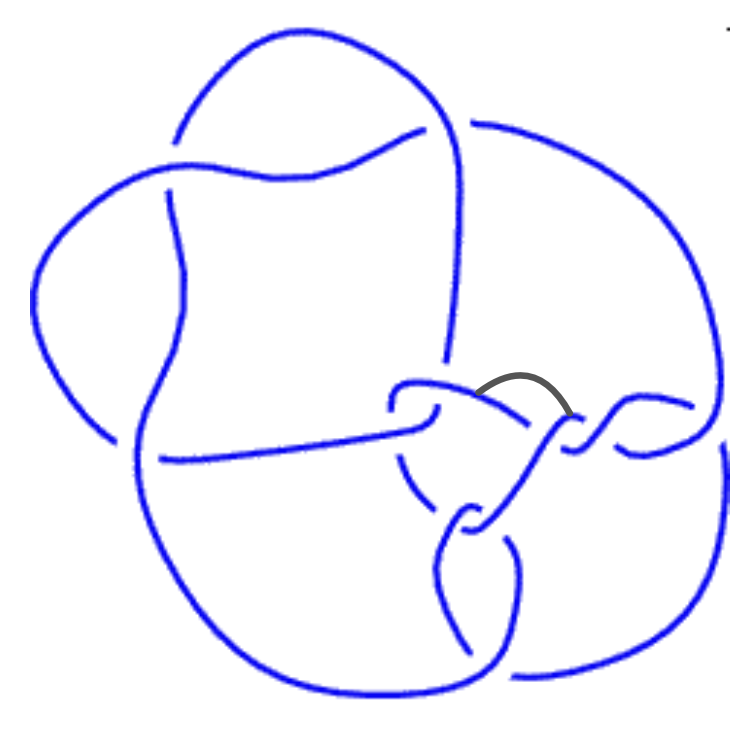}
		\caption{$11n_{99}\stackrel{1}{\longrightarrow} 10_{148}$}
		
	\end{subfigure}
 \vskip3mm
 ~
	\begin{subfigure}[b]{0.25\textwidth}
		\includegraphics[width=\textwidth]{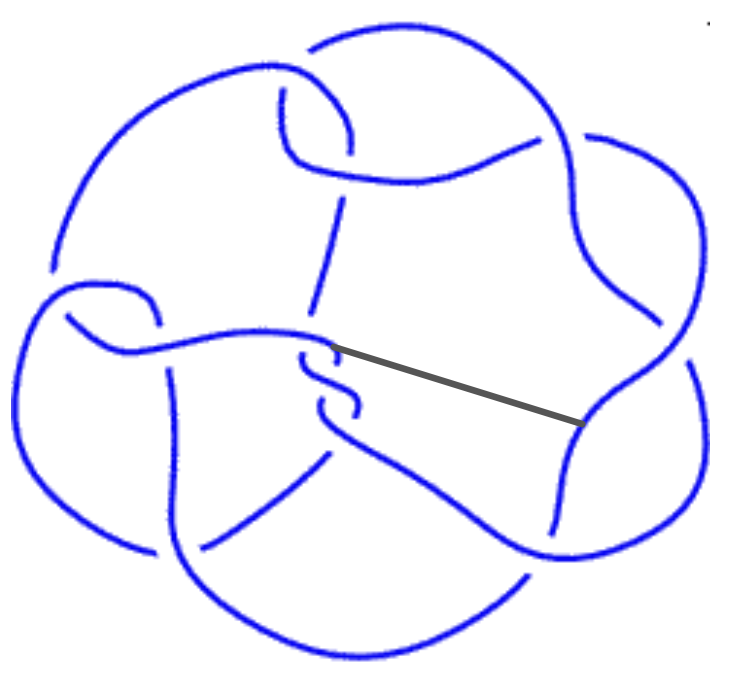}
		\caption{$11n_{101}\stackrel{0}{\longrightarrow} 6_{2}$}
		
	\end{subfigure}
 ~
	\begin{subfigure}[b]{0.25\textwidth}
		\includegraphics[width=\textwidth]{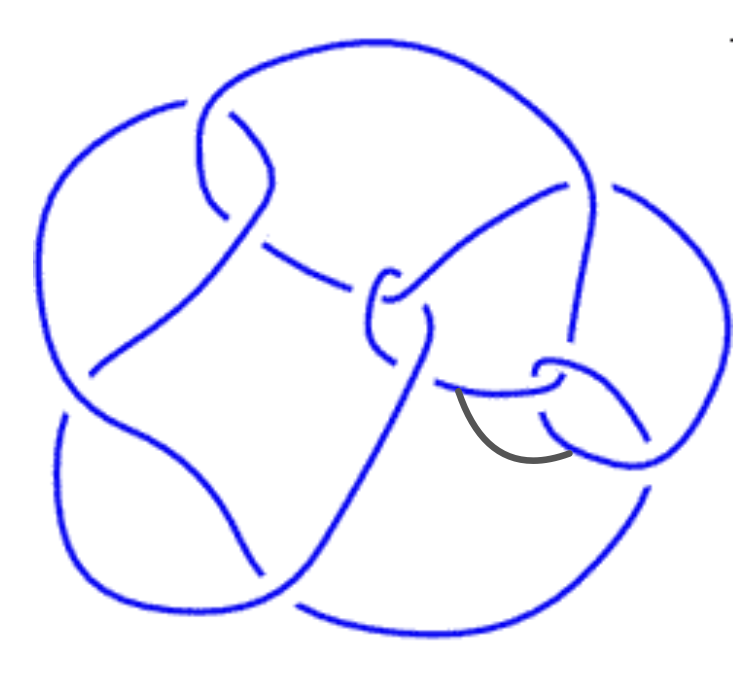}
		\caption{$11n_{103}\stackrel{0}{\longrightarrow} 9_{45}$}
		
	\end{subfigure}
 ~
	\begin{subfigure}[b]{0.25\textwidth}
		\includegraphics[width=\textwidth]{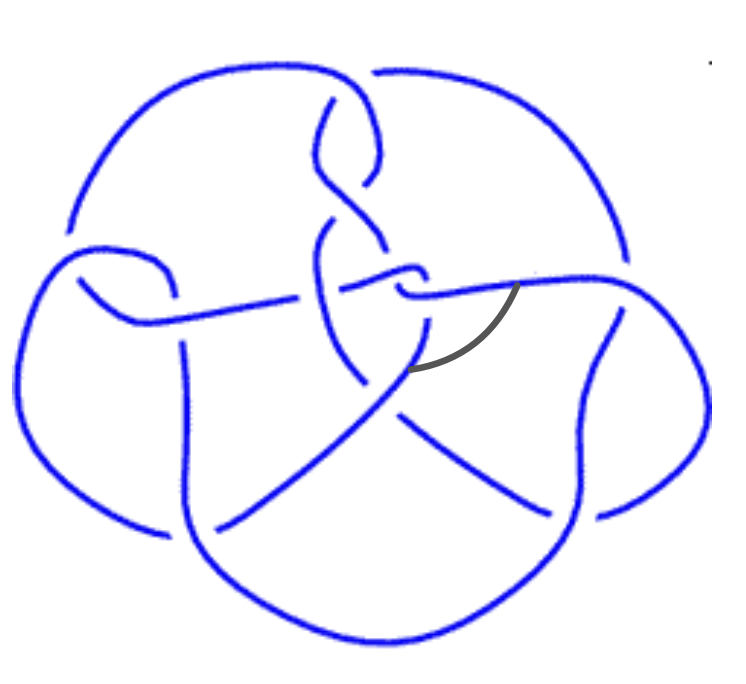}
		\caption{$11n_{112}\stackrel{0}{\longrightarrow} 8_{6}$}
		
	\end{subfigure}
	\vskip3mm
	\caption{Non-oriented band moves from the knots $11n_{61},  11n_{63},  11n_{72}, $ \\ $ 11n_{84}, 11n_{85}, 11n_{90}, 11n_{92}, 11n_{98}, 11n_{99}, 11n_{101}, 11n_{103}, \text{ and } 11n_{112} $ to knots with non- orientable genus 1.}
\end{figure}
%


\newpage

\begin{figure}[!htbp]
	\centering
	\begin{subfigure}[b]{0.27\textwidth}
		\includegraphics[width=\textwidth]{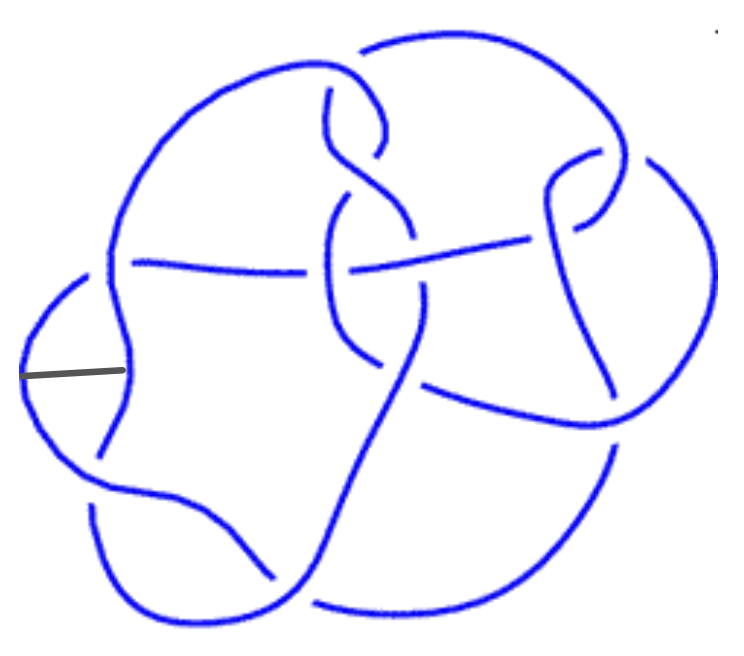}
		\caption{$11n_{125}\stackrel{0}{\longrightarrow} 8_{14}$}
		
	\end{subfigure}
	~
	\begin{subfigure}[b]{0.25\textwidth}
		\includegraphics[width=\textwidth]{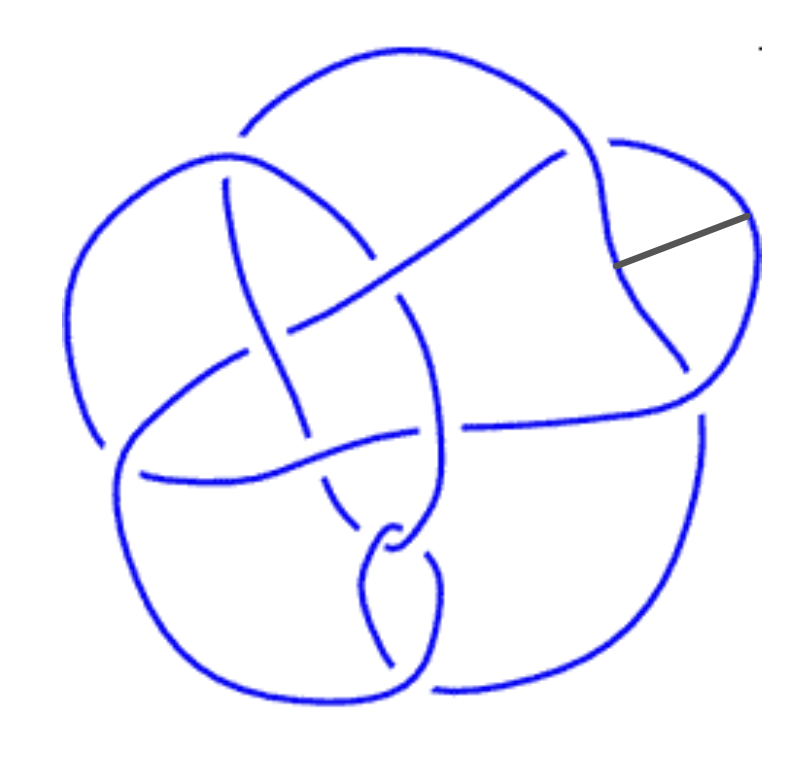}
		\caption{$11n_{130}\stackrel{0}{\longrightarrow} 8_{7}$}
		
	\end{subfigure}
	~
	\begin{subfigure}[b]{0.25\textwidth}
		\includegraphics[width=\textwidth]{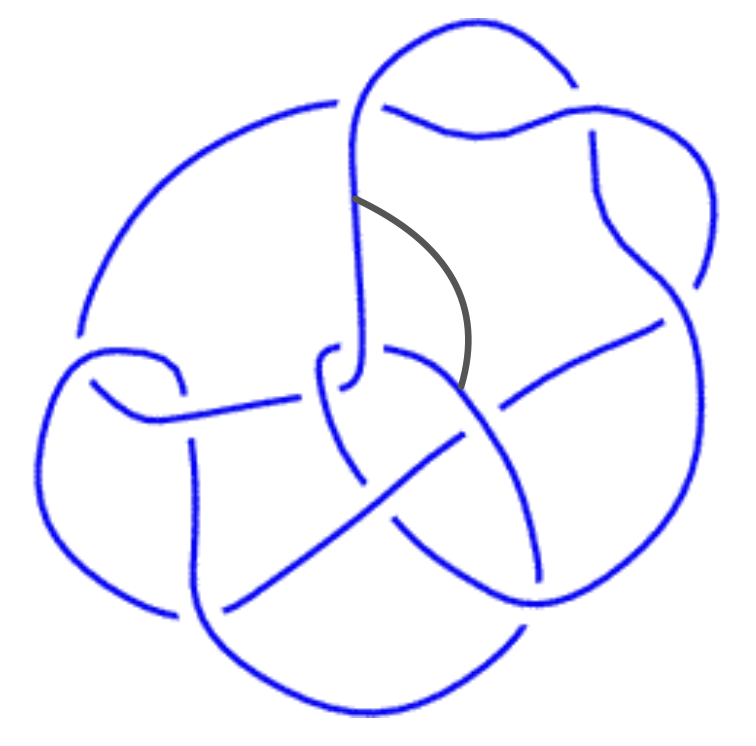}
		\caption{$11n_{131}\stackrel{0\phantom{i}}{\longrightarrow} 8_{14}$}
		
	\end{subfigure}
	\vskip3mm
	\begin{subfigure}[b]{0.25\textwidth}
		\includegraphics[width=\textwidth]{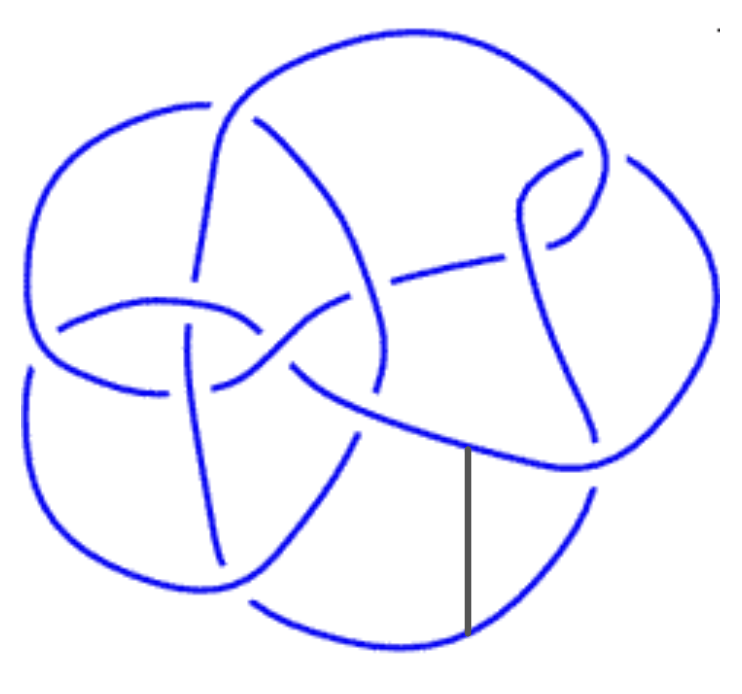}
		\caption{$11n_{133}\stackrel{0}{\longrightarrow} 10_{165}$}
		
	\end{subfigure}
	~
	\begin{subfigure}[b]{0.25\textwidth}
		\includegraphics[width=\textwidth]{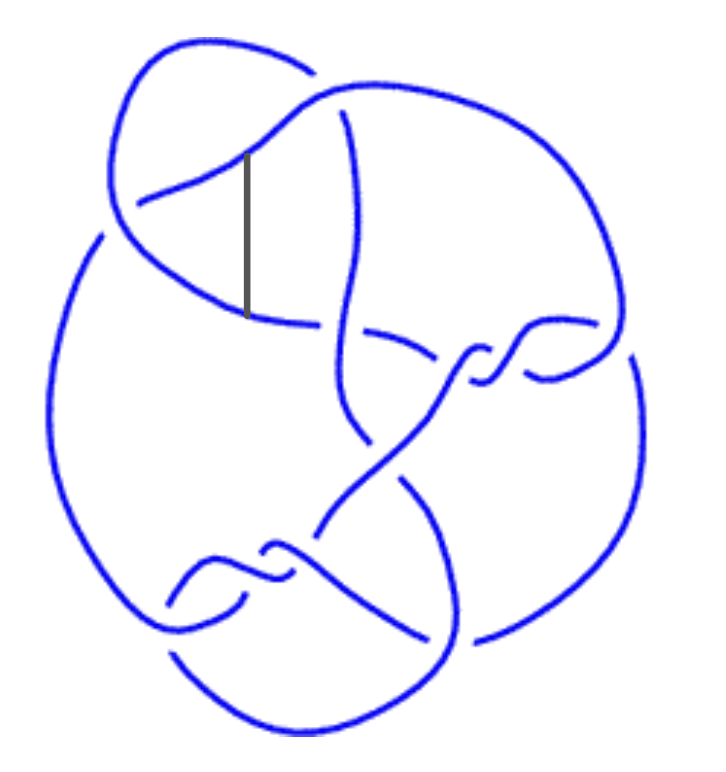}
		\caption{$11n_{137}\stackrel{0}{\longrightarrow} 10_{131}$}
		
	\end{subfigure}
	~
	\begin{subfigure}[b]{0.25\textwidth}
		\includegraphics[width=\textwidth]{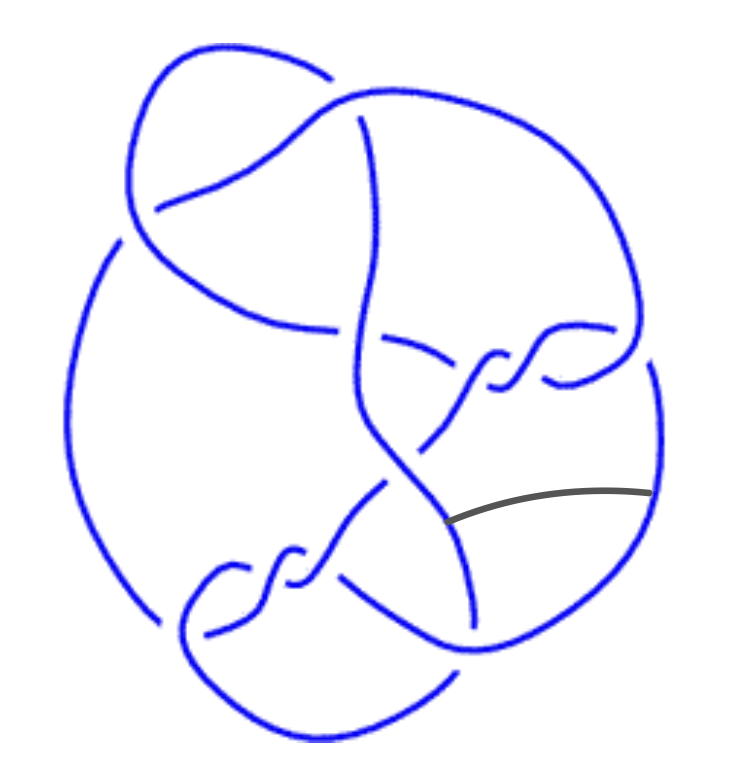}
		\caption{$11n_{138}\stackrel{1}{\longrightarrow} 10_{139}$}
		
	\end{subfigure}
	\vskip3mm
	\begin{subfigure}[b]{0.25\textwidth}
		\includegraphics[width=\textwidth]{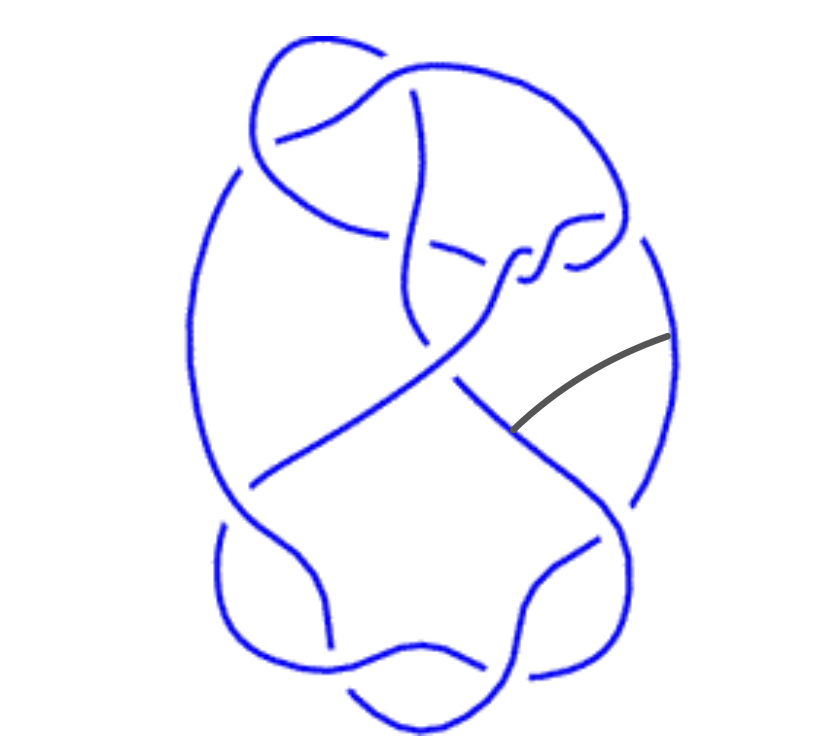}
		\caption{$11n_{140}\stackrel{-1}{\longrightarrow} 10_{144}$}
		
	\end{subfigure}
	~
	\begin{subfigure}[b]{0.25\textwidth}
		\includegraphics[width=\textwidth]{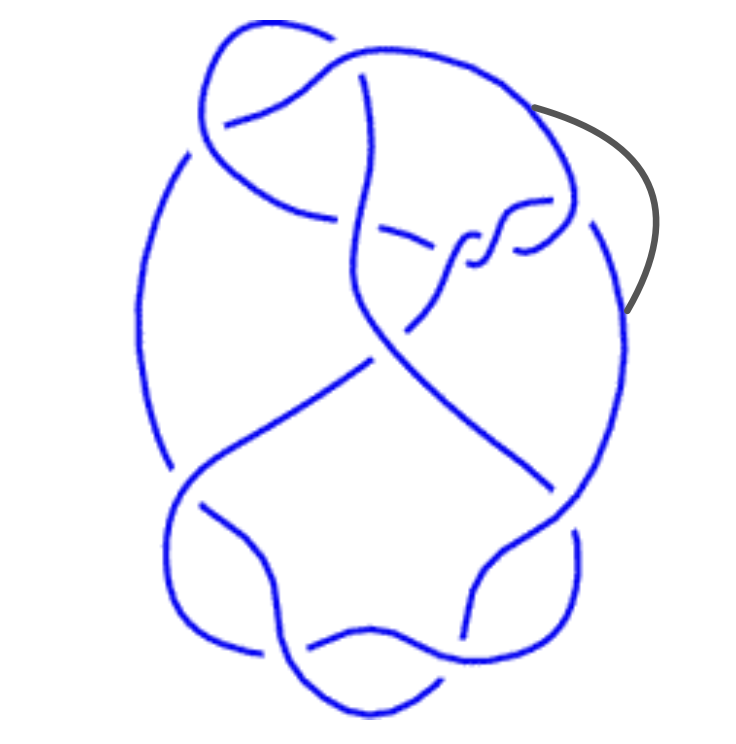}
		\caption{$11n_{141}\stackrel{-1}{\longrightarrow} 10_{126}$}
		
	\end{subfigure}
	~
	\begin{subfigure}[b]{0.25\textwidth}
		\includegraphics[width=\textwidth]{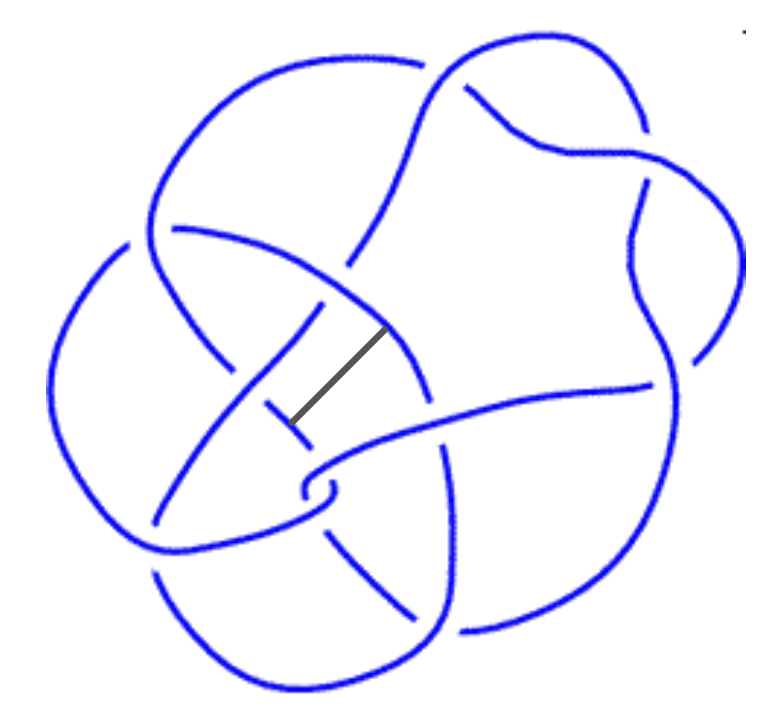}
		\caption{$11n_{155}\stackrel{0}{\longrightarrow} 3_1$}
		
	\end{subfigure}
 \vskip3mm
 ~
	\begin{subfigure}[b]{0.25\textwidth}
		\includegraphics[width=\textwidth]{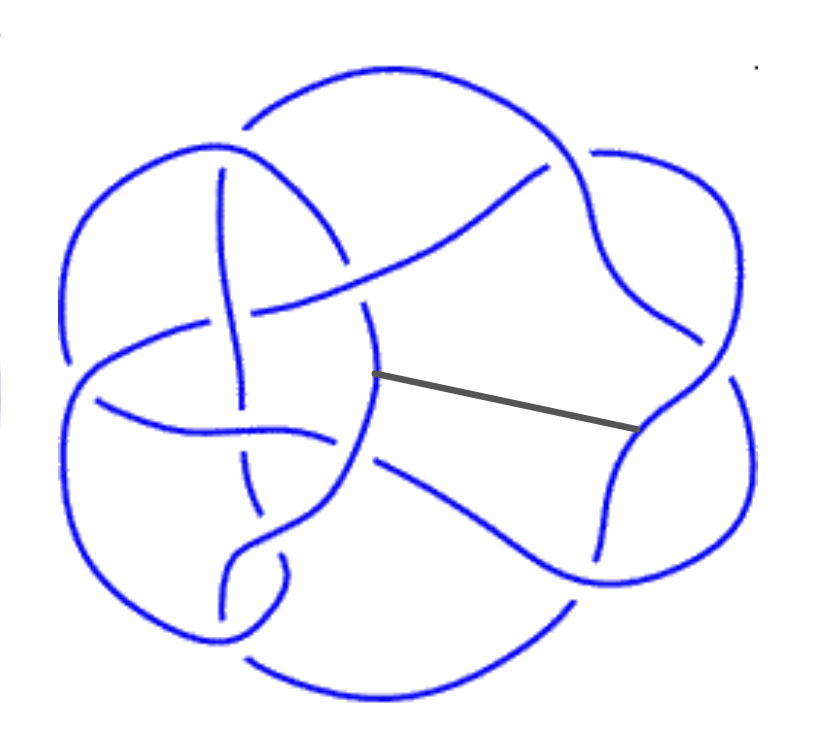}
		\caption{$11n_{161}\stackrel{-1}{\longrightarrow} 6_2$}
		
	\end{subfigure}
 ~
	\begin{subfigure}[b]{0.25\textwidth}
		\includegraphics[width=\textwidth]{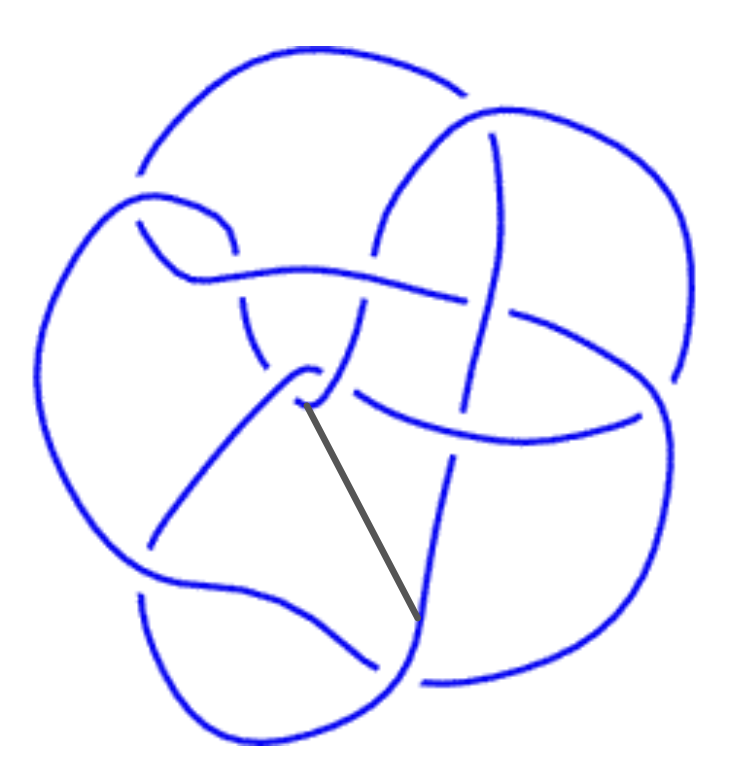}
		\caption{$11n_{165}\stackrel{0}{\longrightarrow} 11n_{46}$}
		
	\end{subfigure}
 ~
	\begin{subfigure}[b]{0.25\textwidth}
		\includegraphics[width=\textwidth]{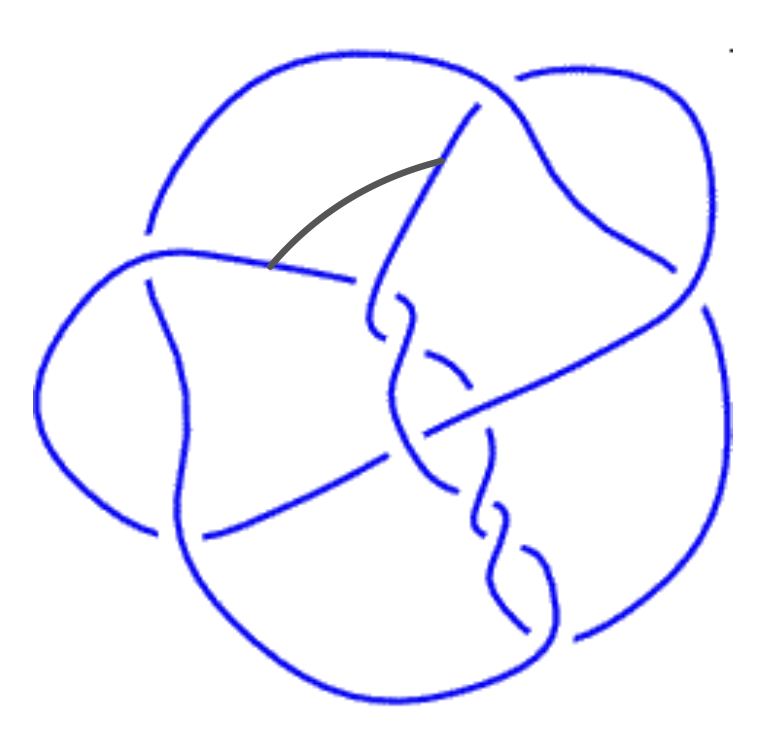}
		\caption{$11n_{171}\stackrel{1}{\longrightarrow} 10_{144}$}
		
	\end{subfigure}
	\vskip3mm
	\caption{Non-oriented band moves from the knots $11n_{125},  11n_{130},  11n_{131}, $ \\ $ 11n_{133}, 11n_{137}, 11n_{138}, 11n_{140}, 11n_{141}, 11n_{155}, 11n_{161}, 11n_{165}, \text{ and } 11n_{171} $ \\ to knots with non- orientable genus 1.}
\end{figure}


\newpage

\begin{figure}[!htbp]
	\centering
	\begin{subfigure}[b]{0.27\textwidth}
		\includegraphics[width=\textwidth]{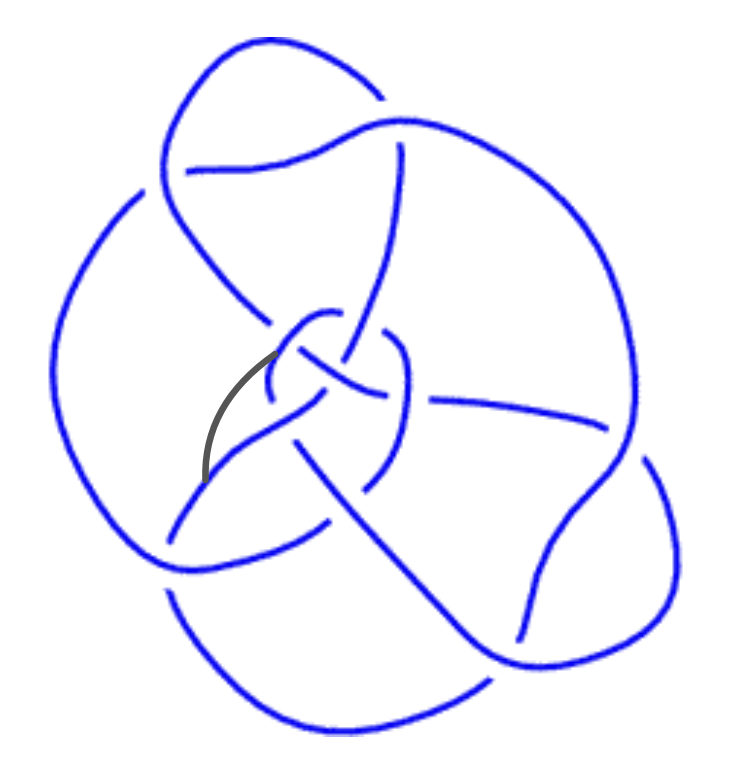}
		\caption{$11n_{176}\stackrel{-1}{\longrightarrow} 8_{10}$}
		
	\end{subfigure}
	~
	\begin{subfigure}[b]{0.27\textwidth}
		\includegraphics[width=\textwidth]{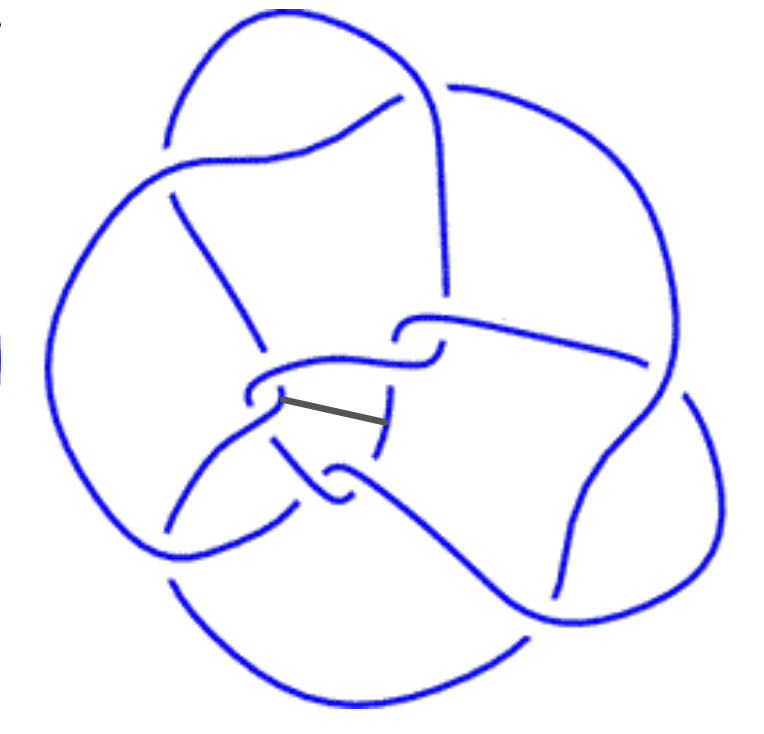}
		\caption{$11n_{179}\stackrel{0}{\longrightarrow} 8_{14}$}
		
	\end{subfigure}
	~
	\begin{subfigure}[b]{0.27\textwidth}
		\includegraphics[width=\textwidth]{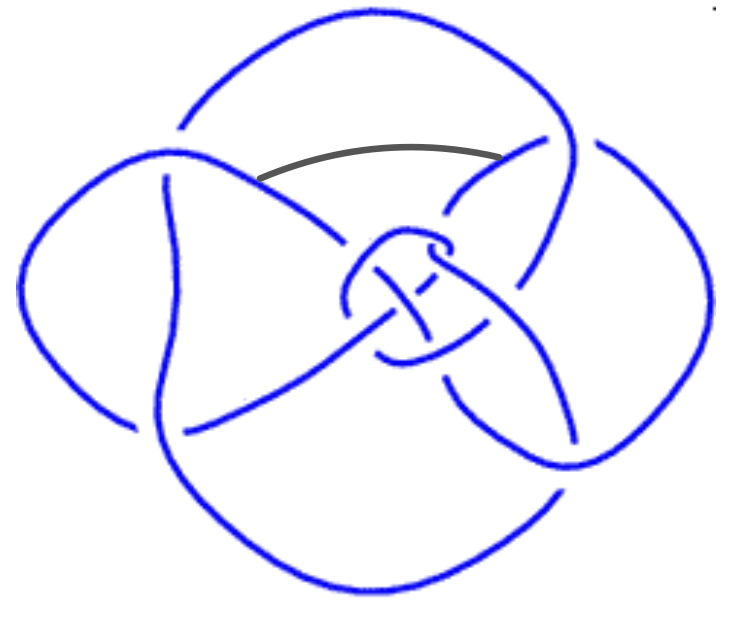}
		\caption{$11n_{184}\stackrel{0\phantom{i}}{\longrightarrow} 6_{2}$}
		
	\end{subfigure}
	\vskip3mm
	\caption{Non-oriented band moves from the knots $11n_{176},  11n_{179}, \text{ and} \\  11n_{184}  $ to knots with non-orientable genus 1.}\label{last1G}
\end{figure}
\clearpage

\newpage
\bibliographystyle{plain}
\bibliography{main}

\end{document}